\setlist[itemize]{leftmargin=*}
\setlist[enumerate]{leftmargin=*}
\def\co{\colon\thinspace}
\newcommand{\C}{\mathbb{C}}
\newcommand{\Z}{\mathbb{Z}}
\newcommand{\N}{\mathbb{N}}
\newcommand{\rk}{\mathrm{rk}\,}
\newcommand{\R}{\mathbb{R}}
\newcommand{\bd}{\partial}
\newcommand{\M}{\mathcal{M}}
\newcommand{\cL}{\mathcal{L}}
\newcolumntype{C}[1]{>{\centering\arraybackslash$}p{#1}<{$}}
\newcommand{\CO}{\mathcal{CO}}
\newcommand{\Fuk}{\mathcal{F}uk}
\newcommand{\ev}{\mathrm{ev}}
\newcommand{\D}{\mathcal{D}}
\renewcommand{\O}{\mathcal{O}}
\newcommand{\cd}{\cdot}
\newcommand{\pt}{\mathrm{pt}}
\newcommand{\I}{{\sc i}}
\newcommand{\II}{{\sc ii}}
\newcommand{\III}{{\sc iii}}
\newcommand{\IV}{{\sc iv}}
\newcommand{\IandII}{\textsc{i,ii}}
\renewcommand{\S}{{\sc s}}
\newcommand{\BS}{\mathcal{BS}}
\renewcommand{\Re}{\mathrm{Re}}
\renewcommand{\Im}{\mathrm{Im}}
\newcommand{\LL}{\mathbf{L}}
\newcommand{\mm}{\mathbf{m}}
\renewcommand{\L}{\mathcal{L}}
\newtheorem{theorem}{Theorem}[section]
\newtheorem{proposition}[theorem]{Proposition}
\newtheorem{lemma}[theorem]{Lemma}
\newtheorem{corollary}[theorem]{Corollary}
\theoremstyle{definition}
\newtheorem{hypothesis}[theorem]{Hypothesis}
\theoremstyle{remark}
\newtheorem{remark}{Remark}[section]
\newtheorem{example}[remark]{Example}
\numberwithin{equation}{section}
\begin{document}

\title[From SH to Lagrangian enumerative geometry]{From symplectic cohomology\\ to Lagrangian enumerative geometry}

\author{Dmitry Tonkonog}
\thanks{This work was partially supported by the Simons
	Foundation (grant \#385573, Simons Collaboration on Homological Mirror Symmetry), and by the Knut and Alice Wallenberg Foundation (project grant Geometry and Physics).}
\address{UC Berkeley, Berkeley CA 94720, United States}
\address{
	Uppsala University, 751~06 Uppsala, Sweden}

\begin{abstract}
We build a bridge between  Floer theory on open symplectic manifolds  and the enumerative geometry of holomorphic disks inside their Fano  compactifications, by detecting elements in symplectic cohomology which are mirror to Landau-Ginzburg potentials. 
We also treat the higher Maslov index versions of the potentials. 

We discover a relation between higher disk potentials and  symplectic cohomology rings of smooth anticanonical divisor complements (themselves conjecturally related to closed-string Gromov-Witten invariants), and explore several other applications to the geometry of Liouville domains.
\end{abstract}
\maketitle

\section{Introduction}

\subsection{Overview}
A  recurring topic in symplectic geometry, closely related to mirror symmetry, is the interplay between the symplectic topology of open symplectic manifolds and that of their compactifications.  The idea is that Floer theory on open manifolds is usually easier to understand, and Floer theory on the compactification can sometimes be seen as a \emph{deformation} of the former one. 
This interplay  has been the driving force behind many important developments, notably the proofs of homological mirror symmetry for the genus 2~curve and the quartic surface by Seidel~\cite{Sei02,Sei11,Sei15} and for projective hypersurfaces by Sheridan \cite{She13,She15}; the works of Cieliebak and Latschev~\cite{CL09}, Seidel~\cite{Sei16b,Sei16}, Ganatra and Pomerleano \cite{GP16}; and the ongoing work of Borman and Sheridan~\cite{BS16}.

Our aim is to investigate this circle of ideas in a new setting that reveals a different aspect of mirror symmetry. Inside closed manifolds, we are interested in the enumerative geometry of holomorphic disks with boundary on a given (monotone) Lagrangian submanifold $L$; more precisely, in a specific and most basic invariant called the \emph {Landau-Ginzburg potential} and its important generalisation explained later.
Our main result links it to the wrapped symplectic geometry of open Liouville subdomains $M$ containing $L$, translating the Landau-Ginzburg potential into the language of \emph{symplectic cohomology} and \emph{closed-open string maps} on $M$. The result (Theorem~\ref{th:factor}) can be written as follows:
$$
W_L=\CO_L(\BS)
$$
where $W_L$ is the potential, $\BS\in SH^0(M)$ is a deformation element called the Borman-Sheridan class (explored by Seidel \cite{Sei16} in the different context of Calabi-Yau manifolds), and $\CO_L$ is the closed-open map. The precise statement appears later in the introduction. This theorem has a very clear mirror-symmetric interpretation which is the next thing we discuss, at a slightly informal level.

\subsection{Mirror symmetry context} Let $X$ be a (smooth compact) Fano variety of complex dimension $n$ equipped with a monotone K\"ahler symplectic form, and $\Sigma\subset X$ be a smooth anticanonical divisor. Roughly speaking, it is expected that the mirror $\check X$ of $X\setminus\Sigma$  is a variety which carries a proper map 
$$W\co \check X\to \C,$$ therefore its ring of regular functions is isomorphic to the polynomial ring in one variable $r$:
$$
\C[\check X]\cong \C[r],
$$
generated by the function $W$. The pair $(\check X,W)$ is called a \emph{Landau-Ginzburg model} and is mirror to the compact Fano variety $X$.

Suppose that $X\setminus \Sigma$ carries an SYZ fibration which produces the above mirror.
Consider a monotone Lagrangian torus $L\subset X$ which is exact in $X\setminus\Sigma$, and assume for the purpose of this subsection that it is an SYZ fibre. The space of $\C^*$-local systems on $L$ gives rise to an associated chart in the mirror:
$$
(\C^*)^n_L\subset \check X.
$$
Our results, 
Theorem~\ref{th:factor} and Proposition~\ref{prop:bs_antican}, can be interpreted as the following identity:
$$
W|_{(\C^*)^n_L}=W_L.
$$
In other words, start with the canonical regular function $W$ on $\check X$ (the LG model) and restrict it to a $(\C^*)^n_L$-chart; the result can be written as a Laurent polynomial. The claim is that this Laurent polynomial is the LG potential of the torus $L$. It means that the LG potentials of monotone Lagrangian tori in $X$ (which are exact in $X\setminus\Sigma$) are in fact different `avatars' of the same function $W$ defined on the whole mirror.

Furthermore, suppose
$M\subset X\setminus\Sigma$ is a Liouville subdomain, and assume that it is the preimage of a subset of the base of the SYZ fibration. In this setting, one expects an inclusion of the mirrors:
$$
\xymatrix{
	M\ar@{}[d]^(.25){}="a"^(.75){}="b" \ar@{^{(}->} "a";"b" \ar@{<-->}[r]&
	\check M \ar@{}[d]^(.25){}="a"^(.75){}="b" \ar@{^{(}->} "a";"b" 
	\\
	X\setminus\Sigma
	\ar@{<-->}[r] 
	&
	\check X
}
$$
As a general prediction, rings of regular functions are mirror to degree zero symplectic cohomology. (This was proven by  Ganatra and Pomerleano \cite{GP17} for the complement to the anticanonical divisor itself, see Section~\ref{sec:higher_disk}.) So one expects:
$$
SH^0(M)\cong \C[\check M].
$$
These rings can be complicated (much bigger than $\C[r]$), but they carry a distinguished element, the restriction of $W$:
$$
W|_{\check M}\in \C[\check M].
$$
It is natural to ask what is the symplectic counterpart, or the mirror, of this element.
This is answered by Theorem~\ref{th:factor},
which can be summarised in the language of mirror symmetry as follows:
$$
\textit{The Borman-Sheridan class}\ \ \BS\in SH^0(M)\ \  \textit{is mirror to} \ \ W|_{\check M}.
$$
The actual scope of Theorem~\ref{th:factor} is  broader: $\Sigma$ can have any degree, $L$ is not required to be an SYZ fibre or even a torus, and $M$ can be an arbitrary subdomain.

\subsection{Overview, continued}

Following an idea that we learned from James~Pascaleff we  introduce, in a restricted setting, the \emph{higher disk potentials} of a monotone Lagrangian submanifold which is disjoint from a smooth anticanonical divisor, and establish a similar theorem about them, roughly reading:
$$
W_{L,k}=\CO_L(\D_k)
$$
where $W_{L,k}$  is the Maslov index~$2k$ potential and $\D_k\in SH^0(M)$ is a deformation class. The case $k=1$ corresponds to the previous theorem.

Our results provide a convenient tool in the study of symplectic cohomology, holomorphic disk counts, and the topology of Liouville domains. To demonstrate this, we explore several applications:
\begin{itemize}
	\item A theorem relating higher disk potentials to the product structure on the symplectic cohomology ring of the complement $X\setminus \Sigma$ of a smooth anticanonical divisor $\Sigma$.
	
	This is of special interest in view of a conjecture of Gross and Siebert, cf.~\cite{GS16}. According to it, the symplectic cohomology product  can also be expressed in terms of closed-string log Gromov-Witten invariants of the pair $(X,\Sigma)$. Combined with this conjecture, our result would provide interesting identities between open- and closed-string GW theories of $X$ in line with the general intuition that specific combinations of holomorphic disks can be glued to holomorphic spheres.	
		
\item An alternative proof of a general form of the wall-crossing formula for Lagrangian mutations due to Pascaleff and the author \cite{PT17}.

\item A connection between seemingly distant properties of a Liouville domain $M$, under certain additional assumptions: the existence of a Fano compactification, the existence of an exact Lagrangian torus inside, the finite-dimensionality of $SH^0(M)$, and split-generation of $\Fuk(M)$ by simply-connected Lagrangians. Two sample outputs appear below.

\item We prove that Vianna's exact tori in del Pezzo surfaces minus an anticanonical divisor are not split-generated by spheres, which extends a result of  Keating \cite{Ke15}.
\end{itemize}

The proof strategies developed here are re-usable in different settings. Roughly speaking, they recast a stretching procedure for holomorphic curves, which would  typically be performed in the framework of Symplectic Field Theory \cite{CompSFT03}, within the world of symplectic cohomology. A major benefit is that it makes the moduli spaces unproblematically regular; and an equally important feature is that it gives  access to the algebraic structures like the closed-open maps. While we employ the Hamiltonian stretching procedure which by itself is entirely standard (it is used in classical Floer theory), the main content of the proofs lies in analysing the broken curves to show that, in some sense, they behave analogously to what one would expect under SFT stretching. This idea is implemented in the specific setup of Landau-Ginzburg potentials, but has a wider outlook.

\subsection{Main results}
Suppose $X$ is a monotone symplectic manifold, and $L\subset X$ is a monotone Lagrangian submanifold. Unless otherwise stated, we shall assume that $X$ is closed; and all Lagrangian submanifolds are closed and carry a fixed orientation and spin structure. We begin with a brief reminder of the Landau-Ginzburg potential of $L$ and its relation to local systems.

The simplest enumerative geometry problem relative to $L$ is to count holomorphic disks with boundary on $L$ which are of Maslov index~2, and whose boundary passes through a specified point on $L$.  The answer to this problem can be packaged into a generating function called the {\it Landau-Ginzburg superpotential}, or simply the {\it potential}. It is a Laurent polynomial 
$$W_L\in\C[H_1(L;\Z)]\cong \C[x_1^{\pm 1},\ldots, x_m^{\pm 1}]$$ 
where $m=\dim H_1(L;\C)$. When $L$ is monotone, Maslov index~2 disks have minimal positive symplectic area, so the potential is invariant under the choice of a tame almost complex structure and of Hamiltonian isotopies of $L$. See Section~\ref{sec:prelim} for a more extended reminder.

Let $\rho$ be a (rank one) $\C^*$-local system on $L$, and denote the pair $\LL=(L,\rho)$.
One can view the superpotential as a function on the space of $\C^*$-local systems, meaning that one can evaluate $W_L(\rho)\in \C$ to a complex number: it counts the same holomophic disks as described above, but their count is  weighted using the monodromies of $\rho$ along the boundaries of the disks, and the result is  a number.
We denote $$
\mm^0_\LL=W_L(\rho)\cdot 1_L\in HF^0(\LL,\LL)
$$
where $1_L$ is the Floer cohomology unit. (This is the \emph{curvature} of $\LL$ in the monotone Fukaya category of $X$.)

We are ready to state our main theorem; we shall use the notion of a Donaldson hypersurface which is reminded in Section~\ref{sec:prelim}, and a technical notion of a grading-compatible embedding which is defined after the statement. The theorem roughly asserts that the symplectic cohomology of any (nice) Liouville domain $M\subset X$ lying away from a Donaldson hypersurface $\Sigma\subset X$ has a canonical deformation class, the \emph{Borman-Sheridan class}, with the following property. For any 
monotone Lagrangian submanifold $L\subset X$ which happens to be contained in $M$ and becomes exact therein, the potential of $L$ can be computed by applying the closed-open map to this deformation class.

\begin{theorem}
	\label{th:factor}
	Let $X$ be a monotone symplectic manifold, and $\Sigma\subset X$ be a Donaldson hypersurface dual to $dc_1(X)$ for some $d\in\N$.  
	Fix a Liouville subdomain $M\subset X\setminus\Sigma$ such that $c_1(M)=0\in H^2(M;\Z)$ and the embedding is grading-compatible. There exists an element $\BS\in SH^0(M)$ which is called the Borman-Sheridan class, with the following property.
	
	Consider any monotone Lagrangian submanifold $L\subset X$ such that $L$ is contained in $M$ and is exact in $X\setminus \Sigma$ (automatically, $L$ is also exact in $M$). Let $W_L$ be the superpotential of $L$ computed inside $X$. Take a local system $\rho$ on $L$ and denote $\LL=(L,\rho)$. Then the image of $\BS$ under the closed-open map to $\LL$ equals $d\cdot  \mm^0_\LL=d\cdot  W_L(\rho)\cdot 1_L$:
	\begin{equation}
	\label{eq:diagram_M_and_CO}
	\xymatrix@R=12pt{
		\BS
		\ar@{|->}^-{}[r]
		\ar@{}|{\rotatebox[origin=c]{-90}{$\in$}}[d]
		&
		d\cdot \mm^0_\LL
		\ar@{}|{\rotatebox[origin=c]{-90}{$\in$}}[d]
		\\
		SH^0(M)\ar^-{\CO}[r]
		&
		HF^0_M(\LL,\LL)\\
	}
	\end{equation}
	We point out that the Floer cohomology $HF^0_M(\LL,\LL)\cong H^0(L)=\C\cdot 1_{L}$ is computed inside $M$.
	The Borman-Sheridan class depends on $M$ and its Liouville embedding into $X\setminus\Sigma$, but not on $L\subset M$.
\end{theorem}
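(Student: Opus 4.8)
The plan is to introduce the Borman--Sheridan class by a Floer count adapted to the hypersurface $\Sigma$, and then to evaluate $\CO_L(\BS)$ by degenerating, through a Hamiltonian stretching procedure, the very Maslov index~$2$ disks that define $W_L$. First I would fix the geometric data: since $\Sigma$ is a Donaldson hypersurface dual to $d c_1(X)$, a neighbourhood $\nu(\Sigma)$ is symplectomorphic to a disk subbundle of its normal bundle, whose contact boundary carries a Reeb flow by the circle fibres; $X\setminus\Sigma$ then carries a completed Liouville end, and $M$ is a subdomain inside it. I would define $\BS\in SH^0(M)$ via a Hamiltonian on the completion $\hat M$ which is linear of small slope at infinity and whose low-action generators correspond to Reeb orbits encircling $\Sigma$ with the appropriate multiplicity; among these there is a distinguished Floer cycle --- dual, in a neighbourhood of $\Sigma$, to the pushoff of $\Sigma$ --- and $\BS$ is its class. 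An index computation, using $c_1(M)=0$ and grading-compatibility of the embedding (so that $SH^*(M)$ is $\Z$-graded), places this class in degree $0$; equivalently one can present $\BS$ as the image of the point class under a PSS-type morphism counting holomorphic spheres in $X$ meeting $\Sigma$. By construction $\BS$ refers only to $M\hookrightarrow X\setminus\Sigma$ and not to $L$.

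Next I would unwind $\CO_L(\BS)$ at the chain level: it counts rigid holomorphic disks $u\co(D,\partial D)\to(X,L)$ carrying one interior marked point at which the chosen representative of $\BS$ is inserted, the boundary being weighted by the holonomy of $\rho$. The heart of the argument is to compute this count using an almost complex structure and Hamiltonian stretched along a long neck inserted around the contact hypersurface $\partial\nu(\Sigma)$ (equivalently, around $\partial M$). Since $\CO_L$ is a chain map and the resulting count is deformation-invariant, in the limit it equals the count of fully broken configurations. I would then prove that each broken configuration, after capping off, is precisely a Maslov index~$2$ holomorphic disk $\bar u$ in $X$ with boundary on $L$ through the prescribed point, together with holomorphic pieces in $\nu(\Sigma)$ recording its intersection with $\Sigma$ --- and conversely that each such $\bar u$ arises. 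Because $L$ is disjoint from $\Sigma$, a Maslov~$2$ disk satisfies $[\bar u]\cdot\Sigma=d$, so it meets $\Sigma$ with total multiplicity $d$; tracking how this multiplicity is distributed among the near-$\Sigma$ pieces yields an overall factor $d$ in the algebraic count. The holonomy weight of the disk part of a broken configuration is exactly the weight of $\bar u$ in $W_L(\rho)$, so summing gives $d\cdot W_L(\rho)$. Finally, exactness of $L$ in $M$ rules out all disk bubbling and identifies $HF^0_M(\LL,\LL)\cong H^0(L)=\C\cdot 1_L$, so the output is forced into the unit and $\CO_L(\BS)=d\cdot\mm^0_\LL$ as claimed.

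The main obstacle is the analysis of the degeneration: one must show that the stretched moduli spaces are regular --- the advertised benefit of using a Hamiltonian, rather than genuine SFT, stretching --- and that the compactified limit contains exactly the broken strata described above and nothing else. Concretely, this means ruling out sphere bubbling in $X$ that does not carry the intersection with $\Sigma$, controlling energy so that no spurious breaking occurs along other Reeb orbits, and --- most delicately --- carrying out the multiplicity bookkeeping near $\Sigma$ (for disks tangent to $\Sigma$, or meeting it at fewer than $d$ points) so that the contribution is always $d$ times that of $\bar u$. Establishing these gluing and compactness statements, so that the broken curves behave as they would under SFT stretching while keeping the closed-open map algebraically accessible, is where the real work of the proof lies.
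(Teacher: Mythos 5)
Your strategy---Hamiltonian domain-stretching around a contact hypersurface to degenerate the Maslov index~$2$ disks into a broken configuration, with the piece carrying the Lagrangian boundary computing $\CO$ and the remainder defining $\BS$---is indeed the paper's strategy. But there are two substantive issues, the first of which is a genuine gap.

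\textbf{The definition of $\BS$ does not work for general $M$.} You propose to define $\BS$ first, independently of the degeneration, using ``Reeb orbits encircling $\Sigma$'' and a ``distinguished Floer cycle dual to the pushoff of $\Sigma$'', or alternatively a PSS image. This description only makes sense when $\partial M$ is the boundary of a tubular neighbourhood of $\Sigma$, i.e.\ essentially when $M=X\setminus\Sigma$ (and even then one needs $d=1$, see Remark~\ref{rem:donaldson}). For an arbitrary Liouville subdomain $M\subset X\setminus\Sigma$---say a Weinstein neighbourhood of $L$ itself---the Reeb orbits of $\partial M$ have nothing to do with $\Sigma$, and there is no obvious ``pushoff of $\Sigma$'' living in $SH^0(M)$. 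The actual proof resolves this by \emph{not} trying to define $\BS$ in advance: it defines $\BS_l$ as the chain-level count of the right-hand pieces of the broken curves (the (b)-curves), which are half-planes solving an $\mathcal{S}$-shaped Floer equation with an interior incidence condition at $\Sigma$ and asymptotic to orbits of type \I\ or \II\ lying \emph{inside} $M$. Proposition~\ref{prop:bs_def} then shows this chain is closed in a suitable restricted subcomplex isomorphic to $CF^*(\hat H_l)$ and is compatible with continuation maps. The definition and the identity $\CO(\BS)=d\,W_L(\rho)\cdot 1_L$ are proved simultaneously; they cannot be cleanly separated as your proposal suggests. Relatedly, the neck you stretch along is $\partial M$, not $\partial\nu(\Sigma)$; these are not equivalent and the distinction matters precisely because $M$ is arbitrary.

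\textbf{The factor $d$ is handled differently, and more cleanly.} You propose to extract $d$ by ``tracking how the intersection multiplicity is distributed among the near-$\Sigma$ pieces'', which would require a rather delicate accounting of tangencies and multiple intersection points inside the limit. The paper sidesteps this at the outset: before any stretching, it replaces $\M_0$ (Maslov~$2$ disks through $\pt\in L$) by the moduli space $\M$ of \emph{parametrised} disks with $u(1)=\pt$ and an additional interior marked point $u(0)\in\Sigma$; the forgetful map $\M\to\M_0$ has degree $d$ because each Maslov~$2$ disk meets $\Sigma$ at exactly $d$ points counted with sign. All subsequent stretching is applied to $\M$, which already carries the marked point $u(0)\in\Sigma$, so the intersection multiplicity bookkeeping you worry about never arises. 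If you keep your approach, you would in addition need to reconcile two inequivalent pictures of the closed-open map (insertion at an interior marked point versus a cylindrical Hamiltonian asymptotic), which introduces another layer of comparison. Finally, you correctly flag that the real work is ruling out spurious breaking; the paper's mechanism for this is a combination of positivity of intersections with $\Sigma$ (including at constant orbits on $\Sigma$, using Seidel's Lemma~\ref{lem:pos_inter_asympt}), action estimates exploiting the $\mathcal{S}$-shape of the Hamiltonian, Bourgeois--Oancea asymptotics, and the no-escape lemma---not merely monotonicity or regularity---and omitting these leaves the argument incomplete at exactly the place the theorem's content lives.
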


The name for the Borman-Sheridan originates from the ongoing work \cite{BS16}; it has also  recently appeared in the work of Seidel \cite{Sei16} in the Calabi-Yau setup. We  present our version of the definition during the proof: specifically, in Proposition~\ref{prop:bs_def}.
In Section~\ref{sec:liouv} we collect several   applications of the result to the symplectic topology of Liouville domains; they were mentioned above.

\begin{remark}
	\label{rmk:CO_BS}
The theorem should be compared with the lemma of Auroux, Kontsevich and Seidel \cite[Proposition~6.8]{Au07}. If $\CO_X\co QH^*(X)\to HF^*(\LL,\LL)$ is the closed-open map computed in $X$, the lemma says that $\CO_X([\Sigma])=d\cdot \mm^0_\LL$. Theorem~\ref{th:factor} factors this relation through symplectic cohomology.
\end{remark}

\begin{remark}
	\label{rem:donaldson}
	If $\Sigma$
 is anticanonical ($d=1$),
 it is allowable to take $M=X\setminus\Sigma$ in Theorem~\ref{th:factor}. In this important case, one can compute the Borman-Sheridan class explicitly: see Proposition~\ref{prop:bs_antican} below.  Theorem~\ref{th:factor} need not apply to $M=X\setminus \Sigma$ 
	if $d>1$, because $c_1(X\setminus\Sigma;\Z)$ is torsion but may be non-zero in this case. 
\end{remark}

\begin{remark}
	\label{rmk:don_div}
By a version of the Donaldson or the Auroux-Gayet-Mohsen theorem \cite{AGM01}, \cite[Theorem~3.6]{CW17}, given a monotone Lagrangian submanifold $L\subset X$, one can find a Donaldson hypersurface $\Sigma$ away from $L$ and such that $L\subset X\setminus \Sigma$ is exact,  bringing us closer to the setup of Theorem~\ref{th:factor}. For example, in the K\"ahler case when $\Sigma$ is a complex divisor, one takes the K\" ahler form on $X\setminus \Sigma$ to be $-dd^c\log\|s\|$, for a section $s$ of $\O(\Sigma)$ such that $\Sigma=s^{-1}(0)$, and we take $-d^c\log\|s\|$ to be the natural primitive 1-form on $X\setminus \Sigma$, with respect to which $L$ has to be exact. In the general symplectic case, one uses the same form, but $s$ is now almost-holomorphic. It is explained in \cite{CW17} how to choose $\Sigma$ so that $L$ becomes exact in the complement, in the general symplectic case. However, the degree of this $\Sigma$ may in  general be large; see  also the discussion in~\cite{PT17}.
\end{remark}

\begin{remark}
Suppose
$$M_1\subset M_2\subset X\setminus\Sigma$$ are nested embeddings of Liouville domains, and consider the Viterbo map
$$
\mathrm{Vit}\co SH^*(M_2)\to SH^*(M_1).
$$
Using the tools developed in the proof of Theorem~\ref{th:factor}, it easy to argue that the Viterbo map respects the Borman-Sheridan classes. In particular, if $\Sigma$ is anticanonical, the Borman-Sheridan class of a Liouville subdomain $M\subset X\setminus \Sigma$ is the Viterbo map image of the Borman-Sheridan class of $X\setminus\Sigma$ itself, which is determined in Proposition~\ref{prop:bs_antican}.

For example, suppose that $M_1\cong T^*L$ is a Weinstein neighbourhood of a Lagrangian torus satisfying the conditions of Theorem~\ref{th:factor}, and $M_2=M$ is chosen as in Theorem~\ref{th:factor}. Then $SH^*(M_1)\cong \C[H_1(L;\Z)]\cong \C[x_1^{\pm 1},\ldots,x_m^{\pm 1}]$, and applying the composition
$$
SH^0(M)\xrightarrow{\mathrm{Vit}} SH^0(M_1)\cong \C[x_1^{\pm 1},\ldots,x_m^{\pm 1}]
$$
to $\BS$ computes $d\cdot W_L$ as a Laurent polynomial. An analogous statement holds for a Lagrangian $L$ of general topology, with an extra detail which we will encounter in the beginning of Section~\ref{sec:appl}. This way, Theorem~\ref{th:factor} admits an equivalent reformulation using Viterbo maps instead of the closed-open maps.
\end{remark}

Let us explain the grading conventions that are being used. The Floer cohomology $HF^*(\LL,\LL)$ is graded in the way singular cohomology is, with the unit in degree 0. Our definition of symplectic cohomology and its grading follows e.g.~the conventions of Ritter~\cite{Ri13}: the unit has degree 0, the closed-open maps have degree 0, and the Viterbo isomorphism \cite{Vi96,ASc06,ASc10,Ab15} reads
\begin{equation}
\label{eq:viterbo}
SH^*(T^*L)\cong H_{n-*}(\L L)
\end{equation}
where $\L L$ is the free loop space of a spin manifold $L$. Since Theorem~\ref{th:factor} assumes that $c_1(M)=0$, $SH^*(M)$ is $\Z$-graded. 

We come to the definition of what it means for $M\subset X\setminus\Sigma$ to be \emph{grading-compatible}. Let $d$ be the degree of $\Sigma$ so that $[\Sigma]$ is Poincar\'e dual to $dc_1(X)$. Then the $d$th power of the canonical bundle $(K_{X\setminus\Sigma})^d$ has a natural trivialisation $\eta$. If $\Sigma$ is given as the zero-set of a section $s$ of $K_X^{-d}$, then $\eta$ is the restriction of $s^{-1}$. Following \cite{PT17}, one says that $M\subset X\setminus\Sigma$ is grading-compatible if $\eta$ admits a $d$th root over $M$; that root provides a trivialisation of $K_M$ which is used to grade $SH^*(M)$. Grading-compatibility is a mild topological condition which is equivalent to the fact that $[\Sigma]$ is divisible by $d$ in $H_{2n-2}(X\setminus M;\Z)$. It is satisfied in all interesting examples, in particular when $\Sigma$ is anticanonical; see \cite[Section~2.2, Proposition~2.5]{PT17} for further discussion.

In Section~\ref{sec:antican}, we provide a version of Theorem~\ref{th:factor} for partial compactifications, or equivalently for normal crossings divisors instead of smooth ones. A rough sketch appears below.

\begin{theorem}[=Theorem~\ref{th:factor_red}]
When $L$ is disjoint from a normal crossings anticanonical divisor $\Sigma=\cup_{i\in I} \Sigma_i$ in a compact Fano variety $Y$, 
there holds a version of Theorem~\ref{th:factor} concerning the potential of $L$ in the non-compact manifold $X=Y\setminus\cup_{i\in J}\Sigma_i$, for any subset $J\subset I$.
\end{theorem}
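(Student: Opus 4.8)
The plan is to run the proof of Theorem~\ref{th:factor} one irreducible component of the anticanonical divisor at a time. Fix $J\subset I$, write $X=Y\setminus\bigcup_{i\in J}\Sigma_i$, and let $\Sigma^\circ=\bigcup_{i\in I\setminus J}\Sigma_i$ be the (in general non-smooth) divisor that remains inside $X$, so that $X\setminus\Sigma^\circ=Y\setminus\Sigma$. Since $\Sigma$ is anticanonical, $K_{Y\setminus\Sigma}$ is holomorphically trivial, so any Liouville subdomain $M\subset Y\setminus\Sigma$ containing $L$ automatically satisfies $c_1(M)=0$ and is grading-compatible, exactly as in Remark~\ref{rem:donaldson}. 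The potential $W_L$ computed in $X$ counts Maslov index~$2$ disks with boundary on $L$ that avoid $\bigcup_{i\in J}\Sigma_i$; by monotonicity in $Y$, exactness of $L$ in $Y\setminus\Sigma$, and positivity of intersection, every such disk meets $\Sigma^\circ$ transversally at one point of one component $\Sigma_i$, $i\in I\setminus J$. A standard generic-path argument --- the intersection point of a $1$-parameter family of disks sweeps out a curve, which misses the codimension~$4$ loci $\Sigma_i\cap\Sigma_j$ --- shows that the count $W_L^{(i)}$ of disks hitting a given $\Sigma_i$ is well defined, and $W_L=\sum_{i\in I\setminus J}W_L^{(i)}$.

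First I would define $\BS_X\in SH^0(M)$ by running the construction of the Borman--Sheridan class of Proposition~\ref{prop:bs_def}, relative to the embedding $M\subset Y\setminus\Sigma$, but with the deforming Hamiltonian taken to be a sum $H=\sum_{i\in I\setminus J}H_i$, with $H_i$ supported in a small neighbourhood of $\Sigma_i$ and growing towards it. These neighbourhoods overlap only over the double loci $\Sigma_i\cap\Sigma_j$, which for index reasons no chain-level configuration entering the construction can reach; consequently the defining cycle splits as a sum of contributions localised near the individual $\Sigma_i$, each separately closed, yielding classes $\D_i\in SH^0(M)$ with $\BS_X=\sum_{i\in I\setminus J}\D_i$. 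For $J=\emptyset$ this $\BS_X$ is the sought generalisation of the Borman--Sheridan class of Theorem~\ref{th:factor} (equivalently of Proposition~\ref{prop:bs_antican}) to a normal crossings $\Sigma$; for general $J$ it is compatible with the Viterbo maps, along the lines of the discussion following Theorem~\ref{th:factor}. Grading-compatibility of $M$ is what places each $\D_i$, hence $\BS_X$, in degree~$0$.

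Next I would repeat the Hamiltonian-stretching and broken-curve analysis behind Theorem~\ref{th:factor}, now performed near $\Sigma^\circ$. Stretching a point-constrained Maslov~$2$ disk in $X$ breaks it into a piece lying in $M$ with boundary on $L$, together with a single ``cap'' that crosses $\Sigma^\circ$ once; because the total tangency to $\Sigma$ equals $1$, the cap cannot approach a double locus $\Sigma_i\cap\Sigma_j$, so it stays in a neighbourhood of one component $\Sigma_i$ and is enumerated, with signs and local-system weights, exactly by $\D_i$. Running the $\CO_L$-side of the same broken-curve count and summing over $i\in I\setminus J$ then gives
\begin{equation*}
\CO_L(\BS_X)=\sum_{i\in I\setminus J}\CO_L(\D_i)=\sum_{i\in I\setminus J}W_L^{(i)}(\rho)\cdot 1_L=W_L(\rho)\cdot 1_L,
\end{equation*}
which is the promised analogue of Theorem~\ref{th:factor}, with $d=1$ since $\Sigma$ is anticanonical.

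The hard part will be the compactness and transversality bookkeeping forced by the non-compactness of $X$ and the singularity of $\Sigma^\circ$. Two points need care. First, one must choose the almost complex structure near the removed end $\bigcup_{i\in J}\Sigma_i$ of $X$ so that all relevant moduli spaces of (possibly broken) curves obey a maximum principle and remain in a fixed compact set --- this is where exactness of $L$ in $Y\setminus\Sigma$ and monotonicity enter --- and so that these spaces are cut out transversally, as in the compact case. Second, one must verify that the strata $\Sigma_i\cap\Sigma_j$ are genuinely invisible to every index~$0$ configuration occurring in the definitions of $\D_i$, $\CO_L$, and the stretching homotopies; for the Maslov~$2$ potential this follows from positivity of intersections, but it must be checked for the auxiliary continuation and homotopy solutions as well. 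Granting these points, every remaining step is the componentwise echo of the proof of Theorem~\ref{th:factor}.
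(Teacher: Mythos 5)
Your proposal takes a genuinely different route from the paper, and the difference is not just cosmetic: the paper's proof avoids precisely the ``hard part'' you identify and defer.

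The paper's strategy is to replace the normal crossings divisor $\Sigma$ by its \emph{smoothing} $\tilde\Sigma$, supported in a neighbourhood $U(\Sigma)$. One deforms the Liouville structure so that $M\subset Y\setminus\Sigma\subset Y\setminus\tilde\Sigma$ is a Liouville subdomain, and then one literally re-runs the proof of Theorem~\ref{th:factor} for $L\subset M\subset Y\setminus\tilde\Sigma$ --- a situation involving only a smooth divisor, where all of the confinement, positivity-of-intersection, and transversality statements already established apply verbatim. The only new ingredient is a topological filter: by Lemma~\ref{lem:intersect_part}, the disks that contribute to $W_{L,X}$ are exactly those whose class $A\in H_2(Y,L)$ satisfies $A\cdot[\Sigma_i]=0$ for $i\in J$; after domain-stretching, the (a)-curve lies in $M\subset Y\setminus\Sigma$, so the whole constraint is carried by the (b)-curve, and $\BS_{M,X}$ is defined by counting only those (b)-curves satisfying this homological condition. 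No curve ever comes near a double locus $\Sigma_i\cap\Sigma_j$, because the singular $\Sigma$ has been traded for the smooth $\tilde\Sigma$ before any Floer-theoretic analysis begins.

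By contrast, you propose to run the domain-stretching directly near the singular union $\Sigma^\circ=\bigcup_{i\in I\setminus J}\Sigma_i$, with a sum of Hamiltonians $H_i$ localised near the individual components and an argument that ``index reasons'' keep everything away from $\Sigma_i\cap\Sigma_j$. This is exactly where your write-up stops short. The positivity-of-intersection lemmas (Lemmas~\ref{lem:pos_inter} and~\ref{lem:pos_inter_asympt}), the no-escape lemma, and the classification of limiting orbits into types \textsc{i}--\textsc{iv} are all set up for a single smooth hypersurface with an $S^1$-periodic normal Reeb flow; near a normal crossing these tools break down (the Reeb dynamics on the boundary of a neighbourhood of a normal crossings divisor is no longer circle-periodic, the splitting chart of Lemma~\ref{lem:pos_inter_asympt} does not exist, and the decomposition of orbits into types \textsc{i}--\textsc{iv} needs to be redone). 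Declaring the double loci ``invisible for index reasons'' does not discharge these obligations: one still needs a compactness statement ensuring the stretched curves do not accumulate there, and that argument would have to be built from scratch. Your per-component decomposition $\BS_X=\sum_i\D_i$ is an attractive extra structure, but establishing it is strictly more work than the theorem requires. The smoothing trick is the missing idea: it sidesteps the singular divisor entirely and reduces the statement to a homological bookkeeping exercise on top of the already-proved Theorem~\ref{th:factor}.
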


\begin{remark}
The Borman-Sheridan classes in Theorem~\ref{th:factor_red} are essentially the first-order deformation classes from the work of Sheridan \cite{She17}.
\end{remark}

In Section~\ref{sec:higher_disk}, we introduce  \emph{higher disk potentials} 
$$W_{L,k}\in\C[x_1^{\pm 1},\ldots, x_m^{\pm 1}]$$
which, roughly speaking, count Maslov index $2k$ disks with boundary on $L$, passing through a specified point of $L$,  intersecting the given anticanonical divisor $\Sigma$ at a single point, and this intersection happens with order of tangency $k$, i.e.~intersection multiplicity $k$. 
The higher disk potentials depend on the choice of $\Sigma$ (at least, we do not show that they do not).

We show in Section~\ref{sec:higher_disk} that higher disk potentials can be used to compute the structure constants of the symplectic cohomology ring $SH^0(X\setminus \Sigma)$ with respect to its \emph{canonical basis} formed by (perturbations of) the periodic orbits of the standard $S^1$-periodic Reeb flow around $\Sigma$. We postpone a detailed discussion to Section~\ref{sec:higher_disk}, and now mention a sketch of the main statement:

\begin{theorem}[$=$Corollary~\ref{cor:struc_coef}]
It holds that	
$$
(W_L)^k=W_{L,k}+\sum_{0\le i\le k-1}c_{i,k}W_{L,i},
$$
where $c_{i,k}$ are the structure coefficients computing the product on $SH^0(X\setminus\Sigma)$ with respect to the canonical basis.
\end{theorem}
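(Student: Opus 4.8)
\textit{Proof proposal.} The plan is to deduce this from the higher-disk analogue of Theorem~\ref{th:factor} --- the identity $W_{L,k}=\CO_L(\D_k)$ established in Section~\ref{sec:higher_disk}, with $\D_1=\BS$ --- together with two structural facts about $SH^0(X\setminus\Sigma)$ (we take $M=X\setminus\Sigma$, which is allowed by Remark~\ref{rem:donaldson} since $\Sigma$ is anticanonical, so $d=1$): that $\CO_L\co SH^0(X\setminus\Sigma)\to HF^0_M(\LL,\LL)\cong\C\cdot 1_L$ is a unital ring homomorphism (equivalently, after passing to a Weinstein neighbourhood $T^*L$, that the Viterbo map $\mathrm{Vit}\co SH^0(X\setminus\Sigma)\to SH^0(T^*L)\cong\C[x_1^{\pm1},\dots,x_m^{\pm1}]$ is one), and that $\{\D_k\}_{k\ge 0}$, with $\D_0=1$ the unit, is the canonical basis of $SH^0(X\setminus\Sigma)$ formed by perturbations of the $k$-fold covers of the standard Reeb orbit encircling $\Sigma$.

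First I would record the multiplicative structure of this basis. The periodic orbits of the standard $S^1$-Reeb flow around $\Sigma$ are graded by winding number with $\Sigma$, and this grading is respected up to lower order by the pair-of-pants product: a Morse--Bott count of pseudoholomorphic pants near $\Sigma$ gives $\D_i\cdot\D_j=\D_{i+j}+\sum_{\ell<i+j}(\ast)\,\D_\ell$, with no terms of winding number exceeding $i+j$ and with leading coefficient exactly $1$ after the normalisation built into the definition of the canonical basis (this computation is part of setting up Section~\ref{sec:higher_disk}). Iterating with $i=j=1$ yields, in $SH^0(X\setminus\Sigma)$,
\[
\BS^{\,k}=\D_1^{\,k}=\D_k+\sum_{0\le i\le k-1}c_{i,k}\,\D_i ,
\]
which is precisely the defining relation for the structure coefficients $c_{i,k}$.

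Next I would apply the ring homomorphism to this identity. Since $\mathrm{Vit}$ is unital and multiplicative, $\mathrm{Vit}(\BS^{\,k})=\mathrm{Vit}(\BS)^{k}$; by Theorem~\ref{th:factor} (with $d=1$) and its higher counterpart $W_{L,k}=\CO_L(\D_k)$, one has $\mathrm{Vit}(\BS)=W_L$ and $\mathrm{Vit}(\D_i)=W_{L,i}$ as elements of $\C[x_1^{\pm1},\dots,x_m^{\pm1}]$. Here I would either invoke the Viterbo reformulation of the higher-disk theorem directly, or apply $\CO_L$ and evaluate against all local systems $\rho$ on $L$, using that $SH^0(T^*L)$ is the coordinate ring of the space of such $\rho$, to promote the evaluated identities $\CO_L(\D_i)=W_{L,i}(\rho)\cdot 1_L$ to identities of Laurent polynomials. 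Substituting into the displayed relation gives $(W_L)^k=W_{L,k}+\sum_{0\le i\le k-1}c_{i,k}W_{L,i}$, as claimed.

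The main obstacle is not this final bookkeeping but the inputs it rests on, which are the substance of Section~\ref{sec:higher_disk}: proving the higher-disk potential theorem $W_{L,k}=\CO_L(\D_k)$ (via the same Hamiltonian-stretching-plus-broken-curve analysis that proves Theorem~\ref{th:factor}), and pinning down the canonical basis and its multiplication table so that the product of two lowest-winding generators above a given level is the next multiple-cover orbit with coefficient exactly $1$ --- including arranging the grading and sign conventions so that no spurious constant appears. Granting these, the corollary is the purely formal consequence sketched above: apply a ring homomorphism to a defining relation.
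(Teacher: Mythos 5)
Your argument is correct and is essentially the route the paper takes: identify $\D_k=r^k$ via Proposition~\ref{prop:bs_antican}, use the defining relation $r^{*k}=r^k+\sum_{i<k}c_{i,k}r^i$, and push it through the ring map $\CO$ (or, equivalently, Viterbo, as the paper notes in a remark) using Theorem~\ref{th:factor_higher} and the multiplicativity recorded in Lemma~\ref{lem:W_powers}. The only minor excess is your general multiplication table for $\D_i\cdot\D_j$ --- the paper only needs (and only establishes, via the action/low-energy argument after Theorem~\ref{th:gp}) the leading-coefficient statement for powers of $r$, which is already the defining relation for the $c_{i,k}$.
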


According to the mentioned  conjecture of Gross and Siebert, the same structure constants can be expressed in terms of certain closed-string log Gromov-Witten invariants of $(X,\Sigma)$, cf.~\cite{GS16,ArThesis}.

\subsection{Proof idea}

\begin{figure}[h]
	\includegraphics{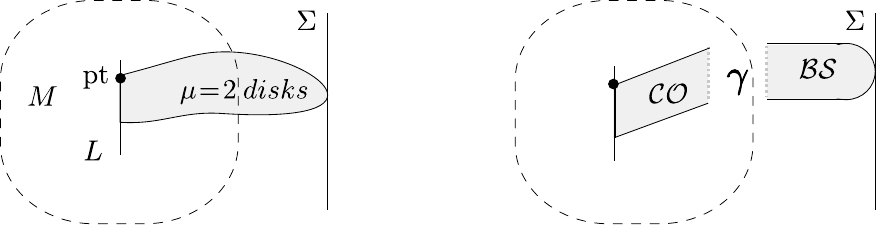}
	\caption{By ``domain-stretching'' the perturbation of the holomorphic equation, one makes holomorphic disks break this way in order to prove Theorem~\ref{th:factor}.}
	\label{fig:two_lags}
\end{figure}

Let us summarize the main aspects of the proof of Theorem~\ref{th:factor}. Assume that $L\subset M\subset X$ is a Lagrangian satisfying the conditions of Theorem~\ref{th:factor}.
Consider holomorphic Maslov index~2 disks in $X$ with boundary on $L$, satisfying a boundary point constraint. See Figure~\ref{fig:two_lags}, where we have depicted the Donaldson divisor $\Sigma$ which the disks will have to intersect.

In the proof of
Theorem~\ref{th:factor}, we perform a \emph{Hamiltonian domain-stretching} procedure upon the disks which makes them break into curves  shown in Figure~\ref{fig:two_lags}. The curves are broken along a periodic orbit of an \emph{\S-shaped} Hamiltonian which we explicitly choose. The count of the left parts of the broken curves in Figure~\ref{fig:two_lags} is precisely the closed-open map onto $L$.
It is crucial that the count of the right parts of the broken curves are independent on the Lagrangian $L$ we started with. Their count is what is taken to be the definition of the Borman-Sheridan class $\BS$.

\subsection*{Structure of the paper} 
In Section~\ref{sec:higher_disk} we discuss the higher disk potentials, a version of Theorem~\ref{th:factor} for them, and show that higher disk potentials can be used to compute structure constants of symplectic cohomology rings. 
In Section~\ref{sec:liouv}, we re-prove the wall-crossing formula for Lagrangian mutations due to Pascaleff and the author \cite{PT17}, and explore several applications to the existence of exact Lagrangian tori in Liouville domains, and split-generation by simply-connected Lagrangians.
In Section~\ref{sec:prelim}, we set up the preliminary material for the proof of Theorem~\ref{th:factor}.
In Section~\ref{sec:proofs} we prove Theorems~\ref{th:factor} and~\ref{th:factor_higher}. In Section~\ref{sec:antican} we compute the Borman-Sheridan class in the case when $M=X\setminus\Sigma$ and discuss what happens when $\Sigma$ is a normal crossings divisor, rather than a smooth one.

\subsection*{Related work} 
The recent works of Ganatra and Pomerleano \cite{GP17,GP18}, while they do not discuss Lagrangian enumerative geometry, use related methods and ideas. In order to get control on the broken curves arising in the proof of Theorem~\ref{th:factor}, we use  a collection of existing tools surrounding the theory of symplectic cohomology. Apart from the foundational material, these tools include confinement lemmas of Cieliebak and Oancea~\cite{CO18}, and a lemma about intersection multiplicity with a divisor at constant a periodic orbit by Seidel~\cite{Sei16}.
 Certain ingredients of our proof are also found in other recent papers on related subjects, for example: Diogo~\cite{DTh},  Gutt~\cite{Gu17}, Lazarev~\cite{La16}, Sylvan~\cite{Sy16}, and the work in preparation of Borman and Sheridan \cite{BS16}. 

\subsection*{Acknowledgements}This paper owes a lot to James Pascaleff; it uses several of his ideas and insights generously shared. I am thankful to Mohammed~Abouzaid, Denis~Auroux, Georgios Dimitroglou Rizell, Tobias~Ekholm, Sheel~Ganatra, Mark Gross, Yank\i\ Lekili, Paul~Seidel, Bernd Siebert, Ivan~Smith, Jack~Smith, Renato Vianna and the referee for useful  suggestions and interest. 

I am grateful to the following institutions for support  while this work was being developed:  Department of Pure Mathematics and Mathematical Statistics, University of Cambridge, and King's College, Cambridge (during my final PhD year); Mittag-Leffler Institute (during the Symplectic Geometry and Topology program, Fall 2015); Knut and Alice Wallenberg Foundation via the Geometry and Physics project grant; and the Simons Foundation via grant \#385573, Simons Collaboration on Homological Mirror Symmetry.

\section{Higher disk potentials and anticanonical divisor complements}
 \label{sec:higher_disk}
 \subsection{Higher disk potentials}
 A quick recollection of the usual Landau-Ginzburg potential is found in Section~\ref{sec:prelim}. 
 Assuming familiarity with it, we will now introduce higher disk potentials.
 
 Let $X$ be a closed monotone symplectic manifold, $L\subset X$ a monotone Lagrangian submanifold, $\Sigma\subset X$ a \emph{smooth} anticanonical divisor disjoint from $L$ and such that $L\subset \Sigma\setminus X$ is exact. Fix a point $p\in L$.
 Fix a tame almost complex structure $J$ preserving $\Sigma$, that is, $J(T_x\Sigma)=T_x\Sigma$ for each $x\in\Sigma$. 
 Consider a class $A\in H_2(X,L;\Z)$, a positive integer $k$ and define the moduli space
 $$
 \M(k,A)=\left\{
 \begin{array}{l}
 u\co (D^2,\bd D^2)\to (X,L),\ \bar \bd u=0, \ [u]=A,\\
 u(1)=p,\ u(0)\in \Sigma,\\
 u \text{ has a }k\text{-fold tangency to }\Sigma\text{ at }u(0).
 \end{array}
 \right\}
 $$
 The latter condition means that the local intersection number $u\cdot \Sigma$ at the point $u(0)$ equals $k$. For example, $k=1$ means transverse intersection. 

 To exclude multiply-covered disks, we assume that $J$ is domain-dependent, and the domain-dependence is restricted for convenience to both of: a neighbourhood of $1$ in $D$ and a neighbourhood of $p$ in $X$.
 
 \begin{remark}
 The homological intersection number of a Maslov index~$2k$ disk with $\Sigma$ equals $k$ by (\ref{eq:mu_and_intersec}) below. Due to positivity of intersections, Maslov index~$2k$ holomorphic disks having an order $k$ tangency to $\Sigma$ are precisely the ones that have a unique geometric intersection point with $\Sigma$.
 \end{remark}
 
  For $J$ generic outside a neighbourhood of $\Sigma$, $\M(k,A)$ is a manifold of dimension
 $$
 \dim \M(k,A)=\mu(A)-2k,
 $$
 see \cite[Lemma~6.7]{CM07}, \cite[Section~4.3]{GP16}.
 For a $\C^*$-local system $\rho$ on $L$, one would like to define the higher disk potential $W_{L,k}$ by
 $$
 W_{L,k}(\rho)=\sum_{A\, :\, \mu(A)=2k}\rho([\bd A])\cdot \#\M(k,A)\ \in\ \C,
 $$
 where $[\bd A]\in H_1(L;\Z)$ is the boundary of $A$,  $\rho([\bd A])\in\C^*$ is the value of the monodromy of $\rho$, and $\#\M(k,A)$ is the signed count of the points in the (oriented zero-dimensional) moduli space. Alternatively, fixing a basis for $H_1(L;\Z)/\mathrm{Torsion}$, one can package the same information into a Laurent polynomial
 $$
 W_{L,k}\in\C[x_1^{\pm 1},\ldots x_m^{\pm 1}]
 $$
 given by
 \begin{equation}
 \label{eq:higher_potential_def}
 W_{L,k}({\mathbf x})=\sum_{A\, :\, \mu(A)=2k} \mathbf{x}^{[\bd A]}\cdot \#\M(k,A),
 \end{equation}
 where  $\mathbf{x}^l=x_1^{l_1}\ldots x_m^{l_m}$ and $[\bd A]\in \Z^m=H_1(L;\Z)/\mathrm{Torsion}$.
 We have the following invariance lemma.

 \begin{lemma}
 	\label{lem:higher_bubble_sigma}
 The higher disk potential $W_{L,k}$ does not depend on the choice of $J$ preserving $\Sigma$, on $p\in L$, and is invariant under Hamiltonian isotopies of $L$ preserving $\Sigma$.
 \end{lemma}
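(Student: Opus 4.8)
The plan is to prove invariance by the standard cobordism argument for counts in zero-dimensional moduli spaces, adapted to the presence of a preserved divisor $\Sigma$. First I would observe that the count $\#\M(k,A)$ is only sensitive to data in the space of pairs $(J,p)$ where $J$ is a tame domain-dependent almost complex structure preserving $\Sigma$ and $p\in L$; this parameter space is nonempty, connected, and (with the prescribed domain-dependence localisations) large enough to achieve transversality, by the Cieliebak--Mohr argument \cite[Lemma~6.7]{CM07} together with \cite[Section~4.3]{GP16}. Given two admissible choices $(J_0,p_0)$ and $(J_1,p_1)$, pick a generic path $(J_t,p_t)$ between them inside this parameter space, and form the parametrised moduli space $\M(k,A;\{J_t,p_t\})$, which for a generic path is a smooth oriented $1$-manifold with boundary $\M(k,A;J_1,p_1)\sqcup(-\M(k,A;J_0,p_0))$. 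Identical reasoning handles the Hamiltonian isotopy statement: a Hamiltonian isotopy $\psi_t$ of $L$ preserving $\Sigma$ transports everything, so after relabeling it is again a path-of-data argument, now additionally dragging the point constraint $\psi_t(p)$ and pushing $J$ forward.

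The substance of the lemma is that this $1$-manifold is \emph{compact}, i.e.\ that no energy escapes and no boundary strata beyond the expected endpoints appear. Here monotonicity does the main work: Maslov index~$2k$ disks through a point have the minimal symplectic area in their ``tangency order $k$'' regime because, by \eqref{eq:mu_and_intersec}, area is proportional to Maslov index, and a disk that bubbles must split its index between components, each of which must have positive Maslov index (hence index $\geq 2$) since $L$ is monotone and oriented. Thus in the worst case a Maslov~$2k$ disk degenerates into a Maslov~$2k-2j$ disk attached to a Maslov~$2j$ disk or sphere bubble. I would rule these out by a dimension/constraint count: the principal component would then live in a negative-dimensional parametrised moduli space after accounting for the point constraint at $1$ and the order-$k$ tangency constraint at $0$, unless the tangency order is itself partitioned among the bubble components. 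The delicate point, and where I expect the main obstacle, is a \textbf{genericity-of-tangency} issue: one must ensure that for a generic path $(J_t,p_t)$ the order of tangency to $\Sigma$ never jumps to $>k$ along the path, and that tangency constraints distribute correctly across broken configurations. This is exactly the kind of statement controlled by Cieliebak--Mohr's analysis of the local structure of holomorphic curves tangent to a symplectic divisor \cite{CM07}: the stratum of curves with tangency $\geq k+1$ has real codimension $2$, hence is avoided by a generic $1$-parameter family, and a curve with a sphere or disk bubble carrying part of the tangency lies in a stratum of the expected codimension as well.

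Concretely the steps are: (1) fix $A$ with $\mu(A)=2k$ and set up the universal moduli space over the parameter space of $(J,p)$, invoking Cieliebak--Mohr/Ganatra--Pomerleano for transversality (using the restricted domain-dependence near $1\in D$ and near $p\in X$ to kill multiple covers while keeping $J$ integrable-near-$\Sigma$, as needed for the tangency formalism); (2) for a generic path, identify the parametrised moduli space as an oriented $1$-manifold and compute its boundary; (3) Gromov compactness: list the possible limit configurations — disk breaking, sphere bubbling, and tangency-order jumps — and show, by the monotonicity-driven area/index accounting plus the codimension count for the tangency strata, that all of these are of codimension $\geq 1$ in the path and hence do not contribute, so the only boundary points are the two fibers over the endpoints; (4) conclude $\#\M(k,A;J_0,p_0)=\#\M(k,A;J_1,p_1)$ for every $A$, and therefore the generating functions \eqref{eq:higher_potential_def} agree; (5) handle Hamiltonian isotopies of $L$ preserving $\Sigma$ by incorporating the isotopy into the path of data (pushing forward $J$ and moving $p$), which reduces to the already-established case. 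A final remark: one should note that since we only ever vary $J$ among structures preserving $\Sigma$ and only use isotopies of $L$ preserving $\Sigma$, the intersection pattern with $\Sigma$ is never destroyed, so positivity of intersections (as in the Remark preceding the lemma) keeps the ``single geometric intersection point of multiplicity $k$'' description stable along the whole cobordism.
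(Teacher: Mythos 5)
Your overall framework — parametrised moduli space over a path $(J_t,p_t)$, cobordism argument, energy/index accounting via monotonicity — is the same as the paper's, and your reduction of the Hamiltonian isotopy statement to invariance of $J$ and $p$ is what the paper does. But there is a genuine gap in the way you dispose of the bubbling.

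You treat all bubble configurations uniformly as living in strata of the expected codimension, citing Cieliebak--Mohnke for the tangency strata. That is fine for bubbles \emph{not} contained in $\Sigma$: there monotonicity makes the bubbling codimension $2$ and a generic path avoids it. The case you do not address, and which is the real content of the paper's proof, is sphere bubbles lying \emph{inside} $\Sigma$. Because $J_t$ is required to preserve $\Sigma$, such spheres are honest $J_t$-holomorphic spheres in $\Sigma$ and you cannot perturb $J$ to push them off; a blanket ``expected codimension'' appeal does not apply to them. The paper handles this case by a separate argument that crucially uses the hypothesis that $\Sigma$ is anticanonical. Writing $C\in H_2(\Sigma;\Z)$ for the class of the sphere components in $\Sigma$ and $B=A-C$, the identity $c_1(\Sigma)=0$ enters twice: first, $c_1(X)\cdot C = [\Sigma]\cdot C = d$, so that $\mu(B)=2k-2d$ and $[\Sigma]\cdot B = k-d$, which by positivity of intersections forces the remaining part of the configuration to carry a single tangency of order $k-d$, giving a disk in $\M(k-d,B)$ of non-positive virtual dimension, hence an evaluation chain of dimension $\le 1$ in $\Sigma$ over the family $J_t$; second, $c_1(\Sigma)\cdot C = 0$, so the spheres in $\Sigma$ sweep a subset of real codimension $4$ in $\Sigma$. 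Since $1+(2n-2-4)<2n-2$, the two sets miss each other for generic $J_t$, ruling out the bubble. Without invoking $c_1(\Sigma)=0$ in both places, the ``codimension $\ge 2$'' claim for these strata is not justified.

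A smaller point: the ``genericity-of-tangency'' worry you flag (tangency order spontaneously jumping above $k$ along the path) is not actually an issue for an irreducible curve in class $A$. By~\eqref{eq:mu_and_intersec} the total homological intersection is $k$, and positivity of intersections pins the single geometric intersection to have multiplicity exactly $k$ as long as the curve stays irreducible; the only way multiplicity can redistribute is through bubbling, which brings you back to the case above.
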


\begin{proof}
The space of tame almost complex structures preserving $\Sigma$ is clearly connected. Fix a generic 1-parametric family $J_t$ of such, interpolating between the two given endpoints.
 To show the independence on $J$, one has to prove that the disks in $\M(k,A)$ do not bubble within the family $J_t$.

 Disk bubbles cannot occur because such bubble configurations would have intersection number with $\Sigma$ greater than $k$. Indeed, one gets the contribution of at least $k$ to this intersection number from the main disk inheriting the tangency, and all other disk bubbles have strictly positive intersection number with $\Sigma$.
  Next, standard tools using monotonicity imply that any bubbling which does not involve sphere components in $\Sigma$ is a condimension~2 phenomemon, so does not occur in a generic family $J_t$. 
  
  Now suppose that a stable bubble involves sphere components in $\Sigma$. Let $C\in H_2(\Sigma;\Z)$ be the class of the union of those components, and $B=A-C\in H_2(X,L;\Z)$.
Because $\Sigma$ is anticanonical, $c_1(X)\cdot C=[\Sigma]\cdot C=d$ for some $d\in \Z$. Then $\mu(B)=2k-2d$ and $[\Sigma]\cdot B=k-d$. It follows by positivity of intersections that, if we remove the sphere components from the bubble, the remaining part of the stable curve (in class $B$) has a unique geometric intersection with $\Sigma$, and the curve component containing that intersection is a disk  in the moduli space $\M(k-d,B)$, which has non-positive virtual dimension for every fixed $t$.
 Evaluating at the intersection point for all $t$ produces a chain of dimension at most 1 in $\Sigma$.

Recall that $\Sigma$ is anticanonical, so $c_1(\Sigma)=0$.
In particular, $c_1(\Sigma)\cdot C=0$, so the sphere components in $\Sigma$ sweep a subset of codimension 4 in $\Sigma$ for every $t$, by the index formula and the regularity of the simple curves. Hence they do not intersect the 1-chain appearing above, for all $t$ (if the $J_t$ are chosen generically), ruling out the bubbling.

Finally, it is standard that the invariance under Hamiltonian isotopies reduces to the invariance under the choice of $J$.
\end{proof}

 \begin{theorem}
 	\label{th:factor_higher}
 	Let $X$ be a closed monotone symplectic manifold, $\Sigma\subset X$ a smooth anticanonical divisor, and
 	let $M\subset X\setminus\Sigma$ any Liouville subdomain. There exists a class $\D_k\in SH^0(M)$ such that the following holds. 
 	
 	Take any monotone Lagrangian submanifold $L\subset X$ such that $L$ is contained in $M$ and is exact in $X\setminus \Sigma$. Let $W_{L,k}$ be the higher disk potential of $L$ computed inside $X$. Take a local system $\rho$ on $L$ and denote $\LL=(L,\rho)$. 
 	Then the image of $\D_k$ under the closed-open map to $\LL$ equals $W_{L,k}(\rho)\cdot 1_L$:
 	\begin{equation}
 	\label{eq:diagram_M_and_CO_higher}
 	\xymatrix@R=12pt{
 		\D_k
 		\ar@{|->}^-{}[r]
 		\ar@{}|{\rotatebox[origin=c]{-90}{$\in$}}[d]
 		&
 	 W_{L,k}(\rho)\cdot 1_L
 		\ar@{}|{\rotatebox[origin=c]{-90}{$\in$}}[d]
 		\\
 		SH^0(M)\ar^-{\CO}[r]
 		&
 		HF^0_M(\LL,\LL)\\
 	}
 	\end{equation}
 \end{theorem}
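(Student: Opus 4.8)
The plan is to mirror the proof of Theorem~\ref{th:factor} (the case $k=1$), replacing the Borman-Sheridan class $\BS$ by a higher analogue $\D_k\in SH^0(M)$ whose definition encodes a tangency-of-order-$k$ condition at a divisor point rather than a simple intersection. First I would choose, exactly as in the $k=1$ case, an \S-shaped Hamiltonian $H$ on the Liouville domain $M$ (extended over $X\setminus\Sigma$) whose negative slopes near $\Sigma$ produce $S^1$-families of Reeb orbits encircling $\Sigma$, and whose periodic orbits sit at controlled levels. The new ingredient is that instead of picking out the chain-level class represented by orbits linking $\Sigma$ once, one picks out the class represented by orbits that wind $k$ times around $\Sigma$ (or, more precisely, the component of the Floer chain complex corresponding to winding number $k$). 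Seidel's lemma on intersection multiplicity with a divisor at a constant periodic orbit (cited in the Related Work section) is what pins down that winding number $k$ is the right combinatorial bookkeeping for $k$-fold tangency, and the confinement lemmas of Cieliebak--Oancea keep the broken curves away from $\Sigma$ except at the prescribed tangency point. I would define $\D_k$ as the count of the ``right halves'' of the broken configuration: holomorphic planes in (a completion built from) $X\setminus\Sigma$ asymptotic to such a $k$-winding orbit, constrained to meet $\Sigma$ once with multiplicity $k$; this count manifestly does not depend on $L$.

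Next, the core of the argument is the Hamiltonian domain-stretching applied to the moduli space $\M(k,A)$ of Maslov index $2k$ disks with a boundary point constraint and a $k$-fold tangency to $\Sigma$ (the space whose count is $W_{L,k}$ by \eqref{eq:higher_potential_def}). I would set up a one-parameter family of perturbed Cauchy--Riemann equations interpolating between the unperturbed disk equation defining $\M(k,A)$ and the stretched equation, and run the standard compactness-plus-gluing analysis. The claim is that in the stretched limit the disks break along a $k$-winding periodic orbit of $H$ into: a ``left part'' living near $L$ inside $M$, which by construction is precisely the Floer-theoretic datum computed by the closed-open map $\CO_L$ applied to the relevant chain; and a ``right part'' living in $X\setminus\Sigma$ meeting $\Sigma$ with multiplicity $k$, which is exactly (a representative of) $\D_k$. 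Assembling these across all $A$ with $\mu(A)=2k$ and weighting boundaries by the local system $\rho$ yields the identity $\CO_L(\D_k)=W_{L,k}(\rho)\cdot 1_L$. As in Lemma~\ref{lem:higher_bubble_sigma}, one must rule out sphere bubbles in $\Sigma$ and ordinary disk/sphere bubbles off $\Sigma$: the former are excluded using $c_1(\Sigma)=0$ (they sweep a codimension-$4$ set), and the latter by monotonicity, while the domain-dependence of $J$ near $1\in D$ and near $p\in X$ excludes multiple covers.

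The main obstacle I anticipate is the analytic bookkeeping of the $k$-fold tangency under stretching. For $k=1$ the breaking is along a simple linking orbit and the tangency condition is automatic from positivity of intersections; for general $k$ one must show that the stretched broken configuration distributes the intersection multiplicity $k$ correctly --- i.e.\ that the whole multiplicity concentrates on the single right-hand plane asymptotic to a $k$-winding orbit, and that no multiplicity leaks onto other components or gets split among several orbits of lower winding. This is where Seidel's intersection-multiplicity lemma and a careful asymptotic analysis of holomorphic curves near the constant orbit at $\Sigma$ do the real work; one also needs that the relevant moduli of right-hand planes is cut out transversally (here the domain-dependent $J$ near $\Sigma$, together with the regularity setup already used for $\M(k,A)$, should suffice) so that $\D_k$ is a well-defined cycle and the count is unambiguous. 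A secondary, more routine point is checking that $\D_k$ is independent of the auxiliary choices (the Hamiltonian $H$, the interpolating family, $J$) and that it lands in degree $0$ of $SH^*(M)$ --- the degree computation follows from the index formula $\dim\M(k,A)=\mu(A)-2k$ together with the grading conventions fixed after Theorem~\ref{th:factor}, since the closed-open map has degree $0$.
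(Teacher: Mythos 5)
The overall plan — domain-stretch the $k$-fold tangency moduli space $\M(k,A)$, break into (a)- and (b)-halves, define $\D_k$ from the (b)-side — is the same as the paper's. But your account of the breaking orbit and, consequently, your definition of $\D_k$ confuses the general statement with the special case $M=X\setminus\Sigma$, and this confusion propagates into the part you flag as the "main obstacle."

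In the paper's proof, the \S-shaped Hamiltonian $H_l$ has its interpolation region (the "\S" itself) along the contact hypersurface $\bd M$, not along a sphere bundle around $\Sigma$. The orbit $\gamma$ along which disks in $\M_{n,l}$ break is shown (by ruling out types~\III,~\IV a,~\IV b exactly as in the $k=1$ proof) to be a type~\I\ or~\II\ orbit: a constant orbit inside $M$ or a Reeb orbit of $\bd M$. For a general Liouville subdomain $M\subset X\setminus\Sigma$ such orbits have nothing to do with $\Sigma$, and there is no meaningful "winding number around $\Sigma$" attached to them. Hence $\D_k$ is \emph{not} the "component of the Floer chain complex corresponding to winding number $k$"; it is the chain $(\D_k)_l\in CF^0_\IandII(H_l)$ obtained by counting the output asymptotics of rigid (b)-curves $u\co\C\to X$ that satisfy the order-$k$ tangency condition at $u(0)\in\Sigma$ and have homological intersection $k$ with $\Sigma$, a sum of type~\I/\II\ generators that will generally \emph{not} be a single orbit class. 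The identification $\D_k=r^k$ with a $k$-fold Reeb orbit around $\Sigma$ is a separate, additional computation valid only for $M=X\setminus\Sigma$ — that is Proposition~\ref{prop:bs_antican}, proved by genuine SFT neck-stretching near $\Sigma$, which is a different operation from the Hamiltonian domain-stretching used here.

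This also means the worry about "multiplicity leaking among lower-winding orbits" is not the real issue. The tangency constraint is at the interior marked point $u(0)$, which always lands on the (b)-curve; the (b)-curve inherits the full order-$k$ tangency automatically. The substantive steps, as in Theorem~\ref{th:factor}, are: (i) excluding extra bubbles and Floer cylinders (Lemma~\ref{lem:a_b}); (ii) ruling out asymptotics of types~\III\ and~\IV\ via the energy/confinement arguments and Lemma~\ref{lem:pos_inter_asympt}; and (iii) showing the (a)-curve is disjoint from $\Sigma$ and confined to $M$. For (iii), the key numerology is that the (b)-curve already accounts for all $k=(A+B)\cdot[\Sigma]$ units of intersection, forcing $A\cdot[\Sigma]=0$ for the (a)-curve by positivity of intersections; Seidel's intersection-at-asymptotics lemma enters through (ii), not through any $k$-winding bookkeeping.
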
 
 
 Clearly, $W_{L,1}=W_L$ is the usual Landau-Ginzburg potential, and
one easily sees from the proof that $\D_1=\BS$. 
 
 \subsection{Symplectic cohomology of anticanonical divisor complements}
Let $X$ be a monotone symplectic manifold and $\Sigma\subset X$ a smooth Donaldson divisor of degree $d$. We remind that this means $[\Sigma]=dc_1(X)$ for $d\in\Z_{>0}$. The Reeb flow on the boundary-at-infinity of  $X\setminus \Sigma$ is $S^1$-periodic, and the space of its Reeb orbits is homeomorphic to $S\Sigma$, the unit normal bundle to $\Sigma$. One can choose a Hamiltonian perturbation in such a way that at chain level, the complex computing $SH^*(X\setminus\Sigma)$ has a fractional $\tfrac 1 d\Z$-grading and is isomorphic, as a graded vector space, to:
\begin{equation}
\label{eq:CH_chain_level}
CF^*(X\setminus\Sigma)\cong C^*(X\setminus\Sigma)\oplus (C^*(S\Sigma)\otimes r\C[r]),\quad |r|=2-2/d.
\end{equation}

See e.g.~\cite[(3.19)]{GP16}, cf.~\cite[Lemma~3.4]{DTh}, \cite{Sei08}.
Now suppose that $d=1$, that is, $\Sigma$ is anticanonical. Then at chain level,
\begin{equation}
\label{eq:CH_chain_level_0}
CF^0(X\setminus\Sigma)\cong\C[r],\quad CF^{i}(X\setminus\Sigma)=0\textit{ for }i<0.
\end{equation}

The theorem below is due to Ganatra and Pomerleano \cite[Theorem~5.31]{GP18}, compare \cite{DTh}.

\begin{theorem}
	\label{th:gp}
If $\Sigma$ is smooth anticanonical, then the symplectic cohomology differential on $CF^0(X\setminus\Sigma)$ from (\ref{eq:CH_chain_level}) vanishes. Consequently, as a vector space,
$$
SH^0(X\setminus\Sigma)=CF^0(X\setminus\Sigma)
$$
has the natural geometric basis $\{1,r,r^2,\ldots\}$
coming from (\ref{eq:CH_chain_level}), where we denote the element $1\otimes r^i$ appearing in the right summand of (\ref{eq:CH_chain_level}) simply by $r^i$.

Moreover, as an algebra, $SH^0(X\setminus\Sigma)$ is generated by $r$ and is abstractly isomorphic to the polynomial algebra in one variable.\qed
\end{theorem}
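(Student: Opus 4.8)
\emph{Proof strategy.} The plan is to establish the three assertions in order, with one common engine: the filtration of $CF^*(X\setminus\Sigma)$ by intersection multiplicity with $\Sigma$. I would set things up as in~(\ref{eq:CH_chain_level}), using an almost complex structure that preserves $\Sigma$ and the standard Hamiltonian wrapping around $\Sigma$, so that every generator of $CF^*(X\setminus\Sigma)$ carries a winding number $w\ge 0$, and — by Seidel's lemma on intersection multiplicity at a periodic orbit — a well-defined local intersection number $w$ with $\Sigma$. Write $\mathcal F_{\le w}CF^*$ for the span of the generators of winding $\le w$. Since $\Sigma$ is anticanonical one has $d=1$ and $|r|=0$, so by~(\ref{eq:CH_chain_level_0}) the degree-$0$ generators are the interior unit $1$ (winding $0$) together with the winding-$w$ ``minimum'' orbits $r^w$ for $w\ge 1$, and $CF^{<0}=0$. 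The structural input I would isolate first is that the Floer differential, and likewise the pair-of-pants product, are \emph{filtered}: every contributing curve has controlled — hence nonnegative — intersection with $\Sigma$, because $J$ preserves $\Sigma$ and the Hamiltonian depends only on the distance to $\Sigma$ there, and this intersection number records the change of winding along the curve; making this precise for the non-compact Floer cylinders is exactly the confinement mechanism of Cieliebak--Oancea. Combined with the action filtration this gives $\partial r^w\in\mathcal F_{\le w}CF^1$.

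\emph{Vanishing of $\partial$ on $CF^0$.} The winding-preserving part of $\partial r^w$ is the associated-graded differential, which on the winding-$w$ stratum is the Morse differential of $S\Sigma$ (this is where $c_1(\Sigma)=0$ is used, to make the relevant Morse--Bott moduli regular of the expected index), and it annihilates the minimum $r^w$; the same holds in the winding-$0$ stratum, where it is the Morse differential of the connected manifold $X\setminus\Sigma$ on its minimum. It remains to exclude the strictly winding-lowering components of $\partial r^w$, i.e.\ rigid Floer cylinders from $r^w$ to a degree-$1$ generator of smaller winding. In the spirit of this paper I would prove their absence by stretching the neck around $\Sigma$ and analysing the resulting holomorphic building exactly as in the proof of Lemma~\ref{lem:higher_bubble_sigma}: sphere bubbling in $X$ away from $\Sigma$ is codimension $\ge 2$ and absent in a generic one-parameter family; the loci swept by sphere components inside $\Sigma$ have codimension $\ge 4$ there (again because $c_1(\Sigma)=0$) and miss the relevant evaluation chains; and the plane/disk-type pieces forced to carry the tangency to $\Sigma$ have strictly negative expected dimension. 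Hence $\partial|_{CF^0}=0$, and since $CF^{<0}=0$ this yields $SH^0(X\setminus\Sigma)=CF^0(X\setminus\Sigma)$ with geometric basis $\{1,r,r^2,\dots\}$.

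\emph{Algebra structure.} Running the same filtration argument for the triangle product — now with two inputs and one output, so that positivity of intersections bounds $w_{\mathrm{out}}\le w_{\mathrm{in},1}+w_{\mathrm{in},2}$ — gives $r^i\cdot r^j=a_{ij}\,r^{i+j}+(\text{terms of strictly smaller winding})$ for scalars $a_{ij}$. The leading coefficient $a_{ij}$ counts rigid pairs of pants meeting $\Sigma$ in exactly $i+j$ forced points; stretching the neck around $\Sigma$ degenerates such a pants to a ``trivial'' pair of pants over the relevant Reeb orbit in the symplectization of $S\Sigma$, whose signed count is $\pm 1$, so $a_{ij}\ne 0$. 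Inductively the powers $r,r^2,r^3,\dots$ of the winding-$1$ class $r$ are then nonzero with leading winding $1,2,3,\dots$, hence linearly independent; since the winding-$w$ graded piece of the filtered vector space $SH^0(X\setminus\Sigma)$ is one-dimensional, they form a basis. Therefore $SH^0(X\setminus\Sigma)$ is generated as a ring by $r$, and $\C[t]\to SH^0(X\setminus\Sigma)$, $t\mapsto r$, is an isomorphism.

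\emph{Main obstacle.} The delicate point is the curve count excluding the winding-lowering components of $\partial$ on the degree-$0$ classes — equivalently, showing that every neck-stretched building contributing to $\partial|_{CF^0}$ has a component of negative expected dimension or evaluates into a too-small subset of $\Sigma$ — together with the parallel statement that the leading product coefficients are units. Both refine Lemma~\ref{lem:higher_bubble_sigma} but demand care with Morse--Bott and SFT-type compactness near $\Sigma$; this is essentially the content of the Ganatra--Pomerleano theorem, which one could alternatively simply cite.
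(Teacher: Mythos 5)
The paper does not actually prove this theorem: it is stated with a \texttt{\char`\\qed} attached and attributed to Ganatra and Pomerleano (\cite[Theorem~5.31]{GP18}, compare \cite{DTh}). The only original material the paper adds around the statement is an informal justification of the ``moreover'' part, using the \emph{action} filtration: the actions of the $r^{i}$ can be arranged to be additive up to an arbitrarily small error, the chain-level differential and product are action-nonincreasing, so $r^{i}*r^{j}$ is a combination of $r^{k}$ with $k\le i+j$, and the coefficient of $r^{i+j}$ is computed by a unique low-energy curve in the $S^{1}$-Morse--Bott setup. Your winding-number filtration is a different bookkeeping of the same information (higher winding forces more negative action), so the algebra-structure halves of the two arguments are effectively parallel, with your trivial pair of pants in the symplectization of $S\Sigma$ playing the role of the paper's unique low-energy curve. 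The essential difference is that you attempt to prove the whole statement from scratch, whereas the paper cites it.

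On the algebra part, a small but concrete slip: if $r^{i}*r^{j}=a_{ij}r^{i+j}+(\text{lower winding})$, the leading pair of pants has intersection number $(i+j)-(i+j)=0$ with $\Sigma$, i.e.\ it is \emph{disjoint} from $\Sigma$, not ``meeting $\Sigma$ in exactly $i+j$ forced points.'' It is precisely because this pants is disjoint from $\Sigma$ that it stays entirely in the symplectization after stretching. Your conclusion $a_{ij}=\pm 1$ is unaffected, but the sentence as written is wrong.

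The more substantive issue is the one you flag yourself: the vanishing of $\partial$ on $CF^{0}$ is exactly the delicate content of the Ganatra--Pomerleano theorem, and your appeal to ``the same mechanism as in Lemma~\ref{lem:higher_bubble_sigma}'' glosses over a real difference. That lemma excludes \emph{bubble configurations} containing sphere components in a one-parameter family; here you must exclude a rigid, \emph{transversally cut out} Floer cylinder in the chain complex whose intersection number with $\Sigma$ is strictly positive. Such a cylinder is not a degenerate object, so ruling it out requires a stretching argument followed by a careful index computation for the resulting holomorphic building (punctured spheres near $\Sigma$ projecting to spheres with $c_{1}(\Sigma)=0$, pieces in the completion of $X\setminus\Sigma$, and the matching of Fredholm indices across levels), which is of the same kind as, but harder than, the analysis in the proof of Proposition~\ref{prop:bs_antican}. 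Your outline names the right ingredients, but the index bookkeeping is not spelled out; if you want a self-contained proof you have to carry it through, and otherwise it is cleaner to simply cite \cite{GP18} as the paper does.
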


We wish to add several comments about the theorem.
First, if the Floer differential vanishes on
$
CF^0(X\setminus\Sigma)
$,
it follows by (\ref{eq:CH_chain_level_0}) that
$
SH^0(X\setminus\Sigma)=CF^0(X\setminus\Sigma).
$

\begin{remark}
	This has to be compared	to the case when $\Sigma$ is a smooth symplectic divisor such that $[\Sigma]=\tfrac 1 d c_1(X)$. An example of this situation is when $\Sigma$ is a hyperplane in $\C P^n$; then $[\Sigma]=\tfrac 1{n+1}c_1(\C P^n)$. The symplectic cohomology of $$\C P^n\setminus\Sigma\cong \R^{2n}$$  vanishes. In this case a version of (\ref{eq:CH_chain_level}) holds with $|r|=1-d$, and the complex $CF^*(X\setminus\Sigma)$ now has elements of negative degree. Therefore the Floer differential can hit the unit.
\end{remark}	

We now wish to explain the \emph{`moreover'} part of Theorem~\ref{th:gp}. By a suitable choice of a  Hamiltonian perturbation realising the complex (\ref{eq:CH_chain_level}), one can arrange the actions of the generators $r^i$ appearing in Theorem~\ref{th:gp} to be additive up an error:
$$
A(r^{i+j})=A(r^i)+A(r^j)+(\epsilon\text{-error}),
$$ 
where the error can be made arbitrarily small, in particular separating the orbit actions. The chain-level differential and product on symplectic cohomology are action-non-decreasing. It follows that on chain level, the symplectic cohomology product $r^i*r^j$
is a combination of the elements $r^k$ for $k\le i+j$. (The actions of the $r^i$ are negative and tend  to $-\infty$.) Moreover, the coefficient of $r^{i+j}$ is computed by low-energy curves, and  choosing a suitable Hamiltonian (a convenient choice is to make it autonomous and work in the $S^1$-Morse-Bott setup, see e.g.~\cite{Bo03,BO09,BEE}) one can show that there is a unique such low-energy curve.

It follows that the product structure on $SH^0(X\setminus\Sigma)$, written in the canonical basis $\{1,r,r^2,\ldots\}$, has the following form:
\begin{equation}
\label{eq:str_const}
r^{*k}=r^k+\sum_{0\le i\le k-1}c_{i,k}r^i.
\end{equation}
Here we denote by $r^{*k}=r*\ldots *r$ the $k$th power of $r$ with respect to the symplectic cohomology product.
The \emph{structure constants} $c_{i,k}$ need not be trivial, and are of big interest as they are expected to encode certain closed log  Gromov-Witten invariants of $X$, see e.g.~\cite{GS16}, \cite[Remark~5.39]{GP18}. 

\subsection{Structure constants and higher potentials}
We take a different viewpoint and reveal that the structure constants above can be computed from the higher disk potentials of a monotone Lagrangian submanifold.
The next proposition will be proved in Section~\ref{sec:proofs}; it also follows from a  recent and more general result of Ganatra and Pomerleano~\cite[Theorem 4.31]{GP18}.

\begin{proposition}
	\label{prop:bs_antican}
	Let $\Sigma$ be a smooth anticanonical divisor and take $M=X\setminus\Sigma$.
In the situation of Theorem~\ref{th:factor}, it holds that $\BS=r$. In the  situation of Theorem~\ref{th:factor_higher}, it holds that $\D_k=r^k$.
\end{proposition}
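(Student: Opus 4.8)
The plan is to identify the Borman--Sheridan class $\BS$ and the higher classes $\D_k$ directly with the canonical generators $r^k$ of $SH^0(X\setminus\Sigma)$ from Theorem~\ref{th:gp}, using the explicit \S-shaped Hamiltonian and the structure of the broken curves produced in the proof of Theorem~\ref{th:factor_higher}. First I would recall that $\BS$ is defined as the count of the \emph{right halves} of the broken curves in Figure~\ref{fig:two_lags}: these are holomorphic planes in $X\setminus\Sigma$ (equivalently, finite-energy curves asymptotic to the chosen periodic orbit) which intersect $\Sigma$ once, transversally. The orbit along which the breaking happens is precisely the generator named $r$ in $(\ref{eq:CH_chain_level})$, because it is the lowest-action orbit of the $S^1$-periodic Reeb flow around $\Sigma$, with intersection multiplicity one with $\Sigma$; similarly, for the Maslov $2k$ version, the breaking happens along the orbit of intersection multiplicity $k$, which is $1\otimes r^k$ in $(\ref{eq:CH_chain_level})$. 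So the content of the proposition is that the \emph{coefficient} with which this canonical generator appears in $\BS$ (resp.\ $\D_k$) is exactly $1$, with no admixture of lower generators $r^i$, $i<k$.

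The key steps, in order: (1) Note that by the action/filtration argument already sketched after Theorem~\ref{th:gp}, any element of $SH^0(X\setminus\Sigma)$ supported in actions close to $A(r^k)$ is, on chain level and up to lower-action generators, a multiple of $r^k$; so it suffices to pin down the top coefficient. (2) Invoke Seidel's lemma \cite{Sei16} on intersection multiplicity with a divisor at a constant periodic orbit (cited in the Related Work paragraph) to guarantee that the rigid curve defining the top coefficient of $\D_k$ has intersection number exactly $k$ with $\Sigma$ and is unbroken; this is the same mechanism forcing the breaking in the proof of Theorem~\ref{th:factor_higher}. (3) Identify this rigid curve with the unique low-energy curve computing the leading term of $r^{*k}$, or more directly with the defining datum of the canonical generator $r^k$ itself --- in the Morse--Bott/autonomous model of $(\ref{eq:CH_chain_level})$ the relevant space is a single trivial cylinder (or its cap), giving coefficient $1$. (4) Conclude $\D_k=r^k+\sum_{i<k}(\ast)r^i$; then use the fact (also from the filtration argument) that $\D_k$ must be a \emph{cohomology} class whose image under $\CO_L$ recovers $W_{L,k}(\rho)\cdot 1_L$ for all admissible $L$, together with the relation $(W_L)^k=W_{L,k}+\sum_{i<k}c_{i,k}W_{L,i}$ of Corollary~\ref{cor:struc_coef} and the multiplicativity $\CO_L(r^{*k})=(\CO_L r)^k=(W_L)^k\cdot 1_L$, to force the lower coefficients of $\D_k$ to coincide with those of $r^{*k}$; since $\{r^i\}$ is a basis, this gives $\D_k=r^k$ on the nose. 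The case $k=1$ specialises to $\BS=r$.

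I expect the main obstacle to be step (3): matching the \emph{a priori} moduli-theoretic definition of $\D_k$ (a count of rigid broken-curve right-halves, appearing in the proof of Theorem~\ref{th:factor_higher} after Hamiltonian stretching) with the \emph{chain-level} generator $r^k$ of the Morse--Bott symplectic cochain complex, including getting the signs and the normalisation of the count to come out to exactly $+1$ rather than some other nonzero integer. This requires being careful that the perturbation data used to set up the stretching in Section~\ref{sec:proofs} can be taken compatible with (a small perturbation of) the autonomous $S^1$-Morse--Bott Hamiltonian realising $(\ref{eq:CH_chain_level})$, so that the ``right half'' moduli space is literally the one whose count defines $r^k$. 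A cleaner alternative, which sidesteps the sign bookkeeping, is to argue purely representation-theoretically: one already knows $\D_k\in SH^0(X\setminus\Sigma)=\C[r]$ is \emph{some} polynomial in $r$; apply $\CO_L$ for a Lagrangian torus $L$ whose potential $W_L$ is a non-constant Laurent polynomial (such $L$ exist, e.g.\ a Clifford-type torus in a Fano $X$), use $\CO_L(\D_k)=W_{L,k}(\rho)\cdot 1_L$ and $\CO_L(r)=W_L(\rho)\cdot 1_L$ together with Corollary~\ref{cor:struc_coef}, and observe that this determines $\D_k$ as a polynomial in $r$ uniquely (since $W_L$ takes infinitely many values as $\rho$ varies, $\CO_L$ is injective on $\C[r]$). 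This reduces the whole proposition to the polynomial identity $W_{L,k}=(W_L)^k-\sum_{i<k}c_{i,k}W_{L,i}$, i.e.\ to Corollary~\ref{cor:struc_coef}, which is the route I would ultimately take.
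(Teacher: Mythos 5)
The main route you propose is on the right general track (pin the asymptotic orbit to $r^k$, then determine the coefficient), but both the ``main'' route and the ``cleaner alternative'' have a serious \emph{circularity} problem, and the key technical step is not actually addressed.

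\textbf{Circularity.} Both of your arguments appeal to $\CO_L(r)=W_L(\rho)\cdot 1_L$ and to Corollary~\ref{cor:struc_coef}. In this paper neither is available independently of Proposition~\ref{prop:bs_antican}: the formula $\CO_L(r)=W_L(\rho)\cdot 1_L$ is precisely the combination of Theorem~\ref{th:factor} with $\BS=r$ (see the proof of Lemma~\ref{lem:W_powers}), and Corollary~\ref{cor:struc_coef} is deduced from Lemma~\ref{lem:W_powers} together with Proposition~\ref{prop:bs_antican}. So reducing the proposition to Corollary~\ref{cor:struc_coef}, or invoking multiplicativity of $\CO$ on powers of $r$, assumes exactly what you are trying to prove.

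\textbf{Step~(4) is also logically incoherent as stated.} If the lower coefficients of $\D_k$ ``coincide with those of $r^{*k}$'', you would conclude $\D_k=r^{*k}$, not $\D_k=r^k$. These differ by the structure constants $c_{i,k}$, which the proposition does not claim to vanish; the whole point of Corollary~\ref{cor:struc_coef} is that they generically do not.

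\textbf{Step~(3) — the crux — is conceded but not solved.} You rightly identify that the real content is matching the moduli-theoretic count of (b)-curve outputs with the normalised generator $r^k$ with coefficient exactly~$1$, but you offer no mechanism for it. The paper does something genuinely more involved: it first proves (via SFT neck-stretching along $\bd U(\Sigma)$, combined with Hamiltonian slowdown \`a la Bourgeois--Oancea, plus dimension counts of Chern-number-zero spheres in $\Sigma$) that the (b)-curves are \emph{confined to a neighbourhood of $\Sigma$} and can only output the orbit $r^k$ and no lower $r^i$. This also takes care of your Step~(1): without that confinement lemma, you have no a priori action control on the (b)-curve outputs. It then pins the top coefficient to $1$ by re-running the stretching proof of Theorem~\ref{th:factor} for holomorphic planes (rather than disks) in $U(\Sigma)$ asymptotic to a Reeb orbit $\hat r^k$, whose total count is $1$ by gluing to the fibre sphere (or its $k$-fold branched cover) in the projectivised normal bundle; the resulting (a)-curves are then identified as diagonal entries of the Bourgeois--Oancea continuation map $\Psi$ from symplectic to non-equivariant contact homology, which are $1$. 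None of this is anticipated in your sketch, and there is no elementary or representation-theoretic shortcut around it within the logical architecture of the paper.
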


\begin{lemma}
\label{lem:W_powers}
In the setting of Theorem~\ref{th:factor_higher} and Proposition~\ref{prop:bs_antican}, let $\rho$ be any local system on $L$. Then
$$
\CO(r^{*k})=(W_L(\rho))^k\cdot 1_{\LL},
$$
where $W_L$ is the usual LG potential of $L$ computed in $X$.
\end{lemma}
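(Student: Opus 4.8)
\textbf{Proof plan for Lemma~\ref{lem:W_powers}.}

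The plan is to combine the multiplicativity of the closed-open map with the two structural inputs already in hand: Theorem~\ref{th:factor_higher} together with Proposition~\ref{prop:bs_antican} (which identify $\CO(r^{k}) = \CO(\D_k) = W_{L,k}(\rho)\cdot 1_\LL$, where $r^k$ means the $k$-th power in the \emph{canonical basis}, not the ring power), and the relation $\BS = \D_1 = r$ with $\CO(\BS) = \CO(r) = W_L(\rho)\cdot 1_L$ from Theorem~\ref{th:factor}. First I would recall that the closed-open map $\CO\co SH^0(M)\to HF^0_M(\LL,\LL)$ is a unital ring homomorphism; since $HF^0_M(\LL,\LL)\cong \C\cdot 1_\LL$ is just the ground field, this means $\CO(a * b) = \CO(a)\cdot\CO(b)$ with the product on the right being scalar multiplication. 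Applying this to the $k$-fold product $r^{*k} = r*\cdots *r$ gives immediately
\begin{equation*}
\CO(r^{*k}) = \big(\CO(r)\big)^k = \big(W_L(\rho)\big)^k\cdot 1_\LL,
\end{equation*}
which is exactly the claim.

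So the real content is entirely the ring-homomorphism property of $\CO$ in this setting, and the main thing to be careful about is the distinction between the two notations: $r^k$ as the $k$-th element of the canonical basis from Theorem~\ref{th:gp} (which equals $\D_k$ by Proposition~\ref{prop:bs_antican}) versus $r^{*k}$ as the $k$-th power of $r$ under the symplectic cohomology product. These differ by lower-order terms, as recorded in~(\ref{eq:str_const}). The lemma as stated concerns $r^{*k}$, and the computation above handles it directly via multiplicativity of $\CO$ — one does not even need to invoke Theorem~\ref{th:factor_higher} for this particular statement, only Theorem~\ref{th:factor} (the $k=1$ case giving $\CO(r) = W_L(\rho)\cdot 1_\LL$) plus the fact that $\CO$ is a ring map.

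The one point that requires a word of justification is that $\CO$ is indeed a homomorphism of rings, intertwining the pair-of-pants product on $SH^*(M)$ with the Floer product on $HF^*_M(\LL,\LL)$; this is a standard structural property of the closed-open map, and I would cite it (e.g.\ the references for $\CO$ used elsewhere in the paper) rather than reprove it. I do not anticipate any genuine obstacle here — the lemma is a formal consequence of facts already established; its purpose is presumably to set up the comparison with~(\ref{eq:str_const}), from which Corollary~\ref{cor:struc_coef} (the identity $(W_L)^k = W_{L,k} + \sum_{i\le k-1} c_{i,k} W_{L,i}$) follows by applying $\CO$ to both sides of~(\ref{eq:str_const}) and using Theorem~\ref{th:factor_higher} term by term.
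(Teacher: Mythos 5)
Your proposal matches the paper's proof: establish $\CO(r)=W_L(\rho)\cdot 1_\LL$ from Theorem~\ref{th:factor} (with $d=1$ since $\Sigma$ is anticanonical) and Proposition~\ref{prop:bs_antican}, then apply the ring-homomorphism property of $\CO$. Your side remarks — that the target ring is $\C\cdot 1_\LL$, that Theorem~\ref{th:factor_higher} is not actually needed, and the distinction between $r^{*k}$ and the basis element $r^k$ — are all correct and consistent with the paper.
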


\begin{proof}
	One has $\CO(r)=W_L(\rho)
	\cdot 1_{\LL}$ by Theorem~\ref{th:factor} and Proposition~\ref{prop:bs_antican}. The claim follows from the fact that $\CO$ is a ring map. 
\end{proof}	

\begin{corollary}
	\label{cor:struc_coef}
	Suppose $\Sigma\subset X$ is an anticanonical divisor, and $L\subset X\setminus\Sigma$  is a Lagrangian submanifold which is monotone in $X$ and exact in $X\setminus\Sigma$.	
Assume that we are in a setting when the higher disk potentials $W_{L,i}$ are defined for all $i\le k$. Writing the disk potentials as Laurent polynomials, the following identity between Laurent polynomials holds:
$$
(W_L)^k=W_{L,k}+\sum_{0\le i\le k-1}c_{i,k}W_{L,i},
$$
where $W_L=W_{L,1}$ is the usual Landau-Ginzburg potential and $c_{i,k}$ are the structure constants from (\ref{eq:str_const}).\qed 
\end{corollary}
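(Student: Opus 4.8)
The plan is to combine the algebraic input of Lemma~\ref{lem:W_powers} with the enumerative identification of Theorem~\ref{th:factor_higher}, using the fact that the closed-open map $\CO\co SH^0(X\setminus\Sigma)\to HF^0_M(\LL,\LL)\cong \C\cdot 1_\LL$ is a ring homomorphism. Concretely, I would first expand the structure constant identity (\ref{eq:str_const}) inside $SH^0(X\setminus\Sigma)$, namely $r^{*k}=r^k+\sum_{0\le i\le k-1}c_{i,k}r^i$, where $r^k=\D_k$ by Proposition~\ref{prop:bs_antican} and $r^i=\D_i$ likewise. Then I would apply $\CO$ to both sides of this identity.

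Applying $\CO$ to the left-hand side gives $\CO(r^{*k})=(W_L(\rho))^k\cdot 1_\LL$ by Lemma~\ref{lem:W_powers}. Applying $\CO$ to the right-hand side, term by term, Theorem~\ref{th:factor_higher} identifies $\CO(\D_i)=\CO(r^i)=W_{L,i}(\rho)\cdot 1_\LL$ for each $0\le i\le k$ (with $\D_1=\BS$ recovering the $k=1$ case of Theorem~\ref{th:factor}, consistent with $W_{L,1}=W_L$). Hence, as elements of $HF^0_M(\LL,\LL)=\C\cdot 1_\LL$,
$$
(W_L(\rho))^k\cdot 1_\LL=W_{L,k}(\rho)\cdot 1_\LL+\sum_{0\le i\le k-1}c_{i,k}\,W_{L,i}(\rho)\cdot 1_\LL.
$$
Since $1_\LL\neq 0$, this yields the numerical identity $(W_L(\rho))^k=W_{L,k}(\rho)+\sum_{0\le i\le k-1}c_{i,k}W_{L,i}(\rho)$ for every local system $\rho$ on $L$.

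Finally I would promote the numerical identity to an identity of Laurent polynomials. The point is that $W_L$ and each $W_{L,i}$ are genuine Laurent polynomials in $\C[x_1^{\pm 1},\ldots,x_m^{\pm 1}]$ (by the defining formula (\ref{eq:higher_potential_def}) and Lemma~\ref{lem:higher_bubble_sigma}), and evaluating at a local system $\rho$ is precisely substituting $\mathbf x=(\rho([\gamma_1]),\ldots,\rho([\gamma_m]))$ for a chosen basis $\gamma_1,\ldots,\gamma_m$ of $H_1(L;\Z)/\mathrm{Torsion}$. As $\rho$ ranges over all $\C^*$-local systems, $\mathbf x$ ranges over all of $(\C^*)^m$, a Zariski-dense subset of the affine torus; since the structure constants $c_{i,k}$ are fixed scalars independent of $\rho$, two Laurent polynomials agreeing on $(\C^*)^m$ must be equal. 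This gives $(W_L)^k=W_{L,k}+\sum_{0\le i\le k-1}c_{i,k}W_{L,i}$ in $\C[x_1^{\pm 1},\ldots,x_m^{\pm 1}]$.

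There is essentially no obstacle here beyond bookkeeping: the corollary is a formal consequence of results already established, and the only mild point requiring care is the polynomial-versus-numerical distinction in the last paragraph, handled by Zariski density of $(\C^*)^m$. One should also note that the higher potentials $W_{L,i}$ are only defined when $\Sigma$ is smooth anticanonical and $L$ is disjoint from it and exact in the complement, which is exactly the standing hypothesis of the corollary, so the identity (\ref{eq:str_const}) and Proposition~\ref{prop:bs_antican} apply without further assumptions.
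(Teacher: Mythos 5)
Your argument is correct and is exactly the one the paper intends: apply the ring map $\CO$ to the identity $r^{*k}=r^k+\sum_{i<k}c_{i,k}r^i$ in $SH^0(X\setminus\Sigma)$, then use Proposition~\ref{prop:bs_antican} ($\D_i=r^i$), Theorem~\ref{th:factor_higher} ($\CO(\D_i)=W_{L,i}(\rho)\cdot 1_\LL$) and Lemma~\ref{lem:W_powers} ($\CO(r^{*k})=(W_L(\rho))^k\cdot 1_\LL$), and promote the resulting equality of values at all $\rho\in(\C^*)^m$ to an equality of Laurent polynomials by Zariski density. The only cosmetic point is the $i=0$ term, where Theorem~\ref{th:factor_higher} is stated for $k\ge 1$: one simply uses $\D_0=r^0=1$ and $\CO(1)=1_\LL$, with the convention $W_{L,0}\equiv 1$ (consistent with the paper's example computation), so nothing is missing.
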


\begin{example}
Take the monotone Clifford torus $L\subset \C P^2$.  There exists a smooth elliptic curve $\Sigma\subset \C P^2$ such that $L$ is exact in its complement. One has, in some basis for $H_1(L;\Z)$:
$$
W_L=x+y+1/xy.
$$
One computes:
$$
(W_L)^3=6+(x^3+y^3+1/x^3y^3+3x^2y+3xy^2+3x/y+3/xy^2+3y/x+3/x^2y)
$$
It is expected by several methods that 
$$c_{0,3}=6,$$ while $c_{1,3}$, $c_{2,3}$ vanish for minimal Chern number reasons. It follows that $W_{L,3}=(W_L)^3-6$; compare with \cite[Remark~5.39]{GP18}.

Conversely, we hope that it is possible to compute the higher disk potentials of toric fibres relative to a smooth anticanonical divisor by other means, and deduce the structure constants for symplectic cohomology using that. The main point is that it is not hard to determine the holomorphic disks of any Maslov index intersecting the singular \emph{toric boundary divisor} at a single geometric point. However, the enumerative geometry changes when we pass to the smooth divisor, and one needs to understand this change.
It is left as a subject for future research.
\end{example}

\section{Wall-crossing and symplectic topology of Liouville domains}
\label{sec:liouv}
\subsection{The wall-crossing formula}
Theorem~\ref{th:factor} provides an alternative proof of a general form of the wall-crossing formula due to Pascaleff and the author~\cite{PT17}. Let $L_0,L_1\subset M\subset X$ be two Lagrangian submanifolds satisfying the conditions of Theorem~\ref{th:factor}. Let $\rho_i$ be a $\C^*$-local system on $L_i$ and denote $\LL_i=(L_i,\rho_i)$.

\begin{theorem}[\cite{PT17}]
	\label{th:pt}
	Assume that the Floer cohomology of the pair below computed in $M$ does not vanish:
	$$
	HF^*_M(\LL_0,\LL_1)\neq 0.
	$$
	Then it holds that
	$$
	W_{L_0}(\rho_0)=W_{L_1}(\rho_1)\in \C,
	$$		
	where the potentials are computed inside $X$.	
\end{theorem}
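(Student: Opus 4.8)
The plan is to deduce the wall-crossing formula directly from Theorem~\ref{th:factor} by exploiting that the closed-open map intertwines the module structures over $SH^*(M)$. First I would fix a Donaldson hypersurface $\Sigma\subset X$ away from both $L_0$ and $L_1$ with the property that both Lagrangians become exact in $X\setminus\Sigma$ and $M\subset X\setminus\Sigma$, using Remark~\ref{rmk:don_div}; this is possible after possibly shrinking $M$, and shrinking $M$ does not change any of the Floer-theoretic quantities in the statement since everything is computed inside $X$ (the hypothesis $HF^*_M(\LL_0,\LL_1)\neq 0$ is the only place $M$ enters, and this can be arranged to persist, or taken as an input for the chosen $M$). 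Let $\BS\in SH^0(M)$ be the Borman-Sheridan class supplied by Theorem~\ref{th:factor}, and let $d$ be the degree of $\Sigma$.

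The key step is the following observation about module structures. The closed-open map $\CO_\LL\co SH^*(M)\to HF^*_M(\LL,\LL)$ is a unital ring homomorphism, and for any two objects $\LL_0,\LL_1$ the Floer cohomology $HF^*_M(\LL_0,\LL_1)$ is a bimodule: it carries a left action of $HF^*_M(\LL_0,\LL_0)$ and a right action of $HF^*_M(\LL_1,\LL_1)$, both pulled back via the respective closed-open maps to give two actions of $SH^*(M)$. Standard TQFT/associativity arguments for closed-open maps (the "closed-open map is central" property, see the references in the excerpt) show these two $SH^*(M)$-actions on $HF^*_M(\LL_0,\LL_1)$ coincide. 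Applying this to $\BS$: left multiplication by $\CO_{\LL_0}(\BS)=d\cdot W_{L_0}(\rho_0)\cdot 1_{\LL_0}$ equals right multiplication by $\CO_{\LL_1}(\BS)=d\cdot W_{L_1}(\rho_1)\cdot 1_{\LL_1}$, as endomorphisms of $HF^*_M(\LL_0,\LL_1)$. But $1_{\LL_i}$ acts as the identity, so on $HF^*_M(\LL_0,\LL_1)$ we get that multiplication by the scalar $d\cdot W_{L_0}(\rho_0)$ agrees with multiplication by the scalar $d\cdot W_{L_1}(\rho_1)$.

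Since $HF^*_M(\LL_0,\LL_1)\neq 0$ by hypothesis, and $d\neq 0$, this forces $W_{L_0}(\rho_0)=W_{L_1}(\rho_1)$, which is the claim. The main obstacle I anticipate is making precise the claim that the two $SH^*(M)$-module structures on $HF^*_M(\LL_0,\LL_1)$ — one through $\CO_{\LL_0}$ and one through $\CO_{\LL_1}$ — agree; this requires the compatibility of the closed-open map with the $A_\infty$-bimodule structure on Floer complexes, i.e.\ a moduli space of disks with one interior input (the $SH$ class) and two boundary punctures asymptotic to the two intersection points, whose boundary degeneration gives precisely the two actions. This is part of the standard package for closed-open maps but its invocation, together with checking that it persists in the present monotone/Donaldson-hypersurface setup (bubbling is controlled exactly as in the proof of Theorem~\ref{th:factor}), is where the real work lies. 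Alternatively, one can bypass the bimodule formalism by noting that $\CO_{\LL}(\BS)$ is a multiple of the unit, hence automatically central, and then only the much softer statement that $\CO_{\LL_0}(c)\cdot \xi = \xi\cdot \CO_{\LL_1}(c)$ for $\xi\in HF^*_M(\LL_0,\LL_1)$ and $c\in SH^*(M)$ is needed — and when $\CO(\BS)$ is a scalar multiple of the unit this is immediate from unitality on both sides.
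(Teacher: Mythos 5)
Your proof is correct and takes essentially the same route as the paper: you apply Theorem~\ref{th:factor} to identify both $\CO_{\LL_i}(\BS)$ as $d\cdot W_{L_i}(\rho_i)\cdot 1_{\LL_i}$, then invoke the compatibility of the two $SH^*(M)$-module structures on $HF^*_M(\LL_0,\LL_1)$ (the identity $\mu^2(\CO_{\LL_0}(\BS),x)=\mu^2(x,\CO_{\LL_1}(\BS))$), and cancel a nonzero $x$. The small preliminary discussion about choosing $\Sigma$ and shrinking $M$ is unnecessary, since the theorem as stated already places $L_0,L_1\subset M\subset X$ in the setup of Theorem~\ref{th:factor}, but it does no harm.
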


We refer to \cite{PT17} for the context surrounding this theorem, and its applications. In particular, we remind that in any given geometric setting, one still has to compute the pairs $(\rho_1,\rho_2)$ for which the Floer cohomology appearing in the statement does not vanish; this computation determines the precise shape of the wall-crossing formula relating the potentials.

Let us prove Theorem~\ref{th:pt} using Theorem~\ref{th:factor}. Consider the following diagram:
\begin{equation}
	\label{eq:CO_pair}
	\begin{array}{ccccc}
		d\cdot W_{L_0}(\rho_0)\cd 1_{L_0}
		&\reflectbox{$\longmapsto$}
		&
		\BS
		&\longmapsto
		&d\cdot  W_{L_1}(\rho_1)\cd 1_{L_1}
		\\
		\rotatebox[origin=c]{-90}{$\in$}
		&&
		\rotatebox[origin=c]{-90}{$\in$}
		&&
		\rotatebox[origin=c]{-90}{$\in$}\\
		HF_M^0(\LL_0,\LL_0)
		&\xleftarrow{\CO_{L_0}} &
		SH^0(M)
		&\xrightarrow{\CO_{L_1}}&
		HF_M^0(\LL_1,\LL_1)
	\end{array}
\end{equation}
Above, the subscripts for the $\CO$-maps are simply used to specify the target.
Now pick any non-zero element
$$
0\neq x\in HF^*_M(\LL_0,\LL_1),
$$
assuming that this Floer cohomology is non-vanishing. The next equality follows from the general fact that the closed-open maps turn Floer cohomologies into $SH^*(M)$-modules:
$$
\mu^2(\CO_{L_0}(\BS),x)=\mu^2(x,\CO_{L_1}(\BS)).
$$
Here $\mu^2$ denotes the product in Floer cohomology, read left-to-right (this convention is opposite to \cite{SeiBook08}).	
By (\ref{eq:CO_pair}), the equality above rewrites as
$$
d\cdot W_{L_0}(\rho_0)\cd x=d\cdot  W_{L_1}(\rho_1)\cd x.
$$
Since $x\neq 0$, it follows that   $W_{L_0}(\rho_0)=W_{L_1}(\rho_1)$. 
While the original proof of Theorem~\ref{th:pt} given in \cite{PT17} using relative Floer theory is easier and less technical than that of Theorem~\ref{th:factor}, we hope that the present proof serves as a useful illustration of Theorem~\ref{th:factor}.

\label{sec:appl}
\subsection{Lagrangian embeddings with constant potential}
We will now discuss the following hypothesis about a Fano manifold $X$, which will be used later.

\begin{hypothesis}
	\label{hyp:const}
Let $X$ be a monotone symplectic $2n$-manifold. Every monotone Lagranian torus in $X$ has non-constant LG potential.  
\end{hypothesis}

We begin with the following.

\begin{lemma}
The potential of any monotone Lagrangian torus has vanishing constant term.
In particular, if the potential is constant, then it is zero.
\end{lemma}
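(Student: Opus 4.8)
The plan is to show that the constant term of the Landau-Ginzburg potential $W_L$ of a monotone Lagrangian torus $L$ counts (with signs and local-system weights) Maslov index~2 holomorphic disks whose boundary class is trivial in $H_1(L;\Z)$, and to argue that this count vanishes. First I would recall the standard setup: for a monotone $L$, Maslov index~2 disks have minimal positive symplectic area, so no bubbling or breaking occurs and the relevant moduli spaces $\M(\beta)$ of disks in class $\beta\in H_2(X,L;\Z)$ with $\mu(\beta)=2$ and boundary through a marked point $p\in L$ are compact oriented $0$-manifolds; the potential is $W_L(\mathbf{x})=\sum_{\mu(\beta)=2}\mathbf{x}^{\partial\beta}\cdot\#\M(\beta)$. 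The constant term is the contribution of all classes $\beta$ with $\partial\beta=0\in H_1(L;\Z)$ (here we work with $\Z^m=H_1(L;\Z)/\text{Torsion}$, but for a torus $H_1(L;\Z)$ is already torsion-free).

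The key step is to rule out such disks, or rather to show their signed count cancels. The cleanest argument: if $\partial\beta=0$ in $H_1(L;\Z)$, then the disk $u$ can be capped off along its boundary by a chain in $L$, producing a class $\bar\beta\in H_2(X;\Z)$ with $\langle c_1(X),\bar\beta\rangle=\mu(\beta)/2=1$ (using monotonicity and the relation $\mu(\beta)=2\langle c_1(X),\bar\beta\rangle$ for boundary-trivial classes). For a monotone torus, one then invokes the dimension count: the moduli space of disks in class $\beta$ through a point is $0$-dimensional, so the disks sweep an $n$-dimensional family in $L$ as $p$ varies, i.e. they sweep all of $L$; equivalently, the evaluation map $\ev\co \M(\beta)^{\text{free}}\to L$ (no point constraint, $\dim = n$) has degree equal to $\#\M(\beta)$ times an orientation factor. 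I would then use the well-known fact — essentially Auroux's observation, cf.~\cite{Au07} — that the only disk classes contributing to $W_L$ have $\partial\beta$ a primitive-type generator dual to a Maslov-$2$ facet, and in particular $\partial\beta\neq 0$; the boundary-trivial case forces the disk to bound a sphere of Chern number $1$, and for such classes the boundary evaluation is null-homotopic, so the count vanishes. Alternatively, and perhaps more robustly, one can argue via the divisor axiom / the observation that $\mm^0_\LL=W_L(\rho)\cdot 1_L$ is independent of the local system's value on a contractible loop, combined with the fact that varying $\rho$ multiplies the $\beta$-term by $\rho(\partial\beta)$: the constant term is the $\rho$-independent part coming from $\partial\beta=0$, and comparing with $\CO_X([\Sigma])=d\cdot\mm^0_\LL$ from Remark~\ref{rmk:CO_BS} pins it down, since $[\Sigma]$ pairs with any boundary-trivial disk class through its Chern number.

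I expect the main obstacle to be making the "capping and sphere" argument rigorous with orientations: one must check that the signed count of boundary-trivial Maslov-$2$ disks through a point is genuinely zero rather than merely that no such disks exist (they can exist, e.g. a disk union a Chern-$1$ sphere attached at an interior point of a trivial disk). The resolution is that such configurations, being of the form (constant or trivial disk) $\#$ (sphere), are not rigid — the sphere's gluing parameter and the position of the attaching node give positive-dimensional moduli — so they do not appear in the $0$-dimensional moduli space $\M(\beta)$ for $\beta$ of minimal Maslov index; and a genuine embedded Maslov-$2$ disk with $\partial\beta=0$ would have to be non-constant with zero area by monotonicity if $\partial\beta=0$ forces the area into the sphere part, a contradiction. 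So in fact no boundary-trivial Maslov-$2$ disk contributes at all, and the constant term is literally an empty sum. The second sentence of the lemma is then immediate: a constant Laurent polynomial equals its constant term, which is $0$.
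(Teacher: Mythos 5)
Your proposal takes a genuinely different route from the paper, but it has a gap that I do not see how to fill, and I want to flag a concrete error.

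The paper does not attempt to show that Maslov index~2 disks with contractible boundary are absent; it shows their \emph{signed, weighted} count is zero by Floer-theoretic means. Concretely, the paper takes $M\cong T^*T^n$ a Weinstein neighbourhood of $L$, applies Theorem~\ref{th:factor} together with Lemma~\ref{lem:non_const} (which says $\BS$ lies in the image of $SH^0_+(M)\to SH^0(M)$, i.e.\ it is carried by non-constant orbits), and uses the Viterbo isomorphism $SH^*(T^*T^n)\cong H_{n-*}(\L T^n)$. Since the constant-loop component of $\L T^n$ corresponds to the constant term of $\C[H_1(T^n;\Z)]$, and $\BS$ avoids it, $W_L$ has no constant term. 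The argument is structural and does not claim the relevant moduli space is empty.

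Your final ``resolution'' paragraph is where the proposal breaks. You claim that a Maslov~2 disk with $\partial\beta=0$ ``would have to be non-constant with zero area by monotonicity if $\partial\beta=0$ forces the area into the sphere part.'' This is not so. Monotonicity says $\omega(\beta)=\lambda\cdot\mu(\beta)/2=\lambda>0$, regardless of whether $\partial\beta$ is trivial. Capping the disk off by a chain in $L$ is a purely topological operation and does not ``transfer'' symplectic area: the chain in $L$ has zero area because $L$ is Lagrangian, so the disk and the resulting sphere have the \emph{same} positive area. Nothing forces the disk to be constant, and such disks can perfectly well exist; the lemma's content is that their count cancels, not that they are absent. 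Consequently your conclusion ``the constant term is literally an empty sum'' is unsupported and, as a general statement, wrong.

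The other two supports you offer do not rescue the argument. ``Auroux's observation'' that contributing disk classes have $\partial\beta$ equal to a primitive generator dual to a Maslov-$2$ facet is specific to toric fibres in toric Fanos; for a general monotone torus (e.g.\ Vianna's exotic tori in del Pezzo surfaces) the potential has many non-primitive exponents and no such classification. And the ``divisor axiom'' alternative is not well-formed: a local system is a homomorphism $H_1(L;\Z)\to\C^*$ and has no ``value on a contractible loop'' that one could vary, so the statement ``$\mm^0_\LL$ is independent of the local system's value on a contractible loop'' is vacuous rather than an input. The identity $\CO_X([\Sigma])=d\cdot\mm^0_\LL$ alone does not isolate the constant term either; one genuinely needs a statement like Lemma~\ref{lem:non_const} that refines $[\Sigma]$'s image through $SH^0(M)$ into the positive part. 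In short, a direct geometric vanishing argument is not available here, and the paper's Floer-theoretic route (via $SH^0_+$ and the free loop space of $T^n$) is doing essential work that your sketch does not replicate.
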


\begin{proof}
	Let $M$ be a Weinstein neighbourhood of $L$, and $\Sigma$ an auxiliary Donaldson divisor provided by Remark~\ref{rmk:don_div}, which guarantees that Theorem~\ref{th:factor} applies to $M$.
	Let $SH^*_+(M)$ be the positive symplectic cohomology, whose chain complex is generated by non-constant periodic orbits.
We employ Lemma~\ref{lem:non_const} below, whose equivalent formulation says that $\BS$ lies in the image of the map $SH^0_+(M)\to SH^0(M)$.  Note that $SH^0_+(T^*T^n)$ is generated at chain level by non-contractible loops in $T^*T^n$. So constant term of the potential, which is computed by disks with contractible boundary, vanishes. 
\end{proof}

\begin{remark}
To compare, the potential of a Lagrangian sphere is always a constant, since the sphere is simply-connected. This constant need not be zero.
\end{remark}

\begin{lemma}
Suppose $X$ has a non-trivial one-pointed descendant Gromov-Witten invariant $\langle \psi_d\, \mathrm{pt}\rangle$ for some $d\ge 2$; here $\mathrm{pt}$ is a generator of $H_0(X)$. Equivalently, the quantum period \cite[Section~B]{CCGK16} of $X$  is a series which is not identically one. Then Hypothesis~\ref{hyp:const} holds for $X$.
\end{lemma}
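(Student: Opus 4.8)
The plan is to reduce the claim to the enumerative content of Theorem~\ref{th:factor} (equivalently Theorem~\ref{th:factor_higher}) together with the classical fact that a non-trivial descendant invariant $\langle\psi_d\,\mathrm{pt}\rangle$ with $d\geq 2$ forces the appearance of a Maslov index $2$ disk through a generic point whose boundary is non-trivial in $H_1$ of any monotone Lagrangian torus. More concretely, let $L\subset X$ be an arbitrary monotone Lagrangian torus; I want to show $W_L$ is non-constant. First I would invoke Remark~\ref{rmk:don_div} to choose an auxiliary Donaldson hypersurface $\Sigma$ away from $L$ such that $L$ is exact in $X\setminus\Sigma$, and let $M$ be a Weinstein neighbourhood of $L$ inside $X\setminus\Sigma$ so that Theorem~\ref{th:factor} applies. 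By the preceding lemma the constant term of $W_L$ vanishes, so it suffices to prove $W_L\neq 0$, i.e.\ that $L$ bounds \emph{some} Maslov index $2$ holomorphic disk through a generic point.

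The key step is to manufacture such a disk from the descendant invariant. The standard mechanism (going back to the Auroux--Kontsevich--Seidel observation recalled in Remark~\ref{rmk:CO_BS}, in the form $\CO_X([\Sigma])=d\cdot\mm^0_\LL$) is that a non-trivial gravitational descendant $\langle\psi_d\,\mathrm{pt}\rangle\neq 0$ certifies the non-triviality of a certain quantum-cohomology class, and via the closed-open map this non-triviality propagates to Floer-theoretic invariants of $L$. I would make this precise as follows: the quantum period being $\not\equiv 1$ is equivalent to the existence of a degree-$d$ class contributing non-trivially to a descendant count of spheres through a point; bubbling/degeneration of such spheres near $L$ (or, cleanly, the argument via $\CO_X$ applied to the relevant quantum class, as in \cite{Au07}) produces a configuration of holomorphic disks on $L$ whose total Maslov index equals $2d$ and which covers a generic point, and since $L$ is monotone each disk has Maslov index $\geq 2$; hence at least one Maslov index $2$ disk through a generic point of $L$ must exist, so $W_L(\mathbf{1})\neq 0$ for the trivial local system, giving $W_L\neq 0$ as a Laurent polynomial.

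The main obstacle I anticipate is making the passage ``non-trivial descendant $\Rightarrow$ non-trivial disk count'' rigorous without circularity: one must be careful that the descendant invariant $\langle\psi_d\,\mathrm{pt}\rangle$ actually detects a \emph{disk} contribution rather than being cancelled, and the cleanest route is probably to run it through the closed-open map machinery already set up (Remark~\ref{rmk:CO_BS} and Theorem~\ref{th:factor}), using that $\CO_X$ is a unital ring map and that the unit of $QH^*(X)$ cannot map to zero, so some power or some component of the quantum class involved must hit $\mm^0_\LL$ non-trivially. Equivalently, if $W_L$ were identically zero then $\mm^0_\LL=0$, so $\LL$ would be an honest object of the Fukaya category with $HF^*(\LL,\LL)\cong H^*(T^n)$; but then $\CO_X$ would have to kill the part of $QH^*(X)$ seen by the descendant $\psi_d$ with $d\geq 2$, and I would derive a contradiction with the non-vanishing of $\langle\psi_d\,\mathrm{pt}\rangle$ by a dimension/grading count on the relevant moduli spaces of spheres with a $\psi$-constraint that are forced to bubble off a disk on $L$ once $L$ is inserted as a generic cycle. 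Assembling this contradiction carefully is the technical heart; everything else (choice of $\Sigma$, reduction to $W_L\neq 0$, monotonicity bounds on Maslov indices) is routine given the results already established in the excerpt.
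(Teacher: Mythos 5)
The paper's proof is one line: it cites \cite[Theorem~1.1]{To18}, which states that the \emph{classical period} of $W_L$ (the sequence of constant terms of the powers $(W_L)^k$) coincides with the \emph{quantum period} $1+\sum_{d\ge 2}c_d t^d$ of $X$, whose coefficients are exactly the one-pointed descendants $\langle\psi_d\,\mathrm{pt}\rangle$. Given the preceding lemma (a torus potential has zero constant term), $W_L$ constant $\Rightarrow W_L=0\Rightarrow$ the classical period is $\equiv 1\Rightarrow$ all $\langle\psi_d\,\mathrm{pt}\rangle$ vanish for $d\ge 2$, a contradiction.

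Your proposal correctly reduces to showing $W_L\neq 0$ via the preceding lemma, but the core step — deriving a non-trivial Maslov index~$2$ disk count from a non-vanishing descendant $\langle\psi_d\,\mathrm{pt}\rangle$ — is left entirely at the level of intuition, and this is not a routine gap: it is precisely the content of \cite[Theorem~1.1]{To18}, a substantial analytic theorem in its own right. The specific mechanisms you gesture at do not close the gap. First, Remark~\ref{rmk:CO_BS} concerns $\CO_X([\Sigma])$; there is no comparably clean statement in this paper relating $\CO_X$ to descendant classes $\psi_d$, and a $\psi$-insertion is not a quantum cohomology class that $\CO_X$ can be applied to, so ``the part of $QH^*(X)$ seen by $\psi_d$'' is not well-defined. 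Second, your degeneration heuristic (``spheres through a point with a $\psi$-constraint bubble into disks on $L$'') omits the key difficulty: descendant constraints do not behave in any simple way under neck-stretching or SFT splitting near $L$, and making this precise — including handling the combinatorics of which boundary components of which moduli spaces survive — is the technical heart of \cite{To18} and cannot be dispatched by a dimension count here. Third, the alternative contradiction you propose (``$W_L\equiv 0\Rightarrow$ uncountably many orthogonal objects'') only yields a contradiction under an extra hypothesis such as semisimplicity of $QH^*(X)$, which is what Corollary~\ref{cor:ve} uses; it does not follow from non-vanishing of a single descendant.

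In short: your reduction to $W_L\neq 0$ matches the paper, but the implication ``$\langle\psi_d\,\mathrm{pt}\rangle\neq 0\Rightarrow W_L\neq 0$'' is not established by your sketch and is exactly the external input \cite[Theorem~1.1]{To18} that the paper cites.
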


\begin{proof}
This follows from \cite[Theorem~1.1]{To18}.	
\end{proof}

To our knowledge, the above lemma, and hence Hypothesis~\ref{hyp:const}, is expected to hold for all Fano varieties, but this does seem to be currently proven.

\begin{corollary}
	\label{cor:hyp_comp_int}
Let $X$ be a Fano complete intersection in a smooth toric Fano variety. Then Hypothesis~\ref{hyp:const} holds for $X$.
\end{corollary}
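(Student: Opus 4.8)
The statement to be proved is Corollary~\ref{cor:hyp_comp_int}: for $X$ a Fano complete intersection in a smooth toric Fano variety, Hypothesis~\ref{hyp:const} holds, i.e.\ every monotone Lagrangian torus in $X$ has non-constant LG potential. By the immediately preceding lemma, it suffices to show that $X$ has a non-trivial one-pointed descendant Gromov--Witten invariant $\langle \psi_d\,\mathrm{pt}\rangle$ for some $d\ge 2$, or equivalently that the quantum period of $X$ is not identically one. So the entire argument reduces to a computation in closed-string Gromov--Witten theory of $X$, with no further symplectic input needed.

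\textbf{Key steps.} First I would recall that the quantum period of a Fano complete intersection in a toric variety is computed by a quantum Lefschetz / mirror-theorem argument: the quantum period of the ambient toric Fano is explicit (it is essentially the classical period of its Hori--Vafa mirror Laurent polynomial), and the quantum Lefschetz hyperplane principle of Coates--Givental expresses the quantum period of the complete intersection $X$ in terms of that of the ambient variety together with the degrees of the defining sections. This is precisely the content carried out systematically in \cite{CCGK16}, where the quantum periods of all Fano complete intersections in toric varieties are written down. Second, I would observe that such a quantum period is a power series $1 + \sum_{d\ge d_0} c_d\, t^d$ whose leading nontrivial coefficient $c_{d_0}$ is manifestly a positive integer: it counts (with the relevant $\psi$-class insertion) rational curves of minimal anticanonical degree through a point, and in the toric / complete-intersection setting this count is a sum of products of multinomial coefficients coming from the Hori--Vafa presentation, hence strictly positive. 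Therefore the quantum period is genuinely not identically $1$. Third, I would invoke \cite[Theorem~1.1]{To18} (via the preceding lemma) to conclude that Hypothesis~\ref{hyp:const} holds for $X$.

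\textbf{Main obstacle.} The only real point requiring care is the non-vanishing of the leading coefficient of the quantum period: a priori the quantum Lefschetz correction could conspire with the ambient period to kill all positive-degree terms. The way to rule this out is to use positivity --- the quantum period of a Fano variety has non-negative coefficients (they are genuine curve counts with non-negative contributions in these torically-fibered situations), and the coefficient in the lowest anticanonical degree that supports any curve through a point is a manifestly non-empty sum of positive contributions. Concretely, one reads off from the Hori--Vafa / quantum Lefschetz formula that the coefficient of $t$ (or of $t^{d_0}$, where $d_0$ is the Fano index if it exceeds $1$) is a strictly positive integer, so the series is non-constant and the hypothesis of the preceding lemma is met. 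Everything else is bookkeeping with the formulas in \cite{CCGK16}.

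\begin{proof}
By the lemma preceding the statement, it is enough to check that $X$ has a non-trivial descendant invariant $\langle\psi_d\,\mathrm{pt}\rangle$ for some $d\ge2$, equivalently that the quantum period of $X$ is not identically~$1$. For $X$ a Fano complete intersection in a smooth toric Fano variety, the quantum period is computed explicitly by the quantum Lefschetz principle applied to the ambient variety, whose quantum period is the classical period of the associated Hori--Vafa Laurent polynomial; see \cite[Section~B and the tables therein]{CCGK16}. In all these cases the resulting power series has non-negative integer coefficients, and the coefficient in the lowest positive degree that can support a rational curve through a point is a non-empty sum of strictly positive contributions (multinomial coefficients coming from the toric presentation together with the degrees of the defining sections), hence strictly positive. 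Therefore the quantum period is not identically~$1$, and the preceding lemma yields Hypothesis~\ref{hyp:const} for~$X$.
\end{proof}
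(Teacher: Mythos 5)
Your reduction to showing that the quantum period of $X$ is not identically one is exactly what the paper does, and both routes go through the quantum Lefschetz principle and \cite[Corollary~D.5]{CCGK16}. However, the way you close the argument has a genuine gap. Your key claim is that ``the quantum period of a Fano variety has non-negative coefficients'' and that the leading nontrivial coefficient is visibly a positive sum of multinomial terms. This is not what the quantum Lefschetz formula produces. For a complete intersection $X\subset Y$, \cite[Corollary~D.5]{CCGK16} gives the quantum period in the form $G_X(t)=e^{-ct}\sum_\beta \bigl(\text{ratio of factorials}\bigr)\,t^{\langle -K_X,\beta\rangle}$, where the constant $c$ is chosen to kill the linear coefficient, and the prefactor $e^{-ct}$ has alternating-sign Taylor coefficients. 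So the coefficients of $G_X$ are \emph{not} manifestly non-negative, and a priori the exponential twist could cancel every positive-degree term. Positivity of quantum periods is a conjectural property in this generality, not something one reads off the formula; your ``Main obstacle'' paragraph correctly names the danger but then waves it away by asserting precisely the positivity that needs to be proved.

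The paper sidesteps this by a contradiction argument that only uses positivity on the toric side, where it \emph{is} manifest. Namely: if $G_X\equiv 1$, the structure of the quantum Lefschetz formula (via the expression for $C(\tau)$ in the proof of \cite[Corollary~D.5]{CCGK16}) forces the quantum period of the ambient toric Fano $Y$ to receive contributions only from curve classes of a very restricted numerical type; but for a toric Fano variety \emph{every} effective class contributes non-trivially to the quantum period \cite[Corollary~C.2]{CCGK16}, which is the desired contradiction. To repair your proof you would either need to establish non-negativity of the descendant invariants of Fano complete intersections directly (a non-trivial claim), or replace the positivity step with the paper's contradiction argument.
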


\begin{proof}
The quantum period of $X$  can be explicitly computed by iteratively applying the quantum Lefschetz formula of Coates and Givental \cite{CoGi07}. The answer is written down in \cite[Corollary~D.5]{CCGK16}, and this quantum period sequence is non-zero (i.e.~the quantum period series is not identically one). To see this, it may be helpful to take the cohomology classes of curves into account. If the quantum periods of $X$ all vanish, the quantum Lefschetz formula implies that the quantum periods of the ambient toric variety $Y$ can be non-zero only for classes $\beta\in H_2(Y)$ which have the property that $c_1(X)\cdot \beta=[Y]\cdot \beta+1$ (that is, they come from Chern number~1 classes in $Y$), compare with the formula for $C(\tau)$ in the proof of \cite[Corollary~D.5]{CCGK16}. This is a contradiction because for toric Fano varieties, all effective classes contribute non-trivially to quantum periods, see e.g.~\cite[Corollary~C.2]{CCGK16}.

Strictly speaking, \cite{CCGK16} compute algebro-geometric GW~invariants, which are conjecturally equivalent to the symplectic ones. However, the work in preparation \cite{DTVW17} re-proves a version of the quantum Lefschetz formula in the context of symplectic GW~invariants.
\end{proof}	

Independently of the above, we can also prove Hypothesis~\ref{hyp:const}
in the case when $QH^*(X;\C)$ is semisimple.

\begin{lemma}
	\label{lem:diff_ls}
	Let $L\subset X$ be a monotone Lagrangian submanifold and $\rho_1\neq \rho_2$ two different local systems on $L$. Let $\LL_i=(L,\rho_i)$. Then $ HF^*(\LL_1,\LL_2)=0$.
\end{lemma}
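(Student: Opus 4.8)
The plan is to compute the Floer cohomology $HF^*(\LL_1,\LL_2)$ via a standard spectral-sequence or Oh-type argument and exploit the fact that the two local systems differ. First I would recall that, for a monotone Lagrangian $L$, the Floer complex $CF^*(\LL_1,\LL_2)$ computed by a small Hamiltonian perturbation (or the Morse model with a Morse function on $L$) has underlying vector space $C^*(L)\otimes\k$, and the Floer differential is a deformation of the Morse differential weighted by the local systems: the coefficient of a Floer trajectory asymptotic to critical points $x,y$ is a sum over homotopy classes of strips, each counted with the monodromy factor $\rho_2(\partial u)\rho_1(\partial u)^{-1}$ along the boundary of the strip (together with the area weight, which is trivial here by monotonicity/exactness considerations and contributes only via Maslov index). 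The key point: the degree-$0$ part of the differential is the ordinary Morse differential (there are no nonconstant strips of that index by monotonicity), so $HF^*(\LL_1,\LL_2)$ carries a spectral sequence whose $E_1$ (or $E_2$) page is $H^*(L;\k)$, with higher differentials $d_r$ encoding disk contributions twisted by the ratio local system $\rho:=\rho_2\rho_1^{-1}$.

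Next I would isolate the part of the differential coming from Maslov index~$2$ disks, i.e. the order-$1$ (in the area/index filtration) piece $\mathfrak m^{0,1}$, which acts on $H^*(L)$ as multiplication by the curvature term $\mathfrak m^0_{\LL_1,\LL_2}$. The crucial computation is that this curvature equals a \emph{twisted} potential $W_L(\rho_2)-W_L(\rho_1)$ evaluated against $1_L$ — or, more precisely, the relevant obstruction for the pair $(\LL_1,\LL_2)$ vanishes iff $\rho_1=\rho_2$ on the classes of Maslov~$2$ disks. Rather than chase this through, the cleaner route is: since $\rho_1\ne\rho_2$, there is a class $\gamma\in H_1(L;\Z)$ with $\rho_1(\gamma)\ne\rho_2(\gamma)$; the corresponding $1$-cocycle (dually, the degree-$1$ class it represents) acts on $HF^*(\LL_1,\LL_2)$ in two ways — via the closed-open-type module structure over $H^*(L;\k)$ coming from each factor — and these two actions differ precisely by the monodromy ratio. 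Concretely, for $a\in HF^*(\LL_1,\LL_2)$ one has $\mu^2(c,a)=\rho_1(\gamma)$-action while $\mu^2(a,c)=\rho_2(\gamma)$-action up to sign, where $c$ is the unit-type generator; if $HF^*(\LL_1,\LL_2)\ne 0$, pick $0\ne a$ and derive $\rho_1(\gamma)\cdot a=\rho_2(\gamma)\cdot a$, forcing $\rho_1(\gamma)=\rho_2(\gamma)$, a contradiction. This is exactly the mechanism used in the proof of Theorem~\ref{th:pt} above, applied with the local system as the ``potential''.

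I would therefore organize the proof as: (1) set up the Morse-Bott/perturbed Floer model for $HF^*(\LL_1,\LL_2)$ with its $H^*(L;\k)$-bimodule structure over the two Floer algebras $HF^*(\LL_i,\LL_i)\cong H^*(L;\k)$; (2) observe that the degree-one generators $c_\gamma$ dual to $\gamma\in H_1(L)$ act on $HF^*(\LL_1,\LL_2)$ from the left by $\rho_1(\gamma)^{-1}$-twist and from the right by $\rho_2(\gamma)^{-1}$-twist, since a local system only changes the Floer differential/products through boundary monodromy; (3) invoke associativity $\mu^2(\mu^2(c_\gamma,a))$-type relations (equivalently that left and right module actions of the ambient $H^*(L)$ commute in the appropriate graded sense) to conclude that multiplication by $(\rho_1(\gamma)-\rho_2(\gamma))$ annihilates $HF^*(\LL_1,\LL_2)$; (4) since $\rho_1\ne\rho_2$ choose $\gamma$ with $\rho_1(\gamma)\ne\rho_2(\gamma)$, so the scalar $\rho_1(\gamma)-\rho_2(\gamma)\in\k^*$ is invertible, whence $HF^*(\LL_1,\LL_2)=0$.

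The main obstacle is step (2)--(3): making precise in what sense the two module actions of a degree-$1$ class differ exactly by the monodromy scalar, and checking the sign/grading bookkeeping so that the relevant relation really reads ``multiplication by an invertible scalar is zero.'' One must be careful that the naive statement ``a degree-$1$ element acts as a scalar'' is false in $H^*(L)$ in general; what is true and what is needed is the weaker fact that the \emph{difference} of the two actions of $c_\gamma$ on $HF^*(\LL_1,\LL_2)$ is the scalar $(\rho_1(\gamma)^{-1}-\rho_2(\gamma)^{-1})\cdot(\text{cup with }\gamma)$, and then to run the argument one restricts to the summand of $HF^*$ where cup product with $\gamma$ is injective, or — more robustly — one uses the continuation/PSS comparison identifying $HF^*(\LL_1,\LL_2)$ with $HF^*$ of the pair $(L,\text{trivial})$ twisted by $\rho$, for which vanishing when $\rho\ne 1$ is the classical computation of Floer cohomology with a nontrivial flat line bundle over a torus-like factor. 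Either route works; I would present the module-action version as it parallels the proof of Theorem~\ref{th:pt} most closely and requires no new machinery.
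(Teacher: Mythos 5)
Your preferred route, the module-action argument (steps (2)--(4)), has a gap that you yourself flag but do not close. The action of a degree-$1$ class $c_\gamma\in HF^1(\LL_1,\LL_1)\cong H^1(L)$ on the bimodule $HF^*(\LL_1,\LL_2)$ is cup product to leading order, \emph{not} multiplication by a scalar, and the local systems enter the Floer products through boundary holonomies of the contributing triangles, not as a uniform factor of $\rho_i(\gamma)^{-1}$ on the whole operation. Even your corrected claim --- that the difference of the two actions of $c_\gamma$ equals $(\rho_1(\gamma)^{-1}-\rho_2(\gamma)^{-1})\cdot(\text{cup with }\gamma)$ --- is, if true, a degree-$1$ operator and not an element of $\k^*$, so the intended conclusion ``multiplication by an invertible scalar annihilates $HF^*$'' does not follow. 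The analogy with Theorem~\ref{th:pt} is misleading on this point: there the class $\BS$ lands, via $\CO$, inside the one-dimensional space $HF^0(\LL_i,\LL_i)=\C\cdot 1_{L_i}$, so its action genuinely is a scalar; a degree-$1$ class has no such privilege. Your fallback (restrict to a summand where cup with $\gamma$ is injective) is not developed, and it is unclear it can be.

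The paper takes the other route you mention in passing, and it is short and explicit. Oh's spectral sequence converging to $HF^*(\LL_1,\LL_2)$ has first page $C^*(L)$ with the Morse differential twisted by the ratio local system $\rho_1\rho_2^{-1}$; one then observes that for $L=T^n$, in the minimal Morse model $\Lambda^*\langle p_1,\dots,p_n\rangle$, the twisted differential is $a\mapsto a\wedge\bigl(\sum_i(\lambda_i-1)p_i\bigr)$ with $\lambda_i$ the relevant monodromies, and this Koszul-type complex is acyclic as soon as some $\lambda_i\neq 1$, so the whole spectral sequence collapses to zero. Two things to note: the computation is specific to the torus --- twisted singular cohomology of a general closed $L$ need not vanish (take a higher-genus surface; the twisted Euler characteristic is still $2-2g\neq 0$) --- and this is the only case the surrounding applications (Hypothesis~\ref{hyp:const}) actually use. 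Your ``classical computation with a nontrivial flat line bundle over a torus-like factor'' is precisely this; I would develop that version and drop the module-action one.
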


\begin{proof}
	Recall that Oh's spectral sequence  \cite{Oh96A} converging to $HF^*(\LL_1,\LL_2)$	begins with the first page $C^*(L)$  carrying the singular differential of degree 1 twisted by the local system $\rho_1\rho_2^{-1}$. So it is enough to check that the twisted singular cohomology of $L$ vanishes. Consider a minimal Morse function on $T^n$ with $2^n$ critical points, and represent the Morse complex, as a vector space, by the exterior algebra $\Lambda^*\langle p_1,\ldots, p_n\rangle$ on the degree~1 Morse generators $p_i$. In this Morse model, the twisted differential is given  by
	$$
	a\mapsto a\wedge \left(\sum_{i=1}^n (\lambda_i-1)p_i\right)
	$$
	where $\lambda_i\in \C^*$ is the monodromy of $\rho_1\rho_2^{-1}$ around the loop corresponding to the class of $p_i$. The resulting complex is acyclic unless $\rho_1=\rho_2$.
\end{proof}	

\begin{corollary}
	\label{cor:ve}
If $X$ is a Fano variety with semisimple $QH^*(X;\C)$, 
 Hypothesis~\ref{hyp:const} holds for it.
\end{corollary}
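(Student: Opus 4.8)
The plan is to leverage the semisimplicity of $QH^*(X;\C)$ together with the decomposition of the monotone Fukaya category according to the eigenvalues of quantum multiplication by $c_1(X)$, and to combine this with Lemma~\ref{lem:diff_ls} above. First I would recall the structural fact that for a monotone $X$, the monotone Fukaya category splits as an orthogonal direct sum $\bigoplus_{w} \Fuk(X)_w$ indexed by the eigenvalues $w$ of the operator $c_1(X)*-$ acting on $QH^*(X;\C)$, and that an object $\LL=(L,\rho)$ with curvature $\mm^0_\LL = W_L(\rho)\cdot 1_L$ can only be nonzero in the summand $\Fuk(X)_w$ with $w=W_L(\rho)$; in particular $HF^*(\LL_1,\LL_2)=0$ whenever the curvature values $W_L(\rho_1)\ne W_L(\rho_2)$. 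This is essentially Auroux–Kontsevich–Seidel (cf.\ Remark~\ref{rmk:CO_BS}) combined with the Abouzaid–Fukaya–Oh–Ohta–Ono / Sheridan splitting.

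Next I would argue by contradiction: suppose $L\subset X$ is a monotone Lagrangian torus with constant potential $W_L\equiv c$ (for every, equivalently some, local system). The semisimplicity of $QH^*(X;\C)$ means $c_1(X)*-$ is diagonalizable with a basis of idempotents; the key leverage point is that $\CO_X(\mathrm{pt})$ or the open-closed map $\OC$ detects whether $L$ lies in a given eigenvalue-summand, and semisimplicity forces each nonzero summand $\Fuk(X)_w$ to be ``as nondegenerate as possible''. More concretely, I would use the fact (proven in the semisimple case) that if $L$ is nonzero in $\Fuk(X)_w$, then the open-closed map $\OC\co HH_*(\Fuk(X)_w)\to QH^{*+n}(X)$ hits the idempotent $e_w$, hence $HF^*(\LL,\LL)$ has the ``expected'' rank; but if the potential is constant, all the local systems $\rho$ on $T^n$ give objects in the \emph{same} summand $\Fuk(X)_c$, while Lemma~\ref{lem:diff_ls} shows they are pairwise orthogonal — this produces a continuum (or at least $2^n$, after restricting to order-two monodromies, or a full $(\C^*)^n$) of pairwise-orthogonal nonzero idempotent-like objects in a single summand $\Fuk(X)_c$ whose $\OC$ all hit the \emph{same} idempotent $e_c$. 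That contradicts the finiteness/semisimplicity of $QH^*(X;\C)$ at the eigenvalue $c$, since a semisimple summand of $QH^*$ corresponding to a single eigenvalue that receives $\OC$ from infinitely many orthogonal objects cannot be finite-dimensional over $\C$.

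I would organize the write-up as: (i) invoke the eigenvalue decomposition of $\Fuk(X)$ and the fact that $(L,\rho)\in\Fuk(X)_{W_L(\rho)}$; (ii) assume $W_L$ constant $=c$ and note every $(L,\rho)$ lands in $\Fuk(X)_c$; (iii) apply Lemma~\ref{lem:diff_ls} to get that $\{(L,\rho)\}_{\rho}$ are pairwise orthogonal nonzero objects of $\Fuk(X)_c$; (iv) use that semisimplicity of $QH^*(X;\C)$ implies $QH^*(X;\C)_c$ is a finite-dimensional (in fact one-dimensional, field) summand, so by the $\OC$–unitality criterion of Ritter–Smith / Ganatra it can support only finitely many orthogonal objects, contradiction. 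Hence no such $L$ exists and Hypothesis~\ref{hyp:const} holds.

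The hard part will be pinning down step (iv) rigorously without importing more machinery than the paper wants to assume — specifically, making precise the statement that ``a one-eigenvalue semisimple summand of $QH^*$ cannot receive $\OC$ from infinitely many pairwise-orthogonal nonzero Floer-nontrivial objects.'' The cleanest route is probably to observe that $L$ being nonzero in $\Fuk(X)_c$ forces $HF^*(\LL,\LL)\ne 0$, that $HF^*(\LL,\LL)\cong H^*(T^n;\C)$ has dimension $2^n>1$, and to combine this with Lemma~\ref{lem:diff_ls} and the fact that, when $QH^*(X;\C)_c$ is a one-dimensional summand (as for a \emph{semisimple} ring every generalized eigenspace is one-dimensional), the category $\Fuk(X)_c$ is ``$A_\infty$-Morita equivalent'' to its endomorphism algebra, so that all nonzero objects are isomorphic up to shifts — contradicting the existence of the pairwise-orthogonal family $\{(L,\rho)\}$. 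Alternatively, if the paper is content with the invariance-of-potential statement alone, one can skip the categorical language entirely and cite directly that in the semisimple case Theorem~\ref{th:pt}-type non-vanishing criteria combined with Lemma~\ref{lem:diff_ls} are incompatible with a constant potential; I would present the argument at roughly the level of detail of the preceding corollaries, pointing to the semisimple-Fukaya-category literature for the decomposition.
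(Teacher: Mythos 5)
Your core strategy matches the paper's: derive a contradiction by producing, from a constant-potential torus, infinitely many nonzero pairwise-orthogonal objects in the monotone Fukaya category (orthogonality coming from Lemma~\ref{lem:diff_ls}), and then invoke semisimplicity of $QH^*(X;\C)$ to cap the number of such objects. However, the paper's actual proof is considerably more direct than your plan: it simply cites the theorem (Entov--Polterovich \cite{EnPo09}, Seidel \cite{Sei14}) that when $QH^*(X;\C)$ is semisimple with even part of rank $k$, the monotone Fukaya category admits at most $k$ non-trivial mutually orthogonal objects. It does not pass through the eigenvalue decomposition $\Fuk(X)=\bigoplus_w\Fuk(X)_w$, the open-closed map, idempotents $e_w$, or any Morita/generation argument — none of that machinery is needed, and building it in makes the write-up harder to tie off (as you yourself note at the end). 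The paper also treats the nonvanishing $HF^*(\LL,\LL)\neq 0$ as an immediate consequence of $W_L$ being constant.

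There is a genuine gap in your step establishing $HF^*(\LL,\LL)\neq 0$: you deduce it from ``$L$ being nonzero in $\Fuk(X)_c$ forces $\OC$ to hit $e_c$, hence $HF^*(\LL,\LL)$ has expected rank,'' but being nonzero in $\Fuk(X)_c$ \emph{means} $HF^*(\LL,\LL)\neq 0$, so this is circular. The clean input is the standard criterion for monotone Lagrangian tori with local systems: $HF^*(\LL,\LL)\neq 0$ if and only if $\rho$ is a critical point of $W_L$ (the obstruction spectral sequence differentials are controlled by $dW_L$, and for $HF^*(\LL,\LL)$ the local-system twist cancels). Constant $W_L$ makes every $\rho\in(\C^*)^n$ a critical point, so $HF^*(\LL,\LL)\cong H^*(T^n;\C)\neq 0$ for all $\rho$; then Lemma~\ref{lem:diff_ls} gives orthogonality, and the cited bound on orthogonal objects delivers the contradiction. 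If you repair that one step and drop the detour through $\OC$ and idempotents, your argument coincides with the paper's.
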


\begin{proof}
	If $QH^*(X)$ is semisimple and its even part has rank $k$, the monotone Fukaya category of $X$ cannot have more than $k$ non-trivial, mutually orthogonal objects (meaning: every object has non-trivial self-Flor homology, and every pair of distinct objects has vanishing Floer homology). See \cite[(1d)]{Sei14}, \cite[Theorem~1.25]{EnPo09}. If $W_L$ is constant, for any local system $\rho$ and $\LL=(L,\rho)$ it holds that $HF(\LL,\LL)\neq 0$. In view of Lemma~\ref{lem:diff_ls}, this means that the Fukaya category of $X$ has infinitely many non-trivial, mutually orthogonal objects, which is a contradiction.
\end{proof}	

\subsection{Exact tori in Liouville domains}

Below is the main result of this section, which we will use to draw several corollaries about the symplectic topology of Liouville domains.

\begin{proposition}
	\label{prop:no_exact}
Suppose $X$ is a  monotone symplectic manifold, and $\Sigma\subset X$ is a Donaldson hypersurface.
Suppose $M\subset X\setminus \Sigma$ is a grading-compatible Liouville subdomain with $c_1(M)=0$, $L\subset M$ is an exact torus with vanishing Maslov class, and:
\begin{itemize}
	\item either $SH^0(M)$ is finite-dimensional as a vector space,
	\item or there exist countably many compact exact Lagrangian submanifolds $K_i\subset M$ with $H_1(K_i;\C)=0$ satisfying the following property: for any $\C^*$-local system $\rho$ on $L$, the object $(L,\rho)$ is split-generated, in the compact exact Fukaya category of $M$, by the $K_i$.  
\end{itemize}
Then the potential of the monotone torus  $L\subset X$ is constant. In particular, both cases provide a contradiction if Hypothesis~\ref{hyp:const} holds for $X$.
\end{proposition}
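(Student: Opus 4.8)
The plan is to assume the potential $W_L$ is non-constant and derive a contradiction in each of the two cases, using Theorem~\ref{th:factor} to translate the geometric hypotheses on $M$ into statements about the Borman--Sheridan class $\BS\in SH^0(M)$ and its closed-open images. Since $L$ is exact in $M$ with vanishing Maslov class, and $M$ is grading-compatible with $c_1(M)=0$, Theorem~\ref{th:factor} applies: for every $\C^*$-local system $\rho$ on $L$ we have $\CO_{(L,\rho)}(\BS)=d\cdot W_L(\rho)\cdot 1_{(L,\rho)}$ in $HF^0_M((L,\rho),(L,\rho))\cong\C$. The key elementary observation is that, because $W_L$ is a non-constant Laurent polynomial, the function $\rho\mapsto d\cdot W_L(\rho)$ takes infinitely many distinct values as $\rho$ ranges over $(\C^*)^m$; in particular it is nonzero for infinitely many $\rho$, and it takes at least two distinct values $W_L(\rho_1)\ne W_L(\rho_2)$.

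In the first case, suppose $SH^0(M)$ is finite-dimensional. Then $SH^0(M)$ is a finite-dimensional commutative $\C$-algebra, so it has only finitely many $\C$-algebra homomorphisms to $\C$ (one for each of its finitely many maximal ideals, as $\C$ is algebraically closed). For each $\rho$ with $W_L(\rho)\ne 0$, the composition $SH^0(M)\xrightarrow{\CO_{(L,\rho)}} HF^0_M((L,\rho),(L,\rho))\cong\C$ is a nonzero ring homomorphism — it is a ring map because $\CO$ is a ring map and $HF^0_M\cong\C$ with $1_{(L,\rho)}\mapsto 1$ — hence a unital $\C$-algebra homomorphism. Moreover distinct values $W_L(\rho)\ne W_L(\rho')$ force these two homomorphisms to be distinct, since they disagree on $\BS$. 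As there are infinitely many such $\rho$ but only finitely many algebra maps $SH^0(M)\to\C$, we get the required contradiction unless $W_L$ is constant. (One must check $HF^*_M((L,\rho),(L,\rho))$ is concentrated in degree $0$, i.e.\ equals $H^*(L)$ with the undeformed differential — this holds since $L$ is exact in $M$ and $\rho$ is trivialized on the relevant disk classes, or more simply by the discussion preceding Theorem~\ref{th:factor}; I would invoke that $HF^0_M\cong\C\cdot 1_L$ directly.)

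In the second case, suppose $(L,\rho)$ is split-generated in the compact exact Fukaya category of $M$ by simply-connected exact Lagrangians $K_i$ with $H_1(K_i;\C)=0$. By Lemma~\ref{lem:diff_ls} applied inside $M$ — or rather its proof, which only uses exactness and the twisted Morse model — each $K_i$ has $HF^*_M(K_i,K_i)=H^*(K_i)$ and, crucially, the closed-open image $\CO_{K_i}(\BS)\in HF^0_M(K_i,K_i)$ is computed by disks with boundary on $K_i$; since $H_1(K_i;\C)=0$, the relevant deformation count is the \emph{constant term} type contribution, which I will argue is forced to be a single fixed scalar independent of $i$. More robustly: the closed-open maps are compatible with the module structure and with split-generation, so if $(L,\rho)$ is a summand of an object built from the $K_i$, then the eigenvalue by which $\BS$ acts on $HF^*_M((L,\rho),(L,\rho))$ — namely $d\cdot W_L(\rho)$ — must lie in the (finite) set of eigenvalues of $\BS$ acting on $\bigoplus_i HF^*_M(K_i,K_i)$. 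Since the $K_i$ are simply-connected, there are no local systems to vary and this eigenvalue set, taken over all countably many $K_i$, is a fixed countable — in fact, I will argue finite, or at least not all of an infinite subset of $\C$ — set, contradicting that $d\cdot W_L(\rho)$ takes infinitely many values.

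The main obstacle is the second case: making precise the claim that the closed-open image of $\BS$ on a split-generating family of simply-connected Lagrangians forces $W_L(\rho)$ into a constrained set. The clean way is to argue at the level of the closed-open map into the endomorphism algebra of a split-generator: $\CO\co SH^0(M)\to HH^0(\Fuk_{ex}(M))\to Z(\Fuk_{ex}(M))$, and observe that $\BS$ acts on $HF^*_M((L,\rho),(L,\rho))\cong\C$ as a scalar which must factor through the action on the $K_i$. Since the $K_i$ carry no local-system parameter, the set of scalars arising this way cannot separate the infinitely many $(L,\rho)$ — concretely, $HF^*_M((L,\rho),(L,\rho))$ being a summand of $HF^*_M(\bigoplus K_i,\bigoplus K_i)$ (after taking the relevant twisted complex) bounds the number of distinct $\BS$-eigenvalues by the total dimension of the $HF^*_M(K_i,K_i)$ that appear, and each such space is finite-dimensional with a single $K_i$; organizing this over a \emph{countable} family and a \emph{continuum} of local systems $\rho$ is where care is needed. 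I expect the argument to go through by noting that for any single $(L,\rho)$ only finitely many $K_i$ can appear in a twisted complex resolving it, and the eigenvalue of $\BS$ is then determined by finitely many rigid (local-system-free) numbers, so varying $\rho$ over a curve in $(\C^*)^m$ along which $W_L$ is non-constant produces uncountably many eigenvalues drawn from a countable pool — the contradiction. Throughout, the routine but necessary verifications are that $\CO$ is a unital ring map and module map, that $HF^0_M\cong\C$ in all these cases, and that Theorem~\ref{th:factor} genuinely applies to both $L$ and (a posteriori, not needed) the $K_i$; none of these should present difficulty.
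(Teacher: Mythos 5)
Your first bullet point is handled correctly, though by a different route from the paper's: you argue that a finite-dimensional commutative unital $\C$-algebra has only finitely many $\C$-algebra characters, while the maps $\CO_{(L,\rho)}\colon SH^0(M)\to HF^0_M((L,\rho),(L,\rho))\cong\C$ produce infinitely many distinct ones when $W_L$ is non-constant. The paper argues instead that the powers $(W_L)^k$ are linearly independent Laurent polynomials, so, since $\CO$ is a ring map sending $\BS^k$ to $d^k\,W_L(\rho)^k$, the elements $\BS^k$ are linearly independent in $SH^0(M)$, making it infinite-dimensional. Both are valid; the paper's avoids invoking any structure theory of Artinian rings, while yours is a clean reformulation. (Your worry about $HF^*_M$ being concentrated in degree $0$ is unnecessary: the argument only needs $HF^0_M\cong\C\cdot 1_L$, which holds since $L$ is exact in $M$.)

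The second bullet point, however, has a genuine gap. You assert that the scalar $d\,W_L(\rho)$ by which $\BS$ acts on $(L,\rho)$ must lie among the scalars by which $\BS$ acts on the $K_i$, and you acknowledge this is the place where ``care is needed,'' but you do not justify it. The missing tool is precisely the wall-crossing formula (Theorem~\ref{th:pt}), already established in the paper from the same $SH^*(M)$-module-structure compatibility you gesture at: whenever $HF^*_M((L,\rho),K_i)\neq 0$, one has $W_L(\rho)=W_{K_i}$. With this in hand, the paper's argument is short and does not need to track eigenvalues on twisted complexes: split-generation forces, for each $\rho$, some $HF^*_M((L,\rho),K_i)\neq 0$; for each fixed $i$ the set of such $\rho$ is Zariski closed in $(\C^*)^m$; a countable union of proper Zariski closed sets cannot cover $(\C^*)^m$, so a single $K_{i_0}$ has $HF^*_M((L,\rho),K_{i_0})\neq 0$ for all $\rho$, and the wall-crossing formula then gives $W_L(\rho)=W_{K_{i_0}}$, a constant. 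Your alternative conclusion (countably many attained values versus the uncountable image of a non-constant Laurent polynomial) also works, but only after the wall-crossing input is supplied; the claim that a central scalar on a summand of a twisted complex must equal one of the scalars on the building blocks is not automatic and needs either Theorem~\ref{th:pt} or an equivalent module-structure argument spelled out in full.
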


\begin{example}
Suppose $X$ is Fano and $[\Sigma]=dc_1(X)$ where $d\in \mathbb{Q}_{>0}$ and $d\neq 1$. Then $SH^0(M)$ is finite-dimensional by (\ref{eq:CH_chain_level}).
\end{example}

\begin{proof}
	Suppose $L\subset M$ is such a torus. Then it becomes monotone under the inclusion $M\subset X$, compare \cite[Example~3.2, Lemma~3.4]{CW17}, \cite[Section~2]{PT17}. We claim that either of the two given conditions imply that $W_L$ is constant, where the potential $W_L$ is computed in $X$. 
	
	Under the first condition, assume $W_L$ is a non-constant Laurent polynomial. Then the powers $(W_L)^k$ are linearly independent for all $k$. It follows from Theorem~\ref{th:factor} that the powers $\BS^k\in SH^0(M)$ with respect to the symplectic cohomology product are linearly independent for all $k$, hence $SH^0(M)$ is infinite-dimensional.
		
	Under the second condition, for any local system $\rho$ there exists a Lagrangian $K_i$ such that \begin{equation}
	\label{eq:hf_k}
		HF^*_M((L,\rho),K_i)\neq 0.
	\end{equation}	
	For a fixed $K_i$, the set of all $\rho\in (\C^*)^n$ such that (\ref{eq:hf_k}) holds is Zariski closed in $(\C^*)^n$. It follows that there exists a $K_i$ such that (\ref{eq:hf_k}) holds for all local systems, because the union of countably many Zariski closed sets cannot cover $(\C^*)^n$ unless one of the sets coincides with it.
	Since $H_1(K_i;\C)=0$, the LG potential of $K\subset X$ is automatically constant: $W_K\equiv c$. By the wall-crossing formula (Theorem~\ref{th:pt}),
	$$W_L(\rho)=c$$
	for all $\rho$. It means that $W_L\equiv c$. 
\end{proof}

In the rest of the section we present several applications of Proposition~\ref{prop:no_exact} to the symplectic topology of Liouville domains, offering a fresh look on some of the known results in the field and providing new extensions thereof.

\subsection{Generation by simply-connected Lagrangians}

\begin{corollary}
	\label{cor:not_gen}
Suppose $X$ is a monotone symplectic manifold containing a monotone torus with non-constant potential, and $\Sigma\subset X$ is a Donaldson divisor such that the torus is exact in its complement. Then the compact exact Fukaya category of any grading-compatible Liouville subdomain of $X\setminus\Sigma$ with vanishing Chern class is not split-generated by simply-connected Lagrangians.
\end{corollary}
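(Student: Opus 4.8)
The plan is to derive Corollary~\ref{cor:not_gen} directly from Proposition~\ref{prop:no_exact} by contraposition. Suppose, for contradiction, that $M\subset X\setminus\Sigma$ is a grading-compatible Liouville subdomain with $c_1(M)=0$ whose compact exact Fukaya category is split-generated by simply-connected Lagrangians $K_i$; simply-connectedness in particular gives $H_1(K_i;\C)=0$. Let $L\subset X$ be the postulated monotone torus with non-constant potential, exact in $X\setminus\Sigma$.

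First I would observe that we may assume $L\subset M$: this is not automatic, so the cleanest route is to state the corollary with the hypothesis ``containing $L\subset M$'' absorbed, or — as the paper's phrasing suggests — to read it as a statement quantified over those $M$ that do contain such an $L$. Concretely, given that $L$ is exact in $X\setminus\Sigma$ and has vanishing Maslov class there (which holds because $\Sigma$ is Donaldson and $L$ is monotone in $X$, cf. the grading-compatibility discussion), one checks $L$ satisfies the hypotheses of Proposition~\ref{prop:no_exact}: it is an exact torus in $M$ with vanishing Maslov class, and it is monotone in $X$ by \cite[Example~3.2, Lemma~3.4]{CW17}.

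Next I would verify the second alternative in the hypothesis list of Proposition~\ref{prop:no_exact}. Split-generation of the compact exact Fukaya category of $M$ by the $K_i$ means in particular that for each $\C^*$-local system $\rho$ on $L$, the object $(L,\rho)$ — which is an object of that category since $L$ is exact in $M$ and has vanishing Maslov class — is split-generated by the $K_i$. This is exactly the second bullet of Proposition~\ref{prop:no_exact}, with the only addendum that the $K_i$ form a countable (indeed finite, but countability suffices) collection with $H_1(K_i;\C)=0$. Applying Proposition~\ref{prop:no_exact} then forces $W_L$ to be constant, contradicting our choice of $L$. Hence no such split-generating collection exists.

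The main obstacle I anticipate is purely bookkeeping rather than conceptual: making precise the relationship between ``$X$ contains a monotone torus exact in $X\setminus\Sigma$'' and ``that torus lies in the given subdomain $M$''. If the torus does not a priori lie in $M$, the statement as literally worded is about those $M$ for which it does (equivalently, one first fixes $L$, then restricts attention to $M\supset L$); I would add a clause clarifying this, or invoke Remark~\ref{rmk:don_div} to arrange $\Sigma$ and a Weinstein-type $M$ around $L$. A secondary point worth a line is that simply-connectedness of $K_i$ is strictly stronger than $H_1(K_i;\C)=0$, so no Maslov-class or orientation subtlety arises for the $K_i$, and they are genuine objects of the compact exact Fukaya category of $M$; everything else is a direct appeal to the already-proven Proposition~\ref{prop:no_exact} and Theorem~\ref{th:pt}.
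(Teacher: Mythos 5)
Your proposal is correct and matches the paper's proof: both observe that the hypotheses make $L$ (once it sits inside $M$) an exact torus of vanishing Maslov class, so the second alternative of Proposition~\ref{prop:no_exact} applies, forcing $W_L$ to be constant and giving a contradiction. The bookkeeping worry you raise about whether $L\subset M$ is a fair reading of the slightly compressed statement; the paper tacitly assumes $M$ contains $L$ and its own one-line proof does not address this either, so your remark is a reasonable clarification rather than a divergence.
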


\begin{proof}
The given hypotheses imply that $L\subset X\setminus\Sigma$ has vanishing Maslov class. Hence Proposition~\ref{prop:no_exact} applies.
\end{proof}

\begin{example}
Keating~\cite{Ke15} showed that the 4-dimensional Milnor fibres of isolated complex singularities of modality one contain an exact Lagrangian torus $L$ not split-generated by the vanishing Lagrangian spheres. She explicitly showed that the locus in $(\C^*)^2$ consisting of local systems $\rho$ such that the Floer cohomology between $(L,\rho)$ and the vanishing Lagrangian spheres is  1-dimensional. On the other hand, split-generation would force the locus to be 2-dimensional.

The simplest modality one Milnor fibres are
$$
\mathcal{T}_{3,3,3}=dP_6\setminus\Sigma,\quad \mathcal{T}_{2,4,4}=dP_7\setminus\Sigma,\quad \mathcal{T}_{2,3,6}=dP_8\setminus\Sigma,
$$
where $dP_i$ is the del Pezzo surface which is the blowup of $\C P^2$ at $i$ points, and $\Sigma$ is a smooth anticanonical divisor in each of them. Vianna showed \cite{Vi14,Vi17} that each of these Milnor fibres contains infinitely many exact Lagrangian tori, whose potentials in the compactification $dP_i$ are non-constant. In view of Corollary~\ref{cor:not_gen}, we arrive at the following generalisation of \cite{Ke15}.
\end{example}

\begin{corollary}
Fix any finite collection of Lagrangian spheres in the Milnor fibre $\mathcal{T}_{3,3,3}$, $\mathcal{T}_{2,4,4}$ or $\mathcal{T}_{2,3,6}$.
For almost all $\C^*$-local systems (precisely, for a non-empty Zariski open set of them) on each exact Vianna torus in the Milnor fibre, that torus is not split-generated by the fixed collection of  spheres considered as separate objects of the Fukaya category.\qed
\end{corollary}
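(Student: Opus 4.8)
The plan is to re-run the argument behind Proposition~\ref{prop:no_exact} and Corollary~\ref{cor:not_gen}, keeping track of Zariski-openness (which is available here because the collection of spheres is finite, not merely countable). First I would fix the ambient data: take $X=dP_i$ and $M=\mathcal T_{p,q,r}=X\setminus\Sigma$ with $\Sigma$ the chosen smooth anticanonical divisor, so that, as in the example above, $M$ is a grading-compatible Liouville subdomain with $c_1(M)=0$ and Theorems~\ref{th:factor} and~\ref{th:pt} apply to Lagrangians satisfying their hypotheses. Let $L\subset M$ be one of the Vianna tori: it is exact in $M$, becomes monotone in $X$ with vanishing Maslov class (since $c_1(M)=0$), and by Vianna's computations \cite{Vi14,Vi17} its LG potential $W_L\in\C[x_1^{\pm1},x_2^{\pm1}]$ computed in $X$ is \emph{not} a constant Laurent polynomial. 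Let $K_1,\dots,K_N\subset M$ be the given Lagrangian spheres; being simply-connected compact Lagrangians they are exact in $M$, monotone in $X$, and have constant potentials $W_{K_j}\equiv c_j\in\C$, so each pair $(L,\rho)$, $K_j$ satisfies the hypotheses of Theorems~\ref{th:factor} and~\ref{th:pt}.

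Next I would set $\mathcal G=\{\rho\in(\C^*)^2 : (L,\rho)\text{ is split-generated in the compact exact Fukaya category of }M\text{ by }K_1,\dots,K_N\}$ and show $\mathcal G$ lies in a proper Zariski closed subset. The key observation is that $(L,\rho)$ is always a non-zero object, since $HF^*_M((L,\rho),(L,\rho))\cong H^*(L;\C)\neq 0$ (Floer cohomology of a closed exact Lagrangian in a Liouville domain, with the trivial twist $\rho^{-1}\rho$). A non-zero object of a triangulated category cannot be orthogonal to a split-generating collection, so for $\rho\in\mathcal G$ there is some $j$ with $HF^*_M((L,\rho),K_j)\neq 0$. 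Hence $\mathcal G\subseteq\bigcup_{j=1}^N Z_j$ with $Z_j=\{\rho : HF^*_M((L,\rho),K_j)\neq 0\}$, and each $Z_j$ is Zariski closed in $(\C^*)^2$ by the same reasoning used in the proof of Proposition~\ref{prop:no_exact} (the complex computing $HF^*_M((L,\rho),K_j)$ has a model whose differential depends algebraically on the monodromies of $\rho$ and whose Euler characteristic is $\rho$-independent, so non-vanishing of cohomology is a closed condition). Since $(\C^*)^2$ is irreducible, $\bigcup_j Z_j=(\C^*)^2$ would force $Z_j=(\C^*)^2$ for some $j$, i.e.\ $HF^*_M((L,\rho),K_j)\neq 0$ for \emph{every} $\rho$; the wall-crossing formula Theorem~\ref{th:pt} would then give $W_L(\rho)=c_j$ for all $\rho$, contradicting the non-constancy of $W_L$. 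Therefore $\bigcup_j Z_j$ is a proper Zariski closed subset, its complement $U\subset(\C^*)^2$ is a nonempty Zariski open set, and every $\rho\in U$ lies outside $\mathcal G$.

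Finally I would observe that this applies verbatim to each Vianna torus in the Milnor fibre, producing in each case a nonempty Zariski open set of $\C^*$-local systems $\rho$ for which $(L,\rho)$ is not split-generated by $K_1,\dots,K_N$ — which is exactly the assertion. The only non-formal ingredient is the Zariski-closedness of the loci $Z_j$; this is precisely the fact already established inside the proof of Proposition~\ref{prop:no_exact}, so I do not expect a genuine obstacle there. The remaining content is the irreducibility of $(\C^*)^2$ together with Theorems~\ref{th:factor} and~\ref{th:pt}, all of which are in place.
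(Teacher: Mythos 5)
Your proof is correct and is essentially the intended argument: the paper marks this corollary as immediate from Corollary~\ref{cor:not_gen} and the discussion around Proposition~\ref{prop:no_exact}, and your write-up spells out precisely that argument (Zariski-closedness of the non-vanishing loci $Z_j$, irreducibility of $(\C^*)^2$ to turn the finite union of proper closed sets into a proper closed set, and the wall-crossing Theorem~\ref{th:pt} to contradict some $Z_j=(\C^*)^2$ via non-constancy of Vianna's potentials). The only additional substance you supply, correctly, is that finiteness of the collection upgrades "non-empty" to "non-empty Zariski open," which is exactly what the statement asserts.
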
	

\begin{example}
Let $M$ be the plumbing of two copies of $T^*S^2$ at two points with the same intersection sign. It is Liouville deformation equivalent to
$$
\{(x,y,z)\in\C^3:xy=z^2-1\}\setminus\{xy=-1\}.
$$
This can be seen using the Lefschetz fibration onto $\C\setminus\{0\}$ by projecting to the $z$-plane.
Therefore $M$ embeds into the quadric $\C P^1\times\C P^1$ away from an anticanonical divisor (which can be smoothed).  The domain $M$ contains an exact torus (the matching torus for a loop in the base of the Lefschetz fibration encircling both singular points, and enclosing the correct amount of area to make the resulting torus exact).  Corollary~\ref{cor:not_gen}
implies that this torus is not generated by any collection of Lagrangian spheres in $M$. This is totally expected because the torus is displaceable from both core spheres.
\end{example}

\begin{remark}
A subtlety must be pointed out here. Suppose $X$  is a Fano projective hypersurface and $\Sigma\subset X$ is an anticanonical divisor.  
Sheridan proved \cite{She11,She13} that the compact exact Fukaya category of  $X\setminus\Sigma$ is split-generated by the union of certain Lagrangian spheres considered as a \emph{single immersed Lagrangian submanifold}. This is different from being split-generated by the collection of the same Lagrangian spheres considered separately: already in the previous example,  $\Fuk(M)$ is generated by the single object $K=K_1\cup K_2$, the union of the two core spheres. In particular, the matching exact torus $L\subset M$ is Hamiltonian displaceable from $K_1$ and $K_2$, but not from the union $K_1\cup K_2$.
\end{remark}

\subsection{Non-existence of exact tori}

We move on to the discussion along a different line: if it is known that $\Fuk(M)$ is split-generated by spheres, Proposition~\ref{prop:no_exact} may be invoked to show the non-existence of exact Lagrangian tori in $M$.
Current knowledge includes the following:
\begin{itemize}
	\item by Ritter \cite{Ri10}, four-dimensional $A_k$-Milnor fibres do not contain exact Lagrangian tori;
	\item by Abouzaid and Smith \cite{AS12}, $A_2$-Milnor fibres in any dimension do not contain exact Lagrangian tori.
\end{itemize}
Recall that the $A_k$-Milnor fibre is a plumbing of $k$ copies of $T^*S^n$ according to the chain graph.
The following was also proved in \cite{AS12}.

\begin{theorem}
	\label{th:gen_plum}
	Let $M$ be a plumbing of cotangent bundles of simply-connected manifolds according to a tree, and $\dim_\R M\ge 6$. Then the compact exact Fukaya category of $M$ is generated by the cores of the plumbing.\qed
\end{theorem}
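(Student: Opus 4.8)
The plan is to deduce the theorem from a generation criterion of Abouzaid type, reducing the one essential computation to a single cotangent bundle, where generation by the zero section is classical.

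\emph{Setup.} Let $T$ be the plumbing tree with vertex set $V$, and for $v\in V$ let $Q_v\subset M$ be the corresponding core --- a closed exact Lagrangian with vanishing Maslov class. Let $\A\subset\Fuk(M)$ be the full $A_\infty$-subcategory on $\{Q_v\}_{v\in V}$. The first task is to pin down $\A$. Since each $Q_v$ is an honest closed submanifold, $HF^*(Q_v,Q_v)\cong H^*(Q_v;\k)$, and the $A_\infty$-structure on its endomorphisms can be taken to be that of the cochain algebra $C^*(Q_v)$. For $v\neq w$, the cores $Q_v$ and $Q_w$ meet transversely in a single point when $v,w$ are adjacent in $T$ and are disjoint otherwise, so $HF^*(Q_v,Q_w)$ is one-dimensional (in a single degree fixed by the gradings) when $v\sim w$ and vanishes otherwise. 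Exactness forbids every holomorphic polygon of positive area, so all higher products are forced by the intersection data: $\A$ is a ``tree zig-zag'' $A_\infty$-category built out of the $C^*(Q_v)$. Simple-connectivity of each $Q_v$ is what guarantees that this local model genuinely captures $\Fuk(T^*Q_v)$, i.e.\ that $Q_v$ actually generates (not merely weakly generates) the compact exact Fukaya category of $T^*Q_v$.

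\emph{Main step.} By the version of Abouzaid's geometric generation criterion appropriate to the compact exact Fukaya category of a Liouville domain (cf.~\cite{AS12}), it suffices to produce a Hochschild cycle $\alpha\in HH_*(\A)$ whose image under the open--closed map $\OC\co HH_*(\A)\to SH^*(M)$ is the unit $1\in SH^0(M)$. For a single cotangent bundle this is known: $\OC$ out of $\langle Q_{v_0}\rangle$ hits $1\in SH^0(T^*Q_{v_0})$ --- equivalently, $Q_{v_0}$ generates $\Fuk(T^*Q_{v_0})$, a theorem of Fukaya--Seidel--Smith, Nadler and Abouzaid --- and the witnessing cycle $\alpha_0$ is built from a Poincar\'e-dual pair of cycles on $Q_{v_0}$, the underlying rigid curve being a small half-disk concentrated near $Q_{v_0}$. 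I would pick a vertex $v_0$, transport $\alpha_0$ into $HH_*(\A)$ along the inclusion $C^*(Q_{v_0})\hookrightarrow\A$ of the one-object subcategory (an $A_\infty$-functor, hence a chain map on Hochschild complexes, so $\alpha_0$ stays a cycle), and then check that $\OC_{\A}(\alpha_0)=1\in SH^0(M)$, i.e.\ that the curve count defining $\OC$ is insensitive to enlarging the ambient manifold. For this last point I would run a neck-stretching and action-confinement argument in the spirit of the Cieliebak--Oancea confinement lemmas invoked elsewhere in this paper: the curves contributing to $\OC(\alpha_0)$ have energy bounded by the (small) energy of $\alpha_0$, whereas any curve that crosses one of the plumbing necks must absorb at least the fixed positive energy of that neck; choosing the stretching parameters and the input appropriately confines every rigid contributing curve to a Weinstein neighbourhood of $Q_{v_0}$, so the count equals the one in $T^*Q_{v_0}$, namely $1$. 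Hence $1\in\mathrm{im}(\OC)$ and the criterion yields that $\A$ split-generates $\Fuk(M)$.

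\emph{Main obstacle and the role of the dimension hypothesis.} The delicate point is the confinement: one must rule out, uniformly in the stretching parameter, any low-energy $\OC$-curve that wanders through a plumbing region and returns --- precisely the careful energy bookkeeping for broken configurations that forms the technical heart of such arguments. The hypothesis $\dim_\R M\ge6$, that is $\dim Q_v\ge3$, enters twice: it places us in the range where the relevant Lagrangian intersections can be normalised and sphere bubbles of the wrong dimension excluded in the compactified moduli spaces, and it is what makes simple-connectivity of $Q_v$ enough for the single-cotangent-bundle input above. In $\dim_\R M=4$ this scheme degenerates, and one would instead exploit the Lefschetz-fibration description available when the $Q_v$ are $2$-spheres. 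As an alternative overall strategy, one can try a direct induction over $T$: given a compact exact $L\subset M$, repeatedly apply the Lagrangian-surgery exact triangle to trade an intersection point of $L$ with some core $Q_v$ for a mapping cone, until $L$ is disjoint from all cores and hence, being compact and exact, lies in a collar of $\bd M$ and is a zero object. The obstacle there is to exhibit a complexity that strictly decreases at each surgery, so that the induction terminates.
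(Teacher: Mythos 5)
The paper offers no proof of this statement: it is quoted verbatim with a \qed, immediately after the sentence \emph{``The following was also proved in \cite{AS12}.''} So the only task here is to compare your sketch with the actual argument of Abouzaid--Smith.

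Your main step contains a fatal error. You assert that, for a single cotangent bundle $T^*Q_{v_0}$, the open--closed map out of the full subcategory $\langle Q_{v_0}\rangle$ hits the unit $1\in SH^0(T^*Q_{v_0})$, and you cite generation of $\Fuk(T^*Q_{v_0})$ by the zero section as the reason. This is false, and in fact \emph{cannot} be true: Abouzaid's criterion with target $SH^*$ is a criterion for split-generation of the \emph{wrapped} Fukaya category, and if $\OC_{\langle Q_{v_0}\rangle}$ hit $1\in SH^0(T^*Q_{v_0})$ then the compact zero section would split-generate $\mathcal{W}(T^*Q_{v_0})$. It does not: $HW^*(Q_{v_0},Q_{v_0})\cong H^*(Q_{v_0})$ is finite-dimensional, whereas $HW^*(F,F)\cong H_{-*}(\Omega Q_{v_0})$ for a cotangent fibre $F$ is infinite-dimensional (for $\pi_1(Q_{v_0})=0$, $\dim Q_{v_0}\ge 2$), so $F$ cannot be split-generated by $Q_{v_0}$. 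There is therefore nothing for your confinement argument to transport to $M$, and the plan collapses at the one-vertex base case. There is no ``compact exact'' variant of Abouzaid's criterion that simply retargets $\OC$ at $SH^*(M)$; the obstruction above explains why such a statement cannot hold.

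Two secondary issues in the setup: (1) exactness does \emph{not} kill holomorphic polygons of positive area --- it only kills disks with boundary on a single exact Lagrangian. Polygons between distinct cores do have positive area, and the higher $\mu^k$ in these plumbing categories are genuinely nontrivial (already for chains of spheres one gets the non-formal zig-zag $A_\infty$-algebra). So the $A_\infty$-structure on $\A$ is not ``forced by the intersection data.'' (2) It is a slight overstatement that $\Fuk(T^*Q)$ being generated by $Q$ is what ``simple-connectivity guarantees''; simple-connectivity enters the Abouzaid--Smith argument elsewhere (see below).

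For the record, the route actually taken in \cite{AS12} is quite different and worth contrasting. They work with the \emph{wrapped} Fukaya category $\mathcal{W}(M)$: one picks a cotangent fibre $F_v$ over a point of each core $Q_v$ and shows the collection $\{F_v\}$ generates $\mathcal{W}(M)$ (this uses Abouzaid's criterion, now correctly with target $SH^*(M)$, plus a model for the contribution of each neck). One then computes the $A_\infty$-algebra $\bigoplus_{v,w}CW^*(F_v,F_w)$, which is built out of chains on based loop and path spaces of the $Q_v$ glued along the plumbing tree; the hypothesis $\dim Q_v\ge 3$ and simple-connectivity control the grading and the structure of this algebra. A closed exact Lagrangian $L$ with vanishing Maslov class then yields a \emph{proper} $A_\infty$-module $\bigoplus_v CW^*(F_v,L)$ over that algebra, and the theorem follows by classifying proper modules and showing they all arise from twisted complexes on the cores. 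This is a wrapped/module-theoretic strategy, not an application of a compact-category generation criterion, and that is exactly the point your sketch misses.

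Finally, your alternative ``induction by surgery'' scheme is a reasonable instinct but has the same well-known issue you flag yourself (no obvious decreasing complexity), and it would also need to rule out exact $L$ that is disjoint from all cores yet nontrivially linked with the skeleton; disjointness from the cores and compact exactness do not by themselves force $L$ into a collar.
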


The following corollary seems to be new.
	\begin{corollary}Assume that $k\le n$ and $n\ge 3$.
		The $n$-dimensional Milnor fibre $A_k$, i.e.~the plumbing of $k$ copies of $T^*S^n$ according to the linear graph, does not contain an exact Lagrangian torus with vanishing Maslov class.\qed
	\end{corollary}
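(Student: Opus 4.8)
The plan is to deduce the statement from Proposition~\ref{prop:no_exact}, applied with $M$ equal to the $n$-dimensional $A_k$-Milnor fibre, i.e.\ the plumbing of $k$ copies of $T^*S^n$ along a chain. To run that proposition I must supply: (a) a monotone symplectic manifold $X$ with a Donaldson hypersurface $\Sigma\subset X$, together with an embedding of $M$ into $X\setminus\Sigma$ as a grading-compatible Liouville subdomain with $c_1(M)=0\in H^2(M;\Z)$; (b) the validity of Hypothesis~\ref{hyp:const} for $X$; and (c) one of the two bullet conditions of Proposition~\ref{prop:no_exact} for $M$.

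I would verify (c) first, using the second bullet. Since $n\ge 3$ we have $\dim_\R M=2n\ge 6$, so Theorem~\ref{th:gen_plum} applies: the compact exact Fukaya category of $M$ is split-generated by the cores of the plumbing. These cores are Lagrangian spheres $S^n$, which are simply connected because $n\ge 2$, so $H_1(S^n;\C)=0$; taking the $K_i$ of Proposition~\ref{prop:no_exact} to be this finite collection of core spheres, every object $(L,\rho)$ is split-generated by them, which is exactly the second alternative there. Two further ingredients of (a) are cheap: $c_1(M)=0$ because $M$ is the Milnor fibre of the hypersurface singularity $z_0^{k+1}+z_1^2+\dots+z_n^2$, a level set of this function in $\C^{n+1}$ with trivial normal bundle, so adjunction trivialises $K_M$; and once $\Sigma$ is taken to be \emph{anticanonical} ($d=1$), the embedding is automatically grading-compatible.

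The substantive point is the construction in (a) together with (b), and this is where the hypothesis $k\le n$ enters: it is precisely the condition under which the $A_k$-Milnor fibre admits a smooth Fano compactification. Concretely, starting from the affine model $M\cong\{z_1^2+\dots+z_n^2+t^{k+1}=1\}\subset\C^{n+1}$, I would rewrite the defining equation as a complete intersection of quadrics, introducing auxiliary variables with quadric relations that compute $t^{k+1}$ by repeated squaring and multiplication (for instance setting $s=t^2$ and then $z_1^2+\dots+z_n^2+s^2=1$ when $k=3$), and take the projective closure $X$ in some $\C P^N$, checking that for $k\le n$ one obtains a \emph{smooth} Fano variety. Since $X$ is then a complete intersection in the toric Fano $\C P^N$, Hypothesis~\ref{hyp:const} holds for $X$ by Corollary~\ref{cor:hyp_comp_int}. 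The divisor at infinity of this compactification is a rational, not integral, multiple of $c_1(X)$, so I would replace it by a generic \emph{smooth anticanonical} divisor $\Sigma\subset X$ obtained by smoothing a suitable integral multiple of it; as the smoothing parameter tends to $0$ the complement $X\setminus\Sigma$ exhausts the affine part of $X$, which is $M$, so it contains a Liouville subdomain symplectomorphic to $M$, completing (a).

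Granting all this, suppose for contradiction that $M$ contains an exact Lagrangian torus $L$ with vanishing Maslov class. Under $M\subset X$ it becomes a monotone Lagrangian torus, exact in $X\setminus\Sigma$, so Proposition~\ref{prop:no_exact} — with its second bullet verified above — forces the potential $W_L$ computed in $X$ to be constant, contradicting Hypothesis~\ref{hyp:const} for $X$. Hence no such torus exists. The step I expect to be the main obstacle is the compactification of the third paragraph: the naive projective closure of the $A_k$-Milnor fibre acquires singularities along infinity as soon as $k\ge 2$, so genuine care is needed to produce a \emph{smooth} Fano compactification (with the Fano condition matching $k\le n$) and to arrange a \emph{smooth anticanonical} divisor whose complement still contains a copy of $M$ as a Liouville subdomain; everything else is a formal combination of results already established in the paper.
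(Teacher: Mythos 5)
Your overall strategy is the right one, and it matches the paper's: apply Proposition~\ref{prop:no_exact} with the second bullet verified by Theorem~\ref{th:gen_plum} (for which $n\ge 3$ is needed), and verify Hypothesis~\ref{hyp:const} via Corollary~\ref{cor:hyp_comp_int}. Where you diverge is in the construction of the ambient Fano $X$ with $\Sigma\subset X$ anticanonical and $M\hookrightarrow X\setminus\Sigma$, and this is precisely where your proposal is incomplete — as you yourself flag in the last paragraph.

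The paper sidesteps the difficulties you anticipate by not attempting an explicit projective closure at all. Instead it takes $X$ to be a Fano hypersurface of the appropriate degree in $\C P^{n+1}$ and uses the classical fact that such hypersurfaces admit $A_k$-degenerations for $k\le n$. A degeneration of $X$ to a variety with an $A_k$-singularity gives a Lagrangian embedding of the $A_k$-Milnor fibre $M$ into $X$, disjoint from a hyperplane section $\Sigma$ (which is anticanonical for the right choice of degree), and exact in the complement. The bound $k\le n$ then falls out of the degeneration theory rather than having to be extracted from a bespoke complete-intersection model. Your route — encoding $t^{k+1}$ via a tower of auxiliary quadric equations and projectivizing — might be made to work, but you would have to actually establish smoothness at infinity for each $k\le n$ (not obvious; the number of auxiliary equations grows with $k$ and the Fano bound on the number of quadric equations becomes tight), and then repair the fact that the divisor at infinity is only a rational multiple of $c_1(X)$. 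Neither of these points is routine, and neither is addressed in your sketch. Since those are precisely the points that make the corollary true with the stated bound, I would count this as a genuine gap: the formal reductions to Proposition~\ref{prop:no_exact}, Theorem~\ref{th:gen_plum} and Corollary~\ref{cor:hyp_comp_int} are correct and identical to the paper's, but the key geometric input — a smooth Fano compactification of the $A_k$-Milnor fibre relative to a smooth anticanonical divisor, valid for all $k\le n$ — is asserted, not proved. The $A_k$-degeneration of a Fano hypersurface is the clean way to supply it.
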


\begin{proof}
	Let $X$ be an $n$-dimensional projective hypersurface of degree~$n$.  It clearly admits $A_k$-degenerations for $k\le n$, so the $n$-dimensional $A_k$ Milnor fibre $M$ embeds into $X$ (away from a hyperplane section which is an anticanonical divisor) when $k\le n$. (In fact, the actual bound on $k$ in terms of $n$ seems to be higher.) Hypothesis~\ref{hyp:const} holds for $X$ by Corollary~\ref{cor:hyp_comp_int}. So Theorem~\ref{th:gen_plum}, Corollary~\ref{cor:not_gen} and Corollary~\ref{cor:ve} imply the result.
\end{proof}

As another example, consider
the del Pezzo surface $X$ which is the blowup of $\C P^2$ at 3 points. It contains two Lagrangian spheres intersecting transversely once, disjoint from an anticanonical divisor. (If $E_1,E_2,E_3$ are the exceptional sphere classes, these Lagrangian spheres are in the classes $E_1-E_2$ and $E_2-E_3$, see e.g.~\cite{Ev10}.) Hence $X$ contains the $A_2$ Milnor fibre $M$ with the same property. It follows that $M$ does not contain an exact Lagrangian torus with vanishing Maslov class (which was known, as mentioned above). 

\subsection{Concluding notes}
Consider an affine variety $M$ which is the complement of an ample normal crossings divisor in a projective variety $X$, see e.g.~\cite{ML12}. Then $M$ is a Liouville domain, and after smoothing the given divisor, one obtains a Liouville embedding $M\subset X\setminus\Sigma$ where $\Sigma$ is smooth. The affine variety $M$ may admit many other compactifications; this is a broadly studied subject in algebraic geometry. Proposition~\ref{prop:no_exact} can be understood
as a set of conditions that prevent an affine variety $M$ from admitting a \emph{Fano} compactification.

Although constructing exact Lagragian tori in $M$ is complicated, see e.g.~\cite{Ke15}, one general approach to the construction is well known: one considers a Lefschetz fibration on $M$ and seeks to construct a Lagrangian torus as a matching path, reducing the problem to one inside the fibre.

We note that the applications of Proposition~\ref{prop:no_exact} provided above were only using the split-generation condition; we have not discussed the applications which would rely on the dimension of $SH^0(M)$ instead.
It may be interesting to look for such examples too, bearing in mind that $SH^0(M)$ is also related to Lefschetz fibrations on $M$ by an exact sequence involving the monodromy map, due to McLean \cite{McL10}.  

\section{Preparations}
\label{sec:prelim} 
In this section we set up the preparatory material for the proof of Theorem~\ref{th:factor}. We quickly remind the notions of superpotential, symplectic cohomology and the closed-open maps. Then we discuss positivity of intersections for Floer solutions and set down a class of \S-shaped Hamiltonians which will be used in the proof.

\subsection{The superpotential}
Let $X$ be a closed monotone symplectic manifold, and
 $L\subset X$ be a monotone Lagrangian submanifold.
Recall that $L$ is assumed to be oriented and spin. Denote $m=\rk H_1(L;\Z)/\mathrm{Torsion}$ and choose a basis of this group:
\begin{equation}
\label{eq:basis_H}
\Z^m\xrightarrow{\cong} H_1(L;\Z)/\mathrm{Torsion}.
\end{equation}
The potential of $L$ with respect to the basis (\ref{eq:basis_H}) is a Laurent polynomial
$$
W_L\co (\C^*)^m\to\C
$$
defined by:
\begin{equation}
\label{eq:potential_def}
W_L({\mathbf x})=\sum_{l\in\Z^m}\mathbf{x}^l\cdot\#\M_0^l
\end{equation}
where  $\mathbf{x}^l=x_1^{l_1}\ldots x_m^{l_m}$ and 
$\M_0^l$ is the moduli space of unparametrised $J$-holomorphic Maslov index 2 disks $(D,\bd D)\subset (X,L)$ passing through a specified point $\pt\in L$, and whose boundary homology class $[\bd D]$ equals $l\in \Z^m$ in the chosen basis (\ref{eq:basis_H}). The holomorphic disks are computed with respect to a regular tame almost complex structure $J$. Then $\M_0^l$ is 0-dimensional and oriented (as usual, the orientation depends on the spin structure), so the signed count $\# \M_0^l$ is an integer.

For an equivalent way of defining the superpotential, 
let $\rho$ be a {\it local system} on $L$, by which we mean (in a slightly non-standard way)
a map of Abelian groups 
$$\rho\co H_1(L;\Z)/\mathrm{Torsion}\to \C^*$$
where $\C^*$ is the multiplicative group of invertibles.
Using the basis (\ref{eq:basis_H}), one can view $\rho$ as a point: 
$$\rho\in (\C^*)^m,$$ 
by computing its values on the basis elements. We will use the two ways of looking at $\rho$ interchangeably.
Let 
$$\M_0=\bigsqcup_{l\in\Z^m}\M_0^l$$ be the
moduli space of all holomorphic Maslov index~2 disks as above, with any boundary homology class. Then one  puts
\begin{equation}
\label{eq:W_eval_at_rho}
W_L(\rho)=\#_{\rho}\M_0\in \C,
\end{equation}
where the right hand side denotes  the count of holomorphic disks in $\M_0$ {\it weighted using the local system}:
\begin{equation}
\label{eq:mu2_weighted_count}
\#_{\rho}\M_0=\sum_{l\in\Z^m}\# \M_0^l\cdot \rho(l)
\end{equation}  
Formula~(\ref{eq:W_eval_at_rho}) defines the value of the potential at any point of $(\C^*)^m$, and the resulting function is precisely the Laurent polynomial (\ref{eq:potential_def}), so the two definitions of the potential are consistent.

If one changes the basis (\ref{eq:basis_H}) by a matrix
$
(a_{ij})\in GL(m;\Z),
$
the corresponding superpotentials differ by a change of co-ordinates given by the multiplicative action of $GL(m;\Z)$ on $(\C^*)^m$:
\begin{equation}
\label{eq:change_coord}
\begin{array}{c}
x_i\mapsto x_i'=\prod_{j=1}^m x_j^{a_{ij}}, \quad \textit{ so that}\\
W_L'(x_1,\ldots,x_m)=W_L(x_1',\ldots, x_m').
\end{array}
\end{equation}

Recall that $GL(m;\Z)$ consists of integral matrices with determinant $\pm 1$. The proposition below is classical.

\begin{proposition}
	\label{prop:potential_invariance}
For a monotone Lagrangian submanifold $L\subset X$, its superpotential $W_L$, up to the change of co-ordinates (\ref{eq:change_coord}) corresponding to a change of basis (\ref{eq:basis_H}), is invariant under Hamiltonian isotopies of $L$, and more generally under symplectomorphisms of $X$ applied to $L$.\qed
\end{proposition}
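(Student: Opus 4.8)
The plan is to reduce the statement to the $J$-independence (and point-independence) of the disk counts $\#\M_0^l$ and then invoke a standard monotone cobordism argument.

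First I would use the functoriality of the relevant moduli spaces under symplectomorphisms. If $\phi\co X\to X$ is a symplectomorphism and $J$ is a regular tame almost complex structure for counting Maslov index~$2$ disks on $L$ through $\pt$, then $\phi_*J$ is regular for $\phi(L)$ through $\phi(\pt)$, and $u\mapsto \phi\circ u$ is an orientation-compatible bijection $\M_0^l(L,J,\pt)\xrightarrow{\cong}\M_0^{\phi_*l}(\phi(L),\phi_*J,\phi(\pt))$, where $\phi_*\co H_1(L;\Z)/\mathrm{Torsion}\to H_1(\phi(L);\Z)/\mathrm{Torsion}$ is the induced isomorphism; it preserves Maslov indices because $\phi$ is symplectic. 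Assuming the next step, this gives $W_{\phi(L)}=W_L\circ\phi_*$ as Laurent polynomials, which is exactly the coordinate change (\ref{eq:change_coord}) once $\phi_*$ is written as a matrix in $GL(m;\Z)$ with respect to the chosen bases. When $\phi=\phi^1_H$ is the time-one map of a Hamiltonian isotopy, the isotopy identifies $H_1(L)$ with $H_1(\phi(L))$ so that $\phi_*$ is the identity and no coordinate change is needed; in general the claim for Hamiltonian isotopies is subsumed by the claim for symplectomorphisms.

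It remains to show that $\#\M_0^l$ does not depend on the regular $J$ (nor on $\pt\in L$). Given two regular choices, choose a generic path between them, together with a generic path of points if necessary, using connectedness of $L$; the associated parametrised moduli space is a compact oriented $1$-manifold whose boundary is the disjoint union of the two zero-dimensional moduli spaces, so the signed counts agree. The point requiring an argument is compactness of this parametrised moduli space, i.e.\ the absence of bubbling, and here monotonicity enters: a Maslov index~$2$ disk through a point has minimal positive symplectic area among non-constant disks (monotonicity together with minimal Maslov number $\ge 2$), so it cannot degenerate by splitting off a non-constant disk bubble; it cannot acquire a holomorphic sphere bubble because a sphere has Chern number $\ge 1$, which would force the disk component to have non-positive Maslov index hence be constant, and for generic $J$ no $J$-holomorphic sphere passes through the constrained point on $L$; finally $D^2$ with one boundary marked point is already stable, so there is no domain degeneration. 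These are classical facts; compare Oh~\cite{Oh96A} and Cho--Auroux (cf.~\cite{Au07}), as well as the work of Fukaya--Oh--Ohta--Ono.

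The only genuinely technical point is this compactness statement for the one-parameter family — in particular the bookkeeping that excludes configurations with a sphere bubble through the constrained boundary point — everything else being formal. Once it is in place, Proposition~\ref{prop:potential_invariance} follows, with the invariance understood up to the $GL(m;\Z)$ change of coordinates (\ref{eq:change_coord}) induced by a change of basis (\ref{eq:basis_H}).
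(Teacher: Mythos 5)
Your argument is correct and is exactly the ``classical'' argument the paper is referring to: the paper states Proposition~\ref{prop:potential_invariance} with a \verb|\qed| and no proof, deferring to the standard literature (Oh, Cho, Auroux, FOOO). Your two-step reduction---pushforward functoriality under a symplectomorphism to get the $GL(m;\Z)$ coordinate change, followed by a parametrised-moduli cobordism to establish $J$- and point-independence, with monotonicity plus a dimension count (sphere through the constrained point has index $\le 2c-6<0$ even in a one-parameter family, covering multiple covers by passing to underlying simple spheres) ruling out bubbling---is precisely the standard proof and matches the paper's intent.
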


\subsection{Symplectic cohomology}
\label{subsec:sh}
We assume that the reader is familiar with basic Floer theory, the definitions of symplectic cohomology, closed-open maps and related terminology, like~Liouville domains. The reader can consult e.g.~\cite{FH94, CFH95, Vi99, Sei08,CFO10,Ri13} for the necessary background.
Assuming familiarity with these notions, we give a quick overview in the amount required to set up the necessary notation.

Let $M$ be a Liouville domain with boundary $\bd M$. Its Liouville vector field gives a canonical parameterisation of the {\it collar} of $\bd M$ by 
$$[1-\delta,1]\times \bd M\subset M.$$
Here $\{1\}\times \bd M$ is the actual boundary of $M$.
In our definition of symplectic cohomology, we work directly with $M$ and not its Liouville completion; both ways are of course equivalent.

\begin{figure}[h]
	\includegraphics{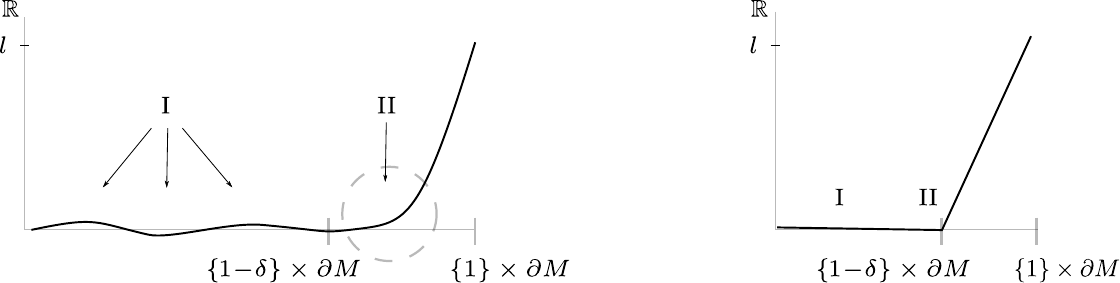}
	\caption{The Hamiltonians $\hat H_l$ used to define symplectic cohomology. The horizontal axis represents $M$, and particularly the collar co-ordinate near $\bd M$. Both pictures depict the same Hamiltonian, and we will use the right-hand `style' further.}
	\label{fig:h_l_sh}
\end{figure}

To define symplectic cohomology, we begin with class of Hamiltonians $M\to \R$ which are zero away from the collar, are monotone functions of the collar co-ordinate on the collar,
and which become linear of fixed slope in the collar co-rdinate starting from a certain distance to $\bd M$, say on $[1-\delta/2,1]\times \bd M$.
In the setup of symplectic cohomology, the important quantity to keep track of is the slope of the Hamiltonian near $\bd M$. For us it is more convenient to keep track of the maximum value of our Hamiltonians which is achieved on $\bd M$. We denote it by $l$ and call the \emph{height}, see Figure~\ref{fig:h_l_sh}. As $l$ tends to infinity so does the slope, which is good enough for the purpose of defining symplectic cohomology.

We perturb the Hamiltonians of the specified class by:
\begin{itemize}
	\item a perturbation away from the collar which turns the Hamiltonians into Morse functions away from the collar;
	\item optionally, a non-autonomous ($S^1$-dependent) perturbation in the collar which makes the 1-periodic orbits of the Hamiltonian flow non-negenerate.
\end{itemize}
We denote the resulting function after any such  perturbation by
\begin{equation}
\label{eq:h_sh}
\hat H_l\co S^1\times M\to\R,
\end{equation}
where $l$ is the maximum value (up to an $\epsilon$-error), or the height. All other choices, in particular the precise perturbations, are immaterial.
The shape of such function is sketched in Figure~\ref{fig:h_l_sh}~(left), and more crudely in Figure~\ref{fig:h_l_sh}~(right). We will adopt the crude variants of the pictures in the future.
The Floer complex $CF^*(\hat H_l)$  is generated, as a vector space over $\C$, by time-1 periodic orbits of the Hamiltonian vector field  $X_{\hat H_l}$. These orbits come in two types:

\begin{itemize}
	\item[\I:] Constant orbits that correspond to critical points of $\hat H_l$ away from the collar;
\item[\II:] Orbits in the collar that correspond to  Reeb orbits of $\bd M$ of various periods. 
\end{itemize}
By the maximum principle, solutions to Floer's equation never escape to $\bd M$, so there are well-defined Floer cohomology groups $HF^*(\hat H_l)$. We use the cohomological convention where positive punctures serve as inputs. This means that Floer's differential is given by: 
$$
d\gamma_+=\sum\# \M(\gamma_+,\gamma_-)\cdot \gamma_-
$$
where Floer solutions $u\in \M(\gamma_+,\gamma_-)$ have orbits $\gamma_{\pm }$ as their $s\to\pm \infty$ asymptotics, see Figure~\ref{fig:hf_dif}.

\begin{figure}[h]
	\includegraphics{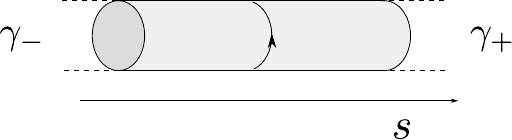}
	\caption{A curve for Floer's differential.}
	\label{fig:hf_dif}
\end{figure}

When $l\le l'$, it is easy to arrange that $\hat H_l\le \hat H_{l'}$ everywhere. If one sets up Floer's continuation equations using a homotopy $H(s)$ between $\hat H_{l'}$ and $\hat H_l$ ($l\le l'$) such that $\bd_s H\le 0$ everywhere, the solutions also obey the maximum principle. This means that there are well-defined continuation maps $HF^*(\hat H_l)\to HF^*(\hat H_{l'})$, $l\le l'$. The symplectic cohomology is the direct limit with respect to  these continuation maps:
\begin{equation}
\label{eq:def_sh}
SH^*(M)=\lim_{l\to+\infty} HF^*(\hat H_l).
\end{equation}
Symplectic cohomology acquires a $\Z$-grading once a trivialisation of the canonical bundle $K_M$ is fixed.

\subsection{Closed-open maps}
Let $M$ be a Liouville manifold and $L\subset M$ be an exact Lagrangian submanifold equipped with a local system $\rho$. Assume that $c_1(M)=0$ and fix a trivialisation of $K_M$. Next, assume that $L$  has vanishing Maslov class in $H^1(L)$ with respect to that trivialisation. Denote the pair $(L,\rho)=\LL$. The closed-open map is a map of graded algebras 
$$
\CO\co SH^*(M)\to HF^*(\LL,\LL).
$$
Here $HF^*(\LL,\LL)$ is  the self-Floer cohomology. Because $L$ is exact, there is an algebra isomorphism $$HF^*(\LL,\LL)\cong H^*(L;\C)$$
for any $\rho$. 
We denote by $1_L\in HF^*(\LL,\LL)$ the unit. We shall  write $HF^*_M(\LL,\LL)$ when we wish to emphasise that the Floer cohomology is computed inside $M$.

The definition of the closed-open map
goes by counting 
maps from the half-cylinder
$$
u\co [0,+\infty)\times S^1\to M
$$
solving the usual Floer's equation with Hamiltonian $\hat H_l$ (precisely the same one as used above), and with Lagrangian boundary condition $L$.
The closed-open map to $HF^*(\LL,\LL)=H^*(L)$ is then obtained by evaluating the solutions $u$ at the fixed boundary point $\{s=t=0\}$, and weighting the counts using the local system in the way it is done in (\ref{eq:mu2_weighted_count}).
Restricting to degree zero, one can write:
\begin{equation}
\label{eq:def_co}
\CO_l(\gamma)=\#_{\rho}\M(\pt,\gamma)\cdot 1_L\in HF^0(\LL,\LL)\quad\text{for }\quad \gamma\in CF^0(\hat H_l),
\end{equation}
where $\M(\pt,\gamma)$ counts Floer solutions  $u$ on the half-cylinder as above, asymptotic to  $\gamma$ as $s\to+\infty$ and sending the fixed boundary marked point $\{s=t=0\}$ to a specified point $\pt\in L$. 

Because the maps $\CO_l$ commute with the continuation maps, they define the closed-open map $\CO\co SH^*(M)\to HF^*(\LL,\LL)$ under the limit (\ref{eq:def_sh}). The definition of the closed-open map just given can be found in e.g.~\cite{Sei08}. The proof that it is a ring map follows the standard TQFT argument, see e.g.~\cite[Section~2]{She13}.

\subsection{Donaldson divisors}
A Donaldson divisor $\Sigma$ in a closed monotone symplectic manifold $X$ is
a smooth real codimension~2 symplectic submanifold whose homology class is dual to $dc_1(X)$, where $d$ is a positive intereger called the degree of $\Sigma$. Donaldson proved that such divisors always exist \cite{Do96}, and  the complement $X \setminus \Sigma$ has a natural Liouville structure, see e.g.~\cite[Section~4.1]{Pa13}.

 By \cite[Theorem~2]{AGM01}, \cite[Theorem~3.6]{CW17}, for a given monotone Lagrangian submanifold $L\subset X$, there exists a Donaldson divisor $\Sigma$ disjoint from it, and such that $L\subset X\setminus\Sigma$ is exact. In this setting,
 $\tfrac 1 d[\Sigma]=PD(\omega)$ is dual to twice the Maslov class of~ $L$. Namely,
 for a homology class $A\in H_2(X,L;\Z)$ one has:
 \begin{equation}
 \label{eq:mu_and_intersec}
 \mu(A)=\omega(A)=A\cdot[\Sigma]/2d
 \end{equation}
 where $\mu$ is the Maslov index and ``$\cdot$'' is the homology intersection, see e.g.~\cite[Lemma 3.4]{CW17}.

\subsection{Positivity of intersections with a Hamiltonian term}

\label{subsec:posit_intersec}
Let $\Sigma\subset X$ be a symplectic hypersurface, and fix a tame almost complex structure $J$ such that $T\Sigma$ is $J$-invariant; one says that $J$ preserves $\Sigma$.
It has been proved by Cieliebak and Mohnke \cite[Proposition~7.1]{CM07} (using the Carleman similarity principle from McDuff and Salamon \cite{MDSaBook}) that all intersections of $J$-holomorphic curves with $\Sigma$ are positive.
We will now discuss similar statements for solutions of Floer's equation, both at interior points and at  punctures.
We begin with intersections at interior points.

Denote by $U(\Sigma)$ a tubular neighbourhood of $\Sigma$.
Let $S\subset \C$
be a  domain, and suppose $u\co S\to U(\Sigma)$ solves Floer's equation:
\begin{equation}
\label{eq:Floer_Hst}
\bd_su+J\bd_tu=JX_{H(s,t)}.
\end{equation}
Here
$s={\mathit \Re}\, z$, $t={\mathit \Im}\, z$ for the complex  co-ordinate $z\in\C$, and one takes a domain-dependent Hamiltonian $H\co S\times X\to\R$. One proves positivity of intersections using Gromov's trick, which reduces the question to the Cieliebak-Mohnke setting. The lemma below appeared in e.g.~\cite[Lemma~4.3]{GP16}.

\begin{lemma}[Positivity of intersections]
\label{lem:pos_inter}
Suppose that $J$ preserves $\Sigma$, and $X_{H(s,t)}|_\Sigma$ is tangent to $\Sigma$ for all $s,t$.
Let $u\co S\to U(\Sigma)$ be a solution of (\ref{eq:Floer_Hst}) such that  $u(S)\cap\Sigma\neq\emptyset$ and $u(\bd S)\cap\Sigma=\emptyset$, then $u(S)$ has positive intersection number with $\Sigma$.\qed
\end{lemma}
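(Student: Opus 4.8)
The plan is to reduce the statement to the positivity of intersections for genuine pseudoholomorphic curves established by Cieliebak and Mohnke \cite[Proposition~7.1]{CM07}, by means of Gromov's graph trick.

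First I would pass from the Floer solution $u\co S\to U(\Sigma)$ to its graph
$$
\tilde u\co S\to X\times S,\qquad \tilde u(z)=(u(z),z),
$$
and equip $X\times S$ with the almost complex structure $\tilde J$ defined, relative to the splitting $T(X\times S)=TX\oplus TS$ and using the standard $z=s+it$ on $S\subset\C$, by
$$
\tilde J\bigl(\xi,\, a\,\partial_s+b\,\partial_t\bigr)=\bigl(J\xi+a\,X_{H(s,t)}-b\,JX_{H(s,t)},\ a\,\partial_t-b\,\partial_s\bigr).
$$
A direct computation gives $\tilde J^2=-\id$, and shows that $\tilde u$ is $\tilde J$-holomorphic if and only if $u$ solves Floer's equation \eqref{eq:Floer_Hst}; note that $\tilde J$ is block lower-triangular, inducing the standard complex structure on the $S$-factor. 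Moreover, on the bounded set $U(\Sigma)\times S$, $\tilde J$ is tamed by $\omega\oplus C\,ds\wedge dt$ once $C$ is large in terms of $\sup_{U(\Sigma)}\|X_H\|$ — although this tameness will not actually be needed, since the positivity statement we invoke is local.

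The key step, and the only place where the hypotheses are used, is to check that $\Sigma\times S\subset X\times S$ is a $\tilde J$-invariant symplectic hypersurface. It is symplectic for $\omega|_\Sigma\oplus ds\wedge dt$. For invariance, take a tangent vector $(\xi,\,a\,\partial_s+b\,\partial_t)$ with $\xi\in T\Sigma$ at a point of $\Sigma\times S$: its image under $\tilde J$ has $TS$-component $a\,\partial_t-b\,\partial_s\in TS$, and $TX$-component $J\xi+a\,X_H-b\,JX_H$, in which $J\xi\in T\Sigma$ because $J$ preserves $\Sigma$, $X_H|_\Sigma$ is tangent to $\Sigma$ by hypothesis, and hence $JX_H|_\Sigma\in T\Sigma$ as well. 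Thus $\tilde J\bigl(T(\Sigma\times S)\bigr)\subset T(\Sigma\times S)$.

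Finally I would transport the conclusion back. One has $\tilde u(z)\in\Sigma\times S$ exactly when $u(z)\in\Sigma$, and at such a point the local intersection multiplicity of $\tilde u$ with $\Sigma\times S$ coincides with that of $u$ with $\Sigma$; moreover $\tilde u$ is not locally contained in $\Sigma\times S$, since by unique continuation this would force a component of $u$ to lie in $\Sigma$, contradicting $u(\partial S)\cap\Sigma=\emptyset$. Applying \cite[Proposition~7.1]{CM07} to the $\tilde J$-holomorphic curve $\tilde u$ and the $\tilde J$-invariant symplectic hypersurface $\Sigma\times S$, every intersection point is isolated with strictly positive local intersection number; since $u(\partial S)\cap\Sigma=\emptyset$, the intersection number $u(S)\cdot\Sigma$ is well defined and equals the sum of these contributions, which is positive as $u(S)\cap\Sigma\neq\emptyset$ supplies at least one of them. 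The main obstacle is the bookkeeping underlying the construction of $\tilde J$: verifying $\tilde J^2=-\id$ together with the graph characterization of Floer solutions, and checking the $\tilde J$-invariance of $\Sigma\times S$, which is precisely where the tangency hypothesis on $X_H$ enters. Everything else is formal.
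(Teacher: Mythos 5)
Your proof is correct and follows exactly the route the paper indicates: Gromov's graph trick to pass to a genuine $\tilde J$-holomorphic curve in $X\times S$ whose intersections with the $\tilde J$-invariant hypersurface $\Sigma\times S$ match those of $u$ with $\Sigma$, followed by Cieliebak--Mohnke positivity. The verification of $\tilde J^2=-\id$, the equivalence between $\tilde J$-holomorphicity of the graph and Floer's equation, and the $\tilde J$-invariance of $\Sigma\times S$ (which is precisely where the tangency of $X_H|_\Sigma$ is used) are all carried out correctly.
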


By the intersection number, we mean  the following.
Given that $u(\bd S)$ is disjoint from $\Sigma$, it produces a well-defined  class $[u(S)]\in H_2(U(\Sigma),\bd U(\Sigma);\Z)$.
We consider the intersection number between $[u(S)]$ and $[\Sigma]\in H_{2n-2}(U(\Sigma);\Z)$.

Maps from cylinders of finite energy solving Floer's equation converge to periodic Hamiltonian orbits (at least when the orbits are non-degenerate).
If the orbit in question is a constant orbit in $\Sigma$, it makes sense to speak of the intersection sign of the compactified curve with $\Sigma$ at that constant orbit. 
The occurring intersection numbers have recently been analysed by Seidel \cite{Sei16}, and we will use a slight reformulation of his result.

Let $S$ be one of the two domains:
$$
S_-=(-\infty,R]\times S^1,\quad S_+=[R,+\infty)\times S^1,
$$
where $R\in\R$.
We use the co-ordinates $(s,t)\in S$ where $t\in S^1$ and $s\in (-\infty,R]$ or $[R,+\infty)$.
This time, we are interested in Floer's equation with autonomous Hamiltonian:
\begin{equation}
\label{eq:Floer_H}
\bd_su+J\bd_tu=JX_{H},
\end{equation}	
where $H\co U(\Sigma)\to \R$. Since we are only interested in the behaviour of Floer solutions near $\Sigma$, it suffices to have $H$ defined on $U(\Sigma)$.

\begin{lemma}[Intersection at asymptotics {\cite[Equation~(7.22)]{Sei16}}]
\label{lem:pos_inter_asympt}
Let $p\in \Sigma$ be a Morse critical point of $H$, and assume that there is a chart for $X$ at $p$ mapping a neighbourhood of $p$ to a neighbourhood of the origin in $\R^{2n-2}\times \R^2$ with the following properties. It takes $\Sigma$ to $\R^{2n-2}\times \{0\}$, $\omega$ to the standard form, $J$ to a split complex structure of the form $\left(\begin{smallmatrix}
J_{\Sigma} & 0\\ 0&  i
\end{smallmatrix}\right)$, where $i$ is the standard complex structure on $\R^2$ but $J_{\Sigma}$ may vary with the point on $\R^{2n-2}$. Finally, we require that $H$ is taken to the following form: $$H=H_{\Sigma}(x_1,\ldots,x_{2n-2})-\pi \alpha(x_{2n-1}^2+x_{2n}^2)$$
for some $\alpha\in \R\setminus\Z$. 

Let $S$ be one of the two domains: $S_+$ or $S_-$, and 
$u\co S\to U(\Sigma)$ be a solution of (\ref{eq:Floer_H}) asymptotic to $p$ as $s\to\pm\infty$, seen as a constant periodic orbit. Assume that   $u(\bd S)\cap\Sigma=\emptyset$, then
the intersection number $[\overline{u(S)}]\cdot [\Sigma]$ is greater than or equal to:
\begin{itemize}
\item[(i)] $\lfloor \alpha \rfloor+1$ if $S=S_+$, or
\item[(ii)] $-\lfloor \alpha \rfloor$  if $S=S_-$. \qed
\end{itemize}
\end{lemma}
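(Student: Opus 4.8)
The plan is to reduce the statement, following Seidel \cite[Section~7]{Sei16}, to an asymptotic analysis of the component of $u$ normal to $\Sigma$ near the puncture. In the chart provided by the hypothesis, write $u=(u_\Sigma,v)$ with $v\co S\to\C$ the normal component. Because both $J$ and $H$ split in this chart, wherever $u$ takes values in it the function $v$ satisfies a \emph{linear} equation of Cauchy--Riemann type,
\begin{equation*}
\bd_s v + i\,\bd_t v - 2\pi\alpha\, v = 0
\end{equation*}
(the precise sign of the zeroth order term being governed by the sign conventions for $X_H$ and for Floer's equation, and compensated later by the orientation conventions at the puncture). By the Carleman similarity principle \cite{MDSaBook}, the zero set of such a $v$ is discrete with positive local intersection multiplicities, and these zeros are exactly the intersections of $u$ with $\Sigma$ inside the chart. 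Outside the chart, $u$ still maps into $U(\Sigma)$, and Lemma~\ref{lem:pos_inter} (positivity of intersections with a Hamiltonian term, after \cite{CM07}) gives positive local contributions there as well. Hence $[\overline{u(S)}]\cdot[\Sigma]$ decomposes as a sum of these nonnegative interior contributions plus one further term, the local intersection number of the capped curve $\overline{u(S)}$ with $\Sigma$ at the asymptotic point $p$; it therefore suffices to bound that last term below by $\lfloor\alpha\rfloor+1$ when $S=S_+$ and by $-\lfloor\alpha\rfloor$ when $S=S_-$.

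For the puncture term, recall that $u(s,t)\to p$ in $C^\infty$ as $s\to\pm\infty$, so a punctured neighbourhood of the end is contained in the chart and $v$ solves the linear equation there. If $v$ vanished on an open subset, it would vanish identically by the structure of $u^{-1}(\Sigma)$ coming from positivity of intersections, forcing $u(S)\subset\Sigma$ and contradicting $u(\bd S)\cap\Sigma=\emptyset$; hence $v\not\equiv 0$ near the puncture. Expanding in Fourier modes $e^{2\pi i m t}$, $m\in\Z$, along $S^1$, the solutions of the linear equation near the end are generated by $e^{2\pi(m\pm\alpha)s}e^{2\pi i m t}$, and the finite-energy (exponential-decay) condition at the puncture selects exactly the modes with $m$ on the appropriate side of $\mp\alpha$; since $\alpha\notin\Z$ this is a genuine one-sided inequality on $m$. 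The asymptotics of $v$ are then governed by the slowest-decaying surviving mode $m_0$, with $v\sim a_{m_0}\,e^{2\pi(m_0\pm\alpha)s}e^{2\pi i m_0 t}$ and $a_{m_0}\neq 0$. The local intersection number of $\overline{u(S)}$ with $\Sigma$ at $p$ equals the winding number of $v$ along a small loop encircling the puncture, which is $\pm m_0$, the sign being fixed by the orientation that loop inherits as the boundary of the capped end of $S$ (these orientations are opposite for a puncture at $s=+\infty$ and one at $s=-\infty$). Combining the winding computation with the one-sided bound on $m_0$ yields $\lfloor\alpha\rfloor+1$ for $S=S_+$ and $-\lfloor\alpha\rfloor$ for $S=S_-$.

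The asymptotic ingredients here --- exponential convergence of finite-energy Floer cylinders to $p$, the Fourier description of solutions of the linearised equation near a nondegenerate end, and continuity of winding numbers --- are entirely standard. The point that needs genuine care, and the one I would be most deliberate about, is the sign bookkeeping: reconciling the symplectic/complex orientation of the normal $\C$-bundle of $\Sigma$, the sign convention for $X_H$ (hence the sign of the zeroth order term in the linear equation, hence which half-line of Fourier modes survives the decay condition), and the induced boundary orientation of the linking loop at a positive versus a negative puncture, so that the answer comes out as $\lfloor\alpha\rfloor+1$ and $-\lfloor\alpha\rfloor$ rather than off by one or with the opposite sign. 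This is precisely the computation carried out around \cite[Equation~(7.22)]{Sei16}, which we follow.
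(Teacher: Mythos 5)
Your proof follows precisely the route the paper indicates (``the splitting assumption makes Floer's equation completely standard in the normal $\R^2$-direction\ldots and that equation can be explicitly solved''), which is also the computation Seidel performs around Eq.~(7.22) of~\cite{Sei16}: split off the normal component $v$, observe it satisfies a constant-coefficient Cauchy--Riemann-type equation whose Fourier modes near the puncture are $e^{2\pi(m\mp\alpha)s}e^{2\pi i m t}$, use decay to bound the surviving integer $m$, and read off the local intersection at $p$ from the winding of the leading mode, with the opposite boundary orientations at $s=\pm\infty$ giving the two cases. This is correct, and the one place where care is genuinely needed --- the sign conventions for $X_H$ and for the induced boundary orientation of the linking circle, which together shift the answer by one and flip its sign --- is exactly where you flag it.
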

In the above lemma, $\overline{u(S)}=u(S)\cup\{p\}$ is the closure of $u(S)$. Then $p$ is, by hypothesis, an intersection point between $\overline{u(S)}$ and $\Sigma$. The proof of the lemma is quite straightforward, because the splitting assumption makes Floer's equation completely standard in the normal $\R^2$-direction responsible for the intersection multiplicity, and that equation can be explicitly solved.

\begin{remark}
To get familiar with the lemma, it is helpful to consider the case of flowlines. A $t$-independent Floer solution $u\co S_{\pm \infty}\to X$ is a gradient flowline of $H$, flowing \emph{down} in the direction $s\to +\infty$. Suppose $\alpha>0$, then $H$ is decreasing in the direction normal to $\Sigma$. Hence there exist downward gradient flowlines flowing away from $\Sigma$ (i.e.~asymptotic to $\Sigma$ at the negative end), but no flowlines flowing towards $\Sigma$ (i.e. ~asymptotic to $\Sigma$ at the positive end). The former flowlines, which exist, obviously have intersection number $0$ with $\Sigma$, showing that the bound in Lemma~\ref{lem:pos_inter_asympt}(ii) is optimal in this case. 
\end{remark}

Finally, let us explain how to find $J$ and $H$ that satisfy the conditions of Lemmas~\ref{lem:pos_inter} and~\ref{lem:pos_inter_asympt}. 
Let $\Sigma\subset (X,\omega)$ be a Donaldson hypersurface given as the vanishing set of an almost holomorphic section of a complex line bundle on $X$. Let $\cL\to \Sigma$ be the restriction of that line bundle to $\Sigma$.  Fix a Hermitian metric on $\cL$, and a connection $\nabla$ with curvature $-2\pi i d\cdot \omega|_\Sigma$, where $d$ is the degree of $\Sigma$. The total space $\cL$ carries a canonical symplectic form considered e.g.~by Biran \cite{Bi01}, given by
\begin{equation}
\label{eq:omega_on_line_bdle}
\omega_0=\pi^*\omega|_{\Sigma}+d(r^2\alpha^\nabla)
\end{equation}
where $\pi\co\cL\to\Sigma$ is the projection, $\alpha^\nabla$ is the circular 1-form associated with the connection $\nabla$, and $r$ is the fibrewise norm. Let $E_\cL$ be the radius-$\epsilon$ neighbourhood of the zero-section of $\cL$, for some small fixed $\epsilon$.
By the symplectic neighbourhood theorem \cite[Section~2]{Bi01}, there is a neighbourhood  $U(\Sigma)\subset X$ which is symplectomorphic to $(E_\cL,\omega_0)$.  

Fix a function $H_\Sigma\co \Sigma\to \R$.
Consider the autonomous Hamiltonian
\begin{equation}
\label{eq:h_on_bdle}
H=l+H_\Sigma\circ \pi-\alpha r^2\co E_\cL\to\R,
\end{equation}
where $l\in \R$ is some constant  and $\pi\co E_\cL\to \Sigma$ is the projection.
Let $J$ be the almost complex structure which is the $\nabla$-lift of an almost complex structure on $\Sigma$ to $E_\cL$ (preserving the horizontal distribution and the fibres); then $J$ preserves $\Sigma$. Our standing setup will be to use the parameter
$$
0<\alpha<1,\quad{equivalently,}\quad \lfloor\alpha \rfloor= 0.
$$

The chosen $J$, $H$ satisfy the conditions of Lemma~\ref{lem:pos_inter} but not Lemma~\ref{lem:pos_inter_asympt}:
since $\nabla$ has non-zero curvature, it is not possible to bring all of $J,H,\omega$ to a split form specified in Lemma~\ref{lem:pos_inter_asympt} in a neighbourhood of a point. However, one may homotope $\nabla$ to a flat connection over a neighbourhood  of each critical point of $H_\Sigma$, assuming this finite set of points has been chosen in advance. Then (\ref{eq:omega_on_line_bdle}) gives another symplectic form on $E_\cL$ which is diffeomorphic to the standard one, by Moser's lemma. After the connection is modified this way, the above choice of $H,J$ will satisfy the conditions of Lemmas~\ref{lem:pos_inter},~\ref{lem:pos_inter_asympt}. We remind that the constant $\alpha$ affects the conclusion of Lemma~\ref{lem:pos_inter_asympt}; and that we are using $0<\alpha<1$.

\begin{remark}
It seems plausible that Lemma~\ref{lem:pos_inter_asympt} is true for all Hamiltonians of the form (\ref{eq:h_on_bdle}) without assuming that $\nabla$ is flat near the critical points.
\end{remark}

\begin{remark}
In Seidel's setup of \cite{Sei16}, the normal bundle to $\Sigma$ is trivial. Rephrasing \cite[Setup 7.2]{Sei16} in our language, he chooses  the globally flat $\nabla$ which ensures the splitting as in Lemma~\ref{lem:pos_inter_asympt} near all points of $\Sigma$.
\end{remark}

\subsection{\S-shaped Hamiltonians}
\label{subsec:s_shape}
We are going to introduce the main class of Hamiltonians that will be used in the proof of Theorem~\ref{th:factor}.
Let $X$ be a symplectic manifold and $S\subset X$ a  contact type hypersurface. A small neighbourhood $U(S)$ of $S$ admits a Liouville vector field whose flow identifies $U(S)$ with $[1-\delta,1+\delta ]\times S$. We call $U(S)$ equipped with such an identification a {\it Liouville collar}, and the parameter $r\in [1-\delta,1+\delta]$ the {\it radial co-ordinate} on the Liouville collar. The original hypersurface $S$ is identified with the middle of the collar: $\{1\}\times S$.

\begin{remark}
\label{rem:hypersurf_liouv}
When proving Theorem~\ref{th:factor}, the specific setup will be to take $S=\bd M$ the boundary of a Liouville domain inside $X$. See Figure~\ref{fig:s_shape} (left). 
\end{remark}

\begin{figure}[h]
\includegraphics{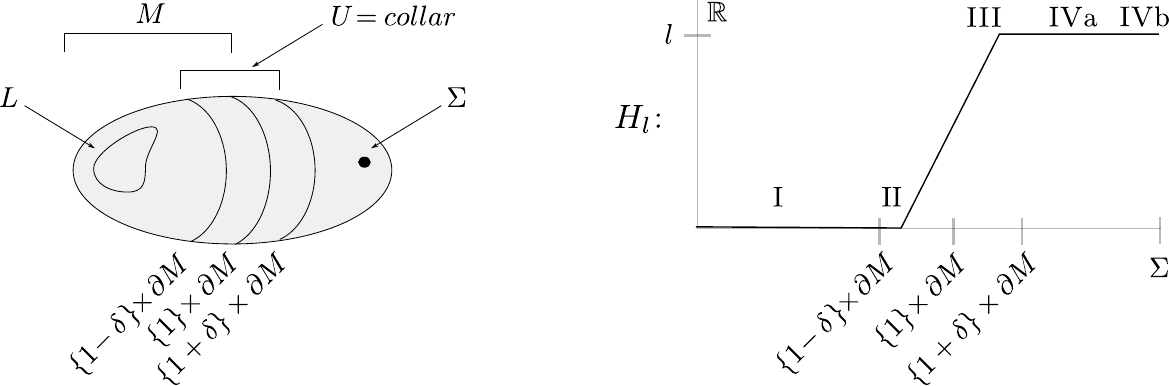}
\caption{Hamiltonians $H_l$ of this shape will be used to prove Theorem~\ref{th:factor}. Here $S=\bd M$.}
\label{fig:s_shape}
\end{figure}

Let $X_-\sqcup X_+=X\setminus U(S)$ be the two components of the complement of $U(S)$; with $X_-$ being the component with convex boundary $\{1-\delta\}\times S$.
Take $l\in\R_+$ and
define the unperturbed \S-shaped Hamiltonian $H_{l}^0\co X\to \R$ by:
\begin{equation}
\label{eq:H_l_0}
\begin{array}{ll}
H_{l}^0\equiv 0 & \text{ on } X_-, \\
H_{l}^0(x)=h_l(r(x)) & \text{ for } x\in U(S),\\
H_{l}^0\equiv l & \text{ on } X_+, 
\end{array}
\end{equation} 
Here $r(x)\in[1-\delta,1+\delta]$ is the radial co-ordinate of a point in the collar, and $h_l(r)\co [1-\delta,1+\delta]\to\R$ is a function of the collar co-ordinate whose shape is shown in Figure~\ref{fig:s_shape}~(right), taking $S=\bd M$. The parameter $l$ is its maximum value. Note that $\delta$ is another parameter of the construction, but it will be fixed throughout so we do not include it in the notation. 
Finally, let 
\begin{equation}
\label{eq:H_l_S}
H_{l}\co X\times S^1\to \R
\end{equation} 
be a (optionally, $t$-dependent) perturbation of $H^0_{l}$ with the following properties: 
\begin{enumerate}
\item[(i)] the perturbation turns $H_{l}$ into a Morse function on $X_-$ and $X_+$, and we assume for convenience that $H_l$ is $t$-independent in those regions;

\item[(ii)] near $\Sigma$, $H_{l}$ has the form (\ref{eq:h_on_bdle}) described in Subsection~\ref{subsec:posit_intersec} after flattening the connection near its critical points on $\Sigma$ as described there. Then $H_l$ satisfies Lemmas~\ref{lem:pos_inter} and~\ref{lem:pos_inter_asympt} for some tame $J$ preserving $\Sigma$. We shall only be using almost complex structures with this property, which are additionally cylindrical in the collar $[1-\delta,1+\delta]\times S$. We also require that the function $H_\Sigma$ from (\ref{eq:h_on_bdle}) is $C^2$-small;
\item[(iii)] optionally, we may use a $t$-dependent perturbation ($t\in S^1$) in the collar region. If we choose to do so, we perform the perturbation only in the subregion $[1-\delta,1]\times S$ of the collar $U(S)\subset X$; elsewhere $H_{l}$ is independent of $t$.
\end{enumerate}
We call $l$ the \emph{height} of $H_l$.
We use Figure~\ref{fig:s_shape} to depict this pertured Hamiltonian as well. The 1-periodic orbits of $X_{H_l}$ are divided into the following types, using the standard notation (see e.g.~\cite{CO18}):

\begin{enumerate}
\item[\I:] constant orbits in $X_-$;
\item[\II:] orbits in $[1-\delta,1]\times S$ arising in the region where $h_l''>0$, corresponding to Reeb orbits in $S$.
\item[\III:] orbits in $[1,1+\delta]\times S$ arising in the region where $h_l''<0$, corresponding to Reeb orbits in $S$.
\item[\IV a:] constant orbits in $X_+$ lying away from $\Sigma$;
\item[\IV b:] constant orbits in $X_+$ lying in $\Sigma$. Such orbits always exist by our choice of the Hamiltonian, see (\ref{eq:h_on_bdle}).
\end{enumerate}

Note that the constant orbits \I, \IV a, and \IV b are Morse, and therefore non-degenerate.

A comment about time-dependent versus autonomous Hamiltonians is due. Depending on whether one uses the optional time-dependent perturbation appearing above, the type \II\ orbits will either be fully non-degenerate (two perturbed Hamiltonian orbits corresponding to a Reeb orbit), or will come in $S^1$-families, where the $S^1$ action is the rotation in $t$. Although this is not crucial, we always choose type \III\ orbits to be unperturbed and come in $S^1$-families. In Section~\ref{sec:proofs} we shall focus on the setting where the type \II\ orbits are perturbed, but  the arguments work equally well in the autonomous setting, after the usual adjustments following the $S^1$-Morse-Bott framework for symplectic cohomology of Bourgeois and Oancea \cite{BO09}. Later in Section~\ref{sec:antican}, we will need the autonomous framework to perform a computation, and we will mention the relevant adjustments therein.

Let us now record a special case of a lemma due to Bourgeois and  Oancea~\cite[pp.~654-655]{BO09b}; see
\cite[Lemma~2.3]{CO18} for a detailed discussion of it.

\begin{lemma}[Asymptotic behaviour]
\label{lem:prohibited_trj}
Let $S_+$ be the domain $[R,+\infty)\times S^1$ and $u\co S_+\to X$ a solution to Floer's equation (\ref{eq:Floer_Hst}) with the Hamiltonian $H_{l}$ as above. Assume that $u$ is asymptotic, as $s\to+\infty$, to a type \III\ orbit $\gamma$ of $H$ lying in $\{1+r_0\}\times S$, for some $0<r_0<\delta$. Then there exists $s>0$ such that 
$u(s,t)\in (1+r_0,1+\delta]\times S$. \qed
\end{lemma}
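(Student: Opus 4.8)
The statement is a ``no-escape-downward'' assertion for Floer solutions on a positive half-cylinder asymptotic to a type~\III\ orbit: such a solution cannot stay confined to the region $\{r\le 1+r_0\}$ near its positive puncture; it must, at some point, climb into the shell $(1+r_0,1+\delta]\times S$. The plan is to argue by contradiction, assuming that $u(s,t)\in X_-\cup ([1-\delta,1+r_0]\times S)$ for all $(s,t)\in S_+$, and to derive a violation of a maximum-principle-type inequality on the piece of $X$ cut out by the hypersurface $\{r=1+r_0\}\times S$. The key point is the geometry of the \S-shaped Hamiltonian $H_l$ from \eqref{eq:H_l_0}: on the portion of the collar where the type~\III\ orbits live we have $h_l''<0$, whereas on $[1-\delta,1+r_0]$ (for $r_0$ in the type~\III\ region) the relevant convexity/slope data is incompatible with the orbit being a limit of a solution contained in that sublevel set.

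\textbf{Key steps.} First I would recall the standard integrated maximum principle (Abouzaid--Seidel, or the version in Cieliebak--Oancea \cite{CO18}, whose Lemma~2.3 is cited) applied to the open subset $V=\{r<1+r_0\}\subset X$, which has contact-type boundary $\{r=1+r_0\}\times S$: if a finite-energy Floer solution on $S_+$ with a Hamiltonian that is linear of slope $a$ in the collar coordinate near $\partial V$ is entirely contained in $V$ and asymptotic to an orbit, then the asymptotic orbit must be an interior (type~\I\ or type~\II) orbit, because solutions cannot be asymptotic ``from inside'' to an orbit sitting on the increasing-slope side; the increasing-then-decreasing profile of $h_l$ means the type~\III\ orbit at $\{1+r_0\}\times S$ sits precisely where $h_l'$ is still positive but $h_l''<0$, and the relevant boundary term in the maximum principle has the wrong sign for a solution approaching it from $r<1+r_0$. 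Concretely, one computes $d(u^*\lambda)$ (or $\frac{d}{ds}\int_{S^1}u^*\lambda$) over $[s,+\infty)\times S^1$, uses Stokes and the asymptotic convergence to $\gamma$ to express the boundary contribution at infinity in terms of $h_l'(1+r_0)$ and the period of the underlying Reeb orbit, and notes that confinement to $V$ forces the transverse derivative $\partial_s r$ to have a definite sign along $\{r=1+r_0\}$, contradicting the sign forced by the convexity region $h_l''<0$. This is exactly the computation packaged in \cite[pp.~654--655]{BO09b} and \cite[Lemma~2.3]{CO18}, so I would invoke it rather than redo it.

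\textbf{Main obstacle.} The genuine content is bookkeeping the sign conventions: which side of $\{r=1+r_0\}\times S$ the solution lies on, whether $h_l'$ is positive or negative at the relevant radius, and the direction of the Liouville flow — all of which must line up so that the integrated maximum principle yields a strict inequality that the confined solution cannot satisfy. I expect the subtlety to be that a type~\III\ orbit sits in the $h_l''<0$ region \emph{past} the maximum of $h_l$ only in slope, not in value, so one must be careful that ``confined below $r=1+r_0$'' is genuinely incompatible; the resolution is that a solution asymptotic to such an orbit necessarily has $\partial_s r>0$ for $s$ large (it ``comes up'' to the orbit from below in $r$ would require the wrong curvature sign), hence it must have already exceeded $1+r_0$ at some earlier finite $s$. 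Once the sign sheet is set up correctly the rest is a direct citation. A secondary, purely technical point is ensuring finite energy so the asymptotic analysis applies — but on a half-cylinder with a single non-degenerate (or Morse--Bott) asymptotic orbit and the standard action estimates for $H_l$, finiteness of energy is automatic, so I would only remark on it.
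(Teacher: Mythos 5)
The paper does not prove this lemma: it is recorded as a known result of Bourgeois--Oancea \cite[pp.~654--655]{BO09b}, pointing to \cite[Lemma~2.3]{CO18} for details, and the statement is closed with a tombstone rather than given a proof. So your plan of ``invoking it rather than redoing it'' reproduces what the paper does. What does not survive scrutiny is your sketch of the cited argument. You describe it as an integrated maximum principle / Stokes computation over $V=\{r<1+r_0\}$. That conflates two \emph{distinct} results in \cite{CO18}: the integrated maximum principle (no-escape lemma) is their Lemma~2.2, which the present paper records separately as Lemma~\ref{lem:no_escape}, while the statement you are asked to reconstruct is their Lemma~2.3. Both are invoked \emph{together} in the proof of Lemma~\ref{lem:not_3}, at different radial levels, precisely because they pull in opposite directions: the no-escape lemma, applied near $1+\delta$ where $h_l$ is arranged to be linear, \emph{confines} the curve, while the present lemma says it must \emph{escape} past $1+r_0$. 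If one tried to run a no-escape argument directly at $\{r=1+r_0\}$ as you propose, it either does not apply ($h_l$ is nonlinear there, and the sign condition $-r_0h_l'(r_0)+h_l(r_0)\le 0$ required by \cite[Lemma~19.5]{Ri13} need not hold --- the paper flags exactly this in the Remark after Lemma~\ref{lem:no_escape}), or, when it does apply, it yields confinement rather than escape, the opposite of what is needed.

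The actual argument in \cite{BO09b,CO18} is a local asymptotic/spectral analysis near the nondegenerate orbit $\gamma$, not a global Stokes estimate. One linearizes Floer's equation at $\gamma$, writes $u(s,\cdot)-\gamma\sim e^{\lambda s}e(\cdot)$ using exponential convergence, with $\lambda<0$ the largest negative eigenvalue of the asymptotic operator, and examines the radial block of that operator, which is controlled by $h_l''$ at the orbit level. For a type~\III\ orbit ($h_l''<0$), the $S^1$-invariant radial mode carries a \emph{positive} eigenvalue and therefore cannot appear in the $s\to+\infty$ decay; the remaining allowed modes all wind nontrivially in $t$ and so have zero $t$-average in the radial coordinate. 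Hence $r(u(s,\cdot))$ oscillates about $1+r_0$ and must exceed it, which is the assertion. You do flag the right mechanism at the end (``coming up from below would require the wrong curvature sign''), but you attribute it to a monotonicity estimate it does not come from, and the sign you state ($\partial_s r>0$ for $s$ large, hence the curve ``must have already exceeded $1+r_0$'') is not a coherent deduction: if $r\circ u$ were increasing to $1+r_0$ from below it would never exceed $1+r_0$, so that chain of reasoning is internally inconsistent.
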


\begin{remark}
We do not want to perturb $H_{l}$ in a $t$-dependent way in the region $[1,1+\delta]\times S$ precisely to allow us to refer to \cite{BO09} in a most straighforward way, as this reference considers autonomous Hamiltonians. However, the same result also holds for small $t$-dependent perturbations of the Hamiltonian by the Floer-Gromov compactness of \cite{BO09}. We actually learned Lemma~\ref{lem:prohibited_trj} from \cite{CO18} which applies it in the perturbed context.
\end{remark}

\section{Proof}
\label{sec:proofs}

This section contains the proofs of Theorems~\ref{th:factor} and Theorem~\ref{th:factor_higher} as well as the definition of the Borman-Sheridan class and higher deformation classes $\D_k$.

\subsection{Stabilising divisor} 
Our first step is quite standard; it is inspired by the Auroux-Kontsevich-Seidel lemma \cite[Section 6]{Au07} and the technique of stabilising divisors due to Cieliebak and Mohnke \cite{CM07}.
Fix a tame almost complex structure $J$ such that $\Sigma$ is a $J$-complex hypersurface. 
Consider two disk counting problems. The original count we are interested in comes from~(\ref{eq:mu2_weighted_count}):
$$
m_0=\#_{\rho(L)} \M_0=W_L(\rho).
$$
Recall that $\M_0$ consists of unparametrised holomorphic Maslov index 2 disks with boundary on $L$, and  passing through a fixed point $\pt\in L$.
One introduces another number: 
$$m=\#_{\rho} \M$$
counting parametrised holomorphic Maslov index 2 maps $u\co(D,\bd D)\to (X,L)$ such that $u(1)=\pt$ is a fixed point on $L$, and $u(0)\in \Sigma$. The count is again weighted using $\rho$. Denote by $\M$ the 0-dimensional moduli space just introduced. Here $D$ is the unit disk in $\C$, $1\in D$ is the fixed point on its boundary, and $0\in D$ is the fixed point in the interior. Our first claim is that
\begin{equation}
\label{eq:k_to_1}
m=dm_0.
\end{equation}
Recall that the natural orientations (signs) on $\M_0$ and $\M$ arise from regarding them as fibre products:
$$
\M_0=(\M_{1,0}(2))\times_{\ev_1}(\pt),\quad 
\M=(\M_{1,1}(2))\times_{\ev_1\times \ev_2}(\pt\times \Sigma)
$$
where $\M_{1,0}(2)$ resp.~$\M_{1,1}(2)$ are the moduli spaces of Maslov index 2 disks with one boundary marked point, resp.~one boundary and one interior marked point; and $\ev_1$, $\ev_2$ is the evaluation at the boundary resp.~interior marked point.
By Equation~(\ref{eq:mu_and_intersec}), the image $u(D)$, $u\in \M_0$, has algebraic intersection number $d$ with  $\Sigma$. If one marks the preimage of any such intersection point $p$ and finds the unique reparametrisation $\tilde u$ of $u$ such that $\tilde u(1)=\pt$, $\tilde u(0)=p$, one obtains an element $\tilde u\in \M$ whose orientation sign coincides with the intersection sign. Therefore the converse forgetful map $\M\to\M_0$ has degree $d$; Equation~(\ref{eq:k_to_1}) follows. For the rest of the proof, we shall work with $\M$ instead of $\M_0$.

\subsection{Domain-stretching.}
\label{subsec:stretch}
Let $M\subset X\setminus \Sigma$ be a Liouville subdomain.
Fix the height parameter $l\in\R_+$. Let 
$$H_l\co X\times S^1\to \R$$ 
be an \S-shaped Hamiltonian of height $l$ described in Subsection~\ref{subsec:s_shape}, see (\ref{eq:H_l_0}), (\ref{eq:H_l_S}), where  $\bd M$ is used as the contact type hypersurface $S$.

Our goal is to introduce, for each fixed $l$, a sequence of domain-dependent Hamiltonian perturbations of the holomorphic equation for the Maslov index 2 disks appearing in the definition of $\M$, by inserting the Hamiltonian $H_l$ as a perturbation over a sequence of annuli exhausting the unit disk $D$.  The  sequence will be parametrised by the integers $n\to\infty$. Introducing this sequence of perturbations can be called {\it domain-stretching}, by analogy with  neck-stretching in the SFT sense. It is important to keep in mind the following contrast with SFT neck-stretching: while the latter procedure changes the holomorphic equation by modifying the almost complex structure in the \emph{target} $X$, we modify the equations over the \emph{domain}, the unit disk. Domain-stretching is part of the standard Floer-theoretic toolbox; for example, it is used to prove composition rules for continuation maps in Floer cohomology (in which case the domain is the cylinder rather than the disk). 

It is helpful to  consider, instead of the unit disk $D$, a differently parametrised disk 
$$D_n:=A_n\cup B$$ where 
$$A_n=S^1\times[0,n]$$
is an annulus and $B$ is a unit disk capping the annulus $A_n$ from the right side, see Figure~\ref{fig:abc}. On $A_n$, we introduce the following standard co-ordinates: $t\in S^1$ and $s\in[0,n]$. We extend them to polar
co-ordinates on $B$ using the vector fields
$\bd_s$, $\bd_t$ shown in Figure~\ref{fig:abc}. Note that $\bd_s$, $\bd_t$ vanish at one point on $B$. Denote this point by $0\in B$; also, denote the point $\{s=t=0\}$ in $A_n$ by $1\in A_n$ (this notation is unusual, but it is consistent with the boundary point $1\in D$ used in the definition of $\M$ above).

Consider the complex structure on the disk $D_n$ glued from the standard complex structure on $A_n$ (which depends on $n$) and the standard complex structure on $B$ (which does not). The resulting complex structure on $D_n$ is of course biholomorphic to the unit disk $D$, but we will use the presentation $D_n=A_n\cup B$ to introduce domain-dependent perturbations.

We now define a sequence of domain-dependent Hamiltonians
$$
H_{n,l}\co D_n\times X\to\R.
$$ 
We introduce them separately over each of the two pieces $A_n$ and $B$; this means that we specify the restrictions $H|_{A_n\times X}$ and $H|_{B\times X}$.

\begin{enumerate}
\item[$A_n:$] Set  
$$
H_{n,l}(t,s,x)= H_l(t,x)\co   S^1\times [0,n]\times X\to \R.
$$
Here $H_l$ is the \S-shaped Hamiltonian of height $l$ introduced in Subsection~\ref{subsec:s_shape} where we use the parameter $0<\alpha<1$.
Observe that $H_{n,l}$ is an $s$-independent Hamiltonian in this region.
\item[$B$:] 
Set $H_{n,l}\equiv 0$ near the point $0\in B$ (recall this is the point where $\bd_s,\bd_t$ vanish) and $H_{n,l}\equiv H_l$ near $\bd B$. Over the sub-annulus of $B$ shown in light shade in Figure~\ref{fig:abc}, $H_{n,l}$ interpolates between $H_l$ and zero in such a way that $\bd_s H_{n,l}\le 0$ everywhere. 

We make the following additional provisions. We require that $X_{H_{n,l}(s,t,x)}|_\Sigma$ is small and is always tangent to $\Sigma$. 
For convenience we also assume that as one moves from left to right over the interpolation region (the light-shaded annulus), $H_{n,l}$ is first given near $\Sigma$ by varying the constant $l$ from (\ref{eq:h_on_bdle}) from the given height parameter $l$ to a small constant $\epsilon$ keeping the rest of the data in (\ref{eq:h_on_bdle}) fixed, and is subsequently homotoped monotonically to zero.
\end{enumerate}

Figure~\ref{fig:abc} gives an impression of how the function $H_{n,l}$  looks. It shows a movie of functions $H_{n,l}(s,t,\cdot)\co X\to \R$ for various values of $s$ (the dependence on $t$ may be small, and does not matter). 

\begin{figure}[h]
	\includegraphics{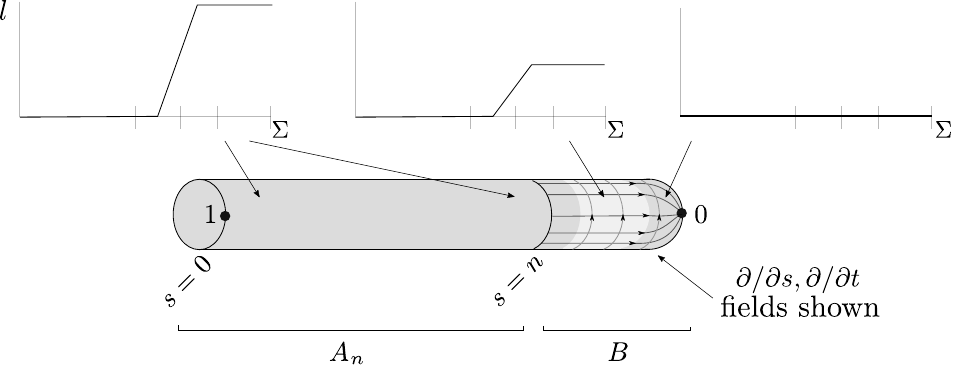}
	\caption{Domain-dependent Hamiltonians $H_{n,l}$ used in the domain-stretching sequence.}
	\label{fig:abc}
\end{figure}

We make a choice of $H_{n,l}$ for each $n,l$ and arrange that  $H_{n,l}\le H_{n,l'}$ for $l\le l'$,  pointwise on $D_n\times X$.
In particular, over the region $A_n$ we are using 
\S-shaped Hamiltonians from Subsection~\ref{subsec:s_shape} satisfying $H_l\le H_{l'}$. For now, we continue to work with a fixed $l$.

One identifies $D_n$ conformally with the unit disk $D$, so that $(s,t)$ become the standard polar co-ordinates on $D$, and the respective points $0,1$ in $D_n$ and $D$ are identified.
This uniformisation allows to treat our Hamiltonians as being defined on the same domain, $H_{n,l}\co D\times X\to \R$.

Consider the following perturbed holomorphic equation for $u\co (D,\bd D)\to (X,L)$:
\begin{equation}
\label{eq:perturbed_holoc_disk}
\bd_su+J\bd_t u=JX_{H_{n,l}}
\end{equation} 
where $X_{H_{n,l}}$ is the Hamiltonian vector field with respect to the variable in $X$.
The vector fields $\bd_s,\bd_t$ vanish at the origin $0\in D_n=D$, so  Equation~(\ref{eq:perturbed_holoc_disk}) does not make sense as written over that point. However, because $H_{n,l}\equiv 0$ near the origin, the equation simply restricts to the unperturbed $J$-holomorphic equation near the origin, so extends over it.

In what follows, we use an almost complex structure which coincides near $\Sigma$ with the one used in Subsection~\ref{subsec:posit_intersec}, is contact type on the collar region, and is generic otherwise.

Let $\M_{n,l}$ be the moduli space of Maslov index 2 maps solving Equation~(\ref{eq:perturbed_holoc_disk}) which additionally satisfy $u(1)=\pt\in L$, $u(0)\in\Sigma$, where $\pt\in L$ is a fixed point. These spaces are 0-dimensional and regular   (there is obviously enough freedom to ensure transversality; for example, by perturbing $J$ near $L$). 

A homotopy from $X_{H_{n,l}}$ to zero in Equation~(\ref{eq:perturbed_holoc_disk}) proves that for each $n,l$:
\begin{equation}
\label{eq:count_m_n_l}
\#_{\rho}\M_{n,l}=\#_{\rho}\M=m.
\end{equation}
(Disk or sphere bubbling is excluded because $L$ is monotone and the disks have Maslov index 2, and therefore have the lowest possible symplectic area.)

\subsection{Breaking and gluing}
Recall that the Hamiltonian $H_{n,l}$ restricts to the $s$-independent Hamiltonian $H_l$ in the region $A_n\subset D_n$ of the domain.
This is a setting to which the standard Floer-Gromov compactness theorem \cite{Flo89} applies.  It states that solutions in $\M_{n,l}$ converge, as $l$ is fixed and $n\to\infty$, to {\it broken curves} like the one shown in Figure~\ref{fig:a_b}. The breakings happen along 1-periodic Hamiltonian orbits of $H_l$.

\begin{figure}[h]
\includegraphics{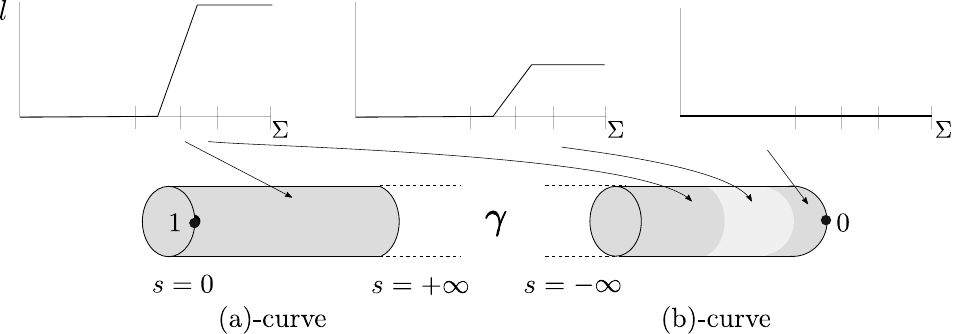}
\caption{A broken configuration consists of the (a)- and the (b)-curve, which carry the depicted Hamiltonian perturbations $H_l^{(a)}$, $H_l^{(b)}$.}
\label{fig:a_b}
\end{figure}

The  broken curve contains two main parts (see Figure~\ref{fig:a_b}): we call them the (a)-curve and the (b)-curve. In addition, the broken curve may contain: cylinders solving Floer's differential equation with respect to $H_l$ inserted between the (a)- and (b)-curves, and disk or sphere bubbles, as shown in Figure~\ref{fig:gen_breaking}. Some sphere bubbles may be contained inside $\Sigma$, because $J$ preserves $\Sigma$. The claim is that these extra parts cannot appear.
 
 \begin{figure}[h]
 	\includegraphics[]{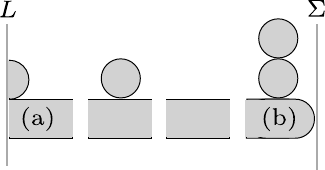}
 	\caption{A general Floer-Gromov limit.}
 	\label{fig:gen_breaking}
 \end{figure}
 
 \begin{lemma}
 	\label{lem:a_b}
When $l$ is large enough (greater than the area of a Maslov index~2 disk with boundary on $L$), any broken curve which is the Floer-Gromov limit of curves in $\M_{n,l}$ is composed of only two parts: the (a)- and the (b)-curve.
 \end{lemma}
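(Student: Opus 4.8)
The plan is to rule out, one at a time, each of the extraneous components that could in principle appear in a Floer--Gromov limit of curves in $\M_{n,l}$: (1) disk bubbles with boundary on $L$, (2) sphere bubbles not contained in $\Sigma$, (3) sphere bubbles contained in $\Sigma$, and (4) Floer cylinders for $H_l$ inserted between the (a)- and (b)-curves. The main bookkeeping tool throughout is area accounting combined with positivity of intersections with $\Sigma$ (Lemmas~\ref{lem:pos_inter} and~\ref{lem:pos_inter_asympt}), together with the monotonicity of $L$ and the hypothesis that $l$ exceeds the area of a Maslov index~$2$ disk on $L$.

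First I would dispose of (1) and (2) using monotonicity exactly as in the proof of (\ref{eq:count_m_n_l}): the curves in $\M_{n,l}$ have Maslov index $2$, hence carry the minimal positive symplectic area among disks with boundary on $L$, so any disk bubble would leave a principal component of non-positive area, which is impossible; and a non-constant sphere bubble in class $C$ has positive $c_1(X)\cdot C$, again pushing the principal component below the minimal area. (Here one must be slightly careful because the perturbation $H_{n,l}$ contributes to the energy; but the $\S$-shaped Hamiltonian and the interpolation are chosen so that the geometric energy of a solution of (\ref{eq:perturbed_holoc_disk}) is bounded by the topological quantity $\omega(u) + \int_{D}\! u^*(dH_{n,l}\wedge dt)$-type correction, which is controlled by $l$ and the area of $A$, and a bubble would still force a contradiction once $n$ is large and the principal component is forced to have area close to that of a Maslov~$2$ disk.) The role of the hypothesis ``$l$ large'' enters precisely here: it guarantees that the broken configuration carrying the perturbation $H_l$ over an annulus of growing length can only absorb the full action budget into the (a)- and (b)-curves, and not into any bubble on top.

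Next, sphere bubbles in $\Sigma$ (case (3)): since the almost complex structure preserves $\Sigma$, such bubbles are not excluded by monotonicity of $L$ alone, but by the intersection count with $\Sigma$. A solution of (\ref{eq:perturbed_holoc_disk}) with $u(0)\in\Sigma$ has, by positivity of intersections, intersection number at least $1$ with $\Sigma$ at $u(0)$; since the total homological intersection $A\cdot[\Sigma]$ is fixed (equal to $d$ by (\ref{eq:mu_and_intersec})), and every sphere component of class $C\subset H_2(\Sigma)$ with $[\Sigma]\cdot C = c_1(X)\cdot C$ positive would consume part of that intersection budget, a counting argument combined with the fact that there is ``no room'' — the virtual dimension of $\M$ is already zero — excludes them in a generic domain-dependent family, following the same scheme as in the proof of Lemma~\ref{lem:higher_bubble_sigma}. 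Finally, for (4), an inserted Floer cylinder for $H_l$ would, by Lemma~\ref{lem:prohibited_trj} and the confinement behaviour of the $\S$-shaped Hamiltonian, have to break further along orbits of types \I--\IV; but the index/action constraints force such a cylinder to be trivial (a constant or a $t$-translate), so it does not constitute a genuine extra level.

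The hard part, and where I would expect to spend the most care, is case (3): excluding sphere bubbles inside $\Sigma$ while the Hamiltonian perturbation $H_{n,l}$ is switched on near $\Sigma$ and varies over the interpolation annulus of $B$. One must check that the degenerations involving such spheres remain codimension~$\ge 1$ in the chosen generic family $H_{n,l}$, using the regularity of simple holomorphic spheres in $\Sigma$ and the fact that $c_1(\Sigma)=c_1(X)|_\Sigma - [\Sigma]|_\Sigma$ is still positive enough (when $\Sigma$ is a Donaldson divisor of degree $d$, $c_1(\Sigma) = (1-d)c_1(X)|_\Sigma$, which is the delicate point when $d\ge 1$, though for the purposes of this lemma the boundary-point constraint $u(1)=\pt$ and the interior constraint $u(0)\in\Sigma$ already pin down the dimension), and that the locus swept by such spheres misses the relevant evaluation constraints for generic data. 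Once these four exclusions are in place, the only surviving configuration is the two-component one, which proves the lemma.
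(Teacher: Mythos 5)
Your overall plan — classify extraneous components, kill each one by positivity of intersections, area/action accounting, and monotonicity — is pointing in the right direction, but the execution has a genuine gap in the Floer-cylinder case, and the bubbles-versus-cylinders separation misses the structure the paper needs.

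The paper's proof does not dispose of bubbles first and cylinders second; it handles the \emph{whole} broken configuration's intersection number with $\Sigma$ (which equals $d$) in one go. The opening move is: any disk bubble has Maslov index $\ge 2$, hence $[\Sigma]$-intersection $\ge d$ by (\ref{eq:mu_and_intersec}); any sphere bubble (in $\Sigma$ or not) has positive Chern number by monotonicity of $X$, hence $[\Sigma]$-intersection $\ge d$. So if a bubble exists, the remaining punctured pieces — the (a)-curve, the (b)-curve and the Floer cylinders between them — collectively have intersection $\le 0$ with $\Sigma$; if not, they have intersection $d$. The subsequent analysis is a case split on whether any type~\IV b asymptotic appears. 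If none, each punctured piece has non-negative intersection on its own (Lemma~\ref{lem:pos_inter}), the incidence $u(0)\in\Sigma$ gives the (b)-curve intersection $\ge 1$, which already rules out bubbles, and Floer cylinders are then killed by a regularity/dimension count. If some type~\IV b asymptotic appears, the paper groups the punctured curves into ``cylinder groups'' running between consecutive type~\IV b asymptotics, uses Lemma~\ref{lem:pos_inter_asympt} with $\lfloor\alpha\rfloor=0$ to show each group has intersection $\ge d$ (or $\ge 0$, if contained in $\Sigma$), concludes the (a)-group has intersection $\le d$, hence area $\le 2\lambda$, and then the action estimate (\ref{eq:act_est}) gives the contradiction once $l \ge 2\lambda$.

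The gap in your proposal is your treatment of case~(4): you claim that Lemma~\ref{lem:prohibited_trj} and ``confinement'' force an inserted Floer cylinder to be trivial. This is not so — Floer cylinders for $H_l$ running between type~\I\ or~\II\ orbits are exactly the trajectories that compute the symplectic cohomology differential and exist in abundance; Lemma~\ref{lem:prohibited_trj} only constrains the qualitative behaviour near a type~\III\ asymptotic and says nothing about triviality. Ruling out inserted cylinders is precisely where the grouping-by-\IV b and the $l\ge 2\lambda$ action estimate do actual work, and you cannot avoid that case analysis. Relatedly, your monotonicity-only disposal of disk and sphere bubbles (items (1)–(2)) glosses over the fact that geometric and topological energy differ by a Hamiltonian term — you acknowledge this, but the cleaner escape is the paper's purely topological intersection-number bookkeeping, which also naturally feeds into the Case~1 grouping. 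Your parenthetical about $c_1(\Sigma)$ positivity for spheres inside $\Sigma$ is a red herring: the paper only uses that those spheres have positive area, hence positive $c_1(X)$-number by monotonicity of $X$, hence $[\Sigma]$-intersection $\ge d$; the value of $c_1(\Sigma)$ never enters.
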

 
We shall give a proof of Lemma~\ref{lem:a_b}  later in this section, because the proof will re-use arguments appearing in the next subsection. So we proceed assuming Lemma~\ref{lem:a_b}.

Let us discuss the (a)- and the (b)-curves in more detail.
The domain of the (a)-curve is $[0,+\infty)\times S^1$. 
The domain of the (b)-curve is bi-holomorphic to $\C$, but it is more convenient to look at it as on $$((-\infty,0]\times S^1)\cup B,$$ where $B$ is the capping disk appearing earlier.
The (a)- and (b)-curves solve Floer's equation
(\ref{eq:Floer_Hst}) with respect to the Hamiltonians
$$
H^{(a)}_l\co [0,+\infty)\times S^1\times X\to \R \quad\textit{and}\quad
H^{(b)}_l\co \left((-\infty,0]\times S^1)\cup B\right)\times X\to \R.
$$
given by:
\begin{equation}
\label{eq:H_a_and_b}
\begin{array}{l}
H^{(a)}_l(s,t,x)\equiv H_l(t,x)\quad\text {(the $s$-independent \S-shaped Hamiltonian)} ;\\
H^{(b)}_l\text{ is stitched using } H_l \text{ over }(-\infty,0]\times S^1
,\text{ and }  H_{n,l}|_B\text { over }B.
\end{array}
\end{equation}

Both the (a)- and the (b)-curve must be asymptotic to the same periodic orbit which is denoted by: 
$$\gamma,\text{ a 1-periodic orbit of } H_l.$$ 
(For type \II\ and \III\ orbits, whenever they are autonomous, the (a)- and (b)-curve may converge to different parametrisations of $\gamma$).
The incidence conditions $u(1)=\pt\in L$ and $u(0)\in \Sigma$ must be met by the new curves; now the marked point $0$ belongs to the (b)-curve and the marked point $1$ belongs to the (a)-curve as shown in Figure~\ref{fig:a_b}.

\begin{proposition}
\label{prop:gluing_broken}
For each sufficiently large $l$, the count of the configurations consisting of an (a)-curve and a (b)-curve  sharing an asymptotic orbit $\gamma$, equals 
$$ d\cdot  W_L(\rho),$$
i.e.~the count of the (a)-curves is weighted using the local system $\rho$.
\end{proposition}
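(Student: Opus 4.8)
The plan is to deduce the statement from the $n$-independence of the count $\#_\rho\M_{n,l}$, by identifying the broken configurations with the elements of $\M_{n,l}$ for $n$ large via a standard broken-curve gluing theorem. Recall from (\ref{eq:count_m_n_l}) and (\ref{eq:k_to_1}) that $\#_\rho\M_{n,l}=\#_\rho\M=m=d\,m_0=d\cdot W_L(\rho)$ for every $n$, so it suffices to prove that the signed, $\rho$-weighted count of broken $(a)+(b)$ configurations equals $\#_\rho\M_{n,l}$ for some (hence every) sufficiently large $n$. Throughout I fix $l$ larger than the area of a Maslov index~$2$ disk with boundary on $L$, so that Lemma~\ref{lem:a_b} is available.

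The first half of the argument is compactness: Floer--Gromov compactness applied to the $s$-independent region $A_n$, together with Lemma~\ref{lem:a_b}, shows that every sequence $u_n\in\M_{n,l}$ subconverges, as $n\to\infty$, to a broken configuration consisting of exactly one $(a)$-curve and one $(b)$-curve sharing an asymptotic orbit $\gamma$ of $H_l$; this also shows the set of broken configurations is finite. The second half is gluing. Here I would first record that, for each relevant $\gamma$, the moduli spaces of $(a)$-curves (Floer half-cylinders with boundary on $L$, Hamiltonian $H^{(a)}_l$, constraint $u(1)=\pt$, asymptotic to $\gamma$) and of $(b)$-curves (Floer planes with Hamiltonian $H^{(b)}_l$, constraint $u(0)\in\Sigma$, asymptotic to $\gamma$ at the negative end) are cut out transversally: away from $\Sigma$ and $L$ by a generic choice of $J$ and of the perturbations defining $H_l$; near $\Sigma$, where $J$ must preserve $\Sigma$, by positivity of intersections (Lemma~\ref{lem:pos_inter}) and the explicit local normal form of Subsection~\ref{subsec:posit_intersec}, exactly as in the stabilising-divisor step. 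For type~\II\ orbits, non-degeneracy of $\gamma$ is arranged by the $t$-dependent perturbation; for type~\III\ orbits and autonomous type~\II\ orbits, which occur in $S^1$-families, I would pass to the $S^1$-Morse--Bott framework of Bourgeois and Oancea~\cite{BO09}, where the relevant contribution is the signed cardinality of the fibre product of the $(a)$- and $(b)$-moduli over the $S^1$-family. With these regularity statements in hand, the standard broken-curve gluing theorem along a (possibly Morse--Bott) periodic orbit produces, for each broken configuration and each sufficiently large $n$, a unique nearby element of $\M_{n,l}$, and conversely every element of $\M_{n,l}$ with $n$ large lies in exactly one such gluing region by the compactness statement. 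Since gluing preserves the total relative homology class — hence the boundary class in $H_1(L;\Z)$, which is carried entirely by the $(a)$-curve because the $(b)$-curve has empty boundary — and preserves coherent orientations, the resulting bijection between $\M_{n,l}$ and the set of broken configurations is sign- and $\rho$-weight-preserving.

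Putting the two halves together, the $\rho$-weighted count of broken configurations (weighting each one by $\rho$ of the boundary class of its $(a)$-curve) equals $\#_\rho\M_{n,l}=m=d\cdot W_L(\rho)$, which is the assertion. The main obstacle I anticipate is the gluing analysis at the orbit $\gamma$: establishing regularity of the $(b)$-curves in the region where $J$ is pinned down to preserve $\Sigma$, and carrying out the Morse--Bott bookkeeping cleanly for orbits occurring in $S^1$-families. Both are handled by the tools already recalled (Lemmas~\ref{lem:pos_inter} and~\ref{lem:pos_inter_asympt}, together with the Bourgeois--Oancea setup), so I expect no analytic input beyond the standard Floer-theoretic toolbox; the conceptual content is simply that these counts are invariant under domain-stretching followed by gluing.
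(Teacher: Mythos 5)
Your proposal is correct and follows essentially the same route as the paper's own (much terser) proof: Gromov--Floer compactness plus Lemma~\ref{lem:a_b} in one direction, Floer's gluing theorem (with the $S^1$-Morse--Bott version from \cite{BO09} for autonomous orbits) in the other, and then the chain of equalities $\#_\rho\M_{n,l}=m=d\,m_0=d\cdot W_L(\rho)$ from (\ref{eq:count_m_n_l}) and (\ref{eq:k_to_1}). You have simply spelled out the regularity, orientation, and Morse--Bott bookkeeping that the paper leaves implicit.
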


\begin{proof}
By Gromov-Floer compactness and Floer's gluing theorems, the count of such configurations equals $\#_{\rho} \M_{n,l}$ for large enough $n$. Then the desired equality follows from (\ref{eq:count_m_n_l}) and (\ref{eq:k_to_1}).
In the $S^1$-Morse-Bott case,  the gluing theorem is due to \cite{BO09}. 
\end{proof}

\subsection{Ruling out type \IV b orbits}
Recall the discussion of the periodic orbits of the \S-shaped Hamiltonian $H_{l}$ in Subsection~\ref{subsec:s_shape}: they come in groups \I,\ \II,\ \III,\ \IV a,\ \IV b. Our aim is to show that an orbit $\gamma$ arising from a broken curve as above must necessarily be a type \I\ or \II\ orbit when $l$ is large enough.
Let us reformulate the hypothesis that the embedding $M\subset X\setminus\Sigma$ is grading-admissible in the following convenient way.

\begin{lemma}
\label{lem:div_by_k}
The image of the intersection pairing $-\cdot  [\Sigma]\co H_2(X,M;\Z)\to \Z$ is contained in $d\Z$.
\end{lemma}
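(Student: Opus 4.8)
The plan is to translate the topological content of grading-compatibility, as recalled in the introduction, into the stated divisibility. Recall that $M \subset X \setminus \Sigma$ is grading-compatible precisely when the canonical trivialization $\eta$ of $(K_{X\setminus\Sigma})^d$ admits a $d$th root over $M$, and that the excerpt already states this is equivalent to $[\Sigma]$ being divisible by $d$ in $H_{2n-2}(X\setminus M;\Z)$. So the first step is to invoke this equivalence and reduce the lemma to a purely homological statement about the pair $(X,M)$. Concretely, I would set $X' = X \setminus M$ (or rather a deformation retract thereof onto the complement of the open domain), so that $\Sigma \subset X'$, and observe that $-\cdot[\Sigma]\co H_2(X,M;\Z)\to\Z$ factors in a way that only sees the class of $\Sigma$ in $X'$.

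The key step is the following compatibility of intersection pairings with the long exact sequence of the pair. By excision and the fact that $\Sigma$ is disjoint from $M$, the class $[\Sigma]$ lives in $H_{2n-2}(X';\Z)$ where $X' = X\setminus M$, and the intersection pairing $H_2(X,M;\Z)\otimes H_{2n-2}(X';\Z)\to\Z$ is well-defined (one can realize a relative $2$-cycle representing a class $A\in H_2(X,M)$ by a chain whose boundary lies in $M$, hence disjoint from a cycle representing $[\Sigma]$ inside $X'$, and count intersections). Under this pairing, if $[\Sigma] = d\cdot\beta$ for some $\beta \in H_{2n-2}(X';\Z)$ — which is exactly the divisibility recalled in the introduction — then $A\cdot[\Sigma] = d\,(A\cdot\beta) \in d\Z$ for every $A\in H_2(X,M;\Z)$. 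That is precisely the assertion of the lemma. The one point requiring a little care is checking that the relative intersection pairing I am using agrees, after composing with $H_2(X;\Z)\to H_2(X,M;\Z)$ being surjective is \emph{not} available, with the genuine homological intersection $A\cdot[\Sigma]$ appearing elsewhere in the proof; but this is a standard consequence of Poincaré–Lefschetz duality for the pair $(X,M)$ together with naturality of cup/cap products.

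I expect the main obstacle to be bookkeeping rather than anything deep: namely, setting up the relative intersection pairing $H_2(X,M;\Z)\times H_{2n-2}(X\setminus M;\Z)\to\Z$ cleanly and matching it with the pairing $-\cdot[\Sigma]$ used in the surrounding argument, so that the factor of $d$ genuinely lands where claimed. Once the equivalence from \cite[Section~2.2, Proposition~2.5]{PT17} is cited, the divisibility of $[\Sigma]$ in $H_{2n-2}(X\setminus M;\Z)$ is granted, and the rest is a formal manipulation of pairings. An alternative, slightly more self-contained route would bypass \cite{PT17}: use the root of $\eta$ over $M$ to build an explicit section of $K_X^{-d}$ over $X$ vanishing on $\Sigma$ that is a $d$th power near $M$, from which one reads off directly that the linking number of any relative $2$-cycle with $\Sigma$ is a multiple of $d$; but citing the already-recorded equivalence is cleaner and I would present the proof that way.
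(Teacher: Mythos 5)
Your proposal is correct, but it takes a genuinely different route from the paper's proof, so it is worth comparing. The paper works with the obstruction class $o\in H^1(M;\Z/d\Z)$ to taking a $d$th root of the trivialisation $\eta$ of $(K_{X\setminus\Sigma})^d$ directly: for a relative $2$-chain $B$ in $(X,M)$, the paper observes that $\eta|_B$ is given by a section with a pole of order $1$ at each point of $B\cap\Sigma$, and identifies the pairing $o\cdot[\partial B]\in\Z/d\Z$ with $B\cdot[\Sigma]\bmod d$; since grading-compatibility says $o=0$, the intersection numbers are all $\equiv 0\pmod d$. You instead take as black box the statement, recorded in the introduction and attributed to \cite[Section~2.2, Proposition~2.5]{PT17}, that grading-compatibility is equivalent to $[\Sigma]$ being divisible by $d$ in $H_{2n-2}(X\setminus M;\Z)$, and then simply appeal to bilinearity of the intersection pairing $H_2(X,M;\Z)\times H_{2n-2}(X\setminus M;\Z)\to\Z$: writing $[\Sigma]=d\beta$, each value $A\cdot[\Sigma]=d(A\cdot\beta)$ lands in $d\Z$. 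This is logically sound and not circular, since the lemma is strictly weaker than the cited equivalence (the converse direction of the equivalence is not needed, and indeed torsion in $H_{2n-2}(X\setminus M;\Z)$ could prevent recovering the equivalence from the lemma alone). The paper's route has the advantage of being self-contained, essentially reproving the dualized form of the Proposition~2.5 equivalence in the one direction needed; yours is shorter at the cost of leaning on the cited reference and on the Poincaré--Lefschetz setup relating $H_2(X,M;\Z)$ to $H^{2n-2}(X\setminus M;\Z)$, which you rightly flag as the only bookkeeping step requiring care.
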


\begin{proof}
	Let $\eta$ be the natural trivialisation of $(K_{X\setminus \Sigma})^d$.
	The obstruction $o$ to finding a $d$th root of $\eta$ over $M$ lies in $H^1(M;\Z/d\Z)$, see e.g.~\cite[Section~2.2]{PT17}. On the image of $H_2(X,M)\to H_1(M)$, this obstruction can be computed as follows. Consider a 2-chain $B$ in $(X,M)$. 
	By definition of the natural trivialisation, $\eta|_B$ is given by a section $s^{-1}$ of $(K_X)^d|_B$ having pole of order $1$ at each intersection point $B\cap \Sigma$. It follows that the value of the obstruction $o\cdot [\bd B]\in\Z/d\Z$ equals  the intersection number $B\cdot[\Sigma]$ mod~$d$. These pairings are all zero if and only $o$ vanishes, which implies the lemma.
\end{proof}

In what follows, by a \emph{limiting  orbit $\gamma$} we mean a periodic orbit of $H_l$ arising as the common asymptotic of some broken configuration of an (a)- and a (b)-curve which is the Floer-Gromov limit of a sequence of solutions in $\M_{n,l}$ as $n\to+\infty$. Assuming Lemma~\ref{lem:a_b}, we prove the following.

\begin{lemma}
\label{lem:not_4b} 
When $l$ is greater than the area of a Maslov index~2 disk with boundary on $L$, a limiting orbit $\gamma$ cannot be of type \IV b, i.e.~a constant orbit in $\Sigma$. 
\end{lemma}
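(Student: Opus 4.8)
The plan is to argue by contradiction using positivity of intersections with $\Sigma$, applied separately to the (a)- and (b)-curves of a putative broken configuration whose limiting orbit $\gamma$ is a constant orbit $p\in\Sigma$ of type \IV b. The key numerical input is the homological identity $\mu(A)=A\cdot[\Sigma]/2d$ from \eqref{eq:mu_and_intersec} (equivalently $A\cdot[\Sigma]=d\mu(A)$), which for a Maslov index $2$ disk $A$ gives total intersection number $A\cdot[\Sigma]=d$; this is precisely the content already exploited in \eqref{eq:k_to_1}. So the broken curve, which represents the class $A$, carries total intersection number $d$ with $\Sigma$, and this budget of $d$ must be distributed among the two pieces and their shared asymptotic at $p$.

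First I would analyse the intersection contributions. The (b)-curve has the prescribed marked point $0\in B$ with $u(0)\in\Sigma$, contributing at least $1$ to its intersection number with $\Sigma$ (by positivity, Lemma~\ref{lem:pos_inter}). If in addition $\gamma=p$ is a constant orbit in $\Sigma$, then by Lemma~\ref{lem:pos_inter_asympt} the closure of the (b)-curve has an \emph{additional} contribution at the asymptotic puncture $p$: since the (b)-curve sits on the domain $((-\infty,0]\times S^1)\cup B$ and is asymptotic to $p$ as $s\to-\infty$, it is of $S_-$-type, so Lemma~\ref{lem:pos_inter_asympt}(ii) gives a contribution of at least $-\lfloor\alpha\rfloor=0$ there (recall we fixed $0<\alpha<1$). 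Meanwhile the (a)-curve, on domain $[0,+\infty)\times S^1$ asymptotic to $p$ as $s\to+\infty$, is of $S_+$-type, so Lemma~\ref{lem:pos_inter_asympt}(i) forces its closure to have intersection number at least $\lfloor\alpha\rfloor+1=1$ with $\Sigma$ near $p$. The main point is that the (a)-curve is \emph{entirely contained in $M$}: its Hamiltonian is the $s$-independent \S-shaped $H_l$, and since $L\subset M$ and $l$ exceeds the area of a Maslov index $2$ disk, the energy/area estimates together with the confinement behaviour (the asymptotic analysis of Lemma~\ref{lem:prohibited_trj} and the maximum principle for $X_-$) keep the (a)-curve inside $M\cup U(\bd M)$, hence away from $\Sigma\subset X_+$. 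This is the crux: an (a)-curve confined to a region disjoint from $\Sigma$ cannot intersect $\Sigma$ at all, so its intersection number with $\Sigma$ is $0$, contradicting the lower bound $\geq 1$ just derived. Therefore $\gamma$ cannot be of type \IV b.

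The technical heart — and the step I expect to be the main obstacle — is justifying rigorously that the (a)-curve stays in $M$, i.e.~that it cannot wander into $X_+$ and reach $\Sigma$. One has to combine: (i) the a priori energy bound coming from the monotonicity of $L$ and the area of Maslov index $2$ disks, which is why the hypothesis on $l$ enters; (ii) the maximum principle on the cylindrical region, ruling out escape toward $\bd M$ from the $M$-side; and (iii) Lemma~\ref{lem:prohibited_trj}, which controls the behaviour near type \III\ orbits and, together with the shape of $H_l$, prevents the curve from crossing the collar into $X_+$. An alternative, cleaner route is purely homological: write $A=A^{(a)}+A^{(b)}$ as relative classes (using the capping at $p$), observe $A^{(a)}\cdot[\Sigma]\geq 1$ from Lemma~\ref{lem:pos_inter_asympt}(i) and $A^{(b)}\cdot[\Sigma]\geq 1$ from the marked point, so $A\cdot[\Sigma]\geq 2$; but if $L$ bounds only Maslov $2$ disks then $d=A\cdot[\Sigma]=d\mu(A)/1$ — wait, this needs $d=1$, so in the general Donaldson-degree case one instead notes that the broken curve together with a type \IV b asymptotic would force $\mu(A)\geq$ something incompatible with $\mu(A)=2$ once the contributions are tallied with the correct multiplicities. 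I would present whichever of these is cleanest; the homological version avoids the hardest analytic estimates but must be reconciled with the possible presence of the extra Floer cylinders and bubbles — which is exactly why Lemma~\ref{lem:a_b} is invoked first to reduce to the two-piece configuration.
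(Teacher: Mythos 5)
Your proposal correctly identifies the intersection budget $A\cdot[\Sigma]=d$ and the two positivity lemmas, but the mechanism for deriving the contradiction is not the one the paper uses, and the routes you sketch both have gaps.

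The confinement route is circular: the no-escape lemma that the paper later uses to confine the (a)-curve (Lemma~\ref{lem:no_escape}) explicitly hypothesizes that the curve avoids $\Sigma$, which is precisely what you are trying to contradict; moreover, Lemma~\ref{lem:disjoint_from_sigma} (disjointness of the (a)-curve from $\Sigma$) is proved in the paper \emph{after} Lemma~\ref{lem:not_4b} and only treats the orbit types \I, \II, \III, \IV a — it presupposes that \IV b has already been ruled out. Your ``cleaner'' purely-homological route ($A^{(a)}\cdot[\Sigma]\ge 1$ plus $A^{(b)}\cdot[\Sigma]\ge 1$ exceeds the budget $d$) works when $d=1$ and the (b)-curve is not contained in $\Sigma$, but it fails in exactly the case the paper singles out in Remark~\ref{rem:b_const}: when the (b)-curve is contained in $\Sigma$ (in particular constant), $u(\bd S)\cap\Sigma=\emptyset$ no longer holds and Lemma~\ref{lem:pos_inter_asympt} does not apply; the paper instead proves $\omega(B)\ge 0$ (hence $B\cdot[\Sigma]\ge 0$) via the separate energy estimate of Lemma~\ref{lem:pos_area}, and this gives only $A\cdot[\Sigma]\le d$, not a contradiction by itself.

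What is actually missing from your proposal is the step where the hypothesis $l>2\lambda$ enters, namely the Hamiltonian action/energy estimate for the (a)-curve (equation~(\ref{eq:act_est}) in the paper). The intersection budget is used only to conclude $\mu(A)\le 2$, hence $\omega(A)\le 2\lambda$. Then one writes, for the (a)-curve asymptotic to $\gamma$ with boundary loop $\gamma'\subset L$,
$$
0\le E(u)=\omega(A)-\int_{\gamma}H_l+\int_{\gamma'}H_l,
$$
and observes that the constant type~\IV b orbit $\gamma\subset\Sigma$ lies where $H_l\approx l$ while $\gamma'\subset L\subset X_-$ lies where $H_l\approx 0$, so $0\le\omega(A)-l+2\epsilon\le 2\lambda-l+2\epsilon$, which is impossible once $l>2\lambda+2\epsilon$. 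In other words, the contradiction comes from the action gap between $\Sigma$ (where $H_l$ is large) and $L$ (where $H_l$ is small), not from an intersection overcount or from confining the (a)-curve to $M$. You gesture at an ``energy/area estimate'' as an auxiliary tool for confinement, but the paper uses it as the main engine; without it, the \IV b case with a constant (b)-curve cannot be eliminated.
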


\begin{proof}
Because type \IV b orbits are constant, the images of the (a)- and the (b)-curve can be compactified by adding the corresponding asymptotic point. These two curves therefore define homology classes which we call $A\in H_2(X,L)$ and $B\in H_2(X)$, for the (a)- and (b)-curve respectively. (We use coefficients in $\Z$.) The (b)-curve may be entirely contained in $\Sigma$, because $J$ preserves $\Sigma$ and the Hamiltonian vector field of $H_l^{(b)}$ is always tangent to $\Sigma$. Consider the two cases separately; see Figure~\ref{fig:ivb}.

\begin{itemize}
	\item If the (b)-curve is contained in $\Sigma$, then the Hamiltonian perturbation in its equation is small, as the function $H_\Sigma$ from (\ref{eq:h_on_bdle}) is taken to be small. This means that the image of the (b)-curve is close to being $J$-holomorphic; it follows that $\omega(B)\ge 0$ (a precise argument appears in Lemma~\ref{lem:pos_area} below). Therefore $c_1(B)\ge 0$ by monotonicity, where the Chern class is computed inside $X$, not $\Sigma$. Observe that the (b)-curve may be constant, in which case $B=0$. It follows that $$B\cdot[\Sigma]\ge 0,$$ since $\Sigma$ is dual to a positive multiple of $c_1(X)$.

	\item If the (b)-curve is not contained inside $\Sigma$, 
	note that it intersects $\Sigma$ at least once, by the incidence condition $u(0)\in\Sigma$. Due to positivity of intersections, using both Lemmas~\ref{lem:pos_inter} and~\ref{lem:pos_inter_asympt} with $\lfloor \alpha\rfloor= 0$, one obtains that $$B\cdot[\Sigma]\ge 1.$$
\end{itemize}	

\begin{figure}[h]
	\includegraphics[]{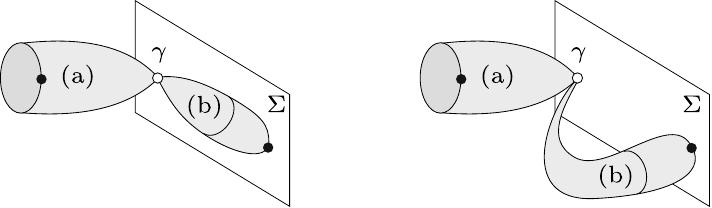}
	\caption{When $\gamma$ is a type \IV b orbit, the (b)-curve may or may not fall entirely within $\Sigma$.}
	\label{fig:ivb}
\end{figure}

Next, recall that $$(A+B)\cdot[\Sigma]=d$$ because $A+B$ is homologous to a Maslov index 2 disk $D$ one started with, see (\ref{eq:mu_and_intersec}). 
In both cases, it follows that 
$$A
\cdot[\Sigma]\le d,$$ or equivalently $\mu(A)\le 2$. Therefore 
$$\omega(A)\le 2\lambda$$ where $\lambda$ is the monotonicity constant.
Now one uses a version of the standard energy estimate, see e.g.~\cite[2.4]{Ri13}, re-cast in a slightly different fashion. Let $$u(s,t)\co [0,+\infty)\times S^1\to X$$ be an (a)-curve. Denote by $\gamma'\subset L$ the boundary loop of $u$, i.e.~$u(0,t)$. 
Recall that the (a)-curve solves the $s$-independent Floer's equation with the Hamiltonian $H_l=H_l^{(a)}$. Let $X$ be its Hamiltonian vector field. Then one has:
\begin{equation}
\label{eq:act_est}
\begin{array}{l}
0\le E(u)=\textstyle\int|\bd_s u|^2 ds\wedge dt\\
=\textstyle\int\omega(\bd_su,\bd_t u-X)ds\wedge dt\\
=\omega(A)-\textstyle\int\omega(\bd_su,X)ds\wedge dt\\
=\omega(A)-\textstyle\int dH_l(\bd_s u)ds\wedge dt\\
=\omega(A)-\textstyle\int_{\gamma}H_l(t)+\textstyle\int_{\gamma'}H_l(t).
\end{array}
\end{equation}
The last step uses the fact that $H_l$ is $s$-independent.
Recall that $\gamma\subset \Sigma$ is the constant type \IV b periodic orbit. By construction, $H_l|_\Sigma$ is a perturbation of the constant function equal to $l$, see (\ref{eq:h_on_bdle}) and Subsection~\ref{subsec:s_shape} where $H_l$ was defined. Therefore:
$$
\textstyle\int_{\gamma}H_l(t)\ge l-\epsilon
$$
for a small $\epsilon$.
Also by construction, $H_l$ is a perturbation of the zero function in $X_-$ (the region of $M$ away from the collar), and one can assume $L\subset X_-$. So
$$
\textstyle\int_{\gamma'}H_l(t)\le \epsilon.
$$
Putting the estimates together, one obtains:
$$
0\le 2\lambda-l+2\epsilon.
$$
For $l$ larger than $2\lambda+2\epsilon$, this is a contradiction.
\end{proof}	

\begin{lemma}
	\label{lem:pos_area}
Suppose the (b)-curve is contained inside $\Sigma$, then it defines a class $B\in H_2(X)$ such that $\omega(B)\ge 0$.	
\end{lemma}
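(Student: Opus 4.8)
The statement to prove is Lemma~\ref{lem:pos_area}: if the (b)-curve lies inside $\Sigma$, then its homology class $B\in H_2(X)$ satisfies $\omega(B)\ge 0$.

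\textbf{Proof proposal.} The plan is to exploit that the (b)-curve, while it solves a \emph{perturbed} Floer equation, is $C^2$-close to being genuinely $J$-holomorphic because the Hamiltonian term driving it is small. Concretely, the (b)-curve $v$ has domain $\big((-\infty,0]\times S^1\big)\cup B$ and solves Floer's equation with the Hamiltonian $H^{(b)}_l$; since $v$ takes values in $\Sigma$, only the restriction $H^{(b)}_l|_\Sigma$ matters, and by the conventions of Subsection~\ref{subsec:s_shape} (item (ii)) and Subsection~\ref{subsec:posit_intersec} this restriction is governed by the function $H_\Sigma$, which we have explicitly required to be $C^2$-small. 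First I would write down the energy of $v$ and relate it to $\omega(B)$ by the standard computation already used in~(\ref{eq:act_est}): namely
\begin{equation*}
0\le E(v)=\int |\bd_s v|^2\,ds\wedge dt=\omega(B)-\int dH^{(b)}_l(\bd_s v)\,ds\wedge dt,
\end{equation*}
where the boundary/asymptotic terms of the action collapse because $v$ is asymptotic to the constant orbit $\gamma\subset\Sigma$ at the negative end and is capped by the disk $B$ on which $H^{(b)}_l$ is homotoped to zero near the marked point $0$; in fact $B$ compactifies to a genuine class in $H_2(X)$ precisely because $\gamma$ is constant. Thus $\omega(B)=E(v)+\int dH^{(b)}_l(\bd_s v)$.

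The remaining task is to bound the correction term $\int dH^{(b)}_l(\bd_s v)\,ds\wedge dt$ from below by something that does not spoil the inequality. I would argue that this integral can be made arbitrarily small in absolute value by taking $H_\Sigma$ sufficiently $C^1$-small: the integrand is pointwise bounded by $\|dH^{(b)}_l\|\cdot |\bd_s v|$, and on $\Sigma$ we have $\|dH^{(b)}_l\|\le \|dH_\Sigma\|+(\text{contribution of the }r^2\text{ term, which vanishes on }\Sigma)$. However, one must be careful because $\int|\bd_s v|$ is not a priori bounded independently of the configuration. The cleaner route, which I would actually take, is to observe that $\omega(B)$ is a symplectic area of a curve in the \emph{monotone} manifold $X$, hence lies in the discrete set $\lambda\cdot\Z$ (or $\lambda\cdot\Z_{\ge 0}\cup\{0\}$ up to the monotonicity constant $\lambda$); more precisely, $B$ is represented by a curve in $\Sigma$, and $c_1(X)\cdot B = \tfrac1d[\Sigma]\cdot B$, combined with monotonicity $\omega(B)=\lambda c_1(X)\cdot B$, shows $\omega(B)\in\lambda\Z$. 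Since $\omega(B)=E(v)+(\epsilon\text{-error})$ with $E(v)\ge 0$ and the error term strictly smaller than $\lambda$ in absolute value once $H_\Sigma$ is chosen $C^2$-small enough (uniformly, using Gromov--Floer compactness to get an a priori energy bound on the finitely many relevant broken configurations for fixed $l$), we conclude $\omega(B)\ge 0$.

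\textbf{Main obstacle.} The delicate point is making the $\epsilon$-error genuinely uniform: a priori $|\bd_s v|$ could be large, so $\big|\int dH^{(b)}_l(\bd_s v)\big|$ need not be small just because $dH^{(b)}_l$ is small pointwise. The resolution is that for a fixed height $l$, Gromov--Floer compactness gives a uniform energy bound $E\le C(l)$ on all broken configurations arising as limits from $\M_{n,l}$, hence $\int|\bd_s v|^2\le C(l)$ and (after controlling the area of the relevant portion of the domain, or by a Cauchy--Schwarz estimate against the area form pulled back) $\big|\int dH^{(b)}_l(\bd_s v)\big|\le \|dH_\Sigma\|_{C^0}\cdot C'(l)$. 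Choosing $H_\Sigma$ so that this is $<\lambda$ then forces $\omega(B)\ge 0$ by discreteness. I expect this uniformity argument, rather than the algebraic part, to be where the real work lies; everything else is a rerun of the energy identity~(\ref{eq:act_est}) together with monotonicity.
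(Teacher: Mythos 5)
Your framing is right (energy identity plus discreteness of monotone areas), and the identity $E(v)=\omega(B)-\int d_X H^{(b)}_l(\partial_s v)\,ds\wedge dt$ is correct. But at the key step you try to bound $\big|\int d_X H^{(b)}_l(\partial_s v)\big|$ by $\|dH_\Sigma\|_{C^0}\cdot\int|\partial_s v|$, and the "main obstacle" you flag is a genuine gap you do not close. Gromov--Floer compactness bounds the energy $\int|\partial_s v|^2$, not the $L^1$-norm $\int|\partial_s v|$, and the domain $\R\times S^1$ has infinite area, so Cauchy--Schwarz against the area form gives nothing. Moreover, shrinking $H_\Sigma$ does not obviously help: the decay rate of $|\partial_s v|$ at the asymptotic constant orbit degenerates as the Hessian of $H_\Sigma$ shrinks, so you cannot choose $H_\Sigma$ so small that $\|dH_\Sigma\|_{C^0}\cdot\int|\partial_s v|<\lambda$ uniformly over the configurations you need.

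The paper avoids this entirely by integrating by parts in $s$: $\int d_X H(\partial_s v)=\bigl[\int_{S^1}H(s,t,v)\,dt\bigr]_{s=-\infty}^{s=+\infty}-\int(\partial_s H)(v)\,ds\wedge dt$. The $s\to-\infty$ asymptotic is the constant orbit $\gamma\subset\Sigma$, where $H(-\infty,\cdot)|_\Sigma=l+H_\Sigma$ is within $\epsilon$ of $l$; the $s\to+\infty$ asymptotic is the removable singularity $q$, where $H(+\infty,\cdot)=0$; and the $\int(\partial_s H)$ term is pinned down using the explicit interpolation arranged in Subsection 4.2 (item $B$)---a sub-annulus on which $(\partial_s H)|_\Sigma$ depends only on $s$ and integrates to $\epsilon-l$, with $\partial_s H\le 0$ elsewhere, giving $\int(\partial_s H)\le\epsilon-l$. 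The two large pieces, of order $l$ and $-l$, cancel and one gets $\omega(B)\ge -2\epsilon$, hence $\omega(B)\ge 0$ by discreteness. No bound on $\int|\partial_s v|$ is ever required. In short, $\int d_X H(\partial_s v)$ is small because of cancellation between the boundary term and the $\partial_s H$ term, not because the integrand is pointwise small; your direct estimate misses this structure and leaves the crucial inequality unjustified.
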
		
	\begin{proof}
 The argument uses an energy estimate similar to (\ref{eq:act_est}). Let $u$ be such  a (b)-curve defining a class $B\in H_2(X)$. The domain of $u$ is bi-holomorphic to $\C$; let us puncture the point $0$ in the domain and reparametrise the resulting domain to $\R\times S^1$ straightening the co-ordinate vector fields $\bd_s$, $\bd_t$. Recall that $u$ solves Floer's equation with an $s$-dependent Hamiltonian $H^{(b)}_l$ as in (\ref{eq:H_a_and_b}). For the purpose of this lemma, denote
$$H=H^{(b)}_l.$$
Then the type \IV b orbit $\gamma$ is the $s\to-\infty$ asymptotic of $u$, and the point $q=u(0)\in\Sigma$ is the $s\to+\infty$ asymptotic (seen as a removable singularity). Accounting for the $s$-dependence, the estimate analogous to (\ref{eq:act_est}) is:
$$
\begin{array}{l}
	0\le E(u)=\textstyle\int_{\R\times S^1}|\bd_s u|^2 ds\wedge dt\\
	=\ldots\\ =\omega(B)-\textstyle\int dH(\bd_s u)ds\wedge dt\\
	=\omega(B)-\textstyle\int_{\gamma}H(-\infty,t)+\textstyle\int_{q}H(+\infty,t)+\int(\bd_sH)ds\wedge dt
\end{array}
$$

Recall that $\bd_s H\le 0$, and by the arrangements made in Subsection~\ref{subsec:stretch} (see item $B$ especially) there is a sub-annulus in the domain of the (b)-curve where $\bd_sH|_\Sigma$ is constant with respect to the variables in $\Sigma$: it only depends on $s$, and integrates to $\epsilon-l$ over the sub-annulus. This, together with the monotonicity of $H$, implies that
$$
\textstyle\int(\bd_sH)ds\wedge dt\le \epsilon-l.
$$
 By the construction  of $H$ one also has: $$\textstyle\int_{\gamma}H(-\infty,t)=\int_{\gamma} H_l(t)\ge l-\epsilon,\quad \textstyle\int_{q}H(+\infty,t)=0.$$ The estimate becomes
 $$
 0\le \omega(B)-(l-\epsilon)+(\epsilon-l)=\omega(B)+2\epsilon.
 $$
 For sufficiently small $\epsilon$ it follows that $\omega(B)\ge 0$ since the areas are discrete by monotonicity.
\end{proof}

\begin{remark}
	\label{rem:b_const}
The only case in the  proof of Lemma~\ref{lem:not_4b} which is ruled out by crucially using the condition that $l$ is sufficiently large (larger than the area of a Maslov index~2 disk) is the case when the (b)-curve defined a class zero area. Note that in this case the (b)-curve is constant, with a constant type \IV b orbit asymptotic. (Indeed, the (b)-curve has to be a semi-infinite Morse flowline in $\Sigma$ and must be rigid, hence it is constant.)

Indeed, revisiting the proof above one sees that otherwise $B\cdot [\Sigma]\ge 1$, so
$$B\cdot [\Sigma]\ge d$$
because $[\Sigma]$ is divisible by $d$.
Applying positivity of intersections---Lemmas~\ref{lem:pos_inter} and~\ref{lem:pos_inter_asympt}---to the (a)-curve, one
obtains $A\cdot[\Sigma]\ge 1$. Indeed, the contribution to the intersection number from the constant asymptotic $\gamma$ is at least $\lfloor \alpha\rfloor+1= 1$.
Recalling that $(A+B)\cdot[\Sigma]=d$,
one gets a contradiction without using the fact that $l$ is large.
\end{remark}

\subsection{No extra bubbles}
We  rewind to prove Lemma~\ref{lem:a_b} which asserts that a broken configuration cannot have any other components (disk or sphere bubbles, Floer cylinders) except for the (a)- and the (b)-curve.

\begin{figure}[h]
	\includegraphics[]{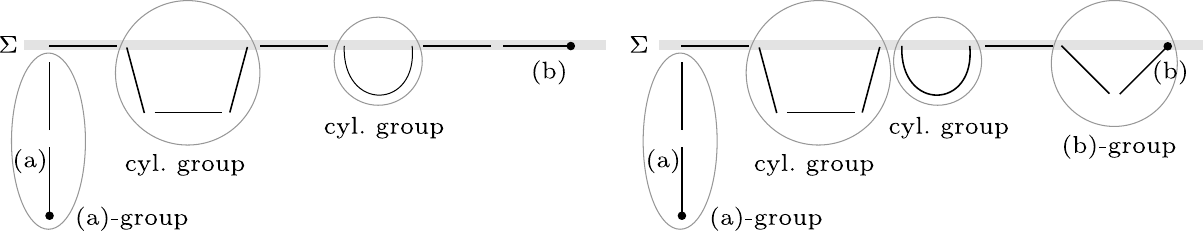}
	\caption{A broken configuration after the sphere and disk bubbles are dropped out. It is composed of the (a)- and the (b)-curve, plus Floer cylinders. The figure shows various positions with respect to $\Sigma$.}
	\label{fig:other_cyl}
\end{figure}

\begin{proof}[Proof of Lemma~\ref{lem:a_b}]
	The disk bubbles must have Maslov index $\ge 2$ by monotonicity, therefore have intersection number at least $d$ with $[\Sigma]$ by (\ref{eq:mu_and_intersec}).
	The sphere bubbles, whether or not contained in $\Sigma$, also have homological intersection at least $d$ with $[\Sigma]$. This is obvious for the bubbles not in $\Sigma$, and  true for the bubbles in $\Sigma$ for the following reason: they have positive area, therefore have positive Chern class in $X$, but $\Sigma$ is dual to $dc_1(X)$. So we record that if there is at least one disk or sphere bubble, the rest of the curve has homological intersection number $\le 0$ with $\Sigma$. If there are no disk or sphere bubbles, the homological intersection is still $d$.
	
	Let us forget the disk and sphere bubbles (if they exist) and look at the rest of the broken curve. It is composed of the (a)-curve, the (b)-curve and a string of Floer cylinders between them, attached to each other along asymptotic 1-periodic orbits of $H_l$.
	
	{\it Case 1.
		There is at least one asymptotic orbit which has type \IV b.}  In this case some Floer cylinders and/or the (b)-curve may be contained in $\Sigma$, see Figure~\ref{fig:other_cyl}  where the curves are represented by segments.
	The idea is to group the pieces of the broken curve into collections starting and ending at type \IV b asymptotics, shown by circles in Figure~\ref{fig:other_cyl}. We will show that every group has non-negative intersection with $\Sigma$.
	The argument is independent on whether there were any additional disk or sphere bubbles, and is an expansion on  the proof of Lemma~\ref{lem:not_4b}.
	
	As a general observation, those Floer cylinders which are contained in  $\Sigma$ have two type  \IV b asymptotics and  compactify to closed cycles $A\in H_2(X)$. Such cycles have non-negative area,  by a similar argument as was used to prove (see Lemmas~\ref{lem:not_4b},~\ref{lem:pos_area}) that the (b)-curves contained in $\Sigma$ have non-negative area. Therefore their classes satisfy
	\begin{equation}
	\label{eq:int_floer_in_s}
	A\cdot [\Sigma]\ge 0 \qquad (A=\text{Floer cylinder in $\Sigma$)}
	\end{equation}
	
	Next, we call a connected union of Floer cylinders, none of which is contained in $\Sigma$, beginning and ending at a type \IV b asymptotic a \emph{cylinder group}. See the circled groups in Figure~\ref{fig:other_cyl}. Compactifying by the type \IV b asymptotics, one sees that a cylinder group defines a class in $A\in H_2(X)$. We claim that the class of a cylinder group satisfies
	\begin{equation}
	\label{eq:cyl_group}
	A\cdot[\Sigma]\ge d \qquad (A=\text{cylinder group)}
	\end{equation}
	It suffices to show that $A\cdot[\Sigma]\ge 1$, because $[\Sigma]$ is divisible by $d$.
	And indeed, the contribution from the beginning and the ending  asymptotic of a group to $A\cdot[\Sigma]$ is in total at least
	$$
	\lfloor\alpha\rfloor+1-\lfloor\alpha\rfloor=1
	$$
	by Lemma~\ref{lem:pos_inter_asympt} (since one asymptotic is positive and the other one is negative), regardless of $\alpha$. The contribution from any other geometric intersection is positive by Lemma~\ref{lem:pos_inter}.
	
	{\it Case 1i. The (b)-curve is contained in $\Sigma$.} In this case we have shown in Lemma~\ref{lem:pos_area} that the class $A\in H_2(X)$ of the (b)-curve satisfies
	\begin{equation}
	\label{eq:basis_in_sigma}
	A\cdot [\Sigma]\ge 0 \qquad (A=\text{(b)-curve in $\Sigma$)}
	\end{equation}

	{\it Case 1ii. The (b)-curve is not contained in $\Sigma$.}
Consider the last group of curves not contained in $\Sigma$, beginning with a negative type \IV b orbit and ending with the (b)-curve; see Figure~\ref{fig:other_cyl}.
	This gives a cycle $A\in H_2(X)$ such that
	\begin{equation}
	\label{eq:group_b}
	A\cdot[\Sigma]\ge d \qquad (A=\text{group with the (b)-curve)}.
	\end{equation}
	Indeed the intersection at the negative type \IV b orbit at least $-\lfloor\alpha\rfloor= 0$, by Lemma~\ref{lem:pos_inter_asympt} and our arrangement about $\alpha$. But we also have a strictly positive intersection due to the incidence condition for the (b)-curve, so the desired estimate follows.
	
	{\it Case 1, common conclusion.} 
	Consider 
	the remaining group of curves beginning with the (a)-curve and ending at the first positive type \IV b asymptotic; see Figure~\ref{fig:other_cyl}. We call it an (a)-group and it defines a relative homology class $A\in H_2(X,L)$. Recall that the total intersection of all punctured curves with $\Sigma$ was at most $d$, and the intersections of the curves not in the (a)-group  are estimated 
	by (\ref{eq:int_floer_in_s}), (\ref{eq:cyl_group}), (\ref{eq:basis_in_sigma}) and (\ref{eq:group_b}). So for the class $A$ of an (a)-group one has
	$$
	A\cdot[\Sigma]\le d \qquad (A=\text{(a)-group)}
	$$
	consequently, $\mu(A)\le 2$ and 
	$$
	\omega(A)\le 2\lambda.
	$$
	Then one uses an analogue of (\ref{eq:act_est}) to get a contradiction whenever $l\ge 2\lambda$. (This holds regardless of whether one had to forget any disk or sphere bubbles in the beginning.)
	Note that one has to apply estimate (\ref{eq:act_est}) to several curves comprising the group and add them up.

	{\it Case 2. There are no type \IV b asymptotics.} In this case each broken curve in the broken configuration has non-negative intersection number with $\Sigma$ on its own. Now, we claim there can be no disk or sphere bubbles: indeed, after forgetting them,  the rest of the curve would have intersection $\le 0$ with $\Sigma$; on the other hand, the latter intersection number is at least $1$ due to the incidence condition of the (b)-curve with $\Sigma$. Floer cylinders are ruled by a simple argument involving the dimensions of moduli spaces, using the fact the curves are not contained in $\Sigma$ and are therefore generically regular. 
\end{proof} 	

\subsection{Ruling out type \IV a orbits}
Recall that type \IV a orbits are constant orbits near $\Sigma$ and not contained in $\Sigma$.

\begin{lemma}
	\label{lem:disjoint_from_sigma}
The (a)-curve of the broken configuration is disjoint from $\Sigma$. 	
\end{lemma}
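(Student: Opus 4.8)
The plan is to run the same homological bookkeeping used in Lemmas~\ref{lem:not_4b} and~\ref{lem:a_b}, but now applied to the (a)-curve alone, exploiting that we have already ruled out the (b)-curve and all intermediate Floer cylinders from converging to type \IV b orbits, and that the broken configuration consists only of the (a)- and (b)-curves (Lemma~\ref{lem:a_b}). So the shared asymptotic orbit $\gamma$ is of type \I, \II, \III, or \IV a. First I would dispose of the cases $\gamma\in\{$\I,\ \II,\ \III$\}$ quickly: in those cases $\gamma$ does not lie in $\Sigma$, and in fact (shrinking the collar and using that the \S-shaped Hamiltonian and the chosen $J$ are cylindrical there and the Hamiltonian vector field is tangent to $\Sigma$ only near $\Sigma$, where no such orbits occur) the (a)-curve near its positive end stays away from $\Sigma$; combined with the boundary condition $L\subset X_-\subset X\setminus U(\Sigma)$, the only way the (a)-curve could meet $\Sigma$ is through an interior intersection in the region near $\Sigma$. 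The remaining, and only genuinely new, case is $\gamma$ of type \IV a: a constant orbit near $\Sigma$ but not on $\Sigma$.

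For the type \IV a case the argument is the energy estimate of~(\ref{eq:act_est}) run against the intersection bound. Suppose for contradiction that the (a)-curve meets $\Sigma$. Since $\gamma$ is \emph{not} in $\Sigma$, the (a)-curve has $u(\partial S)\cap\Sigma=\emptyset$ (its boundary lies on $L\subset X_-$) and its positive asymptotic lies off $\Sigma$, so Lemma~\ref{lem:pos_inter} applies at every interior intersection and gives $A\cdot[\Sigma]\ge 1$ for the class $A\in H_2(X,L)$ of the compactified (a)-curve; by Lemma~\ref{lem:div_by_k} (grading-compatibility) this forces $A\cdot[\Sigma]\ge d$. On the other hand, the (b)-curve compactifies to a class $B\in H_2(X)$ with, by the now-established absence of type \IV b asymptotics and positivity of intersections (Lemmas~\ref{lem:pos_inter}, \ref{lem:pos_inter_asympt} with $\lfloor\alpha\rfloor=0$) at its incidence point $u(0)\in\Sigma$, $B\cdot[\Sigma]\ge 1$. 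But $(A+B)\cdot[\Sigma]=d$ by~(\ref{eq:mu_and_intersec}), since $A+B$ is the class of the original Maslov index~2 disk. Combining $A\cdot[\Sigma]\ge d$ with $B\cdot[\Sigma]\ge 1$ contradicts $(A+B)\cdot[\Sigma]=d$. Hence the (a)-curve does not meet $\Sigma$ when $\gamma$ is of type \IV a.

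The one point that needs a little care — and I expect it to be the main obstacle — is confirming that when $\gamma$ is of type \I, \II, or \III, the (a)-curve is genuinely disjoint from $\Sigma$ and not merely homologically so. Homological bookkeeping alone does not rule out an interior intersection here: unlike the \IV a case there is no ``deficit'' argument, since the (b)-curve now may carry the full intersection number $d$ with $\Sigma$ while the (a)-curve carries $0$ homologically — yet a priori the (a)-curve could still have a pair of geometric intersections of opposite sign, except that positivity of intersections (Lemma~\ref{lem:pos_inter}) forbids opposite signs outright, so any geometric intersection forces $A\cdot[\Sigma]\ge 1$, hence $\ge d$, and then the same $(A+B)\cdot[\Sigma]=d$ contradiction kicks in provided $B\cdot[\Sigma]\ge 1$, which holds by the incidence condition $u(0)\in\Sigma$ for the (b)-curve. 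So in fact the argument is uniform across all orbit types: \emph{any} geometric intersection of the (a)-curve with $\Sigma$ would give $A\cdot[\Sigma]\ge d$ and $B\cdot[\Sigma]\ge 1$, contradicting $(A+B)\cdot[\Sigma]=d$. The only subtlety is making sure the compactification of the (a)-curve into a class in $H_2(X,L)$ is legitimate — which it is, since either $\gamma$ is constant, or $\gamma$ is a non-contractible orbit but one caps it off with the corresponding (b)-side, and it is the \emph{union} $A+B$ that is the honest disk class; the intersection pairing with $[\Sigma]$ is additive over the broken configuration, which is all that is used. I would present the proof in this uniform form.
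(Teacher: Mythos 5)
Your proposal is correct, and the underlying tools are the same as the paper's (positivity of intersections at interior points and at constant asymptotics, additivity of the intersection number across the break at $\gamma$, and the divisibility statement from grading-compatibility). The route you take is, however, slightly different from the paper's in a way worth noting. The paper argues by a case split on $\gamma$: when $\gamma$ is constant (type \I\ or \IV a) the (b)-curve compactifies to a genuine sphere class $B\in H_2(X)$, so $B\cdot[\Sigma]=d\,c_1(B)$ is automatically a multiple of $d$, while when $\gamma$ is in the collar (type \II\ or \III) the paper invokes Lemma~\ref{lem:div_by_k} applied to the relative class of the (b)-curve; in both cases the conclusion is $B\cdot[\Sigma]\ge d$ and hence $A\cdot[\Sigma]\le 0$, forcing $A\cdot[\Sigma]=0$ by positivity. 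You instead apply Lemma~\ref{lem:div_by_k} to the \emph{(a)-curve} class $A$ uniformly across all orbit types: a geometric intersection forces $A\cdot[\Sigma]\ge 1$ by positivity, hence $\ge d$ by divisibility, which together with $B\cdot[\Sigma]\ge 1$ from the incidence condition overshoots $(A+B)\cdot[\Sigma]=d$. This is a valid and arguably cleaner packaging, avoiding the case split. The one step you should spell out more explicitly is why Lemma~\ref{lem:div_by_k} applies to $A$ in every case: its boundary after compactification consists of $L$ and $\gamma$, and while $L\subset M$ always, $\gamma$ need not be inside $M$ (types \III, \IV a); what saves you is that, for non-constant $\gamma$ the collar deformation retracts into $\bd M\subset M$, and for constant $\gamma$ one caps off by a point so $A$ lies in $H_2(X,L)$, and in all cases the intersection pairing factors through $H_2(X,M)$. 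Your closing paragraph gestures at this but doesn't quite nail it; with that filled in, the argument is complete and equivalent in strength to the paper's.
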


\begin{proof}
Suppose first that the asymptotic orbit $\gamma$ has type \IV a or \I: then it is constant. Consider the homology classes $A\in H_2(X,L)$ and $B\in H_2(X)$ of the compactified (a)- and (b)-curves.
In this case the (b)-curve is automatically not contained in $\Sigma$ (because $\gamma$ is not), so the whole configuration looks like in Figure~\ref{fig:iva}.
 Arguing as before,  one obtains $B\cdot[\Sigma]\ge d$ and therefore $A\cdot[\Sigma]=0$. By positivity of intersections (Lemma~\ref{lem:pos_inter}), this means that the (a)-curve is disjoint from $\Sigma$.

\begin{figure}[h]
	\includegraphics[]{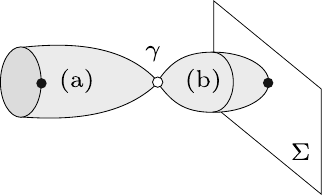}
	\caption{The (a)- and (b)-curve when $\gamma$ has type \IV a.}
	\label{fig:iva}
\end{figure}

Suppose that $\gamma$ has type \II\ or \III; then it belongs to the collar $[1-\delta,1+\delta]\times \bd M$. Denote 
$$
\label{eq:N}
N'=X\setminus \left(M\cup([1-\delta,1+\delta]\times \bd M)\right),
$$
which is diffeomorphic to $N=X\setminus M$. Then the images of the (a)-curve and (b)-curve define relative homology classes
$$
A,B\in H_2(X, N').
$$
We know that $B\cdot[\Sigma]\ge 1$ by positivity of intersections (Lemma~\ref{lem:pos_inter}) and the fact that it intersects $\Sigma$ at least once by the incidence condition $u(0)\in\Sigma$ for the (b)-curve. Then by Lemma~\ref{lem:div_by_k}, $ B\cdot[\Sigma]\ge d$. As  above,  $(A+B)\cdot[\Sigma]=d$ so $A\cdot[\Sigma]=0$ which implies the result. 
\end{proof}	

\begin{lemma}
	\label{lem:not_4a}
For sufficiently large $l$,
a limiting orbit $\gamma$ cannot be of type \IV a.
\end{lemma}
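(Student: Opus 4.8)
The plan is to reuse the energy-estimate argument from the proof of Lemma~\ref{lem:not_4b}; in the present case it is in fact simpler, because a type \IV a asymptotic forces the (a)-curve to be disjoint from $\Sigma$, so the area of the (a)-curve is not merely bounded but actually zero.

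First I would record the homological input. A type \IV a orbit $\gamma$ is constant, so the (a)-curve and the (b)-curve of the broken configuration compactify to classes $A\in H_2(X,L)$ and $B\in H_2(X)$ by adding the asymptotic point; here $B$ genuinely lies in $H_2(X)$ because $\gamma$ is constant and (by Lemma~\ref{lem:a_b}) no stray bubbles or Floer cylinders intervene. The proof of Lemma~\ref{lem:disjoint_from_sigma} (in the case of a constant asymptotic of type \IV a or \I) then applies verbatim: the (b)-curve is not contained in $\Sigma$ since $\gamma$ is not, it meets $\Sigma$ by the incidence condition $u(0)\in\Sigma$, so $B\cdot[\Sigma]\ge 1$ by positivity of intersections (Lemma~\ref{lem:pos_inter}), and $B\cdot[\Sigma]$ is divisible by $d$ because $[\Sigma]=dc_1(X)$ and $B$ is closed; hence $B\cdot[\Sigma]\ge d$. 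Since $(A+B)\cdot[\Sigma]=d$ by (\ref{eq:mu_and_intersec}) and $A\cdot[\Sigma]\ge 0$ again by positivity, this forces $A\cdot[\Sigma]=0$, hence $\mu(A)=0$ by (\ref{eq:mu_and_intersec}), and hence $\omega(A)=0$ by monotonicity of $L$ (the (a)-curve compactifies to a disk with boundary on $L$, so $A$ lies in the image of $\pi_2(X,L)$).

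Next I would apply the energy estimate (\ref{eq:act_est}) to the (a)-curve $u\co[0,+\infty)\times S^1\to X$, whose boundary loop $\gamma'=u(0,\cdot)$ lies on $L$ and whose $s\to+\infty$ asymptotic is the constant orbit $\gamma$:
$$
0\le E(u)=\omega(A)-\textstyle\int_\gamma H_l(t)+\textstyle\int_{\gamma'} H_l(t).
$$
Now $\gamma$ is a constant orbit in $X_+$, where $H_l$ is a small perturbation of the constant function $l$ by (\ref{eq:H_l_0}) and the construction in Subsection~\ref{subsec:s_shape}, so $\int_\gamma H_l(t)\ge l-\epsilon$; and $L\subset X_-$, where $H_l$ is a small perturbation of $0$, so $\int_{\gamma'} H_l(t)\le\epsilon$. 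Combined with $\omega(A)=0$ this gives $0\le -l+2\epsilon$, which is a contradiction as soon as $l>2\epsilon$, and in particular for the large values of $l$ used throughout. The only point requiring care is the legitimacy of treating $B$ as a closed class and the absence of extra broken components, both of which are already settled by Lemmas~\ref{lem:a_b} and~\ref{lem:disjoint_from_sigma}; the estimate itself is a one-line adaptation of the type \IV b case, so I expect no genuine obstacle.
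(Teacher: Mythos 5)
Your proof is correct, and it takes a slightly different (and somewhat longer) route than the paper's. The paper argues directly with the action functional $A(\gamma)=-\int\gamma^*\theta+\int H_l\circ\gamma$: since $\gamma$ is a constant type \textsc{iv}a orbit, $\gamma^*\theta=0$ and $H_l\approx l$ there, so $A(\gamma)\ge l-\epsilon$; since $\theta|_L=0$ and $H_l|_L\approx 0$, the boundary loop has $A(\gamma')\le\epsilon$; and the $s$-independent Floer equation forces $A(\gamma')\ge A(\gamma)$, contradiction. Your route instead establishes $\omega(A)=0$ homologically — via $B\cdot[\Sigma]\ge d$ (positivity plus divisibility from $[\Sigma]=dc_1(X)$), $(A+B)\cdot[\Sigma]=d$, positivity for $A$, monotonicity of $L$ — and then feeds $\omega(A)=0$ into the energy identity~(\ref{eq:act_est}). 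The two are essentially the same argument in disguise: the paper's identity $E(u)=A(\gamma')-A(\gamma)$ is exactly~(\ref{eq:act_est}) combined with $\omega(A)=\int_\gamma\theta-\int_{\gamma'}\theta=0$, which uses exactness of $L$ and that the (a)-curve avoids $\Sigma$ (Lemma~\ref{lem:disjoint_from_sigma}). Your derivation of $\omega(A)=0$ is a legitimate alternative and is arguably cleaner in that it relies only on the homological data already assembled in Lemma~\ref{lem:disjoint_from_sigma} rather than on a fresh application of Stokes; the paper's version is shorter because it packages the same facts into the one-line action estimate. One small remark: your opening phrase that disjointness from $\Sigma$ ``forces the area to be zero'' is a preview of the conclusion, not a one-step implication — the zero-area conclusion still needs the chain $A\cdot[\Sigma]=0\Rightarrow\mu(A)=0\Rightarrow\omega(A)=0$, which you do supply.
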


\begin{proof}
Let $\theta$ be the Liouville form on $X\setminus\Sigma$ making $L$ exact.
By definition, the action of a loop $\gamma\co S^1\to X\setminus\Sigma$ is:
$$
A(\gamma)=-\int_{S^1}\gamma^*\theta+\int_{S^1}H\circ\gamma.
$$ 
When $\gamma$ is a periodic orbit of $H_l$ of type \IV a one has:
$$
A(\gamma)\ge l-\epsilon.
$$
This is because $\gamma^*\theta=0$ for a constant orbit, and $H_l(\gamma)$ is by construction close to $l$ in the region containing type \IV a orbits.

Looking at the (a)-curve, let $\gamma'\subset L$ be its boundary loop. Then $$A(\gamma')\le \epsilon,$$ given that $\theta|_L=0$ (by exactness) and $H_l|_L$ is small (by construction); compare with the proof of Lemma~\ref{lem:not_4b}. However, since the Floer equation for the (a)-curve is $s$-independent, the standard action estimate says that
$$A(\gamma')\ge A(\gamma).$$ This gives a contradiction.
\end{proof}

\begin{remark}
At this point, we know that $\gamma$ must be of type \I, \II\ or \III, for $l$ greater than the area of a Maslov~index~2 disk. In fact, the only place where this condition is needed is discussed in Remark~\ref{rem:b_const}; all other arguments work for a small $l$ as well.

It is instructive to understand what happens when one takes the height $l$ for $H_l$ (see Figure~\ref{fig:s_shape}) to be very small, and keeps $\alpha$ a small positive number: $\lfloor\alpha\rfloor=0$. This way one can arrange $H_l$ to have so small slopes uniformly that it acquires no 1-periodic orbits of type \II, \III\ at all. One can also show that the (b)-curves  do not output type \I\ orbits, at least at the cohomology level, see Lemma~\ref{lem:non_const} below.
Revisiting the above proofs in this case, one concludes that a limiting orbit $\gamma$ of the broken curve must be of type \IV b, and moreover the (b)-curve must be constant. 

The upshot is that when $l$ is small, the breaking of Maslov~2 disks into the (a)-curve and the (b)-curve is essentially trivial, in the sense that the (b)-curve is constant and  the (a)-curve `looks like the initial disk'. We learn that domain-stretching does not achieve anything structurally useful in this case.

\end{remark}

\subsection{Ruling out type \III\  orbits} The next step is to show that type \III\ orbits also cannot arise as an asymptotic orbit of a broken configuration of an (a)- and a (b)-curve.

\begin{lemma}
	\label{lem:not_3}
A limiting orbit $\gamma$ cannot be of type \III. 
\end{lemma}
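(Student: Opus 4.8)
The goal is to exclude a broken configuration of an (a)- and a (b)-curve whose connecting orbit $\gamma$ is of type \III, say lying in $\{1+r_0\}\times\bd M$ for some $0<r_0<\delta$. The only genuinely new ingredient is the asymptotic confinement Lemma~\ref{lem:prohibited_trj}; apart from that the proof should stay close to the treatment of types \IV a and \IV b. I would begin by recording what that lemma buys us. Restricting the (a)-curve to a half-cylindrical neighbourhood $[R,+\infty)\times S^1$ of its puncture produces a solution of Floer's equation with the \S-shaped Hamiltonian $H_l$ that is asymptotic to the type \III\ orbit $\gamma$ as $s\to+\infty$; hence by Lemma~\ref{lem:prohibited_trj} the (a)-curve attains radial coordinates in $(1+r_0,1+\delta]$. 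Since $r\circ u$ is continuous, the (a)-curve is connected, and it also meets $L\subset X_-$ at radius $\le 1-\delta$, the (a)-curve in fact attains every radius in $[1-\delta,\rho_{\max}]$ for some $\rho_{\max}>1+r_0$.

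Next I would feed this into an integrated maximum principle. Fix a regular value $r_1$ of $h_l$ with $1+r_0<r_1<\rho_{\max}$ and set $S'=u^{-1}(\{r\ge r_1\})$. Because the puncture of the (a)-curve converges to $\gamma$ (at radius $1+r_0<r_1$) and its boundary lies at radius $\le 1-\delta<r_1$, the set $S'$ is a non-empty compact subsurface on which the (a)-curve is a non-constant solution of the $s$-independent Floer equation with $H_l=h_l(r)$ and a contact-type $J$. The standard no-escape estimate (the same mechanism that underlies Lemma~\ref{lem:prohibited_trj}; see \cite{CO18,BO09b}) then forces the Hamiltonian action $h_l(r_1)-r_1h_l'(r_1)$ of the shell orbit at radius $r_1$ to be strictly positive. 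Letting $r_1\searrow 1+r_0$, using that this shell action is continuous in $r$ and, since $\tfrac{d}{dr}\bigl(h_l(r)-rh_l'(r)\bigr)=-rh_l''(r)>0$, strictly increasing on the concave range $(1,1+\delta)$, one obtains $\mathcal A(\gamma)=h_l(1+r_0)-(1+r_0)h_l'(1+r_0)\ge 0$, and in fact the same estimate run all the way out to $r=1+\delta$ (or iterated via Lemma~\ref{lem:prohibited_trj}) forces the (a)-curve to reach $X_+$, where $H_l\equiv l$ and the curve is honestly $J$-holomorphic.

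Finally I would close the contradiction exactly as for types \IV a and \IV b. On one hand, the boundary loop $\gamma'$ of the (a)-curve lies on $L$, where $\theta$ vanishes and $H_l$ is $C^2$-small, so by the action estimate for the $s$-independent equation $\mathcal A(\gamma)\le\mathcal A(\gamma')\le\epsilon$. On the other hand, either the (a)-curve carries a point deep inside $X_+$ — impossible, since the (a)-curve is disjoint from $\Sigma$ (Lemma~\ref{lem:disjoint_from_sigma}) and the energy estimate (\ref{eq:act_est}) bounds its action by $\epsilon$, as in Lemma~\ref{lem:not_4a} — or along the way out it crosses a shell orbit of action exceeding $\epsilon$, contradicting $\mathcal A(\gamma)\le\epsilon$; here one also invokes $(A+B)\cdot[\Sigma]=d$, $B\cdot[\Sigma]\ge d$ from positivity of intersections together with Lemmas~\ref{lem:pos_inter}, \ref{lem:pos_inter_asympt}, \ref{lem:div_by_k}, exactly as in Lemma~\ref{lem:not_4b}. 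In all cases we get a contradiction as soon as $l$ exceeds the area of a Maslov index~$2$ disk, proving that $\gamma$ cannot be of type \III.

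The step I expect to be the main obstacle is precisely the passage ``$\gamma$ of type \III $\Rightarrow$ the (a)-curve escapes \emph{far enough} to contradict the action bound''. Lemma~\ref{lem:prohibited_trj} only guarantees that the curve peeks past $\gamma$; converting this into a quantitative statement — that the curve must either reach $X_+$ or traverse a shell orbit whose action exceeds $\epsilon$ — requires a careful run of the integrated maximum principle on the bulging subsurface $S'$, in particular watching the sign of the winding term and, if necessary, iterating the escape lemma. Once that geometric input is in place, the rest is parallel bookkeeping with $A\cdot[\Sigma]$, the energy estimate (\ref{eq:act_est}) and positivity of intersections, just as in Lemmas~\ref{lem:not_4a} and~\ref{lem:not_4b}.
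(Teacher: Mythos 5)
Your opening move is the paper's: invoke Lemma~\ref{lem:prohibited_trj} to force the (a)-curve above the level $\{1+r_0\}\times\bd M$. But the way you convert this into a contradiction has a genuine gap, which you yourself flag. The problematic sentence is ``the same estimate run all the way out to $r=1+\delta$ \ldots\ forces the (a)-curve to reach $X_+$''. The integrated maximum principle is an \emph{obstruction}, not a forcing mechanism: from the fact that the curve escapes past $r_1$ you correctly deduce the shell quantity $h_l(r_1)-r_1h_l'(r_1)>0$, but you cannot then conclude the curve must keep going. Since that shell quantity is increasing on the concave range, having it positive near $1+r_0$ only tells you the no-escape inequality (\ref{eq:ineq_ri}) fails everywhere above, i.e.~you get no obstruction at all — it gives no lower bound on $\rho_{\max}$. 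Likewise, in the last paragraph you invoke ``crosses a shell orbit of action exceeding $\epsilon$'' as a contradiction to $\mathcal A(\gamma)\le\epsilon$, but the action estimate for an $s$-independent Floer solution only controls the actions at its ends (the asymptotic orbit $\gamma$ and the boundary loop $\gamma'$), not at intermediate radii the curve happens to cross. Net result: you establish $0\le\mathcal A(\gamma)\le\epsilon$, which is not yet a contradiction.

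The paper closes this gap differently. Rather than fighting the failure of (\ref{eq:ineq_ri}), it imposes an extra requirement on the profile of $H_l$: near $\{1+\delta_0\}\times\bd M$ (with $\delta_0$ close to $\delta$) the function $h_l$ is made \emph{linear} of small slope with no periodic orbits in $[1+\delta_0,1+\delta]$. Then the no-escape lemma in Ritter's linear form, which needs no sign condition, confines the (a)-curve to $M\cup[1,1+\delta_0]\times\bd M$; and the ordinary maximum principle then confines it further to $M\cup[1,1+r_0]\times\bd M$. This is exactly Lemma~\ref{lem:no_escape}, and contradicting it with Lemma~\ref{lem:prohibited_trj} finishes the proof in one line. (The paper's remark after Lemma~\ref{lem:no_escape} sketches the alternative you were reaching for: $-r_0h'(r_0)+h(r_0)=\mathcal A(\gamma)$, so $\mathcal A(\gamma)>0$ is incompatible with $\epsilon\ge\mathcal A(\gamma')\ge\mathcal A(\gamma)$, while $\mathcal A(\gamma)\le 0$ lets the nonlinear no-escape lemma apply directly; either way works, but one must pin down the case distinction rather than hope the curve ``escapes far enough.'')

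Two smaller points. First, the intersection-number bookkeeping you append at the end ($(A+B)\cdot[\Sigma]=d$, Lemma~\ref{lem:div_by_k}, etc.) plays no role here: Lemma~\ref{lem:disjoint_from_sigma} already guarantees the (a)-curve avoids $\Sigma$, which is the only intersection-theoretic input Lemma~\ref{lem:no_escape} needs. Second, the hypothesis that $l$ exceeds the area of a Maslov index~$2$ disk is not used to rule out type~\III\ orbits — the paper is explicit that this condition is only needed in the type~\IV b analysis (Remark~\ref{rem:b_const}); keeping it out of the statement you are trying to prove makes the logical structure cleaner.
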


\begin{proof}
It is enough to look at the (a)-curve. Lemma~\ref{lem:prohibited_trj} says that the (a)-curve near $\gamma$ `enters the region to the right of $\gamma$' in the collar co-ordinate, i.e.~towards $X_+$ which is concave. It contradicts the no-escape lemma whose particular case is summarised below. 
\end{proof}

\begin{remark}
We learned this way of combining the no-escape lemma with Lemma~\ref{lem:prohibited_trj} from \cite[Section 2.3]{CO18}.
The same argument is essentially used in \cite[Proposition~4.4]{Gu17} and \cite[Proposition~2.11]{La16}.
\end{remark}

The no-escape lemma, whose particular case appears below, is due to Abouzaid and Seidel 
\cite[Lemmas~7.2 and~7.4]{ASei10}; see also \cite[Lemma~19.5]{Ri13} and \cite[Lemma~2.2]{CO18}.
It generalises the maximum principle for Floer solutions: while the maximum principle only applies inside a Liouville collar (or the symplectisation) of a contact manifold, 
the no-escape lemma allows an arbitrary Liouville cobordism instead of a collar. Recall that $J$ is of contact type near the collar.

\begin{lemma}[No-escape lemma]
	\label{lem:no_escape}
Let $u\co[0,+\infty)\times S^1\to X\setminus \Sigma$ solve the Floer equation for the (a)-curve, with boundary on $L$ and asymptotic orbit $\gamma$. Observe the assumption that $u$ avoids $\Sigma$. 

If $\gamma$ is of type \I\ or \II\ then the image of $u$ is contained in $M$; if $\gamma$ is of type \III\ then its image is contained in $M\cup[1,1+r_0]\times \bd M$ where $\gamma\subset \{1+r_0\}\times \bd M$, $r_0<\delta$, see Figure~\ref{fig:s_shape}.
\end{lemma}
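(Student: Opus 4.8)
The plan is to run the integrated maximum principle of Abouzaid and Seidel \cite[Lemmas~7.2 and~7.4]{ASei10} (see also \cite[Lemma~19.5]{Ri13}, \cite[Lemma~2.2]{CO18}) in our situation. Recall from Subsection~\ref{subsec:s_shape} that on the Liouville collar $[1-\delta,1+\delta]\times\bd M$ the Hamiltonian is $H_l=h_l(r)$ with $h_l$ increasing, that $J$ is cylindrical there, and that $\theta=r\,\alpha$ on the collar, $\alpha$ being the contact form on $\bd M$. By Lemma~\ref{lem:disjoint_from_sigma} the (a)-curve $u\co[0,+\infty)\times S^1\to X$ avoids $\Sigma$ and hence maps into $X\setminus\Sigma$, where $\omega=d\theta$. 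I would extend the radial coordinate $r$ to all of $X$ by setting $r\equiv 1-\delta$ on $X_-$ and $r\equiv 1+\delta$ on $X_+$, and write $c_0=1$ when $\gamma$ has type~\I\ or~\II\ and $c_0=1+r_0$ when $\gamma$ has type~\III. Since $\{r\le 1\}=M$ and $\{r\le 1+r_0\}=M\cup([1,1+r_0]\times\bd M)$, it suffices to prove $u([0,+\infty)\times S^1)\subset\{r\le c_0\}$.

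First I would argue by contradiction: suppose $r\circ u$ exceeds $c_0$ somewhere. By Sard's theorem, pick a regular value $c\in(c_0,1+\delta)$ of $r\circ u$ that is still exceeded by $u$, and set $\Sigma'=u^{-1}(\{r\ge c\})$. Then $\Sigma'$ is a compact surface with boundary: it is closed, has non-empty interior, and is bounded in $s$ because $u(s,\cdot)$ converges to $\gamma$ as $s\to+\infty$ while $r(\gamma)\le c_0<c$; moreover $\partial\Sigma'\subset u^{-1}(\{r=c\})$, the free boundary $\{0\}\times S^1$ contributing nothing since $u$ sends it into $L\subset X_-$, where $r\equiv 1-\delta<c$. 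Writing $E(u;\Sigma')=\int_{\Sigma'}|\partial_s u|^2\,ds\wedge dt$ for the geometric energy, the energy identity for the ($s$-independent) Floer equation together with Stokes gives
\begin{equation*}
0\le E(u;\Sigma')=\int_{\partial\Sigma'}\big(u^*\theta-(H_l\circ u)\,dt\big)=c\int_{\partial\Sigma'}u^*\alpha,
\end{equation*}
where Stokes uses $\omega=d\theta$ and $\partial_s(H_l\circ u)\,ds\wedge dt=d((H_l\circ u)\,dt)$ (valid since $H_l$ is $s$-independent on $\Sigma'$), and the last equality uses that on $\partial\Sigma'$ one has $u^*\theta=c\,u^*\alpha$, $H_l\circ u\equiv h_l(c)$, and $\int_{\partial\Sigma'}dt=\int_{\Sigma'}d(dt)=0$.

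The remaining step is the standard pointwise estimate $\int_{\partial\Sigma'}u^*\alpha\le 0$: decomposing $\partial_s u$ into its $\partial_r$-, Reeb-, and contact-hyperplane components and using $\partial_t u=X_{H_l}+J\partial_s u$ with $J$ cylindrical, one finds that $u^*\alpha$ restricted to $\partial\Sigma'$ is a $1$-form which is $\le 0$ on the oriented tangent of $\partial\Sigma'$, plus $h_l'(c)\,dt$, and the latter integrates to zero as above. This is exactly the computation in \cite[Lemma~7.2]{ASei10}. Hence $E(u;\Sigma')=0$, so $\partial_s u\equiv 0$ on $\Sigma'$; but following the horizontal line through a point $(s_0,t_0)$ with $r(u(s_0,t_0))>c$, continuity forces $u(\cdot,t_0)$ to remain constant until $s$ reaches $0$ (where $u\in L$, so $r<c$) or $s\to+\infty$ (where $u\to\gamma$, so $r<c$) --- a contradiction. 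This proves the lemma in each of the three cases.

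I expect the only genuine obstacle to be the sign in the boundary computation of the last paragraph, which is however entirely routine and can be quoted verbatim from \cite[Lemma~7.2]{ASei10}. The one point deserving a remark is that $H_l$ is only piecewise of the form $h_l(r)$ --- in particular it equals the constant $l$ on $X_+$ --- but this is harmless: the region $\{r\ge c\}$ meets $X_+$ only where $\omega=d\theta$ still holds, so the Stokes step is unaffected, and $\partial\Sigma'$ stays inside the collar, where $J$ is cylindrical.
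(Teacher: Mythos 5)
Your proof is correct, but it takes a genuinely different (and arguably cleaner) route than the paper's. The paper's own proof goes in two steps: it imposes an extra profile condition on $H_l$ (that it be linear of small slope near a level $\{1+\delta_0\}\times\bd M$ with no orbits in $[1+\delta_0,1+\delta]\times\bd M$), applies Ritter's no-escape lemma \cite[Lemma~D.3, Remark~D.4(2)]{Ri13} at the auxiliary hypersurface $\{1+\delta_0\}\times\bd M$ to push $u$ into $M\cup[1,1+\delta_0]\times\bd M$, and then invokes the usual maximum principle to land in $M\cup[1,1+r_0]\times\bd M$. The remark following that proof explains why the author believed a one-step argument fails: applying the non-linear no-escape lemma \cite[Lemma~19.5]{Ri13} directly at the orbit level $\{1+r_0\}\times\bd M$ requires $-r_0h_l'(r_0)+h_l(r_0)\le 0$, which is not guaranteed for $\S$-shaped Hamiltonians.

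Your argument sidesteps exactly that difficulty, and it is worth spelling out why it works. The condition $-r_0h_l'(r_0)+h_l(r_0)\le 0$ arises in \cite[Lemma~19.5]{Ri13} because one cuts at the level $r_0$ of the asymptotic orbit, so $\partial\Sigma''$ includes the non-compact asymptotic end, and $\int_{\partial\Sigma''}dt$ picks up a nonzero contribution equal (up to sign) to the period of $\gamma$; the $-h_l(r_0)\int dt$ term is then compensated only if $h_l(r_0)\le r_0 h_l'(r_0)$. By instead cutting at a Sard-regular value $c$ with $c_0<c<1+\delta$, you make $\partial\Sigma'$ a compact $1$-manifold lying entirely in the interior of the domain, and then $\int_{\partial\Sigma'}dt=\int_{\Sigma'}d(dt)=0$ by Stokes. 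This kills the $-h_l(c)\int dt$ term outright, regardless of the profile of $h_l$, and the usual pointwise computation à la \cite[Lemma~7.2]{ASei10} (in the paper's conventions one gets $u^*\alpha(\tau)=-\|\nabla(r\circ u)\|^2+h_l'(c)\,dt(\tau)$ on the positively oriented tangent $\tau$ of $\partial\Sigma'$, and the $h_l'(c)\,dt$-part again integrates to zero) gives $\int_{\partial\Sigma'}u^*\alpha\le 0$. The contradiction from $\partial_s u\equiv 0$ on $\Sigma'$ is also slightly different in flavour from the paper's conclusion, but it is standard and closes the argument. In short: the paper buys regularity of the cutting locus by reshaping the Hamiltonian and invoking a linear-Hamiltonian no-escape lemma, while you buy it with Sard's theorem, which eliminates the extraneous profile condition and avoids the two-step structure entirely.
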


\begin{proof}
Recall that $H_l$ is the \S-shaped Hamiltonian used in the Floer equation for the (a)-curve. 
First, we explain why the image of $u$ must lie inside $M\cup([1,1+\delta]\times \bd M)$. Observe that both $L$ and $\gamma$ lie in this domain.
Moreover, $H_l$ is close to a constant function near $\{1+\delta\}\times \bd M$.
Let us introduce the following additional requirement on the profile of  $H_l$, recall Figure~\ref{fig:s_shape}. 
Recall that $H_l$ is a function of the collar co-ordinate on $[1-\delta,1+\delta]\times \bd M$. We require that
there exists a number $\delta_0$ close to $\delta$ such that $H_l$ is a \emph{linear} function of \emph{small slope} in a neighbourhood of $\{1+\delta_0\}\times \bd M$, and that the slopes of $H_l$ are small uniformly on $[1+\delta_0,1+\delta]\times\bd M$ and  $H_l$ has no periodic orbits in that region. See Figure~\ref{fig:h_del-lin} for a rough sketch.
It follows that $\gamma$ necessarily lies `to the left' of $\{1+\delta_0\}\times\bd M$.

Assume that the (a)-curve enters the region $[1+\delta_0,1+\delta]\times\bd M$.
To get a contradiction, one can apply the no-escape lemma as stated in \cite[Lemma~D.3 and Remark~D.4(2)]{Ri13}, using the contact hypersurface $\{1+\delta_0\}\times\bd M$. The lemma requires that the Hamiltonian be linear in the collar co-ordinate, which has been arranged. It follows that the (a)-curve is contained in $M\cup[1,1+\delta_0]\times \bd M$.

Next, the inclusion into the smaller subdomain $M$ or $M\cup [1, 1+r_0]$ follows from the maximum principle which, roughly, says that $u$ cannot extend inside the collar $[1-\delta,1+\delta]\times \bd M$ `to the right' of all of its asymptotic or boundary conditions. The maximum principle applies whenever  $H$ has the form $h(r)$ on the collar, $h'(r)\ge 0$ \cite[Lemma~19.1]{Ri13}, which certainly holds in our case.

Formally speaking, the above linearity condition for $H_l$ near $\{1+\delta_0\}\times\bd M$ should have been mentioned in the initial setup of $H_l$ in Subsection~\ref{subsec:s_shape}. However, because this is a minor technicality and there are alternative arguments (see the remark below) which do not require the linearity, we decided to keep the extra condition within this proof.
\end{proof}	

\begin{figure}[h]
	\includegraphics[]{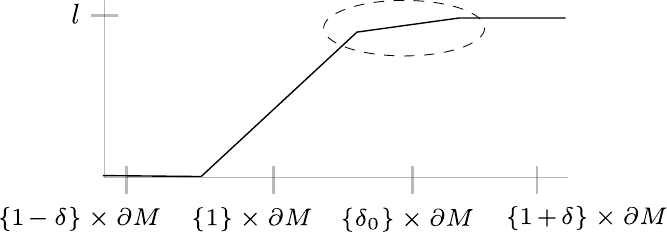}
	\caption{The \S-shaped Hamiltonian $H_l$ near the collar, with an additional linear part of small slope.}
	\label{fig:h_del-lin}
\end{figure}

\begin{remark}
The two-step proof above seems like a necessary technicality, as one cannot apply the no-escape lemma directly to the hypersurface $\{1+r_0\}\times \bd M$. The reason is that our Hamiltonian $H_l=h(r)$ depends on the radial co-ordinate non-linearly inside the collar $[1-\delta,1+\delta]\times \bd M$. The no-escape lemma in this setting still exists \cite[Lemma~19.5]{Ri13}, but it requires the condition that 
\begin{equation}
\label{eq:ineq_ri}
-r_0h'(r_0)+h(r_0)\le 0
\end{equation}
 which is not necessarily satisfied in our case. In the above proof, one of the possible workarounds was used. An alternative solution is to notice that $-r_0h'(r_0)+h(r_0)=A(\gamma)$, and if $A(\gamma)>0$ the (a)-curve cannot exist, as one must have $\epsilon\ge A(\gamma')\ge A(\gamma)$, see the proof of Lemma~\ref{lem:not_4a}. 
\end{remark}

\begin{corollary}
\label{cor:a_curve}
A limiting orbit $\gamma$ either has type \I\ or type \II.
The image of the (a)-curve of any broken configuration lies in $M$.
\end{corollary}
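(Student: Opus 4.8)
The plan is to assemble the lemmas proved in the preceding subsections; no genuinely new argument is needed. Recall from Subsection~\ref{subsec:s_shape} that the $1$-periodic orbits of the \S-shaped Hamiltonian $H_l$ fall into exactly the five classes \I, \II, \III, \IV a and \IV b, and that by Lemma~\ref{lem:a_b}, once $l$ exceeds the area of a Maslov index~$2$ disk with boundary on $L$, every Floer--Gromov limit of solutions in $\M_{n,l}$ is composed of precisely an (a)-curve and a (b)-curve meeting along a single asymptotic orbit $\gamma$. Thus, to prove the first assertion, it suffices to exclude three of the five possible types for the limiting orbit $\gamma$.

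First I would invoke Lemma~\ref{lem:not_4b} to rule out that $\gamma$ has type \IV b (a constant orbit lying in $\Sigma$) and Lemma~\ref{lem:not_4a} to rule out type \IV a (a constant orbit near $\Sigma$ but off it); both invocations require $l$ large, which is our standing assumption. Then Lemma~\ref{lem:not_3} rules out type \III. This leaves only types \I\ and \II, which is the first claim of the corollary.

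For the second assertion, recall that Lemma~\ref{lem:disjoint_from_sigma} guarantees the (a)-curve of the broken configuration is disjoint from $\Sigma$, so it genuinely lands in the Liouville manifold $X\setminus\Sigma$ and the no-escape machinery is applicable. Having just shown that $\gamma$ has type \I\ or \II, the relevant case of the no-escape lemma, Lemma~\ref{lem:no_escape}, applies verbatim and yields that the image of the (a)-curve is contained in $M$.

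No new difficulty arises here: all the analytic substance --- the energy estimates behind Lemmas~\ref{lem:not_4b} and~\ref{lem:not_4a}, the combination of Lemma~\ref{lem:prohibited_trj} with the no-escape principle in Lemma~\ref{lem:not_3}, and the positivity-of-intersections input to Lemma~\ref{lem:disjoint_from_sigma} --- has already been carried out. The only point requiring a little care is bookkeeping of hypotheses: one must keep the standing assumption that $l$ is larger than the area of a Maslov index~$2$ disk (this is exactly what powers Lemmas~\ref{lem:not_4b}, \ref{lem:not_4a} and~\ref{lem:a_b}), and one must apply Lemma~\ref{lem:no_escape} with the knowledge, supplied by Lemma~\ref{lem:disjoint_from_sigma}, that the (a)-curve avoids $\Sigma$.
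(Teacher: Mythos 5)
Your proposal is correct and follows essentially the same route as the paper: the paper's proof is simply "This follows from Lemma~\ref{lem:no_escape}, combined with the previous results which guarantee that the (a)-curve has asymptotic of type \I\ or \II", and you have expanded exactly which previous results (Lemmas~\ref{lem:a_b}, \ref{lem:not_4b}, \ref{lem:not_4a}, \ref{lem:not_3}, \ref{lem:disjoint_from_sigma}) do the work and in which order. The bookkeeping of the hypothesis that $l$ be large and that Lemma~\ref{lem:disjoint_from_sigma} is what licenses Lemma~\ref{lem:no_escape} is correctly noted.
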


\begin{proof}
	This follows from Lemma~\ref{lem:no_escape}, combined with the previous results which guarantee that the (a)-curve has asymptotic of type \I\ or \II.
\end{proof}

\subsection{The degree is zero}
\label{subsec:grade}
Recall that $\Sigma\subset X$ is a Donaldson divisor of degree~$d$. Let $\zeta$ be a trivialisation of $K_M$ such that $\zeta^d$ is the natural trivialisation of $(K_{X\setminus \Sigma})^d$; it exists by the hypothesis of graded-compatibility.
Let $(B,\bd B)\subset (X,M)$ be a 2-chain with boundary. By construction,  one has the following identity between its relative Chern class with respect to $\zeta$, and the intersection number with $\Sigma$:
\begin{equation}
\label{eq:c_1}
c_1(B,K_X|_B,\zeta|_{\bd B})=\tfrac 1 d [B]\cdot[\Sigma].
\end{equation}
\begin{lemma}
	\label{lem:degree}
A limiting orbit $\gamma$ has degree zero in the above trivialisation of $K_M$.
\end{lemma}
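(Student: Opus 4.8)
The plan is to compute the degree (Conley--Zehnder-type index, normalised so the Floer cohomology unit sits in degree $0$) of a limiting orbit $\gamma$ by relating it to the relative Chern number of the broken configuration, and then to use the intersection-number estimates already established. First I would recall that the degree of $\gamma$ in $CF^*(\hat H_l)$, computed with respect to the trivialisation $\zeta$ of $K_M$, can be read off from any of the pieces of the broken curve that is asymptotic to $\gamma$ and carries a trivialisation along its remaining boundary/asymptotics. The natural candidate is the $(a)$-curve: by Corollary~\ref{cor:a_curve}, the $(a)$-curve is disjoint from $\Sigma$ and has image in $M$, its boundary lies on $L\subset M$, which has vanishing Maslov class with respect to $\zeta$, and its only other asymptotic is $\gamma$. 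Since the Maslov index~$2$ disk we started with has index $2$, and the original disk is glued from the $(a)$-curve, the $(b)$-curve, and possibly the intermediate pieces (which by Lemma~\ref{lem:a_b} are absent), the index of $\gamma$ is controlled by the index contribution of the $(b)$-curve.

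Concretely, I would argue as follows. Additivity of the index under gluing gives that $\mu(A_{\mathrm{orig}}) = 2$ splits as the degree contribution of the $(a)$-curve plus that of the $(b)$-curve (the grading conventions are chosen so that the splitting is clean; the unit of $HF^*$ sitting in degree $0$ and the $\CO$ maps having degree~$0$ make this work out). The degree of $\gamma$ is, up to sign convention, the index of the $(a)$-curve; equivalently it is $2$ minus the index of the $(b)$-curve. Now the $(b)$-curve defines a class $B\in H_2(X)$ (it is not contained in $\Sigma$ by Lemma~\ref{lem:disjoint_from_sigma} and the preceding analysis, since $\gamma$ has type \I\ or \II), and by monotonicity its Maslov/Chern index equals $2c_1(B) = \tfrac 2 d [B]\cdot[\Sigma]$ via (\ref{eq:mu_and_intersec}). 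By positivity of intersections (Lemma~\ref{lem:pos_inter}) applied to the $(b)$-curve, which meets $\Sigma$ because of the incidence condition $u(0)\in\Sigma$, one gets $[B]\cdot[\Sigma]\ge 1$, and then by Lemma~\ref{lem:div_by_k} (grading-compatibility) $[B]\cdot[\Sigma]\ge d$. On the other hand $(A+B)\cdot[\Sigma] = d$ with $A\cdot[\Sigma]\ge 0$ by positivity of intersections on the $(a)$-curve (which is disjoint from $\Sigma$, hence $A\cdot[\Sigma]=0$). Therefore $[B]\cdot[\Sigma] = d$, so the index contribution of the $(b)$-curve is exactly $2$, and the degree of $\gamma$ is $2 - 2 = 0$.

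The one subtlety I would be careful about, and which I expect to be the main obstacle, is making the index-additivity bookkeeping precise: one has to trivialise the determinant line bundles consistently along the shared asymptotic orbit $\gamma$, track the contributions from the capping of the $(b)$-curve at the point $u(0)\in\Sigma$ (the relative Chern class formula (\ref{eq:c_1}) is the key identity here, applied to $B=$ the $(b)$-curve viewed as a chain in $(X,M)$ after a small homotopy pushing its $\gamma$-end into $M$), and to verify that the constant-orbit and index-zero bookkeeping is compatible with the convention that $SH^0(M)$ carries the unit in degree $0$. In other words, the geometric input is entirely contained in the intersection-number computation $[B]\cdot[\Sigma]=d$; what remains is to translate "intersection number $d$ with a divisor Poincar\'e dual to $d\,c_1$" into "index contribution $2$" using (\ref{eq:c_1}) and the fact that $L$ has vanishing Maslov class for $\zeta$. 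I would phrase this translation using the relative first Chern number of the $(b)$-curve with respect to $\zeta$ along its end, conclude $c_1 = \tfrac1d[B]\cdot[\Sigma] = 1$, hence index $2$, hence $\deg\gamma = 0$.
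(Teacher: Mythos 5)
Your argument runs into a genuine gap: you try to pin down $|\gamma|$ using \emph{topological} additivity of Maslov/Chern numbers, but the degree of the orbit does not appear in that identity. Writing out what you actually have: the (a)-curve lies in $M$ with boundary on $L$, which has vanishing Maslov class in the trivialisation $\zeta$, so its topological Maslov contribution is $0$; the (b)-curve contributes $2c_1(B,\zeta)=\tfrac 2 d [B]\cdot[\Sigma]=2$; and $0+2=2$ matches $\mu(A_{\mathrm{orig}})=2$. This is a consistency check, not a constraint on $|\gamma|$. The degree of $\gamma$ is a Conley--Zehnder-type index that enters only through the \emph{Fredholm} dimension formulas, and there it enters both pieces with opposite signs: the unconstrained (a)-curve moduli space has dimension $n-|\gamma|$, the unconstrained (b)-curve moduli space has dimension $|\gamma|+2c_1(B,\zeta)=|\gamma|+2$, and their sum is $n+2$, in which $|\gamma|$ cancels. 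So "additivity of index under gluing'' can never determine $|\gamma|$ by itself. Your explicit step "the index contribution of the (b)-curve is exactly $2$, and the degree of $\gamma$ is $2-2=0$'' conflates the topological quantity $2c_1(B,\zeta)$ with the Fredholm index $|\gamma|+2c_1(B,\zeta)$; if you plug the correct Fredholm index into your own equation $|\gamma| = 2 - \mathrm{ind}^{(b)}$ you get the tautology $|\gamma|=-|\gamma|$.

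The ingredient you are missing is \emph{regularity of the two pieces of the broken configuration}, which is what the paper actually invokes. Because the broken configuration is a Floer--Gromov limit of rigid solutions and is achieved with transversal moduli spaces, the (a)-curve moduli space \emph{with} the point constraint $u(1)=\pt\in L$ is non-empty and regular, hence has non-negative dimension $(n-|\gamma|)-n=-|\gamma|\ge 0$; and likewise the constrained (b)-curve moduli space has dimension $(|\gamma|+2)-2=|\gamma|\ge 0$. These two inequalities — one per piece, both crucially using transversality and non-emptiness — sandwich $|\gamma|=0$. Your computation $[B]\cdot[\Sigma]=d$ is correct and needed (to know $c_1(B,\zeta)=1$ so that the incidence condition $u(0)\in\Sigma$ is exactly codimension $2$), but it must feed into a regularity/dimension-count argument, not into a pure additivity argument. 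One small additional remark: you attribute the (b)-curve being outside $\Sigma$ to Lemma~\ref{lem:disjoint_from_sigma}, but that lemma concerns the (a)-curve; the (b)-curve escapes $\Sigma$ simply because its asymptotic $\gamma$ is of type \I\ or \II, hence contained in $M$.
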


\begin{proof}
	Let $|\gamma|\in \Z$ be the degree. Our grading conventions (explained in the introduction) are such that the dimension of the moduli space of (a)-curves \emph{without} the boundary point constraint $u(1)=\pt\in L$ equals
	$$
	n-|\gamma|,
	$$
	using the fact that the (a)-curves are contained in $M$. The moduli space of the (b)-curves without the interior point constaint $u(1)\in\Sigma$ is
	$$
	|\gamma|+2,
	$$	
	where $2$ is twice the relative Chern number from (\ref{eq:c_1}), using the fact that the (b)-curve has intersection number $d$ with $\Sigma$. Adding the incidence conditions and using regularity, one arrives at the conditions:
	$$
	n-|\gamma|\ge n, \quad |\gamma|+2\ge 2.
	$$
	It follows that $|\gamma|=0$.
\end{proof}

\subsection{The Borman-Sheridan class}
Let $CF^*(H_l)$ be the Floer complex generated by all periodic orbits of $H_l$; it carries the usual Floer differential. Denote by
$$CF^*_{\IandII}(H_l)\subset CF^*(H_l)$$ 
the subspace generated by periodic orbits of $H_l$ in $X$ having type \I\  or \II. 
We define 
$$\BS_l\in CF^0_\IandII(H_l)$$ 
to be the chain  counting the output asymptotics $\gamma$ of all rigid maps $u\co \C\to X$ which:
\begin{itemize}
	\item solve Floer's equation with the Hamiltonian perturbation $H^{(b)}_l$ from (\ref{eq:H_a_and_b}),
	\item 
 satisfy the incidence condition $u(0)\in\Sigma$,
 \item have homological intersection number $d$ with $\Sigma$,
 \item and have asymptotic orbit $\gamma$ of type \I\ or \II. 
\end{itemize}
We call such maps the \emph{(b)-curves}.
The last condition has to be added explicitly, and is not guaranteed otherwise. (Above, we only proved that $\gamma$ must of type \I\ or \II\ for configurations arising from the breaking of a Maslov index~2 disk; we were using the existence of an (a)-curve as well.)

  It follows as in the proof of Lemma~\ref{lem:degree} that the outputs of the (b)-curves have degree zero, so indeed $\BS_l\in CF^0_\IandII(H_l)$.

\begin{proposition} 
	\label{prop:bs_def}
	In our setting, the following holds.
\begin{itemize}
	\item[(i)]	$CH^*_\IandII(H_l)$ is a complex with respect to the differential $d^0_l$ counting only those Floer cylinders which run between the type \I\ and\ \II\ orbits, and moreover do not intersect $\Sigma$.
	\item[(ii)] For $l'>l$, there are chain maps $$c^0_{l,l'}\co CF^*_\IandII(H_l)\to CF^*_\IandII(H_{l'})$$
	counting only those continuation map solutions $CF^*(H_l)\to CF^*(H_{l'})$
	which run between the type \I\ and \II\ orbits, and
	moreover do not intersect $\Sigma$. (We use interpolating Hamiltonians that are monotonically non-increasing in $s$.)
	\item[(iii)] The direct limit of the cohomologies of $CH^*_\IandII(H_l)$ with respect to $c^0_{l,l'}$ is isomorphic to the symplectic cohomology of $M$: 
	$$SH^*(M)\cong \lim\limits_{l\to+\infty}HF^*_\IandII(H_l).$$
	\item[(iv)] The elements $\BS_l\in CF^0_\IandII(H_l)$ are $d^0_l$-closed, and their homology classes are respected by the maps $c^0_{l,l'}$. Therefore, they define an element 
	$$\BS\in SH^0(M)$$
	which we call the Borman-Sheridan class. It depends both on $M$ and its Liouville embedding into $X\setminus\Sigma$, but is invariant of Liouville deformations of these data.
\end{itemize}	
\end{proposition}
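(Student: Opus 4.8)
The plan is to deduce (i)--(iii) by identifying the restricted complex with the genuine Floer complex of the Liouville domain $M$, and to prove (iv) by the standard ``boundary of the moduli space'' argument; the point is that, unlike in the breaking analysis behind Theorem~\ref{th:factor}, one can no longer use the auxiliary (a)-curve to control the configurations that appear.

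\emph{Parts (i)--(iii).} By construction, $H_l$ restricted to $M\cup\bd M$ is an admissible Hamiltonian for computing $SH^*(M)$: it vanishes on the bulk $X_-$, equals the convex function $h_l$ of the collar co-ordinate on $[1-\delta,1]\times\bd M$ with slope $h_l'(1)$ at $\bd M$, and its type \I\ and \II\ orbits are precisely the critical points away from the collar together with the Reeb orbits of $\bd M$ of period $<h_l'(1)$. The geometric input needed is a confinement statement: any solution of Floer's equation for $H_l$, or for an $s$-monotone homotopy between $H_l$ and $H_{l'}$, all of whose asymptotic orbits are of type \I\ or \II\ and whose image is disjoint from $\Sigma$, is contained in $M\cup\bd M$. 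This is the no-escape Lemma~\ref{lem:no_escape} with the Lagrangian boundary condition removed, argued exactly as in Corollary~\ref{cor:a_curve}; together with the action computation for type \I\ and \II\ orbits it places the situation within the Cieliebak--Oancea confinement framework \cite{CO18}. Granting it, $d^0_l$ and $c^0_{l,l'}$ are literally the usual Floer differential and continuation maps of $M$, so $(CF^*_\IandII(H_l),d^0_l)$ is canonically isomorphic to the Floer complex of $M$ at slope $h_l'(1)$; in particular $(d^0_l)^2=0$, each $c^0_{l,l'}$ is a chain map, and $c^0_{l',l''}\circ c^0_{l,l'}=c^0_{l,l''}$. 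Since $h_l'(1)\to+\infty$ as $l\to+\infty$, the directed system $\{HF^*_\IandII(H_l)\}$ is cofinal in the one defining $SH^*(M)$, which gives (iii). (The clause ``does not intersect $\Sigma$'' in the definition of $d^0_l$ is automatic once confinement is known, but it is retained to make the comparison transparent.)

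\emph{Part (iv), closedness.} That $\BS_l$ lies in $CF^0_\IandII(H_l)$ is the index count preceding the statement (as in Lemma~\ref{lem:degree}), so the remaining point is $d^0_l\BS_l=0$. Fix a type \I\ or \II\ orbit $\gamma$ of degree $1$ and let $\mathcal N(\gamma)$ be the $1$-dimensional moduli space of (b)-curves with output $\gamma$ and homological intersection number $d$ with $\Sigma$. Its Gromov--Floer compactification is a compact $1$-manifold with boundary, and $\langle d^0_l\BS_l,\gamma\rangle$ is the signed count of those boundary points at which a rigid (b)-curve with a type \I\ or \II\ output $\gamma'$ is glued to a rigid $d^0_l$-cylinder from $\gamma'$ to $\gamma$. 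So it suffices to exclude every other boundary stratum: (1) a breaking at the asymptotic end whose intermediate orbit is of type \III, \IV a or \IV b; (2) a breaking whose connecting cylinder meets $\Sigma$; (3) a sphere bubble, including one contained in $\Sigma$. The key bookkeeping, as in Lemma~\ref{lem:a_b}, is that a (b)-curve has intersection \emph{exactly} $d$ with $\Sigma$, whereas any component of a broken configuration which meets $\Sigma$ has intersection $\ge d$ with it --- by positivity of intersections (Lemmas~\ref{lem:pos_inter} and~\ref{lem:pos_inter_asympt}) and the divisibility of $[\Sigma]$ by $d$ (Lemma~\ref{lem:div_by_k}). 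Hence at most one component meets $\Sigma$, namely the one carrying the marked point $u(0)\in\Sigma$; in particular the connecting cylinder of a breaking is disjoint from $\Sigma$, which kills (2). The intermediate orbit of such a breaking is then shown to be of type \I\ or \II --- ruling out (1) --- by re-running, on the connecting cylinder together with the (b)-curve, the arguments of Lemmas~\ref{lem:not_3}, \ref{lem:not_4a}, \ref{lem:not_4b} and~\ref{lem:pos_area}: the combination of Lemma~\ref{lem:prohibited_trj} with the no-escape lemma excludes a type \III\ intermediate orbit, while the action estimates (for $l$ larger than the area of a Maslov index~$2$ disk) exclude type \IV. For (3), a sphere bubble $S$ satisfies $[S]\cdot[\Sigma]=d\,c_1(X)\cdot[S]\ge d$ by monotonicity, so it absorbs the entire intersection with $\Sigma$; the surviving component is then disjoint from $\Sigma$, confined to $M\cup\bd M$, and carries neither extra marked points nor extra intersection with $\Sigma$, hence has negative virtual dimension --- a contradiction.

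\emph{Part (iv), compatibility and the limit.} The identity $c^0_{l,l'}[\BS_l]=[\BS_{l'}]$ in cohomology comes from the analogous boundary analysis of the $1$-dimensional moduli space of configurations ``a (b)-curve followed by a continuation solution from $H_l$ to $H_{l'}$'': its boundary consists of a (b)-curve glued to a rigid $d^0_{l'}$-cylinder, a (b)-curve glued to a rigid $c^0_{l,l'}$-solution, and a rigid $d^0_l$-cylinder glued to a (b)-curve --- the strata of types (1)--(3) being excluded as above --- so that $c^0_{l,l'}(\BS_l)-\BS_{l'}$ is a $d^0_{l'}$-boundary at chain level. Combined with (iii), the family $([\BS_l])_l$ therefore determines a well-defined class $\BS\in SH^0(M)=\lim_{l\to+\infty}HF^0_\IandII(H_l)$; and $\BS$ is invariant under Liouville deformations of the pair $M\hookrightarrow X\setminus\Sigma$ by running the whole construction in families over a compact interval of data, the confinement and intersection estimates being uniform there, so that the two (b)-curve counts are cobordant. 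I expect the real difficulty to be the boundary analysis in part~(iv): with no (a)-curve available one must bound the intersection numbers with $\Sigma$ and rule out type \IV\ breakings and sphere bubbling directly, and it is there that positivity of intersections, the divisibility of $[\Sigma]$, and the largeness of $l$ all have to be brought together.
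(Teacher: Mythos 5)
Your proposal is correct and follows essentially the same route as the paper: confinement via the no-escape lemma identifies $(CF^*_{\IandII}(H_l),d^0_l)$ with the Floer complex of $\hat H_l$ on $M$ (giving (i)--(iii)), and (iv) comes from the boundary of the $1$-dimensional moduli space of (b)-curves, ruling out the bad strata by re-running Lemmas~\ref{lem:not_4b}, \ref{lem:not_4a}, \ref{lem:not_3}, \ref{lem:pos_area} and the bookkeeping of Lemma~\ref{lem:a_b}. The only presentational difference is that the paper proves $(d^0_l)^2=0$ and the continuation chain property directly by excluding breakings at type~\III/\IV\ orbits before invoking confinement, whereas you deduce them automatically from the confinement-first identification; and your blanket phrase that every $\Sigma$-meeting component has intersection $\ge d$ should be read with the grouping of components as in Lemma~\ref{lem:a_b} (constant pieces and components inside $\Sigma$ are only $\ge 0$), exactly as your citation of that lemma indicates.
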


\begin{proof}
The proofs use same the  ingredients that have already been used above, so we shall be brief.

For (i), one has to show that $(d_l^0)^2=0$. It is enough  to prove that Fredholm index~1 Floer trajectories connecting type \I,\ \II\ orbits, and which have algebraic intersection zero with $\Sigma$, cannot break at type \III\ or type \IV\ orbits.  The proof is analogous to Lemmas~\ref{lem:not_4b},~\ref{lem:not_4a},~\ref{lem:not_3}, also employing an analogue of Lemmas~\ref{lem:no_escape} and~\ref{lem:a_b}. For example, let us check how type \IV b orbits are ruled out. If a Floer cylinder as above breaks at a type \IV b orbit, an argument similar to Lemma~\ref{lem:not_4b} shows that both broken cylinders have non-negative algebraic intersection with $\Sigma$, and at least one cylinder has positive algebraic intersection, which is a contradiction.

For (ii), one needs to rule out the same types of breaking for the continuation maps $c^0_{l,l'}$. This follows the same steps as above, with  some changes standard  in Floer theory: for example, the energy estimate used in Lemma~\ref{lem:not_4a} still holds if we use the continuation map Hamiltonian
$H(s,t,x)$ satisfying $\bd_s H\le 0$.

Using (i) and (ii),
it follows that the direct limit in (iii) makes sense. 
We claim that there is a chain level isomorphism:
\begin{equation}
\label{eq:sh_chain_iso}
HF^*_\IandII(H_l)\cong HF^*(\hat H_l)
\end{equation}
where $\hat H_l$ were introduced in (\ref{eq:h_sh}) as the Hamiltonians computing $SH^*(M)$. First, recall that
\begin{equation}
\label{eq:h_eq_hhat}
H_l
|_{S^1\times M}\equiv \hat H_l,
\end{equation}
and the periodic orbits of $H_l$ contained in $M$ are precisely the type \I\ and \II\ orbits. 
It follows that the generators of (\ref{eq:sh_chain_iso})  coincide. Moreover, solutions contributing to $d^0_l$ in fact belong to $M$ by the no-escape lemma (analogous to Lemma~\ref{lem:no_escape}), so the differentials defining both sides of (\ref{eq:sh_chain_iso})  also coincide. Compare with a similar argument in e.g.~\cite[Proposition~4.5]{GP16}.

Again by a version of the no-escape lemma, (\ref{eq:sh_chain_iso})
intertwines $c^0_{l,l'}$ with the standard continuation maps $CF^*(\hat H_l)\to CF^*(\hat H_{l'})$. And since $$SH^*(M)=\lim_{l\to+\infty} HF^*(\hat H_l)$$ by definition, one immediately obtains (iii).

To prove (iv),   consider the moduli space like that defining $\BS_l$, but 1-dimensional rather than 0-dimensional. One checks that its boundary points correspond to $d_l^0(\BS_l)$. To do so, following the above steps one shows that the 1-parameter families of curves break along orbits which are of type \I\ or \II, and the `left' parts of the broken curves are confined to $M$  and compute the $d^0_l$-differential. (No extra bubbles in $\Sigma$ are possible by a version of Lemma~\ref{lem:higher_bubble_sigma} and Lemma~\ref{lem:a_b}.)
\end{proof}

\subsection{Concluding the proof}
Let us return to the preceding steps where we applied a domain-stretching procedure to Maslov index~2 holomorphic disks and ended up in a broken configuration consisting of the (a)- and (b)-curve. Let us  look at the (a)-curves.

By Corollary~\ref{cor:a_curve}, the (a)-curves are contained in $M$, and the equation they solve is precisely the equation for $\CO_l$ (see Subsection~\ref{subsec:sh}) because:
$$
H_l|_{S^1\times M}\equiv \hat H_l,
$$
where the two sides were defined in (\ref{eq:H_l_S}) and (\ref{eq:h_sh}); see also (\ref{eq:H_a_and_b}).
So by Proposition~\ref{prop:gluing_broken}, 
under   isomorphism (\ref{eq:sh_chain_iso}) we obtain: 
$$\CO_l(\BS_l)= d\cdot  W_L(\rho)\cdot 1_L\in HF^*_M(\LL,\LL).$$
Passing to the limit $l\to+\infty$, it is seen that
$$
\CO(\BS)= d\cdot  W_L(\rho)\cdot 1_L\in HF^*_M(\LL,\LL).
$$
This completes the proof of Theorem~\ref{th:factor}.

\subsection{Higher deformation classes}
\begin{proof}[Proof of Theorem~\ref{th:factor_higher}] The proof is entirely analogous to the proof of Theorem~\ref{th:factor}. 
The first step of the proof---introducing the interior marked point---is unnecessary since it is already built in the definition of higher disk potentials. Apply the same domain-stretching procedure; the (b)-curve inherits the tangency condition eating up the total homological intersection number $k$ with $\Sigma$. Repeating the above arguments, one finds that the limiting orbit $\gamma$ is of type \I\ or \II, and that the (a)-curve is confined to $M$ thus computing the closed-open map. We define the class $\D_k$ to be the count of the outputs of the (b)-curves, namely
we introduce 
$$(\D_k)_l\in CF^0_\IandII(H_l)$$ 
to be the chain  counting the output asymptotics $\gamma$ of all rigid curves $u\co\C\to X$ which 

\begin{itemize}
	\item solve Floer's equation with the Hamiltonian perturbation $H^{(b)}_l$ from (\ref{eq:H_a_and_b}),
	\item 
	satisfy  order $k$ tangency condition at the point $u(0)\in\Sigma$ (this makes sense because the Hamiltonian perturbation is zero in a neighbourhood of $0$ in the domain, see e.g.~Figure~\ref{fig:abc}),
	\item have homological intersection number $k$ with $\Sigma$,
	\item and have asymptotic orbit $\gamma$ of type \I\ or \II. 
\end{itemize}
After this, the class $\D_k\in SH^0(M)$ is defined analogously to Proposition~\ref{prop:bs_def},
and the conclusion of the proof is similar.
\end{proof}

\section{Anticanonical divisors and partial compactifications}
\label{sec:antican}
This section covers two remaining   topics: the proof of Proposition~\ref{prop:bs_antican} which computes of the Borman-Sheridan class (and higher deformation classes) in the complement of an anticanonical divisor; and a version of Theorem~\ref{th:factor} in a setting where $X$ is allowed to be non-compact.

\subsection{Equivariance}
We begin with an observation about the Borman-Sheridan class. 
Recall that the outputs of the (b)-curves appearing in the previous section which define the elements $$\BS_l\in CF^0_\IandII(H_l),$$ are Hamiltonian orbits of $H_l$ of type \I\ and \II.
The following lemma will be helpful soon.

\begin{lemma}
	\label{lem:non_const}
	The element $\BS_l$ is a linear combination of  type \II\ orbits, i.e.~type \I\ orbits do not contribute to $\BS$.
\end{lemma}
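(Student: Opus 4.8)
The plan is to show that the type~\I\ orbits — the constant orbits in $X_-$, which we may take to lie away from the collar, in the interior of $M$ — cannot be the output of a (b)-curve, by an action (equivalently, energy) argument on the (b)-curve itself, without appealing to the existence of an (a)-curve. Recall that a (b)-curve is a rigid solution $u\co\C\to X$ of Floer's equation with the $s$-dependent Hamiltonian $H^{(b)}_l$ from~(\ref{eq:H_a_and_b}), with $u(0)\in\Sigma$ and homological intersection number $d$ with $\Sigma$, asymptotic as $s\to-\infty$ to the orbit $\gamma$. (We parametrise the domain as $((-\infty,0]\times S^1)\cup B$ and puncture the point~$0$, turning it into $\R\times S^1$ after straightening $\bd_s,\bd_t$, so that $\gamma$ is the $s\to-\infty$ asymptote and the removable singularity $q=u(0)\in\Sigma$ is the $s\to+\infty$ asymptote.)

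First I would run the same energy estimate as in the proof of Lemma~\ref{lem:pos_area}. Writing $H=H^{(b)}_l$ and letting $B\in H_2(X)$ be the class of the compactified (b)-curve, one gets
\begin{equation}
\label{eq:noncon_energy}
0\le E(u)=\omega(B)-\int_\gamma H(-\infty,t)+\int_q H(+\infty,t)+\int(\bd_sH)\,ds\wedge dt.
\end{equation}
By construction $H(+\infty,\cdot)\equiv 0$ near $q$ (the perturbation is zero near $0\in B$) and $\bd_sH\le 0$, so~(\ref{eq:noncon_energy}) gives $\int_\gamma H_l(t)\le \omega(B)$. Now if $\gamma$ is of type~\I, then $\gamma$ is a constant orbit in $X_-$ where $H_l$ is a $C^2$-small perturbation of the zero function, so $\int_\gamma H_l(t)\le\epsilon$; this inequality by itself is consistent and does not yet give a contradiction, so the action estimate has to be combined with the intersection-number bookkeeping. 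The point is that, $\gamma$ being type~\I, the (b)-curve is not contained in $\Sigma$ (its asymptote is not in $\Sigma$), so positivity of intersections — Lemmas~\ref{lem:pos_inter} and~\ref{lem:pos_inter_asympt} — forces $B\cdot[\Sigma]\ge 1$, and then by Lemma~\ref{lem:div_by_k} (divisibility of $[\Sigma]$ by $d$ on $H_2(X,M;\Z)$, applied after noting $B$ is represented rel $N'=X\setminus(M\cup\text{collar})$ as in the proof of Lemma~\ref{lem:disjoint_from_sigma}) one gets $B\cdot[\Sigma]\ge d$. But the (b)-curve is \emph{required} to have homological intersection number exactly $d$ with $\Sigma$, so $B\cdot[\Sigma]=d$ and all the intersections are accounted for by the single geometric point $q$; in particular the intersection at the asymptote $\gamma$ contributes~$0$, which is automatic since $\gamma\notin\Sigma$, and there is no contradiction \emph{yet}.

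So the real mechanism has to be dimensional, parallel to Lemma~\ref{lem:degree}. I would argue: when $\gamma$ is a type~\I\ orbit of degree~$0$ (which we already know limiting orbits must be, by Lemma~\ref{lem:degree}, and which holds for (b)-curve outputs by the remark after the definition of $\BS_l$), the (b)-curve is a rigid solution whose domain is $\C$ with an interior marked point mapped to $\Sigma$, in a class $B$ with $B\cdot[\Sigma]=d$ hence relative Chern number $1$ with respect to $\zeta$ by~(\ref{eq:c_1}). The expected dimension of the space of such (b)-curves, \emph{without} the point constraint $u(0)\in\Sigma$ but with the asymptotic condition at $\gamma$ and using that the asymptote is constant in the interior of $M$ where $H_l\equiv\hat H_l$, is $|\gamma|+2n\cdot(\text{Chern})=0+2=2$; imposing $u(0)\in\Sigma$ cuts this down by $2n-2$ generically — but the curve is supposed to avoid the collar on its way, and near a type~\I\ asymptote the no-escape/maximum principle (analogue of Lemma~\ref{lem:no_escape}) confines a neighbourhood of the negative end to $M$, while near $q\in\Sigma\subset X_+$ the curve is in $X_+$, so the (b)-curve must cross the collar $[1-\delta,1+\delta]\times\bd M$ and hence must also pass near type~\II\ or~\III\ orbits' region — but this is a geometric, not an index, obstruction and the cleanest route is:

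I would instead mimic exactly the index computation in Lemma~\ref{lem:degree} but now \emph{isolating the (b)-curve}. For the breaking of a Maslov~$2$ disk we paired the (a)-curve index $n-|\gamma|$ against the (b)-curve index $|\gamma|+2$ and forced $|\gamma|=0$ from \emph{two} inequalities. Here, for the standalone (b)-curve defining $\BS_l$, rigidity gives $(|\gamma|+2)-(2n-2)-1=0$ after imposing $u(0)\in\Sigma$ (codim $2n-2$) and killing the $1$-dimensional automorphism of $\C$ rel the marked point, i.e.\ $|\gamma|=2n-2+1-2=2n-3$; comparing with $|\gamma|=0$ from the degree lemma forces $n$ to be small, which is not what we want. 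The correct bookkeeping, which I believe is what the paper intends, is: a type~\I\ orbit $\gamma$ lies in $X_-$, so a (b)-curve asymptotic to it compactifies to a closed class $B\in H_2(X)$, and because the (b)-curve must hit $\Sigma$ with total multiplicity $d\ge1$ and $\Sigma$ is disjoint from $X_-\supset M$, the curve is non-constant and non-trivial; the subtle point, which is \textbf{the main obstacle}, is to show that such a (b)-curve with a type~\I\ asymptote would have to appear in a moduli space of dimension $\ge 1$ — hence not rigid — whereas a type~\II\ asymptote lowers the index by $1$ (the type~\II\ orbits come in the $S^1$-family / carry the extra "$-1$" from being non-constant and sitting on the boundary of the $h''>0$ region, contributing to a $1$-dimensional orbit space whose evaluation kills one dimension). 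I would make this precise by comparing, for $\gamma$ type~\I\ versus type~\II, the virtual dimension of the (b)-moduli space with the asymptotic and $u(0)\in\Sigma$ constraints imposed: both have degree $|\gamma|=0$ by Lemma~\ref{lem:degree}, but the type~\II\ Floer data involve the extra $S^1$-reparametrisation (or, in the perturbed picture, the perturbation splits each Reeb orbit into a pair with a length-$1$ gradient flow segment in $S\Sigma$), which is exactly the discrepancy that makes type~\I\ outputs live one dimension too high to be rigid. Granting the index computation, rigidity of the (b)-curves forces the output to be type~\II, which is the claim. I expect the delicate part to be pinning down this one-unit index shift cleanly, and I would handle it by reducing, via the no-escape lemma, to the well-understood local model of $H_l$ on the collar $[1-\delta,1]\times\bd M$ together with the $S^1$-Morse–Bott framework of Bourgeois–Oancea~\cite{BO09}, where the Conley–Zehnder indices of the two orbit types differ in precisely the required way.
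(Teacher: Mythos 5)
Your proposal correctly identifies that the mechanism must be one of non-rigidity rather than energy or intersection number (which you rightly observe cannot distinguish type~\I\ from type~\II\ outputs), and you correctly sense that the $S^1$-Morse--Bott framework is the relevant tool. However, you do not actually carry out the argument; you explicitly flag the ``one-unit index shift'' as the main obstacle and leave it as a gesture. The attempted index computations in the middle of the proposal also do not work (you yourself note that one of them ``forces $n$ to be small, which is not what we want''), and are abandoned.

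The paper's proof is not an index or Conley--Zehnder count at all: it is an explicit $S^1$-equivariance argument. Working in the autonomous ($S^1$-Morse--Bott) setting, one observes that rotating the domain $t\mapsto t+\theta$ of a (b)-curve fixes the origin (so $u(0)\in\Sigma$ is preserved) and preserves the Floer equation, giving a potential $S^1$-action on the (b)-moduli space. Whether the action is actually well-defined on the moduli space is controlled by the asymptotic marker constraint at the negative puncture: for a constant (type~\I) asymptote $\gamma$ one has $\gamma(0)=\gamma(t)$ for all $t$, so the marker constraint is trivially preserved, and the $S^1$-action does act. For this action to have a fixed (rigid) solution the curve would have to be $t$-independent, i.e.\ a flowline; but a flowline has intersection number zero with $\Sigma$, contradicting the requirement $B\cdot[\Sigma]\ge 1$ built into the definition of the (b)-curves. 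Thus every (b)-curve with a type~\I\ output sits in a non-trivial $S^1$-family, hence is not rigid, hence cannot contribute to $\BS_l$. This explicit family of solutions is the piece your proposal is missing: it replaces the ``index shift'' you were hunting for with a concrete automorphism of the moduli problem. (The paper's Remark~\ref{rmk:b_no_i} does give a dimension-count heuristic via the adiabatic limit to a Chern number~$1$ sphere, but that is presented only as a plausibility check, not as the proof.)
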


For a proof we shall assume that $H_l$ and $H_{n,l}$ used in Section~\ref{sec:proofs} are autonomous Hamiltonians.  Before proceeding, let us review how the moduli problem for the (a)- and the (b)-curves gets modified in the autonomous setting.

According to the standard modification of Floer theory in the $S^1$-Morse-Bott setting \cite{BO09,BEE}, one takes the following generators to define the complex $CF^0_\IandII(H_l)$ when $H_l$ is autonomous. Each type \I\ (constant) orbit gives one generator; for each $S^1$-family of type \II\ orbits we pick one parameterised orbit $\gamma$ and introduce two formal generators $\hat \gamma$, $\check \gamma$. (Our notation follows \cite{BEE}; in \cite{BO09} these would be $\gamma_p$ and $\gamma_q$.) It is natural to treat a constant orbit $\gamma$  as a $\hat\gamma$-orbit. The grading rule is $|\check\gamma|=|\hat \gamma|+1$, and for a constant orbit $\gamma$, the degree of $|\hat \gamma|$ is the Morse index. After a non-autonomous perturbation of the Hamiltonian, we will see two type \II\ periodic orbits corresponding to each $\gamma$, with degrees matching the ones of $\hat \gamma$ and $\check \gamma$.

The moduli problems involving Floer solutions asymptotic to the periodic orbits of $H_l$ acquire the following modification. Each cylindrical end of a curve needs to be equipped with an asymptotic marker (a point in $S^1$, where $S^1$ is seen as the boundary-at-infinity of the cylindrical end). Having asymptotic $\hat\gamma$ at a negative puncture or asymptotic $\check \gamma$ at a positive puncture means that the asymptotic marker is required to go to the initial point $\gamma(0)$ of the orbit $\gamma$, where $\gamma$ has the parameterisation fixed in advance. In the two other scenarios, the asymptotic marker is unconstrained. These rules in particular  concern the (a)- and the (b)-curves appearing in Section~\ref{sec:proofs}.

\begin{proof}[Proof of Lemma~\ref{lem:non_const}]
	Consider the $S^1$-action on the domain of the (b)-curves rotating in $t$; it fixes the origin which is required to pass through $\Sigma$.
To understand whether this gives an $S^1$-action on the moduli space of (b)-curves defining the classes $\BS_l$, one must pay attention to the asymptotic markers; recall that the (b)-curves have a negative puncture. The answer is that there is an $S^1$-action on the part of the moduli space whose outputs are type \I\ orbits, or type \II\ orbits of form $\check \gamma$.
Indeed, for $\check \gamma$-asymptotics the negative asymptotic marker is unconstrained, so the moduli problem is still satisfied after rotation. For type \I\ orbits the asymptotic marker is required to go to $\gamma(0)$ but since $\gamma$ is constant it holds that $\gamma(0)=\gamma(t)$ for all $t$, and the moduli problem is again satisfied after rotation.

Consider those (b)-curves whose asymptotic is of type \I.
The $S^1$-action on them is non-trivial unless a curve in question is independent of $t$, in which case it is a flowline. But the (b)-curves cannot be flowlines because they are required to have intersection number~1 with $\Sigma$, and this number is zero for a flowline. So the $S^1$-action is non-trivial and the (b)-curves are not rigid.
On the other hand, the (b)-curves contributing to $\BS_l$ must be rigid, which means that the (b)-curves with a  type \I\ asymptotic do not exist.
\end{proof}

\begin{remark}
	\label{rmk:b_no_i}
The non-existence of (b)-curves with type \I\ asymptotic can be compared to the following argument. Consider such a curve; its asymptotic has degree~0 so it corresponds to  the minimum $p$ of a Morse function. In the adiabatic limit when the Hamiltonian perturbation goes to zero, such a curve converges to a Chern number~1 sphere passing through $p$ at a fixed point in the domain, and through $\Sigma$ at another fixed point in the domain. But rather than being rigid, this problem is has virtual dimension $-2$ by an easy dimension count: $2n+2c_1-6+4-2n-2=-2$. The adiabatic limit reveals the same extra $S^1$-symmetry as that used in the proof above.
\end{remark}

Recall that one can define $S^1$-equivariant symplectic cohomology $SH^*_{eq}(M)$ of a Liouville domain $M$, see \cite{Sei08,BO09,BO16}, fitting into an exact sequence
$$
\ldots \to SH^{*-2}_{eq}(M) \to SH^*_{eq}(M)\to SH^*(M) \to SH^{*-1}_{eq}(M)\to \ldots
$$
The above proof essentially  shows  the following proposition (which will not be used).

\begin{proposition}
The Borman-Sheridan class admits a lift to $SH^*_{eq}(M)$.\qed
\end{proposition}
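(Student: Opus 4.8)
The plan is to construct the lift of $\BS$ directly at the chain level of $S^1$-equivariant symplectic cohomology, using the standard parametrised moduli space machinery of Bourgeois--Oancea. Recall that $SH^*_{eq}(M)$ is computed (in the $S^1$-Morse--Bott framework) by a complex built from $CF^*_\IandII(H_l)\otimes\C[u^{-1}]$, or equivalently by counting families of Floer cylinders parametrised by $S^\infty=ES^1$ (concretely, by finite-dimensional skeleta $S^{2N-1}$), where the extra $u$-variable records the position on $ES^1$ and the differential combines the ordinary Floer differential with the ``extra'' maps that move along $BS^1$. The first step is therefore to set up the parametrised version of the $(b)$-curve moduli space: one considers $(b)$-curves $u\co \C\to X$ as in Theorem~\ref{th:factor}, but now with the domain carrying a family of cylindrical-end coordinates parametrised by a point of $S^{2N-1}$, with the asymptotic marker coupled to that parameter in exactly the way prescribed for $S^1$-equivariant Floer theory. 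One defines $\BS^{eq}_{l,N}$ as the count of rigid elements of this parametrised moduli space, landing in the appropriate equivariant chain group built from type \I\ and \II\ orbits.

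The second step is to show that $\BS^{eq}_{l,N}$ is a cycle for the equivariant differential. This is a boundary analysis of the one-dimensional parametrised moduli space, entirely parallel to part (iv) of Proposition~\ref{prop:bs_def}: the codimension-one boundary strata come from (a) breaking off an ordinary Floer cylinder on the output end --- contributing $d^0_l(\BS^{eq}_{l,N})$-type terms --- and (b) the parameter on $S^{2N-1}$ degenerating, which contributes precisely the terms involving the $u$-action in the equivariant differential. The confinement arguments of the previous section (the no-escape lemma, positivity of intersections via Lemmas~\ref{lem:pos_inter} and \ref{lem:pos_inter_asympt}, and the ruling-out of type \III\ and \IV\ orbits) go through verbatim, since adding a family parameter on the domain does not affect intersection numbers with $\Sigma$, homology classes, or the action estimate in (\ref{eq:act_est}); in particular Lemma~\ref{lem:non_const} continues to hold, so the equivariant class still lives on type \II\ orbits and in degree $0$. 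The third step is to check compatibility with the equivariant continuation maps $c^{0,eq}_{l,l'}$ and with increasing the skeleton $S^{2N-1}\hookrightarrow S^{2N+1}$, so that the classes assemble into a well-defined element $\BS^{eq}\in SH^0_{eq}(M)$; this is again a standard homotopy-of-parametrised-solutions argument using the same confinement lemmas. Finally, one observes that the map $SH^*_{eq}(M)\to SH^*(M)$ in the exact sequence is, at chain level, the restriction to the zero-skeleton $S^1\subset ES^1$, which sends $\BS^{eq}$ to $\BS$ by construction, establishing that $\BS^{eq}$ is the desired lift.

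The main obstacle I expect is purely bookkeeping rather than conceptual: correctly interfacing the asymptotic-marker conventions for the $(b)$-curves (which carry a \emph{negative} puncture) with the $S^1$-Morse--Bott and equivariant formalism, so that the parametrised moduli space is genuinely cut out transversally and the boundary contributions match the equivariant differential on the nose --- this is precisely the point where the $\check\gamma$ versus $\hat\gamma$ distinction and the coupling between the $ES^1$-parameter and the marker must be handled with care (as in \cite{BO09,BO16,Sei08}). Since this proposition is stated as not being used in the sequel, I would only sketch these verifications, noting that each required input --- regularity, Gromov--Floer compactness, the confinement estimates, and the gluing theorems --- has already appeared (in the non-equivariant guise) in Section~\ref{sec:proofs}.
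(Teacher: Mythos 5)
The paper does not spell out a proof; it simply records that the proof of Lemma~\ref{lem:non_const} ``essentially shows'' the Proposition. The content of that remark is the observation that the moduli problem defining the $(b)$-curves carries a geometric $S^1$-symmetry: rotation of the domain $\C$ fixes the origin (constrained to $\Sigma$) and rotates the cylindrical end, and in the Morse--Bott framework this compatibility of the problem with domain rotation is precisely what is needed for a cycle to come from $S^1$-equivariant symplectic cohomology. Your proposal takes the longer but morally equivalent route: you unpack that $S^1$-symmetry into the explicit Bourgeois--Oancea parametrised-moduli description over skeleta $S^{2N-1}\subset ES^1$, re-verify the confinement arguments and boundary analysis with the extra family parameter, check compatibility with the equivariant continuation maps and with the inclusions of skeleta, and then observe that the forgetful map to $SH^*(M)$ recovers $\BS$. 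This is a correct implementation of the same idea; what you gain is an explicit chain-level representative, at the cost of having to track the asymptotic-marker/$\check\gamma$-vs-$\hat\gamma$ bookkeeping (which you rightly flag as the main subtlety). One small point worth being explicit about when you do that bookkeeping: the proof of Lemma~\ref{lem:non_const} shows the $S^1$-action is \emph{free} on the rigid $(b)$-curves with $\check\gamma$ or constant asymptotics (ruling them out), so $\BS_l$ is supported on $\hat\gamma$-type generators --- exactly the generators whose asymptotic-marker convention does \emph{not} match the naive rotation-invariance; reconciling this with the equivariant complex is where the $u$-variable structure enters, so this deserves more than a wave of the hand in a full write-up.
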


\subsection{The Borman-Sheridan class in the complement of an anticanonical divisor}
We will now prove Proposition~\ref{prop:bs_antican}.
Let $\Sigma$ be a smooth anticanonical divisor (i.e.~$d=1$), and $M=X\setminus\Sigma$. We will show that $\BS=r$, the generator corresponding to a simple Reeb orbit around $\Sigma$, and that $\D_k=r^k$ is analogous.
By Lemma~\ref{lem:non_const} one knows that all asymptotics of the (b)-curves are of type \II.

Pick an $\epsilon$-neighbourhood $U(\Sigma)$ of $\Sigma$ and choose the \S-shaped Hamiltonian $H_l$ whose growth happens within this neighbourhood, see Figure~\ref{fig:H_l_div}.

\begin{figure}[h]
	\includegraphics[]{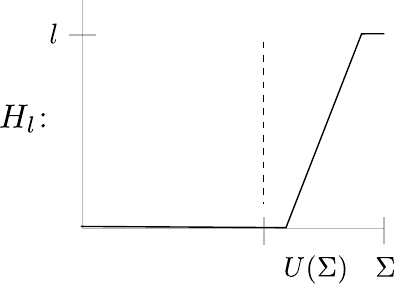}
	\caption{An \S-shaped Hamiltonian with growth in a neighbourhood of $\Sigma$.}
	\label{fig:H_l_div}
\end{figure}

The lemma below is a variation on the argument of Biran and Khanevsky \cite[Proposition~5.0.2]{Bk13}, with several major differences. As opposed to \cite{Bk13}, one has the freedom to make $U(\Sigma)$ as small as desired. Another difference is that while \cite{Bk13} work with purely holomorphic disks, our (b)-curves have a Hamiltonian perturbation, and we quote Bourgeous and Oancea \cite{BO09} for the relevant neck-stretching procedure.  

\begin{lemma}
In the chain model (\ref{eq:CH_chain_level_0}) for $SH^0(X\setminus\Sigma)$, the (b)-curves are all asymptotic to the orbit $r$; therefore the class $\BS$ is a multiple of $r$. 
Moreover, there is a choice of $J$ and $H_l$ such that
the (b)-curves are entirely contained in a neighbourhood of $\Sigma$.

The same is true for $\D_k$, where the orbit is $r^k$.
\end{lemma}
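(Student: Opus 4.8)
The plan is to analyze the (b)-curves by a neck-stretching argument localized near $\Sigma$, exploiting the fact that both the almost complex structure and the Hamiltonian perturbation $H_l^{(b)}$ can be taken to have a particularly simple form in the neighbourhood $U(\Sigma)\cong E_\cL$. First I would fix the Hamiltonian $H_l$ so that all of its growth happens inside $U(\Sigma)$, as in Figure~\ref{fig:H_l_div}: on $X_-=X\setminus U(\Sigma)$ it is (a perturbation of) zero, so that the only periodic orbits of types \I\ and \II\ are the constant orbits at critical points of a small Morse function and the Reeb orbits of $\bd U(\Sigma)$, which in the $d=1$ case correspond exactly to the powers $r^j$ of the simple Reeb orbit around $\Sigma$. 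By Lemma~\ref{lem:non_const} we already know the output orbit $\gamma$ of any (b)-curve is of type \II, hence equal to some $r^j$; the goal is to pin down $j=1$ (resp.\ $j=k$ for $\D_k$).

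The key step is an action/intersection count. A (b)-curve $u\co\C\to X$ has homological intersection number $1$ with $\Sigma$ (resp.\ $k$), carries the incidence condition $u(0)\in\Sigma$ (resp.\ an order-$k$ tangency there), and is asymptotic to $\gamma=r^j$ at its negative puncture. I would run the energy estimate of Lemma~\ref{lem:pos_area}/\eqref{eq:act_est} adapted to this curve: the action of $r^j$ is (up to $\epsilon$-errors) $j$ times the action of $r$, and since the area $\omega([u])$ of the compactified (b)-curve is controlled by its intersection number with $\Sigma$ — which is exactly $1$ (resp.\ $k$) — the energy inequality forces $j\le 1$ (resp.\ $j\le k$); combined with positivity of intersections at the asymptotic orbit via Lemma~\ref{lem:pos_inter_asympt}, which gives a lower bound on the intersection contribution in terms of $\lfloor\alpha\rfloor$ and the multiplicity, one gets $j\ge 1$ (resp.\ $j\ge k$), hence equality. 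Concretely: the intersection number $[\overline{u(S_-)}]\cdot[\Sigma]$ at a negative puncture asymptotic to the $j$-fold cover $r^j$ is at least $j\cdot(-\lfloor\alpha\rfloor)=0$ from the asymptotic plus at least $1$ from the geometric intersection forced by $u(0)\in\Sigma$; but the homology of the normal $S^1$-bundle shows that wrapping $j$ times around $\Sigma$ already accounts for intersection number $j$, giving $j\le$ (total intersection) $=1$ (resp.\ $k$).

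The second assertion — that for a suitable $J$ and $H_l$ the (b)-curves lie entirely in $U(\Sigma)$ — I would prove by a neck-stretching (SFT-type) degeneration along $\bd U(\Sigma)$ as $\epsilon\to 0$, following Bourgeois--Oancea \cite{BO09} for the Hamiltonian-perturbed version of the compactness and gluing; this is morally the argument of Biran--Khanevsky \cite[Proposition~5.0.2]{Bk13}, with the improvement that we may shrink $U(\Sigma)$ at will. In the limit, a (b)-curve that escapes $U(\Sigma)$ would break into a piece in the symplectization $\R\times\bd U(\Sigma)$ and a piece in $X\setminus\Sigma$; the piece in $X\setminus\Sigma$ is asymptotic at a positive puncture to a Reeb orbit $r^{j'}$, hence defines a class $B'\in H_2(X\setminus\Sigma,\ldots)$ of non-negative area by the no-escape lemma (Lemma~\ref{lem:no_escape}) and monotonicity, while the total intersection with $\Sigma$ is already spent on the piece near $\Sigma$; a dimension count in the split moduli space then shows the outside piece cannot be rigid (it carries the same extra $S^1$-symmetry exploited in the proof of Lemma~\ref{lem:non_const}, cf.\ Remark~\ref{rmk:b_no_i}), ruling it out. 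Therefore for $\epsilon$ small enough every rigid (b)-curve is confined to $U(\Sigma)$. The $\D_k$ case is identical, the only change being that the incidence condition $u(0)\in\Sigma$ is replaced by an order-$k$ tangency and the relevant intersection number is $k$, forcing the asymptotic to be $r^k$.

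The main obstacle I anticipate is making the neck-stretching compactness argument fully rigorous in the presence of the domain-dependent Hamiltonian perturbation $H_l^{(b)}$: one must check that the perturbation, which is supported away from $0\in\C$ and is $C^2$-small near $\Sigma$, is compatible with the SFT-style breaking of \cite{BO09} and does not create spurious limit components (in particular no multiply-covered cylinders in the symplectization and no sphere bubbles in $\Sigma$, the latter excluded as in Lemma~\ref{lem:higher_bubble_sigma} and Lemma~\ref{lem:a_b}). The action/intersection bookkeeping, while delicate because of the $\epsilon$-errors in the orbit actions, is otherwise routine given the tools already assembled: Lemmas~\ref{lem:pos_inter}, \ref{lem:pos_inter_asympt}, \ref{lem:div_by_k} (with $d=1$), and the energy estimate \eqref{eq:act_est}.
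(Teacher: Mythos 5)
Your high-level plan --- choose $H_l$ with growth concentrated in $U(\Sigma)$, use Lemma~\ref{lem:non_const} to restrict attention to type \II\ outputs $r^j$, and then neck-stretch along $\bd U(\Sigma)$ with Hamiltonian slowdown following \cite{BO09b} --- matches the paper, and the action estimate you invoke to force $j\le 1$ (resp.\ $j\le k$) is legitimate: it is essentially the Stokes/energy bookkeeping the author himself sketches (and subsequently replaces), and since $j\ge 1$ is automatic for a nonconstant type \II\ orbit you do not in fact need the appeal to Lemma~\ref{lem:pos_inter_asympt}, which is in any case misapplied here because that lemma concerns constant orbits in $\Sigma$, not nonconstant Reeb orbits: ``compactifying at the asymptotic'' does not produce a relative $2$-cycle when the asymptotic is a loop rather than a point.

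The genuine gap is in the confinement step, which is the heart of the lemma. In the SFT limit the building decomposes into a piece $u_1\subset U(\Sigma)$ carrying the Hamiltonian asymptotic $r^l$ and some holomorphic planes $v_1,\ldots,v_q\subset X\setminus U(\Sigma)$ attached along Reeb orbits $\gamma_i$ of $\bd U(\Sigma)$. You rule these out by asserting ``a dimension count in the split moduli space'' together with ``the same $S^1$-symmetry exploited in Lemma~\ref{lem:non_const}.'' Neither is carried out, and the $S^1$-symmetry argument does not transfer: in Lemma~\ref{lem:non_const} the rotation acts freely on the (b)-curve moduli space precisely because the asymptotic is a \emph{constant} orbit, for which the asymptotic-marker constraint is vacuous; for a plane $v_i$ asymptotic to a \emph{nonconstant} Reeb orbit $\gamma_i$ the asymptotic marker pins the rotation, so there is no free circle action. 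What the paper actually does is more specific: it identifies the Fredholm problem for $v_i$ as having the same index as that of Chern number $l_i$ spheres in $X$ passing through a fixed point of $\Sigma$ with local intersection multiplicity $l_i$ (an index $-2$ problem), and then separately rules out multiply covered solutions by checking the underlying simple curves also have negative index. You would also need a separate argument (the paper projects $u_1$ to $\Sigma$ and shows the projection has virtual dimension $-2$ unless constant, using $c_1(\Sigma)=0$) to conclude $u_1$ is contained in a single fibre --- this both finishes the confinement and, in the paper's route, delivers $l=k$ without the action estimate. As written, your proposal does not contain the index computations needed to close the argument.
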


\begin{proof}	
We make the following arrangement:	 for any point $z$ in the domain of the (b)-curve, its Hamiltonian $H_l^{(b)}(z)|_{U(\Sigma)}$ is the sum of a function of the fibre distance to $\Sigma$ (denoted by $r$ in (\ref{eq:h_on_bdle})), and a $\pi$-pullback of some function on $\Sigma$. Compare with (\ref{eq:h_on_bdle}), which in particular has that form.
This choice can be first made for $H_l$ and extended to $H_l^{(b)}$. The effect of the arrangement is that if the (b)-curve lies in $U(\Sigma)$ (which we do not know yet), its projection to $\Sigma$ satisfies  a corresponding Floer equation.
Let $r^l$ be the output orbit of the (b)-curve.

We will now prove that the (b)-curve is contained in $U(\Sigma)$, and $k=l$.
Recall that our Hamiltonian is $C^1$-small near $\bd U(\Sigma)$. One applies neck-stretching along $\bd U(\Sigma)$ combined with  Hamiltonian slowdown as  explained in \cite[Section~5 and Figure~2]{BO09b}. Briefly speaking, neck-stretching modifies the almost complex structure in a small collar neighbourhood of $\bd U(\Sigma)$ by making the collar `long', and Hamiltonian slowdown modifies $H_l$ on that collar so that it has constant slope tending to zero. (The modification of $H_l$ stays within the class of \S-shaped Hamiltonians introduced in Subsection~\ref{subsec:s_shape}. It can be done compatibly with the arrangement in the beginning of the proof, if we stretch with respect to the `round' neighbourhood $U(\Sigma)$ whose boundary has the $S^1$-periodic  Reeb flow.)

\begin{figure}[h]
	\includegraphics{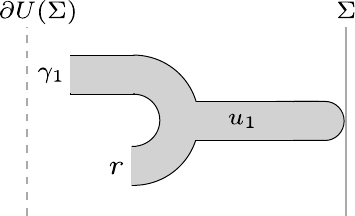}
	\caption{The part $u_1$ of the SFT limit of a (b)-curve which contains the initial Hamiltonian asymptotic orbit $\gamma$, and additional SFT punctures $\gamma_i$.}
	\label{fig:Neck_Stretching}
\end{figure}

By the SFT compactness theorem \cite{CompSFT03}, \cite[Proof of Proposition~5, Step~1]{BO09b},
if the (b)-curve is not eventually contained in $U(\Sigma)$ for the neck-stretching sequence of almost complex structures and $H_l$s, one gets a non-trivial holomorphic building in the SFT limit. Let $u_1\subset U(\Sigma)$ be the part of the building containing the original asymptotic Hamiltonian orbit $r^l$---this part is now constrained to $U(\Sigma)$. It has additional SFT punctures asymptotic to Reeb orbits $\gamma_1,\ldots,\gamma_q$, with multiplicity $k_i$.
Observe that $u_1$ is the only part of the building to lie in $U(\Sigma)$, because $u_1$ realises the homological  intersection number of the whole building with $\Sigma$.

By the similar observation as above, the projection $\pi(u_1)$ solves a Floer equation in $\Sigma$. Because $r$ and $\gamma_i$ project to points, $\pi(u_1)$ defines a class of a sphere; its Chern number in $\Sigma$ vanishes, because $\Sigma$ is anticanonical. Moreover, the $\gamma_i$ project to removable singularities, and $r^l$ projects to an index~0 Morse critical point of a Hamiltonian function. So $\pi(u_1)$ belongs to a moduli space of virtual dimension $-2$, unless it is constant. It implies that $\pi(u_1)$ is constant, therefore $u_1$ lies in a fixed fibre.

Now consider the curves $v_1,\ldots,v_q$ in $X\setminus U(\Sigma)$ attached to the orbits $\gamma_i$. Each $v_i$ is a holomorphic plane, because $u_1$ is the only component of the building that lies in $U(\Sigma)$. Note that $\gamma_i$ is a fixed Reeb orbit, and let $l_i$ be its multiplicity (the winding number around $\Sigma$). Then $v_i$ solves a problem of the same index as that of the following: holomorphic Chern number $l_i$ spheres in $X$, passing through a specified point $p\in \Sigma$ with local intersection multiplicity $l_i$. This problem has index $-2$, hence no solutions exist generically. To rule out multiply-covered solutions, one  checks that the underlying simple curves also have negative virtual dimension. See \cite[Lemma~2.1]{To18} for the case of spheres with a tangency; the punctured SFT case is analogous. 

We have proven that $u_1$ has no punctures. This means that there was no breaking under the SFT limit, and the whole (b)-curve stayed in $U(\Sigma)$. We have also shown that $l=k$.
\end{proof}

\begin{proof}[Proof of Proposition~\ref{prop:bs_antican}]
	One needs to  show that $\BS=r$ and $\D_k=r^k$; we will prove the former since the latter is analogous. Given the previous lemma, there are several ways of completing the argument by a modification of \cite{BO09}. Our strategy is to re-run the proof of Theorem~\ref{th:factor} where instead of holomorphic disks we stretch holomorphic planes in the neighbourhood of $\Sigma$ whose count is known to be $1$.

	Let $U(\Sigma)$ be the neighbourhood appearing above, and consider its completion by an infinite concave end: $M=((-\infty,0]\times \bd U(\Sigma))\cup U(\Sigma)$; see \cite{CompSFT03} for the basic terminology. Pick a contact form at the negative boundary $\bd_{-\infty}M$ whose Reeb flow lifts the Hamiltonian flow of a Morse function $h_\Sigma$ on $\Sigma$ with unique minimum. This contact structure is a perturbation of the standard one whose Reeb flow is the 1-periodic rotation around $\Sigma$.
	Denote by $\hat r$  a parameterised Reeb orbit over the minimum of $h_\Sigma$ considered as a degree~0 element of the non-equivariant contact homology complex of $\bd_{-\infty}M$ \cite{BO09}. Recall that $\Sigma$ is anticanonical, so the degree of $\hat r$ is zero by (\ref{eq:CH_chain_level_0}). (We consider this complex just as a vector space and are not interested in its differential, whose definition in general can meet certain  difficulties.)
	
	Equip $M$ with an almost complex structure $J$ which is cylindrical near the negative end.  Consider the moduli space $\M(\hat r)$
of $J$-holomorphic planes $u\co \C\to M$ asymptotic to $\hat r$ at infinity considered as a negative puncture, with an asymptotic  marker matching the initial point of $\hat r$, such that $u(0)\in \Sigma$ and the total intersection number with $\Sigma$ equals $1$. This is a rigid problem, since this moduli space has dimension $-|\hat r|=0$. We claim that the count of solutions in this moduli space equals 1.  One way of seeing this is as follows.
Consider an embedding of $M$ into the projectivisation of the normal bundle to $\Sigma$, and let $\tilde M$ be its complement, seen as a domain with convex boundary.
We glue the elements in $\M(\hat r)$ to analogous holomorphic planes living in $\tilde M$ with a positive puncture asymptotic to $\hat r$ (this time modulo $S^1$-reparameterisation) and passing through a fixed point in $\tilde \Sigma$, the infinity-section of the projectivisation. The glued solutions become holomorphic curves in the projectivisation of the normal bundle to $\Sigma$. One homotopes the glued almost complex structure to one for which the projection onto $\Sigma$ is everywhere holomorphic, after which the glued solutions become the unique fibre sphere. 
	
The count of $\M(\hat r)$ has been established. Now one runs the analogue of the proof of	Theorem~\ref{th:factor} for these curves. All arguments carry over; it must be additionally mentioned that SFT breaking at the negative end is impossible because any non-trivial SFT bubble projects to a sphere in $\Sigma$ with positive Chern number (by positivity of area and monotonicity of $X$), making the part of the broken configuration in $M$ belong to a moduli space of virtual dimension $\le -2$ which is generically empty.
The resulting (b)-curves identically compute the  Borman-Sheridan class as defined in Section~\ref{sec:proofs}. The (a)-curves this time do  not compute the closed-open map, but instead   the continuation map $\Psi$ of Bourgeois and Oancea \cite[Section~6]{BO09b} realising the quasi-isomorphism from symplectic to non-equivariant contact homology. This isomorphism is known to be action-non-increasing, and its low-energy ``diagonal term'' is already a quasi-isomorphism (in our case, an isomorphism in degree~0). 
 We now also see that the continuation (a)-curves from $r$ to $\hat r$ are in fact computing a diagonal entry of the Bourgeois-Oancea quasi-isomorphism, so their count is 1. We conclude that the count of the outputs of the (b)-curves, i.e.~the Borman-Sheridan class, precisely $r$.

The same argument works for $\D_k$ by considering the $k$-fold Reeb orbit $\hat r^k$ instead of $\hat r$.
The curves we glue in $\tilde M$ have an order $k$ tangency to a given point in $\tilde\Sigma$, and the resulting holomorphic spheres are $k:1$ branched over the fibre sphere, with unique branching point at $\tilde \Sigma$.
 At the last step of the proof, an action estimate for the (b)-curve, which has intersection number $k$ with $\Sigma$, implies that its output is a combination of generators $r^i$ for $i\le k$. But all these generators except the $r^k$ have lower action than $\hat r^k$, so the existence of the continuation (a)-curve implies that the Hamiltonian orbit has to be $r^k$; the argument concludes as above.
\end{proof}

\subsection{Partial compactifications}
In this subsection we discuss a version of Theorem~\ref{th:factor} in a class of situations where $X$ is allowed to be non-compact. A convenient setting is the following one. Let $Y$ be a compact Fano variety equipped with its monotone K\"ahler symplectic form and $\Sigma\subset Y$ be a \emph{normal crossings anticanonical} divisor. Denote its irreducible components by $$\Sigma=\cup_{i\in I}\Sigma_i.$$
Choose a subset of the irreducible components $J\subset I$ and let
$$
X=Y\setminus (\cup_{i\in J}\Sigma_i).
$$
Suppose $L\subset Y$ is a monotone Lagrangian submanifold which is disjoint from $\Sigma$ and exact in $Y\setminus\Sigma$ with respect to $-d^c\log\| s\|$, where $s$ is a section defining $\Sigma$. Obviously, $L\subset X$ is also monotone and therefore there it has well-defined potentials in $X$ and in $Y$:
$$
W_{L,X},\ W_{L,Y}.
$$
Consider a class $A\in H_2(Y,L;\Z)$ such that $\mu(A)=2$. Then by (\ref{eq:mu_and_intersec}), $A\cdot[\Sigma]=1$.

\begin{lemma}
	\label{lem:intersect_part}
The potential $W_{L,X}$ can be obtained by computing all holomorphic disks in $Y$ that contribute to $W_{L,Y}$ and picking out those disks whose homology classes $A$ have the following property: 
\begin{equation}
\label{eq:disj_s_i}
A\cdot [\Sigma_i]=0 \quad\text{for all}\quad  i\in J.
\end{equation}
\end{lemma}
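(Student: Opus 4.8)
The plan is to compute both potentials with one and the same almost complex structure on $Y$ and then read off $W_{L,X}$ from $W_{L,Y}$ by positivity of intersections, exactly in the spirit of Lemma~\ref{lem:higher_bubble_sigma}. First I would fix a tame (possibly domain-dependent, with domain-dependence confined to neighbourhoods of $1\in D$ and of $p\in Y$) almost complex structure $J$ on $Y$ preserving each component $\Sigma_i$, $i\in I$, and regular for the moduli spaces $\M_0(Y;A)$ of Maslov index~$2$ $J$-holomorphic disks $(D,\partial D)\to (Y,L)$ in class $A\in H_2(Y,L;\Z)$ passing through the chosen point $p\in L$. Such $J$ exists: every Maslov~$2$ disk is simple (a non-trivial multiple cover would be a cover of a disk of Maslov index $\le 0$, impossible by monotonicity) and is not contained in any $\Sigma_i$ (its boundary lies on $L\subset Y\setminus\Sigma$), so the divisor-compatible transversality package of Cieliebak--Mohnke applies; cf.~\cite{CM07}, \cite[Section~4.3]{GP16}. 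Refining $\M_0$ over homology classes, this $J$ gives $W_{L,Y}(\mathbf{x})=\sum_{A:\,\mu(A)=2}\mathbf{x}^{[\partial A]}\cdot\#\M_0(Y;A)$.

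Next I would invoke positivity of intersections (Lemma~\ref{lem:pos_inter} with vanishing Hamiltonian term, equivalently \cite{CM07}) to locate the disks: for $u\in\M_0(Y;A)$ and $i\in I$ the set $u(D)\cap\Sigma_i$ is finite and contributes $A\cdot[\Sigma_i]\ge 0$ to the intersection number, while $\sum_{i\in I}A\cdot[\Sigma_i]=A\cdot[\Sigma]=1$ as noted above. Hence $u$ meets exactly one component $\Sigma_{i_0(A)}$, transversely and once, and is disjoint from all the others; in particular $u(D)\subset X=Y\setminus\bigcup_{i\in J}\Sigma_i$ if and only if $i_0(A)\notin J$, i.e.~if and only if the class $A$ satisfies~(\ref{eq:disj_s_i}). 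Now the Maslov~$2$ $J$-holomorphic disks in $X$ through $p$ are precisely these $u$ (a disk lying in $X$ automatically carries a class in $H_2(X,L;\Z)$ mapping to such an $A$), and since $X\subset Y$ is open the linearised $\bar\partial$-operator at such a disk agrees whether the disk is viewed inside $X$ or inside $Y$; so $J$ is regular for the disk problem in $X$, monotonicity rules out bubbling, and the relevant moduli spaces are finite. Therefore this $J$ computes $W_{L,X}$ — which is independent of the tame regular structure used, by the usual invariance argument for monotone potentials — and applying~(\ref{eq:potential_def}) in $X$ yields
$$
W_{L,X}(\mathbf{x})=\sum_{\substack{A:\ \mu(A)=2\\ A\ \text{satisfies}\ (\ref{eq:disj_s_i})}}\mathbf{x}^{[\partial A]}\cdot\#\M_0(Y;A),
$$
with each signed count $\#\M_0(Y;A)$ equal to the one entering $W_{L,Y}$ (same moduli space, same spin-induced orientation, since $L$, its spin structure and the relevant $\bar\partial$-operator are unchanged). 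This is exactly the recipe asserted.

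The only step that is not pure bookkeeping is the transversality claim in the first paragraph: that a single $J$ preserving all of $\Sigma$ can be chosen regular for every Maslov~$2$ disk, including those that hit $\Sigma$. This is precisely the divisor-compatible transversality already invoked for Lemma~\ref{lem:higher_bubble_sigma}, hence a citation rather than new work; everything else is positivity of intersections together with the observation that the $\bar\partial$-operator does not see the ambient compactification, so that a disk which avoids the removed components literally defines the same point in the moduli spaces of $X$ and of $Y$.
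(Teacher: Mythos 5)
Your proposal is correct and takes essentially the same approach as the paper: compute both potentials with one $J$ preserving all the components $\Sigma_i$, then note that positivity of intersections makes the topological condition $A\cdot[\Sigma_i]=0$ for $i\in J$ equivalent to geometric disjointness from $\cup_{i\in J}\Sigma_i$, so the disks contributing to $W_{L,X}$ are exactly the subset of $Y$-disks satisfying~(\ref{eq:disj_s_i}). The extra detail you supply (each Maslov~$2$ disk meets exactly one component, transversality/regularity agree in $X$ and $Y$) fills in the "clearly" of the paper's two-line argument but does not constitute a different route.
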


\begin{proof}
	Clearly, the potential $W_{L,X}$ counts those disks that contribute to $W_{L,Y}$ which are moreover disjoint from $\Sigma_i$ for all $i\in J$. It follows that the classes of such disks satisfy $A\cdot[\Sigma_i]=0$.
	Conversely, if
	$A\cdot[\Sigma_i]=0$,
	then the disks in this class are disjoint from $\Sigma_i$ by positivity of intersections, assuming the chosen almost complex structure preserves $\Sigma$. 
\end{proof}	
	
\begin{theorem}
	\label{th:factor_red}
Let $Y$, $\Sigma$ and $X$ be as above. Suppose 
$$M\subset Y\setminus\Sigma$$
is any Liouville subdomain. 
There exists a class $\BS_{M,X}\in SH^0(M)$ with the following property. For any monotone $L\subset Y$ which is contained in $M$ and exact in $Y\setminus\Sigma$ (automatically, $L$ is exact in $M$), and any local system $\LL=(L,\rho)$, one has:
$$
\CO(\BS_{M,X})=W_{L,X}(\rho)
$$
where $\CO\co M\to HF^*_M(\LL,\LL)$ is the closed-open map.
\end{theorem}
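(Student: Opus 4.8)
The plan is to run the proof of Theorem~\ref{th:factor} essentially unchanged, with $Y$ in place of the compact ambient manifold and with the disk-counting problem cut down by the homological condition of Lemma~\ref{lem:intersect_part}. By that lemma, $W_{L,X}(\rho)$ is the $\rho$-weighted count of Maslov index~$2$ disks $u\co(D,\bd D)\to(Y,L)$ with $u(1)=\pt$ and $[u]\cdot[\Sigma_i]=0$ for all $i\in J$, computed for a tame almost complex structure $J_Y$ that makes each component $\Sigma_i$ complex near $\Sigma$ (and near the crossing stratum $\Sigma_{\mathrm{sing}}=\bigcup_{i\ne j}(\Sigma_i\cap\Sigma_j)$) and is generic elsewhere. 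Since $\Sigma$ is anticanonical, any such disk has $[u]\cdot[\Sigma]=1$, hence is disjoint from $\Sigma_J=\bigcup_{i\in J}\Sigma_i$ and meets $\Sigma_{I\setminus J}=\bigcup_{i\notin J}\Sigma_i$ transversally in one point, away from $\Sigma_{\mathrm{sing}}$. Marking this point as in the stabilising-divisor step of Section~\ref{sec:proofs} produces a moduli space $\M$ of parametrised disks with $u(1)=\pt$, $u(0)\in\Sigma_{I\setminus J}$ and $[u]\cdot[\Sigma_i]=0$ for $i\in J$; the forgetful map $\M\to\M_0$ has degree~$1$ (this is the place where the factor $d$ from Theorem~\ref{th:factor} degenerates to $1$), so $\#_\rho\M=W_{L,X}(\rho)$.

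Next I would repeat the domain-stretching of Subsection~\ref{subsec:stretch} verbatim, with $S=\bd M$ and with the \S-shaped Hamiltonian $H_l$ inserted over an exhausting sequence of annuli. Near the smooth part of each component $\Sigma_i$ with $i\notin J$ one arranges $H_l$ to have the model form~(\ref{eq:h_on_bdle}) associated to the normal bundle of $\Sigma_i$, with the connection flattened near the relevant Morse critical points, so that Lemmas~\ref{lem:pos_inter} and~\ref{lem:pos_inter_asympt} apply to each $\Sigma_i$ separately; near $\Sigma_J$ and near $\Sigma_{\mathrm{sing}}$ one takes $H_l$ to be a small generic Morse perturbation of the constant~$l$. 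The Floer-Gromov limits of curves in $\M_{n,l}$ as $n\to\infty$ are then broken configurations made up of an (a)-curve, a (b)-curve (which carries the incidence $u(0)\in\Sigma_{I\setminus J}$), Floer cylinders, and disk or sphere bubbles, exactly as in Figures~\ref{fig:a_b}--\ref{fig:gen_breaking}.

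The core of the work is to see that the exclusion lemmas of Section~\ref{sec:proofs}---Lemma~\ref{lem:a_b} (no extra bubbles), Lemma~\ref{lem:not_4b} (no type~\IV b asymptotics), Lemma~\ref{lem:not_4a} (no type~\IV a), Lemma~\ref{lem:not_3} (no type~\III), Corollary~\ref{cor:a_curve} (the (a)-curve lies in $M$)---survive. The new input is that the condition $[\cdot]\cdot[\Sigma_i]=0$ for $i\in J$ is closed under Floer-Gromov convergence, so by component-wise positivity of intersections every piece of a broken configuration has non-negative intersection with each $\Sigma_i$, and the intersections with $\Sigma_J$ sum to~$0$; therefore every piece---in particular both the (a)- and the (b)-curve---is disjoint from $\Sigma_J$, and no puncture or constant asymptotic can sit on $\Sigma_J$ (the positive-end estimate of Lemma~\ref{lem:pos_inter_asympt}(i) applied to the (a)-curve would otherwise force a positive intersection with $\Sigma_J$). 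In this way the non-compactness of $X=Y\setminus\Sigma_J$ never enters: the whole discussion takes place in the compact $Y$, while the curves behave as though $\Sigma_J$ had been deleted. Disk and sphere bubbles are killed exactly as in Lemma~\ref{lem:a_b}: any bubble has intersection $\ge1$ with $\Sigma$, so the rest of the curve would have intersection $\le0$ with $\Sigma$, contradicting the incidence $u(0)\in\Sigma_{I\setminus J}$ which forces a positive intersection of the (b)-curve with $\Sigma_{I\setminus J}$. The budget $[\cdot]\cdot[\Sigma_{I\setminus J}]=1$ is then distributed so that $[\text{(b)-curve}]\cdot[\Sigma_{I\setminus J}]=1$ and $[\text{(a)-curve}]\cdot[\Sigma]=0$, and the proofs of Lemmas~\ref{lem:not_4b}--\ref{lem:not_3} run with $d=1$, the energy estimates~(\ref{eq:act_est}) completing the exclusions once $l$ exceeds the area of a Maslov index~$2$ disk. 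Finally, as in Proposition~\ref{prop:bs_def}, one defines $(\BS_{M,X})_l\in CF^0_\IandII(H_l)$ to be the count of the outputs of those (b)-curves with $u(0)\in\Sigma_{I\setminus J}$, $[\cdot]\cdot[\Sigma_{I\setminus J}]=1$, $[\cdot]\cdot[\Sigma_J]=0$, and type~\I\ or~\II\ asymptotic; this chain is $d^0_l$-closed and compatible with the continuation maps, yielding $\BS_{M,X}\in SH^0(M)$. Since the (a)-curve solves the $\CO_l$ equation inside $M$, Proposition~\ref{prop:gluing_broken} gives $\CO(\BS_{M,X})=W_{L,X}(\rho)\cdot 1_L$ in the limit $l\to\infty$.

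The main obstacle I anticipate is the technical setup around the normal crossings locus: one has to pick a single tame almost complex structure making all of the $\Sigma_i$ simultaneously complex in a neighbourhood of $\Sigma$ (including near the deeper strata), install the model Hamiltonian~(\ref{eq:h_on_bdle}) near each component $\Sigma_i$, $i\notin J$, compatibly, and verify that the various moduli spaces appearing in the argument---of disks, (a)- and (b)-curves, Floer cylinders, and bubbles---generically miss $\Sigma_{\mathrm{sing}}$, which has real codimension~$4$, so that the component-by-component use of Lemmas~\ref{lem:pos_inter} and~\ref{lem:pos_inter_asympt} is legitimate. Once this is arranged, each individual step of Section~\ref{sec:proofs} carries over with only cosmetic changes.
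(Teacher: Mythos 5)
Your proposal is correct in spirit but takes a genuinely different route from the paper. The paper does \emph{not} work with the normal crossings divisor $\Sigma$ directly in the stretching argument; instead it first replaces $\Sigma$ by a smoothing $\tilde\Sigma$ supported in a small neighbourhood $U(\Sigma)$, deforms the Liouville structure so that $Y\setminus\Sigma\subset Y\setminus\tilde\Sigma$ as a Liouville subdomain, and then literally re-runs the proof of Theorem~\ref{th:factor} for $L\subset M\subset Y\setminus\tilde\Sigma$ with the \emph{smooth} divisor $\tilde\Sigma$. The only new ingredient is that one restricts attention to Maslov index~$2$ disks with $[u]\cdot[\Sigma_i]=0$ for all $i\in J$ (the topological filter of Lemma~\ref{lem:intersect_part}), observes that the (a)-curves are trapped in $M$ and hence meet no $\Sigma_i$, so the filter survives to the (b)-curves, and defines $\BS_{M,X}$ by counting exactly those (b)-curves in the smooth-divisor setup that satisfy this extra homological condition.

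The difference matters. Your approach keeps the divisor singular, requires a single $J$ preserving all $\Sigma_i$ simultaneously near $\Sigma$ (including deeper strata), requires installing the local model~(\ref{eq:h_on_bdle}) compatibly near each $\Sigma_i$ with $i\notin J$, and requires a genericity argument that all the moduli spaces miss $\Sigma_{\mathrm{sing}}$ so that Lemmas~\ref{lem:pos_inter} and~\ref{lem:pos_inter_asympt} can be applied component by component. You flag this honestly as the main obstacle, and I agree it is the real work in your approach; it is plausible but would have to be carried out with some care (in particular the flattening of the connection near critical points becomes a statement about several normal bundles simultaneously near the crossing strata). The paper's smoothing trick makes all of that irrelevant: once you are in the smooth-divisor world, the entire machinery of Section~\ref{sec:proofs} applies verbatim, and the only thing to check is that the topological condition $[u]\cdot[\Sigma_i]=0$, $i\in J$, propagates through the breaking, which follows immediately from the (a)-curve being contained in $M$. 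Your route buys a statement that is phrased intrinsically in terms of the normal crossings divisor and does not require the auxiliary choice of a smoothing; the paper's route buys a one-paragraph proof that reuses Theorem~\ref{th:factor} as a black box. Both reach the same conclusion, and your identification of the forgetful-map degree as $1$ (rather than $d$) and the homological budget $[\text{(b)-curve}]\cdot[\Sigma_{I\setminus J}]=1$ matches the paper's count.
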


\begin{remark}
The requirements that $c_1(M)=0$ and $M$ is grading-compatible, which appeared in Theorem~\ref{th:factor}, are automatic when $\Sigma$ is anticanonical, so they do not appear here.
\end{remark}	
	
\begin{proof}
Let $\tilde{\Sigma}$ be the smoothing of $\Sigma$, contained in a neighbourhood of $U(\Sigma)$. Let $s_{\Sigma}$, $s_{\tilde \Sigma}$ be the corresponding defining section. One can deform the Liouville structure $-dd^c\log\|s_{\tilde \Sigma}\|$ on $Y\setminus \Sigma$ to match  $-dd^c\log\|s_{\Sigma}\|$ outside of $U(\Sigma)$, so there is an embedding of Liouville domains 
$$
Y\setminus\Sigma\subset Y\setminus\tilde\Sigma,
$$	
 therefore $M$ can be viewed as a Liouville subdomain of $Y\setminus\tilde{\Sigma}$. One may run the proof of Theorem~\ref{th:factor} applied to
$$
L\subset M\subset Y\setminus\tilde \Sigma.
$$
Among the Maslov index 2 disks with boundary on $L$, one picks out those whose homology classes satisfy (\ref{eq:disj_s_i}) as a topological condition. After domain-stretching, the (a)-curves are confined to $M$ therefore have zero intersection with any $\Sigma_i$. So the (b)-curves still have the property that their homological intersection with $[\Sigma_i]$, $i\in J$, vanishes. We define $\BS_{M,X}$ by counting the output asymptotics $\gamma$ of those (b)-curves defining the usual Borman-Sheridan class $\BS$ for $M\subset X\setminus\tilde{\Sigma}$ which satisfy this extra topological condition. The rest of the proof remains unchanged.
\end{proof}		

\begin{example}
Let $L\subset \C^2$ be the monotone Clifford torus which remains monotone after the compactification $$X\coloneqq \C^2(x,y)\,  \subset\,  
Y\coloneqq \C P^2(x,y,z).$$ The space $Y=\C P^2$  plays the role of the compact Fano variety. Consider the anticanonical divisor
$$
\Sigma=\Sigma_1\cup\Sigma_2\subset Y
$$
where $\Sigma_1=\{z=0\}$ is the line at infinity, so that
$$
X=Y\setminus\Sigma_1,
$$
and $\Sigma_2$ is a smooth conic, e.g.~$\{xy=\epsilon z^2\}$, so that
$$
L\subset M\coloneqq \C P^2\setminus\Sigma=\C P^2\setminus(\Sigma_1\cup\Sigma_2)
$$
is exact.  Theorem~\ref{th:factor_red} asserts that there is a class
$$
\BS_{M,X}\in SH^0(M),
$$
which under the closed-open map onto $L$, with various local systems, computes the potential
$$
W_{L,X}(u,v)=(u+1)v
$$
of the Clifford torus in $\C^2$, written in some basis $(u,v)$. Next one can consider the Chekanov torus $L'\subset M\subset \C P^2$ and repeat the story for it (see e.g.~\cite{Pa14,PT17} and \cite[Section~11]{SeiBook13} for details), with the upshot that applying the closed-open map onto $L'\subset M$ makes the same Borman-Sheridan class compute the potential of the Chekanov torus in $\C^2$,
$$
W_{L',X}(u,v)=v^{-1}.
$$
Pascaleff computed \cite{Pa13,Pa14} that
$$
SH^0(M)\cong\C[p,q,(1-pq)^{-1}].
$$
It follows from  \cite[Section~7]{EDT17b} and Remark~\ref{rmk:CO_BS}  that the closed-open maps
to the Clifford and~the Chekanov torus are given respectively by
$$
\left\{
\begin{array}{l}
\CO_L(p)=(u+1)v\\
\CO_L(q)=v^{-1}
\end{array}
\right.
\quad \textit{and}\quad 
\left\{
\begin{array}{l}
\CO_{L'}(p)=v^{-1}\\
\CO_{L'}(q)=(u+1)v.
\end{array}
\right.
$$
It follows from this and Remark~\ref{rmk:CO_BS} that
$$
\BS_{M,X}=p.
$$
This correctly recovers the potentials of both $L$ and $L'$ (the Clifford and the Chekanov torus) under the closed-open map.
\end{example}

\begin{remark}
In view of the example above and the mirror-symmetric context explained in the introduction, it may be an interesting problem to look for relationships between the symplectic cohomology of the complement of a normal crossings divisor $SH^0(Y\setminus\Sigma)$, and the LG potentials of monotone tori in the partial compactifications $X$ of $Y\setminus\Sigma$. (One hopes for a statement in the spirit of Corollary~\ref{cor:struc_coef}, except that now the usual LG potentials may already give interesting relations in symplectic cohomology rings.)
\end{remark}
	
\bibliography{Symp_bib}{}
\bibliographystyle{plain}

\end{document}